 \newcommand{\A}{\ensuremath{\mathbb{A}}}
 \newcommand{\D}[2]{\ensuremath{ \frac{\partial{#1}}{\partial{#2}}}}
 \newcommand{\Q}{\ensuremath{\mathbb{Q}}}
 \newcommand{\R}{\ensuremath{\mathbb{R}}}
 \newcommand{\CP}{\ensuremath{\mathbb{CP}}}
 \newcommand{\st}{\ensuremath{\sqrt{-1}}}
 \newcommand{\ddb}{\ensuremath{\partial \bar{\partial}}}
 \newcommand{\KRf}{K\"ahler Ricci flow\;}
 \newcommand{\KRfc}{K\"ahler Ricci flow,\;}
 \newcommand{\KRfd}{K\"ahler Ricci flow.\;}
 \newcommand{\KRF}{K\"ahler Ricci Flow\;}
 \DeclareMathOperator{\Vol}{Vol}
 \DeclareMathOperator{\diam}{diam}
 \newcommand{\Blow}[1]{\ensuremath{\mathbb{CP}^2 \# {#1}\overline{\mathbb{CP}}^2}}
 \newcommand{\norm}[2]{{ \ensuremath{\|} #1 \ensuremath{\|}}_{#2}}
 \newcommand{\snorm}[2]{{ \ensuremath{\left |} #1 \ensuremath{\right |}}_{#2}}
 \newcommand{\sconv}{\ensuremath{ \stackrel{C^\infty}{\longrightarrow}}}
 \def\ExtendSymbol#1#2#3#4#5{\ext@arrow 0099{\arrowfill@#1#2#3}{#4}{#5}}
 \definecolor{hao}{rgb}{1,0.5,0}
 \definecolor{miao}{cmyk}{0.5,0,0.2,0.2}
 \definecolor{qiao}{gray}{0.96}
 \newcommand{\Poincare}{Poincar\`{e}\;}
 \newtheorem*{clm}{Claim}
 \newtheorem{corollary}{Corollary}[section]
 \newtheorem{proposition}{Proposition}[section]
 \newtheorem{lemma}{Lemma}[section]
 \newtheorem{theorem}{Theorem}[section]
 \newtheorem{definition}{Definition}[section]
 \newtheorem{remark}{Remark}[section]
 \newtheorem{theoremin}{Theorem}
 \newtheorem{lemmain}{Lemma}
 \newtheorem{corollaryin}{Corollary}
 \title{Ricci flow on Orbifolds}
 \author{Bing Wang}
 \date{}
\begin{document}
 \maketitle

 \begin{abstract}
   In this paper, we study the behavior of Ricci flows on compact
  orbifolds with finite singularities.  We show that Perelman's pseudolocality theorem also holds on
  orbifold Ricci flow. Using this property, we obtain a weak
  compactness theorem of Ricci flows on orbifolds under some natural technical conditions.
  This generalizes the corresponding theorem on
  manifolds.   As an application, we can use \KRf to find new K\"ahler Einstein metrics on
  some orbifold Fano surfaces.  For example, if $Y$ is a cubic surface with only one ordinary double point or
  $Y$ is an orbifold Fano surface with degree 1 and every
  singularity on it is a rational double point of type $\A_k (1 \leq k \leq 6)$,
  then we can find a KE metric of $Y$ by running \KRf.
 \end{abstract}

 \tableofcontents

 \section{Introduction}

 An important object of Ricci flow is to find Einstein metrics on a
 given manifold. In the seminal paper~\cite{Ha82}, Hamilton showed that staring from any
 metric with positive Ricci curvature on $M^3$, the normalized Ricci flow will always
 converge to an Einstein metric at last.  In the set of K\"ahler manifolds,
 \KRf was developed as an important tool in search of KE (K\"ahler Einstein)
 metrics. In~\cite{Cao85}, based on the fundamental estimate of
 Yau(~\cite{Yau78}), Cao showed the long time existence of \KRf and the convergence of
 \KRf when $c_1(M)\leq 0$. If $c_1(M)>0$, $M$ is called Fano manifold. In this case,
 situations are much more delicate.  $M$ may not have KE metric. So we cannot expect the convergence of the \KRf to a KE metric in general.
 If the existence of KE metric is assumed,  Chen and Tian showed that \KRf converges
 exponentially fast toward the KE  metric if the initial metric has positive bisectional
 curvature (cf. \cite{CT1}, \cite{CT2}).   Using his famous $\mu$-functional,  Perelman developed fundamental estimates along \KRf on Fano manifolds.  He also claimed that the \KRf will always converge
 to the KE metric on any KE manifold.  This result was  generalized to
  manifolds with K\"ahler Ricci solitons by Tian and Zhu (\cite{TZ}).
  If the existence of KE metric is not assumed, there are a lot of works toward the convergence of \KRf
  after G. Perelman's fundamental estimates. For example, important progress can be found in
  (listed in alphabetical order)
  ~\cite{CLH},~\cite{CST},~\cite{CW1},~\cite{CW2},~\cite{Hei},~\cite{PSS},~\cite{PSSW1},
  ~\cite{PSSW2},~\cite{Ru},~\cite{RZZ},~\cite{Se1},~\cite{To},~\cite{TZs} and references therein.\\

  Following Tian's original idea of $\alpha_{\nu, k}$-invariant in~\cite{Tian90}
  and~\cite{Tian91}, Chen and the author (c.f.~\cite{CW3} and~\cite{CW4}) proved
  that the \KRf converges to a KE metric if the $\alpha_{\nu, 1}(M)$ or $\alpha_{\nu, 2}(M)$ is big enough and the flow is tamed.
  They also showed that every 2-dimensional \KRf is tamed.
  Using the calculation of $\alpha_{\nu, 1}$ and $\alpha_{\nu, 2}$ of every Fano surface (c.f.~\cite{ChS},~\cite{SYl}), they showed the convergence of \KRf to a KE metric on every Fano surface $M$
  satisfying $1 \leq c_1^2(M) \leq 6$. This gives a
  flow proof of Calabi's conjecture on Fano surfaces.  The existence of KE metrics on such manifolds were originally proved by
  Tian in~\cite{Tian90}.\\

  A natural question is: can we generalize these results to Fano orbifolds and use \KRf to search the KE metrics on Fano orbifolds? In this paper, we answer this question affirmatively. We use \KRf as a tool to find new KE metrics on some orbifold Fano surfaces.  However, before we can use the orbifold \KRfc  we firstly need some general results of orbifold Ricci flows.
  So we generalize Perelman's Ricci flow theory to orbifold case.  The study of orbifold Ricci flow is pioneered by the work~\cite{CWL},~\cite{Wu},~\cite{Lu1}.\\

  We have the following theorems.

 \begin{theoremin}
   Suppose $Y$ is a Fano orbifold,  $\{(Y, g(t)), 0 \leq t < \infty \}$ is a \KRf solution
   tamed by $\nu$. Then this flow converges to a KE metric if one of the
   following conditions is satisfied.
  \begin{itemize}
  \item $\alpha_{\nu, 1}>\frac{n}{n+1}$.
  \item $\alpha_{\nu, 2}>\frac{n}{n+1}, \; \alpha_{\nu, 1} >
  \frac{1}{2-\frac{(n-1)}{(n+1)\alpha_{\nu, 2}}}$.
  \end{itemize}
 \label{theoremin: tamedconvergence}
 \end{theoremin}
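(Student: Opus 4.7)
\medskip

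\noindent\textbf{Proof proposal.} The strategy is to transplant the scheme of Chen--Wang (\cite{CW3},\cite{CW4}) from the manifold setting to the orbifold setting, using the orbifold versions of Perelman's estimates, pseudolocality, and weak compactness that have been set up in the earlier sections of this paper. The basic dichotomy is: either the flow $\{(Y,g(t))\}$ has a subsequential smooth limit in the Cheeger--Gromov sense on the orbifold, in which case the tameness hypothesis plus Perelman's functional monotonicity forces the limit to be KE, and one then upgrades subsequential convergence to full convergence via Perelman's $\mathcal{W}$-entropy; or the flow degenerates, and we must rule this out using the $\alpha_{\nu,k}$ hypotheses.

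The first block of steps collects the uniform a priori bounds. I would start from the orbifold Perelman estimates: scalar curvature, diameter, and (noncollapsing) $\kappa$-bounds along $\{(Y,g(t))\}$, combined with the pseudolocality theorem proved earlier, which prevents curvature concentration away from the orbifold singular set. Together with the tameness by $\nu$, this yields uniform volume-ratio bounds and places us exactly in the hypotheses of the orbifold weak compactness theorem of the paper, so along any time sequence $t_i\to\infty$ we obtain a limit Ricci flow on a (possibly new) Fano orbifold $Y_\infty$ with only finite orbifold singularities. The issue is whether $Y_\infty=Y$ and whether $g(t)$ converges, not merely subsequentially.

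The second block is where the $\alpha$-invariant conditions enter, following the Tian--Chen--Wang scheme. On the limit $Y_\infty$ one obtains a limiting Kähler current, whose potential $\varphi_\infty$ relative to a reference metric can be estimated by the standard comparison with plurisubharmonic functions. The inequalities $\alpha_{\nu,1}>\frac{n}{n+1}$ or the joint $(\alpha_{\nu,1},\alpha_{\nu,2})$ inequality in the statement are exactly what is needed to apply Tian's $\alpha_{\nu,k}$-type Moser iteration to the Monge--Amp\`ere equation obtained at the limit, and to conclude that $\varphi_\infty$ is uniformly bounded in $L^\infty$ and hence (by elliptic regularity together with Perelman's estimates at the limit) smooth. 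This produces a KE metric on $Y_\infty$; moreover by running the argument at each finite time one gets a uniform $C^0$ bound on the normalized Kähler potential along the flow, which rules out volume concentration and identifies $Y_\infty$ with $Y$.

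The third and final block promotes this to convergence of $g(t)$ on $Y$. Once the uniform $C^0$ bound on $\varphi(t)$ and the uniform geometric bounds from Perelman are in place, one differentiates the Mabuchi/$\mathcal{W}$-functional along the flow and uses its monotonicity to show that the flow sits in a fixed compact subset of the space of Kähler metrics on $Y$ (modulo $\Aut(Y)$), and that any limit is a critical point, i.e. KE; then a \L ojasiewicz-type or Perelman-style argument promotes subsequential smooth convergence to smooth convergence. The main technical obstacle I expect is the second block: adapting Tian's $\alpha_{\nu,k}$ iteration and the Chen--Wang tameness-based bound on the potential to an orbifold with possibly several singular points, i.e. proving the Green function, Sobolev, and Moser iteration inputs are uniform on orbifolds with uniformly controlled singular set, so that the pointwise $\sup\varphi\le C$ step goes through unchanged. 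Once that step is granted, the orbifold weak compactness theorem and Perelman's estimates (both already established in this paper) do the rest in essentially the same way as in the smooth case.
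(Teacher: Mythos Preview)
Your proposal inverts the logical architecture of the paper and, in doing so, imports hypotheses that are not available in the general dimension $n$ setting of Theorem~1.

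In the paper, tamedness is the \emph{input}, not something to be combined with weak compactness. Once the flow is tamed by $\nu$, the partial $C^0$-estimate
\[
\left|\varphi - \sup_Y \varphi - \tfrac{1}{\nu}\log\sum_{\beta=0}^{N_\nu}\snorm{\lambda_\beta(t)\tilde{S}_{\nu,\beta}^t}{h_0^{\nu}}^2\right| < C
\]
follows \emph{directly} from the definition, with no limit-taking and no pseudolocality. From this one bounds $\int_Y e^{-\alpha(\varphi-\sup_Y\varphi)}\omega^n$ in terms of the eigenvalues $\lambda_\beta$, obtains the key inequality $(1-\alpha)\frac{1}{V}\int_Y\varphi\,\omega_\varphi^n + \alpha\sup_Y\varphi < C - \frac{2\alpha}{\nu}\log\lambda_{N_\nu-k+1}$ for $\alpha<\alpha_{\nu,k}$, and then plays this off against $\frac{1}{V}\int_Y(-\varphi)\omega_\varphi^n \le n\sup_Y\varphi + C$. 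In the first case ($k=1$, $\lambda_{N_\nu}=1$) this is already enough; in the second case one argues by contradiction, forces $\lambda_{N_\nu-1}(t_i)\to 0$, and uses Tian's lower bound on $\int \sqrt{-1}\,\partial X\wedge\bar\partial X\wedge\omega^{n-1}$ to close the loop. A bounded $\varphi$ then gives convergence to KE by standard arguments. No sequential limit orbifold $Y_\infty$ ever appears.

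Your first block assumes that tamedness together with Perelman's estimates places the flow in the moduli space $\mathscr{O}(m,c,\sigma,\kappa,E)$ needed for weak compactness. But membership in $\mathscr{O}$ requires a uniform energy bound $\int_Y|Rm|^{m/2}\le E$, which in the paper is only known when $n=2$ (this is precisely why Theorem~2 is stated for surfaces). So for general $n$ your compactness step is unavailable. Your second block then proposes to apply the $\alpha_{\nu,k}$ invariants of $Y$ to a Monge--Amp\`ere problem on a possibly different limit variety $Y_\infty$; this transfer is not justified, and in any case is unnecessary: the $\alpha_{\nu,k}$ conditions are used on $Y$ itself, at finite time, via the partial $C^0$-estimate. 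The route you should take is the direct algebraic one sketched above; the weak compactness machinery is what the paper uses to \emph{verify} tamedness (Theorem~2), not to exploit it.
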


  The tamedness condition originates from Tian's work in~\cite{Tian90}
  (c.f. eqation (0.3) of~\cite{Tian90}). Under \KRfc  it's first defined
  in~\cite{CW4}.   A flow is called tamed by constant $\nu$ if the function
  \begin{align}
      F(\nu, x, t) \triangleq \frac{1}{\nu} \log
      \sum_{\beta=0}^{N_{\nu}} \snorm{S_{\nu, \beta}^t}{h^{\nu}}^2(x)
  \label{eqn: tamed}
  \end{align}
  is uniformly bounded on $Y \times [0, \infty)$.
   Here $\{S_{\nu, \beta}^t\}_{\beta=0}^{N_{\nu}}$
  are orthonormal  basis of $H^0(K_Y^{-\nu})$,
  i.e.,
  \begin{align*}
     \int_Y \langle S_{\nu, \alpha}^{t}, S_{\nu, \beta}^{t}\rangle_{h^{\nu}}
     \omega_t^n=\delta_{\alpha \beta},  \quad 0 \leq \alpha, \beta \leq N_{\nu}=\dim H^0(K_Y^{-\nu})-1;
     \quad
     h=\det g_{i\bar{j}}(t).
  \end{align*}
  Therefore, this theorem gives us a way to search KE metric by
  \KRfd
  $\alpha_{\nu, k}$ are defined as (c.f. Definition~\ref{definition: nualpha}
  for more details)
  \begin{align*}
    \alpha_{\nu, k} \triangleq
      \sup\{ \alpha |  \sup_{\varphi \in \mathscr{P}_{\nu, k}} \int_Y e^{-2\alpha \varphi} \omega_0^n <
      \infty\}
  \end{align*}
  where $\mathscr{P}_{\nu, k}$ is the collection of all functions of
  the form $\displaystyle \frac{1}{2\nu}\log (\sum_{\beta=0}^{k-1} \norm{\tilde{S}_{\nu,
  \beta}}{h_0^{\nu}}^2)$ for some orthonormal basis $\{\tilde{S}_{\nu, \beta}\}_{\beta=0}^{k-1}$
  of a $k$-dimensional subspace of $H^0(K_Y^{-\nu})$.
  Note that $\alpha_{\nu, k}$ are algebraic invariants which can be calculated explicitly in many
  cases, the most
  important thing now is to show when the tamedness condition is satisfied.

  \begin{theoremin}
   Suppose $Y$ is a Fano surface orbifold, $\{(Y, g(t)), 0 \leq t< \infty \}$
   is a \KRf solution. Then there is a constant $\nu$ such that this
   flow is tamed by $\nu$.
  \label{theoremin: surfacetamed}
  \end{theoremin}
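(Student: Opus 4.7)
The strategy is to adapt the argument for smooth Fano surfaces in~\cite{CW3},~\cite{CW4} to the orbifold setting, using as main new inputs the orbifold version of Perelman's pseudolocality theorem and the orbifold weak compactness theorem established earlier in this paper. I would argue by contradiction, fixing $\nu$ large to be chosen at the end.

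Suppose tamedness fails. The upper bound on $F(\nu, \cdot, t)$ follows from Perelman's Sobolev and scalar estimates along the flow, so only the lower bound can be violated: there exist sequences $t_i \to \infty$ and $x_i \in Y$ with $F(\nu, x_i, t_i) \to -\infty$. Perelman's fundamental estimates (uniform bounds on scalar curvature, diameter, and $\kappa$-noncollapsing) remain valid on the orbifold \KRfc so the pointed spacetime windows $\{(Y, g(t_i + s), x_i) : s \in [-1, 1]\}$ have bounded geometry away from a uniformly controlled set of curvature concentration points. The orbifold pseudolocality theorem bounds the number of such bubble points and yields a definite regularity scale elsewhere. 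Passing to a subsequence and invoking the orbifold weak compactness theorem, I would extract a pointed Cheeger-Gromov limit $(Y_\infty, g_\infty(s), x_\infty)$, which is a \KRf on a Fano orbifold surface with finitely many additional isolated orbifold singularities.

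Because Perelman's $\mu$-functional is monotone and uniformly bounded along the orbifold flow, it stabilizes at infinity, and the limit $(Y_\infty, g_\infty(0))$ satisfies the shrinking \Kahler Ricci soliton equation. Since $Y_\infty$ is a Fano orbifold surface, Kodaira embedding for orbifolds (Baily's theorem) makes $K_{Y_\infty}^{-\nu}$ very ample for some fixed $\nu$, so every point of $Y_\infty$ lies in the non-vanishing locus of some global section of unit $L^2$ norm. A Bergman-kernel convergence argument then transports this non-vanishing back to the sequence, yielding a uniform lower bound on $F(\nu, x_i, t_i)$ and contradicting the assumed blow-down.

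The step I expect to be the main obstacle is the passage of holomorphic data to the orbifold limit. Cheeger-Gromov convergence in the orbifold category is $C^\infty$ only outside the bubble points, so one must upgrade to smooth convergence of the complex structures on the regular part, lift the $L^2$-orthonormal basis of $H^0(K_Y^{-\nu})$ to the limit with matching $L^2$ and pointwise $C^0$ norms at $x_\infty$, and rule out loss of mass into the new orbifold points through $\bar\partial$-estimates and Moser iteration, in the spirit of the partial $C^0$ argument in~\cite{Tian90}. The complex-dimension-two assumption is essential here: in higher dimension the weak limit could possess non-isolated singular structure for which such a quantization argument is not currently available.
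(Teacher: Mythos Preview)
Your proposal is essentially the paper's own argument: establish no volume concentration and weak compactness of time slices via the orbifold pseudolocality theorem and the refined-sequence machinery, identify each sequential limit as a K\"ahler Ricci soliton through the monotonicity of Perelman's $\mu$-functional, embed the limit by Baily's orbifold Kodaira theorem, and then transport the lower bound on $F(\nu,\cdot,\cdot)$ back along the sequence using H\"ormander's $L^2$-estimate together with the a~priori $C^0$ and $C^1$ bounds on unit-norm sections (your ``Bergman-kernel convergence'' is exactly the paper's Proposition on continuity of plurianticanonical sections).

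One step you skip is non-trivial and the paper singles it out explicitly. The weak compactness theorem produces only a $C^0$-orbifold limit $\hat Y$; you cannot invoke Baily's embedding, or even speak of $K_{\hat Y}^{-\nu}$ as an orbifold line bundle, until the new singular points are shown to be $C^\infty$-orbifold points. The paper closes this gap by first using the $\mu$-functional to see that the soliton equation holds on the smooth part, then observing that the soliton potential (the smooth limit of $-\dot\varphi(t_i)$) has a uniform $C^1$ bound, and finally applying Uhlenbeck's removing-singularity argument (as in~\cite{CS}) to upgrade each $C^0$-singularity to a genuine smooth orbifold point. Your outline should insert this regularity step between ``the limit satisfies the shrinking soliton equation'' and ``Baily's theorem applies.''
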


  According to these two theorems and the calculations done
  in~\cite{Kosta} and in~\cite{SYl}, we obtain the existence of K\"ahler Einstein
  metrics on some orbifold Fano surfaces.

  \begin{corollaryin}
  Suppose $Y$ is a cubic surface with only one ordinary double point, or $Y$ is
  a degree $1$ del Pezzo surface having only Du Val singularities of type $\A_k$ for $k \leq 6$. Starting from any
   metric $\omega$ satisfying $[\omega]=2\pi c_1(Y)$,
   the \KRf will converge to a KE metric on $Y$.  In particular, $Y$ admits a KE metric.
  \label{corollaryin: KEexample}
  \end{corollaryin}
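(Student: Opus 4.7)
The plan is to deduce this corollary by combining the two theorems stated just above with the explicit computations of $\alpha_{\nu,k}$ that exist in the literature for these particular Fano surface orbifolds. Since both $Y$'s are Fano surface orbifolds, Cao's long-time existence argument (adapted to the orbifold setting as in~\cite{CWL},~\cite{Wu},~\cite{Lu1}) gives a global Kähler Ricci flow solution $\{(Y,g(t)),\,0\leq t<\infty\}$ starting from any metric $\omega$ with $[\omega]=2\pi c_1(Y)$. By Theorem~\ref{theoremin: surfacetamed}, this flow is automatically tamed by some constant $\nu$. Hence the only remaining task is to verify that the $\alpha$-invariant hypothesis of Theorem~\ref{theoremin: tamedconvergence} holds for some such $\nu$, with $n=2$, i.e.\ either $\alpha_{\nu,1}(Y)>\tfrac{2}{3}$, or else $\alpha_{\nu,2}(Y)>\tfrac{2}{3}$ together with the companion bound on $\alpha_{\nu,1}$.

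First I would handle the cubic surface $Y\subset \mathbb{CP}^3$ with exactly one ordinary double point. Here one uses the computations of Cheltsov--Shramov/Kosta~\cite{Kosta} which estimate the global log-canonical threshold of such a cubic (equivalently Tian's $\alpha$-invariant in the orbifold sense). The relevant output is a lower bound for $\alpha_{\nu,1}$ strictly greater than $2/3$ (indeed a lower bound $\geq \tfrac{3}{4}$ is known in this case), whence the first bullet of Theorem~\ref{theoremin: tamedconvergence} applies and the flow converges to a KE metric.

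Next I would treat the degree $1$ del Pezzo orbifolds with only Du Val singularities of type $\mathbb{A}_k$ with $k\leq 6$. For these, one invokes the calculations in~\cite{SYl} (building on~\cite{Kosta}) to obtain the required lower bounds on $\alpha_{\nu,1}$ (or on the pair $(\alpha_{\nu,1},\alpha_{\nu,2})$). For the cases where $\alpha_{\nu,1}>\tfrac{2}{3}$ is already known, the first bullet of Theorem~\ref{theoremin: tamedconvergence} closes the argument directly; for the remaining cases where only the slightly weaker bound on $\alpha_{\nu,1}$ together with $\alpha_{\nu,2}>\tfrac{2}{3}$ is available, one uses the second bullet instead. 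In either regime Theorem~\ref{theoremin: tamedconvergence} produces a smooth subsequential, in fact $C^\infty$, limit which must be a KE orbifold metric, and standard uniqueness/monotonicity along the flow upgrades this to convergence of the whole flow.

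The main obstacle, and the reason this step is not entirely cosmetic, is the verification that the algebraic invariants $\alpha_{\nu,k}$ computed in~\cite{Kosta} and~\cite{SYl} for these singular surfaces are indeed the same objects that appear in the tamedness/convergence Theorem~\ref{theoremin: tamedconvergence}. Concretely, one has to check that $\alpha_{\nu,k}$, defined here through $\mathscr{P}_{\nu,k}$ using orbifold sections of $K_Y^{-\nu}$ and integration against $\omega_0^n$ on $Y$, agrees (up to the expected factor and subspace interpretation) with the log-canonical-threshold-type quantities computed on these del Pezzo orbifolds. Once this identification is recorded, the numerical bounds from~\cite{Kosta} and~\cite{SYl} can simply be plugged into the hypotheses of Theorem~\ref{theoremin: tamedconvergence}, completing the proof of Corollary~\ref{corollaryin: KEexample}.
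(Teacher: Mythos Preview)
Your approach is essentially the same as the paper's: invoke Theorem~\ref{theoremin: surfacetamed} for tamedness, then feed the known values of $\alpha_{\nu,1}$ and $\alpha_{\nu,2}$ from~\cite{Kosta} and~\cite{SYl} into Theorem~\ref{theoremin: tamedconvergence}. The identification of $\alpha_{\nu,1}$ with the global log canonical threshold $lct(Y)$ that you flag as the ``main obstacle'' is exactly what the paper records in Section~6 via Demailly's argument.

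One factual slip: for the cubic surface with a single ordinary double point one has $lct(Y)=\alpha_{\nu,1}=\tfrac{2}{3}$, not $\geq\tfrac{3}{4}$, so the first bullet of Theorem~\ref{theoremin: tamedconvergence} does \emph{not} apply. The paper handles this case (and the analogous borderline degree~$1$ cases with an $\A_5$, $\A_6$, or certain $\A_2$ configuration) via the second bullet, using the computation $\alpha_{\nu,2}>\tfrac{2}{3}$ from~\cite{SYl} for the cubic and from~\cite{Kosta} for the degree~$1$ surfaces; since $\alpha_{\nu,2}>\tfrac{2}{3}$ forces $\frac{1}{2-\frac{1}{3\alpha_{\nu,2}}}<\tfrac{2}{3}\leq\alpha_{\nu,1}$, the second bullet indeed applies. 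You have the references for the two families essentially swapped, but the structure of your argument is correct once this is fixed.
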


  Actually, both Theorem~\ref{theoremin: tamedconvergence}
  and Theorem~\ref{theoremin: surfacetamed} have corresponding versions
  in~\cite{CW3} and~\cite{CW4}. Their proofs are also similar to the
  ones in~\cite{CW3} and~\cite{CW4}.\\

   Theorem~\ref{theoremin: tamedconvergence} follows from the
   partial $C^0$-estimated given by the tamedness condition:
    \begin{align}
     \left|\varphi(t) - \sup_M \varphi(t)
    -\frac{1}{\nu}\log \sum_{\beta=0}^{N_{\nu}}
     \snorm{\lambda_{\beta}(t) \tilde{S}_{\nu, \beta}^t}{h_0^{\nu}}^2 \right| <C,
   \label{eqn: spe}
   \end{align}
  where $\varphi(t)$ is the evolving K\"ahler potential,
  $0 < \lambda_0(t) \leq \lambda_1(t) \leq \cdots \leq \lambda_{N_{\nu}}(t)=1 $
  are $N_{\nu}+1$ positive functions of time $t$,
  $\{\tilde{S}_{\nu, \beta}^t\}_{\beta=0}^{N_{\nu}}$ is an orthonormal
  basis of $H^0(K_M^{-\nu})$ under the fixed metric $g_0$.
  Intuitively, inequality (\ref{eqn: spe}) means that we can control $Osc_{M} \varphi(t)$ by
  $\displaystyle \frac{1}{\nu}\log \sum_{\beta=0}^{N_{\nu}} \snorm{\lambda_{\beta}(t) \tilde{S}_{\nu, \beta}^t}{h_0^{\nu}}^2$
  which only blows up along intersections of pluri-anticanonical divisors.
   Therefore, the estimate of $\varphi(t)$ is more or less translated to the study of the property of
  pluri-anticanonical holomorphic sections, which are described by $\alpha_{\nu, k}$.\\

   Theorem~\ref{theoremin: surfacetamed} can be looked as the
   combination of the following two lemmas.

   \begin{lemmain}
   Suppose $Y$ is a Fano orbifold,   $\{(Y^n, g(t)), 0 \leq t < \infty\}$ is a \KRf
   solution satisfying the following two conditions
   \begin{itemize}
   \item No concentration:  There is a constant $K$ such that
     \begin{align*}
        \Vol_{g(t)}(B_{g(t)}(x, r)) \leq Kr^{2n}
     \end{align*}
     for every $(x, t) \in Y \times [0, \infty), r \in (0,K^{-1}]$.
   \item Weak compactness:  For every sequence $t_i \to \infty$, by
   passing to subsequence, we have
   \begin{align*}
      (Y, g(t_i)) \sconv (\hat{Y}, \hat{g}),
   \end{align*}
   where $(\hat{Y}, \hat{g})$ is a Q-Fano normal variety.
   \end{itemize}
   Then this flow is tamed by some big constant $\nu$.
 \label{lemmain: justtamed}
 \end{lemmain}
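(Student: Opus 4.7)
My plan is to produce, for some single large $\nu$, a uniform two-sided bound on $F(\nu, x, t)$ by combining an upper Bergman-kernel bound (coming from the no-concentration hypothesis) with a matching lower peak-section bound (coming from the weak compactness hypothesis). Since the collection of all possible weak limits $\hat Y$ is a bounded family of Q-Fano normal varieties, I may fix $\nu$ large and divisible so that $K_{\hat Y}^{-\nu}$ is very ample for every such $\hat Y$ simultaneously; this is the $\nu$ I will show works.

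For the upper bound, note that $\sum_\beta |S_{\nu,\beta}^t|_{h^\nu}^2(x) = \sup\{|S|_{h^\nu}^2(x) : S \in H^0(K_Y^{-\nu}),\ \|S\|_{L^2(\omega_t^n)} = 1\}$. Since $|S|_{h^\nu}^2$ is plurisubharmonic up to a curvature correction uniformly controlled along the normalized \KRfc the mean value inequality together with $\Vol_{g(t)}(B(x,r)) \leq K r^{2n}$ and standard Moser iteration give a pointwise bound $|S|_{h^\nu}^2(x) \leq C$, whence $F(\nu, x, t) \leq C'$ uniformly on $Y \times [0, \infty)$. So the real content of the lemma is the matching lower bound.

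For the lower bound I argue by contradiction: if not, pick $t_i \to \infty$ and $x_i \in Y$ with $F(\nu, x_i, t_i) \to -\infty$. By weak compactness a subsequence of $(Y, g(t_i), x_i)$ converges to a pointed Q-Fano normal variety $(\hat Y, \hat g, \hat x)$, smoothly away from the finite singular locus $\hat\Sigma \subset \hat Y$. On $\hat Y$, H\"ormander's $L^2$-estimate (applied on a resolution or orbifold cover and then pushed down) together with a Tian-style peak section construction at $\hat x$ produces $\hat S \in H^0(K_{\hat Y}^{-\nu})$ with $\int_{\hat Y} |\hat S|_{\hat h^\nu}^2 \hat\omega^n \leq 1$ and $|\hat S|_{\hat h^\nu}^2(\hat x) \geq c_0 > 0$, where $c_0$ can be taken independent of the particular limit by boundedness of the family. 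I then transplant $\hat S$ back to $(Y, g(t_i))$ by pulling it back via the diffeomorphisms realizing the smooth convergence on the regular part and multiplying by a cutoff that vanishes near a shrinking neighborhood of $\hat\Sigma$, producing smooth approximate sections $\tilde S_i$. H\"ormander's $L^2$-estimate on $(Y, g(t_i))$, available because $K_Y^{-1}$ is positive and the flow geometry is uniformly controlled, corrects $\tilde S_i$ to honest holomorphic sections $S_i \in H^0(K_Y^{-\nu})$ with small $L^2$-error, forcing $|S_i|_{h^\nu}^2(x_i) \geq c_0/2$ and contradicting $F(\nu, x_i, t_i) \to -\infty$.

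The main obstacle is making this transplantation quantitative: one must choose the cutoff so that $\|\bar\partial \tilde S_i\|_{L^2}^2 \to 0$, exploiting the real codimension $\geq 4$ of $\hat\Sigma$ inside $\hat Y$ together with quantitative control of the weight $\hat h^\nu$ near the singular set, and one must apply H\"ormander's estimate along the flow with constants independent of $i$. The latter ultimately reduces to Perelman's fundamental estimates and the orbifold pseudolocality theorem established earlier in the paper, which is precisely why the tamedness conclusion is available at this stage.
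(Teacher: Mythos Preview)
Your proposal is correct and follows essentially the same route as the paper: an a priori upper bound on the Bergman kernel via Moser iteration, and a contradiction argument for the lower bound that passes to a weak limit $\hat Y$, produces a holomorphic section there, and transplants it back using H\"ormander's $L^2$-estimate. The paper packages the transplantation step as a two-way continuity statement for orthonormal bases of $H^0(K_Y^{-\nu})$ (Proposition~\ref{proposition: bundleconv}), but the mechanism is exactly what you describe.

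One small correction on the role of the hypotheses: the paper obtains the uniform upper bound $\snorm{S}{h^{\nu}} \leq A\nu^{n/2}$ (Lemma~\ref{lemma: boundh}) from the \emph{uniform Sobolev constant} along the flow (Proposition~\ref{proposition: sobolev}), not from the no-concentration hypothesis. The no-concentration condition is instead used to guarantee that the $L^2$-norm of sections is continuous under the Cheeger--Gromov convergence: since the smooth convergence is only away from the finite singular set, one needs $\Vol(B(x,r)) \leq Kr^{2n}$ together with the $C^0$-bound on $\snorm{S}{}$ to ensure no $L^2$-mass escapes into the singularities. This is precisely what makes your cutoff-and-correct step quantitative, so you are using it in the right place in the end, just not for the upper bound.
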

 Note that the $Q$-Fano normal variety is a normal variety with a
 very ample line bundle whose restriction on the smooth part is the
 plurianticanonical line bundle. The convergence $\sconv$ is the
 convergence in Cheeger-Gromov topology, i.e., it means that the following two
 properties are satisfied simultaneously:
 \begin{itemize}
 \item $d_{GH}(Y_i, \hat{Y}) \to 0$ where $d_{GH}$ is the
      Gromov-Hausdorff distance among metric spaces.
 \item for every smooth compact set
 $K \subset \hat{Y}$, there are diffeomorphisms $\varphi_i: K \to Y_i$
 such that $Im(\varphi_i)$ is a smooth subset of $Y_i$ and $\varphi_i^*(g_i)$
 converges to $\hat{g}$ smoothly on $K$.
 \end{itemize}

 \begin{lemmain}
  Suppose $Y$ is an orbifold Fano surface, $\{(Y, g(t)), 0 \leq t<\infty\}$
 is a \KRf solution, then this flow satisfies the no concentration
 and weak compactness property mentioned in Lemma~\ref{lemmain: justtamed}.
 Moreover, every limit space $(\hat{Y}, \hat{g})$ is a K\"ahler
 Ricci soliton.
 \label{lemmain: surfacetamed}
 \end{lemmain}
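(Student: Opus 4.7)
The plan is to follow Chen-Wang's argument for smooth Fano surfaces in~\cite{CW4}, adapting each step to the orbifold context using the orbifold \KRf tools developed in the earlier sections of this paper. The first step is to apply the orbifold versions of Perelman's fundamental estimates: uniform scalar curvature bound $|R|\leq C$, uniform diameter bound $\diam(Y,g(t))\leq C$, $\kappa$-noncollapsing at all scales below the diameter, and uniform Sobolev constant bound. These follow from the orbifold entropy monotonicity and the orbifold pseudolocality theorem established previously.

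The second step is a uniform $L^2$-bound on the Riemann tensor. Since $\dim_{\C} Y = 2$, the orbifold Chern-Gauss-Bonnet and signature formulas express $\int_Y |Rm|^2\,dv_{g(t)}$ in terms of topological invariants of $Y$ plus terms involving only $R^2$ and the volume. Under the normalized \KRfc the volume is constant and $R$ is uniformly bounded, so this yields $\int_Y |Rm|^2\,dv_{g(t)}\leq C$ uniformly in $t$. Combined with $\kappa$-noncollapsing, this sets up an $\epsilon$-regularity theorem of Cheeger-Tian/Anderson type in the orbifold setting: if $\int_{B_{g(t)}(x,r)} |Rm|^2 < \epsilon_0$ for $r$ small, then $|Rm|\leq Cr^{-2}$ on the concentric half ball. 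Consequently, at each time $t$ curvature concentrates at most at $N_0=\lceil C/\epsilon_0\rceil$ points. Away from these, curvature is bounded on a definite scale and Bishop-Gromov gives $\Vol(B(x,r))\leq Cr^{2n}$; at the concentration points and the finite set of original orbifold singularities the local quotient model still exhibits Euclidean-order volume growth. A covering argument then delivers the no concentration inequality $\Vol_{g(t)}(B(x,r))\leq Kr^{2n}$.

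For the weak compactness, standard Cheeger-Gromov arguments extract a subsequence $(Y,g(t_i))\sconv(\hat Y,\hat g)$, where $\hat Y$ is smooth outside finitely many quotient singularities of type $\C^2/\Gamma$ with $\Gamma\subset U(2)$ finite. Using a uniform pluri-anticanonical embedding $Y\hookrightarrow\CP^N$ for a sufficiently divisible $\nu$ and passing to the limit, $\hat Y$ inherits a very ample line bundle extending $K_{\hat Y}^{-\nu}$ across its singular locus, so $\hat Y$ is a Q-Fano normal variety. For the soliton conclusion, I would invoke the monotonicity of Perelman's orbifold $\mu$-functional: since $\mu(g(t))$ is monotone and bounded it converges, and the equality case of the monotonicity forces the gradient shrinking Ricci soliton equation on $\hat g$; the \Kahler structure carries through the smooth convergence away from the singular set.

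The main obstacle will be the analysis at the curvature concentration points: one must rule out non-orbifold singularities, identify the singular model precisely as a finite quotient of $\C^2$, and handle bubble trees with possibly several merging scales. This requires combining Bando-Shi higher derivative estimates (once $|Rm|$ is controlled on a definite scale), the uniform Sobolev inequality, the orbifold pseudolocality theorem, and careful bookkeeping treating the original orbifold singular set and any newly developed singularities on equal footing. A secondary difficulty is ensuring that the pluri-anticanonical embedding survives the limiting process uniformly in $t$, so that $\hat Y$ is not merely a compact orbifold but genuinely a Q-Fano normal variety.
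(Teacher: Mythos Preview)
Your outline has two genuine gaps.

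\textbf{The $\epsilon$-regularity step.} You invoke ``an $\epsilon$-regularity theorem of Cheeger--Tian/Anderson type'' at a fixed time slice $g(t)$, using only $\kappa$-noncollapsing and small $\int |Rm|^2$. But Anderson's theorem requires a two-sided Ricci bound, and Cheeger--Tian requires the Einstein (or at least critical) equation; along the \KRf on a Fano orbifold you have only a uniform \emph{scalar} curvature bound, which is far weaker. No elliptic $\epsilon$-regularity of that type is available for a bare Riemannian metric with bounded $R$. The paper's entire point is to replace this missing elliptic regularity by a \emph{parabolic} mechanism: one places the time slices into the moduli space $\mathscr{O}(4,1,\sigma,\kappa,E)$ and applies Theorem~\ref{theoremin: centerwcpt}, whose proof is built on the orbifold pseudolocality theorem and the refined-sequence machinery (forward pseudolocality, energy concentration, backward pseudolocality, bubble-tree analysis). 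Your Bishop--Gromov step for the volume upper bound has the same defect: it presupposes a Ricci lower bound you do not have outside the (as yet unestablished) curvature-concentration set.

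\textbf{The Q-Fano conclusion is circular.} You propose to pass a ``uniform pluri-anticanonical embedding $Y\hookrightarrow\CP^N$'' to the limit in order to see that $\hat Y$ is Q-Fano. But a uniform embedding along the flow is exactly the tamedness condition, and Lemma~\ref{lemmain: surfacetamed} is precisely the input needed (via Lemma~\ref{lemmain: justtamed}) to \emph{prove} tamedness. The paper avoids this circularity: Theorem~\ref{theoremin: centerwcpt} first yields a $C^0$-orbifold limit $\hat Y$; Perelman's $\mu$-monotonicity forces the soliton equation on the smooth part; the uniform $C^1$ bound on $-\dot\varphi$ (which limits to the soliton potential) then allows Uhlenbeck's removing-singularity argument to upgrade each singular point to a $C^\infty$-orbifold point; finally one applies Baily's embedding theorem directly to the smooth orbifold $\hat Y$ to obtain the Q-Fano structure. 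No uniform embedding along the flow is needed at this stage.
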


 The proof of Lemma~\ref{lemmain: justtamed} follows directly (c.f. Theorem 3.2 of~\cite{CW4})
 if we have the continuity of plurianticanonical holomorphic
 sections --- orthonormal bases of $H^0(K_Y^{-\nu})$ (under metric $g_i$)
 converge to an orthonormal basis of $H^0(K_{\hat{Y}}^{-\nu})$
 (under metric $\hat{g}$)
 whenever $(Y, g_i)$ converge to $(\hat{Y}, \hat{g})$.  Moreover, every
 orthonormal basis of $H^0(K_{\hat{Y}}^{-\nu})$ is a limit of
 orthonromal bases of $H^0(K_Y^{-\nu})$. This fact is assured by H\"ormand's $L^2$-estimate of
 $\bar{\partial}$-operator and an a priori estimate of
 $\snorm{S}{}$ and $\snorm{\nabla S}{}$, where $S$ is a unit norm
 section of $H^0(K_Y^{-1})$ (c.f. Lemma~\ref{lemma: boundh} for the a priori bounds of sections,
 Theorem 3.1 of~\cite{CW4} for the continuity of holomorphic sections).

 The proof of Lemma~\ref{lemmain: surfacetamed} is essentially based
 on Riemannian geometry. It is a corollary of the following Theorem~\ref{theoremin: centerwcpt}.
 In fact, if we define
  $\mathscr{O}(m, c, \sigma, \kappa, E)$ as the  the moduli space of
  compact orbifold Ricci flow solutions $\{(X^m, g(t)), -1 \leq t \leq 1\}$
  whose normalization constant is bounded by $c$, scalar curvature
  bounded by $\sigma$, volume ratio bounded by $\kappa$ from below,
  energy bounded by $E$ (c.f. Definition~\ref{definition: moduli}),
  then this moduli space have no concentration and weak compactness properties.

  \begin{theoremin}
  $\mathscr{O}(m, c, \sigma, \kappa, E)$ satisfies the following two
  properties.
  \begin{itemize}
  \item No concentration. There is a constant $K$ such that
        \begin{align*}
          \Vol_{g(0)}(B_{g(0)}(x, r)) \leq Kr^m
        \end{align*}
     whenever $r \in (0, K^{-1}]$, $x \in X$,
     $\{(X, g(t)), -1 \leq t \leq 1\} \in \mathscr{O}(m, c, \sigma, \kappa, E)$.
  \item Weak compactness. If $\{ (X_i, x_i, g_i(t)) , -1 \leq t \leq 1\} \in \mathscr{O}(m, c, \sigma, \kappa, E)$
 for every $i$, by passing to subsequence if necessary, we have
 \begin{align*}
    (X_i, x_i, g_i(0)) \sconv (\hat{X}, \hat{x}, \hat{g})
 \end{align*}
 for some $C^0$-orbifold $\hat{X}$ in Cheeger-Gromov sense.
  \end{itemize}
 \label{theoremin: centerwcpt}
 \end{theoremin}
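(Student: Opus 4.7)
The plan is to combine the orbifold pseudolocality theorem (established earlier in the paper) with an $\epsilon$-regularity and bubble-tree analysis adapted to the orbifold setting. The two conclusions will be proved in sequence: the upper volume bound is the input needed for weak compactness, and the weak compactness result upgrades a Gromov-Hausdorff limit to a $C^0$-orbifold Cheeger-Gromov limit.

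For no concentration I would argue by contradiction with parabolic rescaling. Suppose there exists a sequence $\{(X_i, g_i(t))\} \in \mathscr{O}(m,c,\sigma,\kappa,E)$, points $x_i \in X_i$, and scales $r_i \to 0$ with $\Vol_{g_i(0)}(B_{g_i(0)}(x_i, r_i))/r_i^m \to \infty$. Define $\tilde g_i(s) = r_i^{-2}\, g_i(r_i^2 s)$. The rescaled scalar curvature satisfies $|\tilde R| \leq \sigma r_i^2 \to 0$, while $\kappa$-noncollapsing and the normalization bound $c$ are preserved because those hypotheses are scale-invariant. Bishop-Gromov applied on local orbifold covers, combined with the almost-zero Ricci curvature at unit scale in the rescaled flow, forces the unit ball at the base point to have almost-Euclidean volume ratio, contradicting the blow-up of the volume ratio. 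Orbifold pseudolocality is what makes this passage to the limit rigorous, by providing the required uniform curvature control on a fixed-size neighborhood.

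With no concentration and $\kappa$-noncollapsing both in hand, pointed metric balls satisfy uniform volume doubling, so Gromov precompactness extracts a subsequential pointed Gromov-Hausdorff limit $(\hat X, \hat x, \hat d)$. To upgrade this to $C^0$-orbifold Cheeger-Gromov convergence I would apply $\epsilon$-regularity. For a small constant $\epsilon_0 = \epsilon_0(m)$, the energy bound forces the set of points $y \in X_i$ with $\int_{B_{g_i(0)}(y,r)}|Rm|^{m/2} \geq \epsilon_0$ to have cardinality at most $C E/\epsilon_0$ at any fixed scale $r$. Away from such bad points, orbifold pseudolocality applied at a slightly smaller scale, at which the concentrated energy is below $\epsilon_0$, supplies uniform curvature bounds, which Shi-type derivative estimates along the flow promote to uniform $C^k$ bounds. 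Covering finite pieces of $\hat X$ by good points yields smooth convergence on the complement of a finite exceptional set $\{p_1,\dots,p_N\} \subset \hat X$.

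Finally, at each bad point $p_j$ I would perform a bubble blow-up by rescaling so that the maximal curvature becomes one. The normalization bound $c$ and $\tilde R \to 0$ after rescaling force any bubble limit to be Ricci-flat, and $\kappa$-noncollapsing together with finite $L^{m/2}$ energy force the bubble to be ALE of the form $\R^m/\Gamma$ at infinity for some finite subgroup $\Gamma \subset O(m)$. Iterating the bubbling (the total number of bubbles being bounded by $E/\epsilon_0$) until no further concentration remains, and pulling back, endows $\hat X$ with an orbifold chart at each $p_j$ and yields the $\sconv$ convergence. The main obstacle is this last step: one must verify that the bubbles and the ambient smooth limit glue consistently through the finite quotients $\R^m/\Gamma$, so that the limit possesses a genuine $C^0$-orbifold structure rather than something weaker such as a mere metric cone. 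The bounds packaged into $\mathscr{O}(m,c,\sigma,\kappa,E)$, namely normalization, scalar curvature, noncollapsing, and energy, are placed precisely so that this uniform matching can be carried out along the sequence.
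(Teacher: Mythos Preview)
Your overall architecture (rescaling, $\epsilon$-regularity, bubble tree) matches the paper's, but two steps as written do not go through.

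\textbf{No concentration.} After rescaling you have $|\tilde R| \to 0$, but this says nothing about Ricci, so Bishop--Gromov is unavailable. You then invoke pseudolocality to supply curvature control, but the improved pseudolocality theorem takes a bound $|\hat{Rm}| \leq r_0^{-2}$ on a ball as \emph{input}; it cannot manufacture curvature bounds from a volume-ratio hypothesis alone, so the argument is circular. The paper avoids this by passing to the auxiliary notion of a \emph{refined sequence}: after blowup the normalized constants $c_i$ and scalar curvatures tend to zero, and one first proves an \emph{energy concentration} property (every refined sequence is $E$-refined), namely that large $|Rm|$ at a point forces at least $\hslash$ of $L^{m/2}$-energy nearby for a definite parabolic time. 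This, not Bishop--Gromov, is what bounds the number of bad balls and yields the volume-ratio upper bound ($E$-refined $\Rightarrow$ $EV$-refined). The $\epsilon$-regularity (Proposition~\ref{proposition: econ}) used here is for \emph{Ricci-flat} balls, which is why one must first pass to the blowup regime where the limit is Ricci-flat.

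\textbf{Orbifold versus multifold.} You correctly flag the gluing of bubbles to the background as the crux, but give no mechanism to rule out a limit in which a singular point has several topological ends (a multifold). The paper's contribution is precisely here (Steps~1--2 in the proof of Proposition~\ref{proposition: EVweakc1h}). If some singular point $p$ of the limit had two ends, choose $x,y$ on different ends and shortest geodesics $\gamma_i \subset X_i$ approximating a broken geodesic through $p$. On each $X_i$ define $\mathscr{R}(z)$ as the largest radius with energy $\leq \tfrac12\hslash$ in $B(z,\mathscr{R}(z))$; since shortest geodesics on orbifolds avoid singular points, $\min_{\gamma_i}\mathscr{R} = r_i > 0$, and $r_i \to 0$. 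Rescaling by $r_i^{-1}$ at the minimizing point $q_i$ produces a new $EV$-refined sequence whose limit is a Ricci-flat ALE space containing a straight line (the limit of $\gamma_i$), hence $\R^m$ by the splitting theorem. But energy concentration forces at least $\hslash$ of energy to persist for a short time near $q_i$, contradicting flatness of the limit. This line-splitting argument is the missing ingredient; without it your bubble tree can only conclude that the limit is a $C^0$-multifold, not a $C^0$-orbifold.
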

  Actually, according to the fact that scalar
 curvature and $\int_Y |Rm|^2 \omega_t^2$ are
 uniformly bounded (c.f. Proposition~\ref{proposition: perelman})
 along \KRf on orbifold Fano surface, it is clear that
 $\{(Y, g(t+T)), -1 \leq t \leq 1\} \in \mathscr{O}(4, 1, \sigma, \kappa, E)$
 for every $T \geq 1$.  Therefore
 Theorem~\ref{theoremin: centerwcpt} applies.  In order to obtain
 Lemma~\ref{lemmain: surfacetamed}, we need to show that the
 limit space $\hat{Y}$ is a K\"ahler Ricci soliton and every orbifold singularity is a
 $C^{\infty}$-orbfiold point (c.f. Definition~\ref{definition: orbifold}).  The first property is a direct
 application of Perelman functional's monotonicity (c.f.~\cite{Se1}), the second property follows from
 Uhlenbeck's removing singularity method (c.f.~\cite{CS}).\\

 Theorem~\ref{theoremin: centerwcpt} is a generalization of the
 corresponding weak compactness theorem in~\cite{CW3}.
 If we assume Perelman's pseudolocality theorem (Theorem 10.3 of~\cite{Pe1})
 holds in orbifold case, then its proof can be
 almost the same as the corresponding theorems in~\cite{CW3}.
 Therefore, an important technical difficulty of this paper is
 the following pseudolocality theorem.

  \begin{theoremin}
  There exists $\eta=\eta(m, \kappa)>0$ with the following property.

  Suppose $\{(X, g(t)), 0 \leq t \leq r_0^2\}$ is a compact orbifold Ricci flow solution.
  Assume that at $t=0$ we have $|\hat{Rm}|(x) \leq r_0^{-2}$
  in $B(x, r_0)$, and $\Vol B(x, r_0) \geq \kappa r_0^m$. Then the estimate
  $|\hat{Rm}|_{g(t)}(y) \leq (\eta r_0)^{-2}$ holds whenever $0 \leq t \leq (\eta r_0)^2$,
  $d_{g(t)}(y, x) < \eta r_0$.
 \label{theoremin: improvedbound}
 \end{theoremin}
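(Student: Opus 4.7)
The plan is to adapt Perelman's original proof of the pseudolocality theorem (Theorem 10.1 of~\cite{Pe1}) to the orbifold setting. Since a compact orbifold $X$ with finite singularities is smooth away from isolated points, most of the argument is essentially local on the smooth part; the real work is to ensure that (a) all the integral/monotonicity tools behave correctly in the presence of orbifold points, and (b) the blowup-and-contradiction step produces a well-defined limit even when singular points may appear in or near the region of interest.

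First I would argue by contradiction in the standard way. Suppose no such $\eta$ exists; then we obtain a sequence of orbifold Ricci flows $\{(X_k, g_k(t))\}$, scales $r_k$, base points $x_k$, times $t_k \in [0,(\eta_k r_k)^2]$ and points $y_k$ with $d_{g_k(t_k)}(y_k,x_k)<\eta_k r_k$ and $|\hat{Rm}|_{g_k(t_k)}(y_k)>(\eta_k r_k)^{-2}$, while $|\hat{Rm}|_{g_k(0)}\le r_k^{-2}$ on $B_{g_k(0)}(x_k,r_k)$ and $\Vol B_{g_k(0)}(x_k,r_k)\ge\kappa r_k^m$, with $\eta_k\to 0$. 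Rescaling, assume $r_k=1$. Then I would invoke Perelman's point-picking lemma, applied on the orbifold: since $X_k$ is smooth off a finite set, and since orbifold balls are metrically complete, the selection procedure (choose a nearly-bad point at nearly the worst local scale) transplants word-for-word once we interpret all distances and curvature bounds in the orbifold sense. The outcome is a sequence of pointed orbifold flows $(X_k,\bar x_k,\bar g_k(t))$ with $|\hat{Rm}|(\bar x_k,0)=Q_k\to\infty$ and $|\hat{Rm}|\le 4Q_k$ on a suitable parabolic neighborhood.

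Next I would rescale by $Q_k$ to obtain flows with $|\hat{Rm}|\le 4$ on a parabolic ball of radius growing to infinity and centered at a point of $\hat{Rm}=1$. Because singularities are isolated, the rescaled orbifold metrics are smooth on a region whose injectivity radius at $\bar x_k$ either stays positive (no singular point converging to $\bar x_k$) or collapses due to an orbifold singularity sitting at $\bar x_k$. In the first case, a limit $(\hat X_\infty, \hat x_\infty, \hat g_\infty(t))$ exists as a smooth (pointed) ancient Ricci flow by Hamilton's compactness; in the second case one obtains a nontrivial orbifold limit with an isolated singular point at $\hat x_\infty$. Parallel to Perelman, I then use the reduced-volume / $\mathcal{W}$-functional monotonicity to compare the collapsed geometry near the bad point with the nearly-Euclidean geometry near $(x_k,0)$, deriving a contradiction through the logarithmic Sobolev inequality and the hypothesis $\Vol B(x_k,1)\ge\kappa$. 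Concretely one needs to verify that on an orbifold (i) Perelman's $\mathcal{W}$ and $\mathcal{L}$-length are well-defined, (ii) their monotonicity under the flow holds (integration by parts on an orbifold with isolated singularities is legal because the singular locus has Hausdorff codimension $\ge 2$ and carries no $L^1$-weight), and (iii) the $\mathcal{L}$-exponential map can be analyzed on the smooth part up to cut locus / singular locus, with singular points behaving analogously to cut points.

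The main obstacle, I expect, is handling the case where an orbifold singular point sits in (or limits into) the parabolic neighborhood used for point-picking and blowup. In the smooth case the final contradiction comes from the fact that a complete Ricci flow with $|\hat{Rm}|$ bounded and a Euclidean-type volume lower bound at an earlier time must have near-Euclidean entropy, contradicting the collapsing at the bad point. In the orbifold case a singular point can produce genuine local collapse that is not destroyed by the flow on short time scales, so one must show that the volume hypothesis $\Vol B(x_k,1)\ge\kappa$ rules out a too-large isotropy at nearby singular points via a uniform lower bound on the orbifold order, and that the $\mathcal{W}$-comparison still furnishes a quantitative contradiction. The strategy is to run the comparison on the smooth part using smooth compactly supported test functions $\psi$ that vanish in small neighborhoods of the singular set, and then take $\psi\to 1$; the boundary contributions vanish because the singular set has vanishing capacity in dimension $m\ge 2$. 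Once this technical point is settled, the remainder of the proof --- the derivation of a contradictory logarithmic Sobolev inequality --- is a line-by-line translation of Perelman's argument and yields $\eta=\eta(m,\kappa)$ as claimed.
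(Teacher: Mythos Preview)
Your outline follows the general Perelman framework, but the paper takes a rather different two-stage route, and your proposal has a genuine gap precisely at the orbifold-specific step.

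\textbf{Stage 1 (basic pseudolocality, isoperimetric hypothesis).} The paper first proves the weaker statement: if $\mathbf I(B(x,1))\ge(1-\delta)\mathbf I(\R^m)$ and $R\ge -1$, then $|Rm|\le \alpha/t+\epsilon^{-2}$ on $\tilde P^+(x,0,\epsilon,\epsilon^2)$. The key orbifold-specific ingredient is a Claim that the entire geometric parabolic ball $\tilde P^+(x,0,4A\epsilon,\epsilon^2)$ (with $A=\alpha\epsilon^{-1}$) contains \emph{no} orbifold singularity. This is proved by a conjugate-heat-kernel argument: if $(p,s)$ were singular, one takes the fundamental solution $u$ of $\square^*$ based at $(p,s)$, the auxiliary function $v$, and uses the estimate $\lim_{t\to s}\int_X \eta v\le -\log|\Gamma_p|\,\eta(p,s)$ (Theorem~\ref{theorem: deltalimit}) together with a cutoff $\eta$ to manufacture, at $t=0$, a test function $\tilde f$ violating the near-Euclidean log-Sobolev inequality on $B(x,1)$. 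Once singularities are excluded from this region, Perelman's point-picking and the remainder of his smooth argument apply verbatim.

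\textbf{Stage 2 (upgrade to uniform bound).} Given Stage~1, the paper proves a Proposition: if in addition $|Rm|<1$ on $B(x,1)$ at $t=0$, then $|Rm|<(\alpha\epsilon)^{-2}$ on a parabolic ball of size $\alpha\epsilon$. The proof is \emph{not} a blowup argument but a direct parabolic maximum-principle comparison: a cutoff-based barrier $\eta^{-4}$ is a supersolution of $\square F=-6000F^{3/2}$, while $|Rm|^2(1-t/32\alpha)$ is a subsolution on the relevant region, with the correct ordering on the parabolic boundary. Theorem~\ref{theoremin: improvedbound} then follows because the hypotheses $|\hat{Rm}|\le r_0^{-2}$ and $\Vol B(x,r_0)\ge\kappa r_0^m$ force a nearly-Euclidean isoperimetric constant on a smaller ball, reducing to Stage~1 plus the Proposition.

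\textbf{The gap in your proposal.} In your Steps~4--5 you allow a singular point to limit onto the selected bad point $\bar x_k$, assert one obtains an orbifold limit, and then claim a contradiction via $\mathcal W$-comparison. But you give no mechanism: the volume hypothesis $\Vol B(x_k,1)\ge\kappa$ is at the \emph{original} center $x_k$, not at $\bar x_k$, and yields no a priori bound on the order $|\Gamma|$ of a singularity near $\bar x_k$; nor does your ``vanishing capacity'' remark (which only handles integration by parts) address this. The paper's insight is to sidestep the issue entirely: by exploiting the quantitative defect $-\log|\Gamma|$ in the $v$-function at a singular point, it shows \emph{a priori} that no singularity can lie in the region where point-picking takes place, so the whole argument runs in the smooth category.
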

 Note that $|\hat{Rm}|$ is defined as
  \begin{align*}
  |\hat{Rm}|(x)=
  \left\{
  \begin{array}{ll}
  |Rm|(x), & \textrm{if $x$ is a smooth point.}\\
  \infty, & \textrm{if $x$ is a singularity.}
  \end{array}
  \right.
  \end{align*}
 The proof of Theorem~\ref{theoremin: improvedbound} is a
 combination of Perelman's point selecting method and maximal principle.
 Note that the manifold version of
 Theorem~\ref{theoremin: improvedbound}  (Theorem 10.3 of~\cite{Pe1})
 is claimed by Perelman without proof. This first
 written proof is given by Lu Peng in~\cite{Lu2} recently.\\

  With Theorem~\ref{theoremin: improvedbound} in hand, we can prove
  Theorem~\ref{theoremin: centerwcpt} as we did in~\cite{CW3}.
  However, we prefer to give a new proof.  In~\cite{CW3},
  the proof of weak compactness theorem is complicated.  A lot of
  efforts are paid to show the locally connectedness of the limit
  space. In other words, we need to show the limit space is an
  orbifold, not a multifold.  We used bubble tree on space time to
  argue by contradiction. If we are able to construct bubble tree on
  a fixed time slice,  then the argument will be much easier.
  In this paper, we achieve this by observing some stability of
  $\int |Rm|^{\frac{m}{2}}$ in unit geodesic balls. \\

  In short, the new ingredients of this paper are listed as follows.

  \begin{itemize}
  \item We offer a method to find KE metrics on orbifold Fano surfaces.
  \item We give a simplified proof of weak compactness theorem,
  i.e., Theorem~\ref{theoremin: centerwcpt}.
  \item We prove the pseudolocality theorem in orbifold Ricci flow.
  \end{itemize}

  It's interesting to compare the two methods used in
  search of KE metrics: the  continuity method and the flow method.
  Suppose $(M, g, J)$ is a K\"ahler manifold with positive first Chern class
  $c_1$,  $\omega$ is the $(1, 1)$-form
  compatible to $g$ and $J$.  The existence of KE metric under the
  complex structure $J$ is equivalent to the solvability of the
  equation
  \begin{align*}
     \det (g_{i\bar{j}} + \frac{\partial^2 \varphi}{\partial z_i \bar{\partial}
     \overline{z_j}})=e^{-u-\varphi} \det (g_{i\bar{j}}),
     \quad
     g_{i\bar{j}} + \frac{\partial^2 \varphi}{\partial z_i \bar{\partial}
     \overline{z_j}}>0,
  \end{align*}
  where $u$ is a smooth function on $M$ satisfying
  \begin{align*}
     u_{i\bar{j}}=g_{i\bar{j}}-R_{i\bar{j}},
     \quad \frac{1}{V} \int_M (e^{-u}-1) \omega^n=1.
  \end{align*}
  In continuity method, we try to solve a family of equation
  ($0 \leq t \leq 1$):
  \begin{align*}
   \left\{
   \begin{array}{l}
     \det (g_{i\bar{j}} + \frac{\partial^2 \varphi}{\partial z_i \bar{\partial}
     \overline{z_j}})=e^{-u-t\varphi} \det (g_{i\bar{j}}),\\
     g_{i\bar{j}} + \frac{\partial^2 \varphi}{\partial z_i \bar{\partial}
     \overline{z_j}}>0.
   \end{array}
   \right.
  \end{align*}
  In K\"ahler Ricci flow method, we try to show the convergence of the parabolic equation solution:
  \begin{align*}
     \D{\varphi}{t} = \log \frac{\det (g_{i\bar{j}} + \frac{\partial^2 \varphi}{\partial z_i \bar{\partial}
     \overline{z_j}})}{\det (g_{i\bar{j}})}
     + \varphi + u.
  \end{align*}
  In both methods, the existence of KE metric is reduced to set up a
  uniform $C^0$-bound of the K\"ahler potential function $\varphi$.
  If $\alpha(M)>\frac{n}{n+1}$, then $\varphi$ is uniformly bounded
  in either case (c.f.~\cite{Tian87},~\cite{Ru},~\cite{CW2}).
  If $\alpha(M) \leq \frac{n}{n+1}$, we need more
  geometric estimates to show the uniform bound of $\varphi$.  Under
  continuity path, these geometric estimates are
  stated by Tian in~\cite{Tian90} and~\cite{Tian91} (c.f. inequality (0.3) of~\cite{Tian90}
  and inequality (5.2) of~\cite{Tian91}).
  In \KRf case, we used a similar statement and called it as
  tamedness condition(c.f. equation (\ref{eqn: tamed})) for simplicity.
  If the continuity path or \KRf is tamed, then the $\varphi$ is
  uniformly bounded if $\alpha_{\nu, k} (k=1, 2)$ is big enough.
  However, if the complex structure is fixed,
  there are slight difference in obtaining the tamedness condition
  between these two methods. The tamedness condition of a \KRf maybe easier
  to verify under the help of Perelman's functional.
  On a continuity path, the tamedness condition is
  conjectured to be true by Tian(c.f. Inequality (5.2.) of~\cite{Tian91}).

  Let's recall how to find the KE metric on K\"ahler surface
  $(M, J)$ whenever $c_1^2(M)=3$ and $(M, J)$ contains Eckhard point.
  It was first found by Tian in~\cite{Tian90} where he used
  continuity method twice.  Note that on the differential manifold
  $M \sim \Blow{6}$, all the complex structures such that $c_1$ positive
  form a connected $4$-dimensional algebraic variety $\mathscr{J}$. Choose
   $J_0 \in \mathscr{J}$ such that $\alpha_{G}(M, J_0)>\frac23$ for some compact group
   $G \subset Aut(M, J_0)$(e.g. Fermat surface).
  By continuity method, there is a KE metric $g_0$ compatible with $J_0$.
  Now connecting $J_0$ and $J$ by a family of complex structures
  $J_t \in \mathscr{J}, 0 \leq t \leq 1$ such that $J_1=J$.
  Choose $\tilde{g}_t$ be a continuous family of metrics compatible with $J_t$.
  Let $I$ be the
  collection of all $t$ such that there exists a KE metric
  $g_t$ compatible with $J_t$. It's easy to show that $I$ is an open
  subset of $[0, 1]$.  In order to prove $I=[0, 1]$, one only need
  to show the closedness of $I$.  Let
  \begin{align*}
     (g_t)_{i\bar{j}}= (\tilde{g}_t)_{i\bar{j}} + \varphi_{i\bar{j}}.
  \end{align*}
  Then it suffices to show a uniform bound of $Osc_M \varphi(t)$ on $I$.
  Since along this curve of complex structures,
  $\alpha_{\nu, 2}(M, J_t)>\frac23, \alpha_{\nu, 1}(M, J_t) \geq \frac23$
  for every $t\in I \subset [0, 1]$ (c.f.~\cite{SYl},~\cite{ChS}),
  it suffices to show the tamedness condition
  (inequality (0.3) of~\cite{Tian90}) on the set $I$. In fact, this tamedness
  condition is guaranteed by the
  weak compactness theorem of KE metrics on $M$(c.f. Proposition 4.2 of~\cite{Tian90}).

  In \KRf method, we are unable to change complex structure.
  Inspired by the continuity method, we also reduce the boundedness
  of $\varphi$ to the tamedness condition since
   $\alpha_{\nu, 2}(M, J)>\frac23, \alpha_{\nu, 1}(M, J) \geq  \frac23$.
  Now in order to show the tamedness condition, we need a weak
  compactness of time slices of a \KRfd This seems to be more
  difficult since each time slice is only a K\"ahler metric, not a
  KE metric, we therefore lose the regularity property of KE
  metrics.   Luckily,  under the help of Perelman's estimates and
  pseudolocality theorem, we are able to show the weak compactness
  theorem(c.f. Theorem 4.4 of~\cite{CW3}).
  Consequently the tamedness condition of the \KRf on
  $M$ holds, so $\varphi$ is uniformly bounded and this flow converges
  to a KE metric.

  Once the weak compactness of time slices is proved, the
  disadvantage of \KRf becomes an advantage: we can prove the
  tamedness condition without changing complex structure.
  This is not easy to be proved under a continuity path when the
  complex structure is fixed.   Suppose we have a differential
  manifold $M$ whose complex structures with positive $c_1$ form a
  space $\mathscr{J}$ satisfying
  \begin{align*}
   \alpha_{\nu, 1}(M, J) \leq \frac{n}{n+1}, \quad \forall \; J \in
   \mathscr{J}.
  \end{align*}
  Without using symmetry of the initial metric,
  we cannot apply continuity method directly to draw
  conclusion about the existence of KE metrics on $(M, J)$.  However, \KRf can still
  possibly be applied.      For example, if $(Y, J)$ is an Fano orbifold
  surface with degree $1$ and with three rational double points of type $\A_5, \A_2$ and $\A_1$.
  Then $J$ is the unique complex structure on
  $Y$ such that $c_1(Y)>0$ (c.f.~\cite{Zhd},~\cite{YQ}).
  According to the calculations in~\cite{Kosta}, we know
  $\alpha_{\nu, 1}(Y)=\frac23$,
   $\alpha_{\nu, 2}>\frac23$. So we are unable to use continuity method
  directly to conclude the existence of KE metric on $(Y, J)$ because of the absence of tamedness condition.
  However, we do have this condition under \KRf by Theorem~\ref{theoremin: surfacetamed}.
  Therefore,  the \KRf  on $(Y, J)$ must converge to a KE metric. \\

  The organization of this paper is as follows. In section 2, we set
  up notations. In section 3, we go over Perelman's theory on Ricci
  flow on orbifolds and prove the pseudolocality theorem
  (Theorem~\ref{theoremin: improvedbound}).  In
  section 4, we give a simplified version of proof of weak
  compactness theorem (Theorem~\ref{theoremin: centerwcpt}).
  In section 5, we give some improved estimates
  of plurianticanonical line bundles and prove
  Theorem~\ref{theoremin: tamedconvergence} and
  Theorem~\ref{theoremin: surfacetamed}. At last, in section 6,
  we give some examples where our theorems can be applied. In
  particular, we show Corollary~\ref{corollaryin: KEexample}.\\

 \noindent {\bf Acknowledgment}
 The author would like to thank his advisor, Xiuxiong Chen, for
 bringing him into this field and for his constant encouragement.
 The author is very grateful to Gang Tian for
 many insightful and inspiring conversations with him.
 Thanks also go to John Lott, Yuanqi Wang, Fang Yuan for many
 interesting discussions.

 \section{Set up of Notations}

  \begin{definition}
   A $C^{\infty} (C^0)$-orbifold $(\hat{X}^m, \hat{g})$ is a topological
   space which is a smooth manifold with a smooth Riemannian metric
   away from finitely many singular points. At every singular point,
   $\hat{X}$ is locally diffeomorphic to a cone over $S^{m-1} / \Gamma$
   for some finite subgroup $\Gamma \subset SO(m)$. Furthermore, at
   such a singular point, the metric is locally the quotient of a
   smooth (continuous) $\Gamma$-invariant metric on $B^{m}$ under the orbifold
   group $\Gamma$.

     A $C^{\infty}(C^0)$-multifold $(\tilde{X}, \tilde{g})$ is a finite union
     of $C^{\infty}(C^0)$-orbifolds after identifying finite points. In other words,
     $\displaystyle \tilde{X}= \coprod_{i=1}^{N} \hat{X}_i / \sim$ where every
     $\displaystyle \hat{X}_i$ is an orbifold, the relation $\sim$ identifies
     finite points of $\displaystyle \coprod_{i=1}^{N} \hat{X}_i$.

     For simplicity, we say a space is a Riemannian orbifold or orbifold (multifold)
     if it is a $C^{\infty}$-orbifold ($C^{\infty}$-multifold).
 \label{definition: orbifold}
 \end{definition}

 \begin{definition}
 For a compact Riemannian orbifold $X^m$ without boundary, we define its isoperimetric
 constant as
     \begin{align*}
    \mathbf{I}(X)  \triangleq
    \inf_{\Omega} \frac{|\partial \Omega|}{\min\{|\Omega|, |X \backslash \Omega|\}^{\frac{m-1}{m}}}
  \end{align*}
    where $\Omega$ runs over all domains with rectifiable boundaries in $X$.

    For a complete Riemannian orbifold $X^m$ with boundary, we define its isoperimetric
 constant as
  \begin{align*}
    \mathbf{I}(X)  \triangleq
    \inf_{\Omega} \frac{|\partial \Omega|}{|\Omega|^{\frac{m-1}{m}}}
  \end{align*}
    where $\Omega$ runs over all domains with rectifiable boundaries
    in the interior of $X$.
 \end{definition}

 \begin{definition}
   A geodesic ball $B(p, \rho)$ is called $\kappa$-noncollapsed if
  $\displaystyle  \frac{\Vol(B(q, s))}{s^m} > \kappa $
  whenever $B(q, s) \subset B(p, \rho)$.

  A Riemannian orbifold $X^m$ is called $\kappa$-noncollapsed on
  scale $r$ if every geodesic ball $B(p, \rho) \subset X$ is
  $\kappa$-noncollapsed whenever $\rho \leq r$.

   A Riemannian orbifold $X^m$ is called $\kappa$-noncollapsed if it
   is $\kappa$-noncollapsed on every scale $r \leq \diam (X^m)$.
 \end{definition}

  \begin{definition}
   Suppose $(x, t)$ is a point in a Ricci
   flow solution. Then  parabolic balls are defined as
   \begin{align*}
   & P^+(x, t, r , \theta)= \{(y, s)| d_{g(t)}(y, x) \leq r, \; t\leq s
    \leq s+\theta\}.\\
   & P^-(x, t, r , \theta)= \{(y, s)| d_{g(t)}(y, x) \leq r, \; t-\theta \leq s \leq s\}.
   \end{align*}
  Geometric parabolic balls are defined as
   \begin{align*}
   & \tilde{P}^+(x, t, r , \theta)= \{(y, s)| d_{g(s)}(y, x) \leq r, \;t\leq s
    \leq s+\theta\}.\\
   & \tilde{P}^-(x, t, r , \theta)= \{(y, s)| d_{g(s)}(y, x) \leq r, \; t-\theta \leq s \leq s\}.\\
   \end{align*}
  \label{definition: parabolicballs }
  \end{definition}

  \begin{definition}
   Suppose $x$ is a point in the Riemannian orbifold $X$. Then we define
   \begin{align*}
     \snorm{\hat{Rm}}{}= \left\{
   \begin{array}{ll}
   &|Rm|(x), \quad \textrm{if $x$ is a smooth point,}\\
   &\infty, \quad \textrm{if $x$ is a singular point.}
   \end{array}
   \right.
   \end{align*}
  \label{definition: rmhat}
  \end{definition}

 \section{Pseudolocality Theorem}
 \subsection{Perelman's Functional and Reduced Distance}

  Denote $\square= \D{}{t} - \triangle$,
  $\square^*=-\D{}{t} - \triangle + R$.

 In our setting, every orbifold only has finite singularities.  All the concepts in~\cite{Pe1}
 can be reestablished in our orbifold case. For example,
 we can define $W$-functional, reduced distance, reduced volume on orbifold Ricci flow.

 \begin{definition}
   Let $(X,g)$ be a Riemannian orbifold, $\tau>0$ a constant, $f$ a smooth function on $X$.
 Define
\begin{align*}
 W(g, \tau, f) &= \int_X \{ \tau(R+ |\nabla f|^2) +f-n \} (4 \pi
 \tau)^{-\frac{n}{2}} e^{-f}dv,\\
 \mu(g, \tau) &= \inf_{\int_X (4 \pi \tau)^{-\frac{n}{2}}e^{-f}dv=1} W(g, \tau, f).
\end{align*}
 \end{definition}
 Since the Sobolev constant of $X$ exists,
  we know $\mu(g, \tau)>-\infty$ and it is achieved by some smooth function $f$.

 Suppose $\{(X, g(t)), 0 \leq t \leq T\}$ is a Ricci flow solution
 on compact orbifold $X$, $u=(4\pi(T-t))^{-\frac{n}{2}}e^{-f}$
 satisfies $\square^*u=0$. Let
 $v=\{(T-t)(2\triangle f - |\nabla f|^2 +R) +f -m\}u$, then
 \begin{align*}
   \square^* v= -2(T-t)|R_{ij}+f_{ij}-\frac{1}{2(T-t)}g_{ij}|^2u
   \leq 0.
 \end{align*}
 This implies that
 \begin{align*}
     \D{}{t} \int_X\{(T-t)(R+|\nabla f|^2) +f -m\}(4\pi
     \tau)^{-\frac{m}{2}}e^{-f}
  =\D{}{t} \int_X v = \int_X \square^* v \leq 0.
 \end{align*}
 It follows that $\mu(g(t), T-t)$ is nondecreasing along Ricci flow.
 From this monotonicity, we can obtain the no-local-collapsing
 theorem.

 \begin{proposition}
   Suppose $\{(X, g(t)), 0 \leq t <T_0\}$
   is a Ricci flow solution on compact orbifold $X$, then there is a
   constant $\kappa$ such that the following property holds.

   Under metric $g(t)$, if scalar curvature norm $|R| \leq r^{-2}$ in $B(x, r)$ for some
   $r<1$,  then $\Vol(B(x, r)) \geq \kappa r^m$.
 \end{proposition}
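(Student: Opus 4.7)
The plan is to adapt Perelman's original no-local-collapsing argument from Section 4 of~\cite{Pe1} to the orbifold setting. The monotonicity of $\mu(g(t),\tau)$ along the flow has just been established; the only additional input needed is that the infimum in the definition of $\mu$ is finite and attained, which follows from the orbifold Sobolev inequality that was quoted at the start of this section.

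First, by monotonicity, $\mu(g(t),\tau)\geq \mu(g(0),t+\tau)$. On any closed subinterval $[0,T]\subset [0,T_0)$ and for $\tau\in(0,1]$, continuity of $\mu(g(0),\cdot)$ in $\tau$ yields a uniform lower bound $\mu(g(t),\tau)\geq -C_0$ depending only on $(X,g(0))$ and $T$. The constant $\kappa$ in the statement will be extracted from $C_0$ and $m$.

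Next, given $(x,t)$ with $|R|_{g(t)}\leq r^{-2}$ on $B:=B_{g(t)}(x,r)$, $r<1$, set $\tau=r^2$ and test $\mu(g(t),\tau)$ against the standard function $f$ determined by $e^{-f/2}=\bar u\,\phi(d_{g(t)}(x,\cdot)/r)$, where $\phi:[0,\infty)\to[0,1]$ is a smooth cutoff with $\phi\equiv 1$ on $[0,1/2]$, $\phi\equiv 0$ on $[1,\infty)$, $|\phi'|\leq 4$, and $\bar u>0$ is chosen to enforce $\int u^2(4\pi\tau)^{-m/2}\,dv=1$. Writing $A:=\int\phi^2(d/r)\,dv\leq\Vol(B)$, a direct computation shows that the scalar curvature term contributes at most $1$ (using $\tau|R|\leq 1$ on the support of $u$), the gradient term is bounded by a dimensional constant via $|\nabla\phi|\leq 4/r$ together with the normalization, and the entropy term produces
\[
  \int(-2\log u)\,u^2(4\pi\tau)^{-m/2}\,dv = \log\frac{A}{r^m} + O_m(1),
\]
where the $O_m(1)$ absorbs $A^{-1}\int\phi^2\log(1/\phi)\,dv$, itself controlled by the elementary inequality $-2\int\phi^2\log\phi\,dv\leq \Vol(B)-A$. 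Assembling these contributions yields $W(g(t),\tau,f)\leq \log(\Vol(B)/r^m)+C_m$; combined with $W\geq \mu(g(t),\tau)\geq -C_0$, this gives $\Vol(B)/r^m\geq e^{-C_0-C_m}=:\kappa$, as desired.

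The only point specific to the orbifold setting is verifying that the $\mu$-monotonicity computation and the existence and regularity of the $W$-minimizer survive the presence of finitely many singular points. Since the singular set has measure zero, the distributional identity $\square^* v\leq 0$ integrates to the same monotonicity inequality as in the manifold case, and the test function $u=\bar u\,\phi(d(x,\cdot)/r)$ is Lipschitz via the distance function and hence admissible in the variational problem defining $\mu$. This is the main — and only — conceptual obstacle; once verified, the manifold proof transfers verbatim.
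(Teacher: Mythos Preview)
Your proposal is correct and follows exactly the approach the paper has in mind: the paper does not give a self-contained proof but simply states that the argument is identical to Theorem~4.1 of~\cite{Pe1} (with the refinement from $|Rm|$ to $|R|$ supplied by~\cite{KL} and~\cite{SeT}), which is precisely the $\mu$-monotonicity plus cutoff-test-function computation you have written out. One small imprecision: invoking ``continuity of $\mu(g(0),\cdot)$'' alone does not quite give the lower bound on $(0,T+1]$ since the left endpoint is open; you need in addition that $\mu(g(0),\tau)$ stays bounded as $\tau\to 0^+$, which on an orbifold follows from the heat-kernel asymptotics recorded just after this proposition (or, alternatively, from the orbifold log-Sobolev inequality).
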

 The proof of this proposition is the same as Theorem 4.1
 in~\cite{Pe1} if $R$ is replaced by $|Rm|$.
 See~\cite{KL}, ~\cite{SeT} for the improvement to scalar
 curvature.

 \begin{definition}
  Fix a base point $p \in X$.  Let $\mathcal{C}(p, q, \bar{\tau})$ be the
  collection of all smooth curves $\{\gamma(\tau), 0 \leq \tau \leq \bar{\tau}\}$ satisfying
  $\gamma(0)=p, \gamma(\bar{\tau})=q$.  As in~\cite{Pe1}, we define
 \begin{align*}
   \mathcal{L}(\gamma) &= \int_{0}^{\bar{\tau}} \sqrt{\tau} (R +  |\dot{\gamma}(\tau)|^2)
   d\tau,\\
   L(p, q, \bar{\tau})&= \inf_{\gamma \in
   \mathcal{C}(p,q,\bar{\tau})} \mathcal{L}(\gamma),\\
   l(p, q, \bar{\tau}) &=\frac{L(p, q,
   \bar{\tau})}{2\sqrt{\bar{\tau}}}.
 \end{align*}
 \label{definition: rd}
 \end{definition}

  Like manifold case, $L(p, q, \bar{\tau})$ is achieved by some
  shortest $\mathcal{L}$-geodesic $\gamma$.\\

 Under Ricci flow, since the evolution of distance is controlled by
 Ricci curvature. Definition~\ref{definition: rd} yields the following estimate (c.f.~\cite{Ye}).
 \begin{proposition}
  Suppose $|Ric| \leq Cg$ when $0 \leq \tau \leq \bar{\tau}$ for a
  nonnegative constant $C$. Then
  \begin{align*}
    e^{-2C\tau} \frac{d_{g(0)}^2(p, q)}{4\tau} -\frac{nC}{3}\tau
    \leq l(p, q, \tau) \leq e^{2C\tau} \frac{d_{g(0)}^2(p, q)}{4\tau} +
     \frac{nC}{3}\tau.
  \end{align*}
 \end{proposition}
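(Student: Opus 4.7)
The plan is to exploit the two-sided metric equivalence forced by $|Ric|\le Cg$, then bound $l$ from above by testing $\mathcal{L}$ against a carefully reparametrized $g(0)$-minimizing geodesic, and from below by a weighted Cauchy--Schwarz inequality applied to arbitrary admissible curves. Since $\tau$ is a backward time along the Ricci flow, $\partial_\tau g_{ij}=2R_{ij}$, so integrating the hypothesis $|Ric|\le Cg$ yields
\[
e^{-2Cs}g(0)\le g(s)\le e^{2Cs}g(0)\qquad\text{for } 0\le s\le\tau,
\]
and in particular $|R|\le nC$ throughout.

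For the upper bound, I would let $\sigma\colon [0,d]\to X$ be a unit-speed $g(0)$-minimizing geodesic from $p$ to $q$, where $d:=d_{g(0)}(p,q)$, and test with $\gamma(s)=\sigma(d\sqrt{s/\tau})$. This reparametrization is chosen so that $|\dot\gamma(s)|^2_{g(0)}=d^2/(4\tau s)$ exactly balances the Perelman weight $\sqrt{s}$. Using $|\dot\gamma(s)|^2_{g(s)}\le e^{2Cs}|\dot\gamma(s)|^2_{g(0)}$ and $R\le nC$, two elementary integrals give
\[
\mathcal{L}(\gamma)\le \int_0^{\tau}\sqrt{s}\Bigl(nC+e^{2Cs}\frac{d^2}{4\tau s}\Bigr)ds\le \frac{2nC}{3}\tau^{3/2}+e^{2C\tau}\frac{d^2}{2\sqrt{\tau}},
\]
and dividing by $2\sqrt\tau$ yields the claimed upper bound on $l(p,q,\tau)$.

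For the lower bound, fix any $\gamma\in\mathcal{C}(p,q,\tau)$. From $R\ge -nC$ and $|\dot\gamma(s)|^2_{g(s)}\ge e^{-2C\tau}|\dot\gamma(s)|^2_{g(0)}$ one obtains
\[
\mathcal{L}(\gamma)\ge -\frac{2nC}{3}\tau^{3/2}+e^{-2C\tau}\int_0^{\tau}\sqrt{s}\,|\dot\gamma(s)|^2_{g(0)}\,ds.
\]
Applying Cauchy--Schwarz to $d\le \int_0^{\tau}s^{-1/4}\cdot s^{1/4}|\dot\gamma|_{g(0)}\,ds$ gives $d^2\le 2\sqrt{\tau}\int_0^{\tau}\sqrt{s}\,|\dot\gamma|^2_{g(0)}\,ds$, which combined with the previous inequality and division by $2\sqrt\tau$, after taking the infimum over $\gamma$, produces the stated lower bound.

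There is no serious analytic obstacle; the arguments are elementary once the correct reparametrization $u(s)=d\sqrt{s/\tau}$ is chosen so as to match the $\sqrt{s}$ weight in Perelman's $\mathcal{L}$-functional. The only orbifold subtlety is that the $g(0)$-minimizing geodesic $\sigma$ used for the upper bound might pass through one of the finitely many singular points; this is handled by a small perturbation of $\sigma$ avoiding the singular locus followed by a limit, since such perturbations change $\mathcal{L}(\gamma)$ by an arbitrarily small amount.
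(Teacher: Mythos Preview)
Your proposal is correct and is precisely the standard argument. The paper does not supply its own proof of this proposition; it simply states the estimate with a reference to~\cite{Ye}, and your derivation (metric comparison $e^{-2Cs}g(0)\le g(s)\le e^{2Cs}g(0)$, the reparametrized $g(0)$-geodesic $\gamma(s)=\sigma(d\sqrt{s/\tau})$ for the upper bound, and the weighted Cauchy--Schwarz for the lower bound) is exactly the computation underlying that citation.
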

 Therefore, as $\tau \to 0$, $l(p, q, \tau)$
 behaves like $\frac{d_{g(0)}^2(p, q)}{4\tau}$.

 \begin{proposition}
 Let $u(p, q, \tau)$ be the heat kernel of $\square^*$ on $X \times [0,
 \bar{\tau}]$. As $q \to p,  \; \tau \to 0$, we have
 \begin{align*}
     u(p, q, \tau) \sim (4\pi\tau)^{-\frac{n}{2}} e^{-\frac{d_{g(0)}^2(p,
     q)}{4\tau} + \log |\Gamma_p|}.
 \end{align*}
 \end{proposition}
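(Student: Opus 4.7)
The plan is to reduce the asymptotic behavior to the classical short-time heat-kernel asymptotics on a smooth local cover of $X$, and then extract the extra factor $|\Gamma_p|$ by summing over the isotropy orbit of the lifted base point. Near $p$ choose an orbifold chart $\pi:\tilde U\to\tilde U/\Gamma_p\cong U\ni p$ with $\tilde U\subset\mathbb{R}^m$ a small $\Gamma_p$-invariant ball centered at a lift $\tilde p$ of $p$. The Ricci flow $g(t)$ on $U$ lifts to a smooth $\Gamma_p$-equivariant Ricci flow $\tilde g(t)$ on $\tilde U$; smooth functions on $U$ correspond to $\Gamma_p$-invariant smooth functions on $\tilde U$, and $\square^{*}$ on $U$ pulls back to the usual smooth conjugate heat operator $\tilde\square^{*}$ on $\tilde U$.

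On the smooth piece $\tilde U$ I would construct the fundamental solution $\tilde u(\tilde p,\tilde q,\tau)$ of $\tilde\square^{*}$ by Levi's parametrix method along Ricci flow. The time-dependence of $\tilde g$ and the $R$-potential contribute only $O(\tau)$ corrections to the Gaussian prefactor, yielding the standard expansion
\begin{align*}
\tilde u(\tilde p,\tilde q,\tau)=(4\pi\tau)^{-\frac{n}{2}}e^{-\frac{d_{g(0)}^{2}(\tilde p,\tilde q)}{4\tau}}\bigl(1+O(\tau)\bigr)
\end{align*}
as $\tilde q\to\tilde p$ and $\tau\to 0$. A direct computation using the change of variable $\tilde q\mapsto\gamma^{-1}\tilde q$ and the $\Gamma_p$-equivariance of $\tilde u$ identifies the orbifold heat kernel, lifted in the $q$-variable, as
\begin{align*}
u(p,q,\tau)=\sum_{\gamma\in\Gamma_p}\tilde u(\tilde p,\gamma\tilde q,\tau).
\end{align*}

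Because $\tilde p$ is a fixed point of $\Gamma_p$ and each $\gamma$ acts by $g(0)$-isometries, every summand satisfies $d_{g(0)}(\tilde p,\gamma\tilde q)=d_{g(0)}(\tilde p,\tilde q)=d_{g(0)}(p,q)$. All $|\Gamma_p|$ summands therefore agree at leading order, which gives
\begin{align*}
u(p,q,\tau)\sim |\Gamma_p|\,(4\pi\tau)^{-\frac{n}{2}}e^{-\frac{d_{g(0)}^{2}(p,q)}{4\tau}}=(4\pi\tau)^{-\frac{n}{2}}e^{-\frac{d_{g(0)}^{2}(p,q)}{4\tau}+\log|\Gamma_p|},
\end{align*}
as claimed. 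The main technical step is the parametrix construction for $\tilde\square^{*}$ under a smoothly evolving metric, but this analysis takes place entirely on the smooth cover $\tilde U$, so classical heat-kernel techniques transfer verbatim; the orbifold factor $\log|\Gamma_p|$ then appears as a purely group-theoretic bookkeeping term reflecting the cardinality of the orbit of $\tilde p$.
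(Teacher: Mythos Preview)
Your argument is correct and is precisely the approach the paper has in mind: the paper omits the proof entirely, merely pointing to the parametrix construction for $\square^{*}$ under Ricci flow in~\cite{CLN} together with the orbifold heat-kernel construction of~\cite{DSGW}, and your sketch spells out exactly this combination---build the smooth parametrix on the local uniformizing cover and then sum over $\Gamma_p$, the fixed-point property of $\tilde p$ yielding the factor $|\Gamma_p|$. One small point worth tightening: since $\tilde U$ has boundary, $\tilde u$ should be understood as the local parametrix (or the heat kernel of any smooth extension of $\tilde g(t)$ to a complete manifold) rather than a genuine fundamental solution on $\tilde U$; either interpretation gives the same leading asymptotics, so the conclusion is unaffected.
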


 In the case the underlying space is a manifold,  this approximation
 can be proved by constructing parametrix for the operator
 $\square^*$(c.f.~\cite{CLN} for detailed proof).
 This construction can be applied to orbifold case easily. See~\cite{DSGW} for the construction of parametrix of heat kernel on
 general orbifold under fixed metrics.  This proposition is the
 combination of the corresponding theorems in~\cite{CLN}
 and~\cite{DSGW}.  The proof method is the same,
 so we omit the proof for simplicity.

 \begin{proposition}
    $\square^* \{(4\pi \tau)^{-\frac{n}{2}} e^{-l}\} \leq 0$.
 \end{proposition}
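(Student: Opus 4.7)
The plan is to follow Perelman's original manifold proof verbatim on the smooth locus, then patch at the cut locus and at the finitely many singular points by a barrier argument. On the smooth part the argument is purely local: first variations of $\mathcal{L}$ (in the endpoint and in $\bar\tau$) yield identities for $\nabla l$ and $l_\tau$, while tracing the second variation of $\mathcal{L}$ against a basis of test Jacobi fields gives the Laplacian upper bound for $l$. Each relation involves Perelman's Harnack-type integral $K = \int_0^{\bar\tau}\tau^{3/2}H(X)\,d\tau$ along a minimizing $\mathcal{L}$-geodesic $\gamma$ from $p$ to $q$, and when the three are summed the $K$- and $R$-dependent terms cancel, producing
\begin{align*}
   \tfrac{n}{2\bar\tau} + l_\tau + |\nabla l|^2 - \Delta l - R \;\geq\; 0.
\end{align*}
A direct computation of $\square^* \bar u$ for $\bar u = (4\pi\bar\tau)^{-n/2}e^{-l}$, namely
\begin{align*}
   \square^*\bar u \;=\; \bar u\bigl(-\tfrac{n}{2\bar\tau} - l_\tau - |\nabla l|^2 + \Delta l + R\bigr),
\end{align*}
then gives $\square^* \bar u \leq 0$ pointwise on the smooth, non-$\mathcal{L}$-cut part of $X\times(0,\bar\tau]$.

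To make these manifold computations legitimate I would next argue that for a generic smooth non-cut endpoint $q$ one can find a minimizing $\mathcal{L}$-geodesic lying entirely in the smooth locus $X^{\mathrm{reg}}$. This is where the finiteness of the singular set $\Sigma$ enters: $\Sigma$ is $0$-dimensional, hence of real codimension at least $2$ in $X^m$, so a putative minimizer that passes through a singularity $\sigma\in\Sigma$ can be perturbed off $\Sigma$ with an arbitrarily small $\mathcal{L}$-length cost, contradicting minimality away from a set of $q$'s of measure zero. This is the main orbifold-specific obstacle, because the precise behavior of $\mathcal{L}$-geodesics near orbifold singularities must be understood, either through a local $\Gamma$-equivariant lift of $\gamma$ to a smooth cover $B^m\setminus\{0\}\to B^m/\Gamma$, or by direct comparison on the tangent cone $\mathbb{R}^m/\Gamma$. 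Once granted, Perelman's first and second variation computations apply on an open set of full measure in $X$, and the pointwise inequality $\square^*\bar u\leq 0$ holds there.

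Finally I would extend the inequality from this full-measure open set to the barrier (equivalently, distributional) sense on all of $X\times(0,\bar\tau]$. At $\mathcal{L}$-cut points this is Perelman's standard Calabi-type upper-barrier argument, replacing $\gamma$ by nearby smooth $\mathcal{L}$-geodesics to build a smooth upper barrier for $l$. At the finitely many orbifold singularities, $\bar u$ is locally bounded (since $l$ is comparable to $d_{g(0)}^2/4\tau$ as $\tau\to 0$ and bounded below by the estimates of the preceding propositions), and an isolated singular set of codimension at least two is removable for weak supersolutions of the uniformly parabolic operator $\square^*$ by a standard capacity argument. Combining these two patches yields $\square^*\bar u\leq 0$ in the barrier/distributional sense on all of $X\times(0,\bar\tau]$, as required.
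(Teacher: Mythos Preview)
The paper does not actually give a proof of this proposition. It is listed among the collection of Perelman's results in Section~3.1 that, in the paper's words, ``can be reestablished in our orbifold case'' once one knows that the singular set is finite; the proposition is stated and then used (for instance via inequality~(\ref{eqn: fl})) without further justification. Your proposal therefore supplies exactly the argument the paper omits, and your outline is the standard and correct one.

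Two comments on the orbifold-specific step. First, your perturbation argument for why a minimizing $\mathcal{L}$-geodesic from a smooth $p$ to a generic smooth $q$ avoids the singular set $\Sigma$ can be sharpened: rather than a measure-zero exceptional set of endpoints, one can show that \emph{no} minimizing $\mathcal{L}$-geodesic between smooth points passes through an isolated orbifold point. Near $\sigma\in\Sigma$, lift to the smooth $\Gamma$-cover $B^m\to B^m/\Gamma$; an $\mathcal{L}$-geodesic through $\sigma$ lifts to one through the origin, and composing the outgoing branch with a nontrivial element of $\Gamma$ produces a broken $\mathcal{L}$-geodesic of equal length with a corner, which can then be strictly shortened. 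This is exactly the mechanism behind the fact---used later in the paper in Step~2 of Proposition~\ref{proposition: EVweakc1h}---that ordinary shortest geodesics between smooth points never pass through orbifold singularities. Second, with this in hand, the first/second variation identities and the Laplacian comparison for $l$ hold pointwise on the complement of the $\mathcal{L}$-cut locus and $\Sigma$; your Calabi-type barrier extension at the cut locus and codimension-$2$ removability at $\Sigma$ then give the inequality in the barrier/distributional sense on all of $X\times(0,\bar\tau]$, which is precisely the form needed for the subsequent maximum-principle comparison with the conjugate heat kernel.
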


 \begin{proposition}
 Suppose $h$ is the solution of $\square h=0$, then
 \begin{align*}
 \lim_{t \to 0} \int_X hv \leq -\log |\Gamma|h(p, 0).
 \end{align*}
 \label{proposition: deltalimit}
 \end{proposition}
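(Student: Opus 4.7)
The plan is to use the short-time asymptotic of the conjugate heat kernel $u$ provided by the preceding proposition to compute the limiting integrand pointwise, then integrate against $h$ and control the remainder.

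In the concentration regime, $u(p,q,\tau)\sim (4\pi\tau)^{-n/2}\exp\!\bigl(-\tfrac{d_{g(0)}^2(p,q)}{4\tau}+\log|\Gamma_p|\bigr)$ yields $f(q,\tau)=\tfrac{d_{g(0)}^2(p,q)}{4\tau}-\log|\Gamma_p|+o(1)$. Differentiating the parametrix in a local orbifold chart (which reduces the calculation to Euclidean space on the cover modulo lower-order curvature corrections) produces $|\nabla f|^2\sim d^2/(4\tau^2)$ and $\triangle f\sim n/(2\tau)$. Plugging these into $v=[\tau(2\triangle f-|\nabla f|^2+R)+f-n]u$, the divergent pieces cancel: $f\sim d^2/(4\tau)$ cancels $-\tau|\nabla f|^2$, and $2\tau\triangle f\sim n$ cancels $-n$. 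What survives is the constant $-\log|\Gamma_p|$ plus an $O(\tau)$ remainder coming from $\tau R$ and subleading parametrix terms, so pointwise $v\leq (-\log|\Gamma_p|+o(1))\,u$ near $p$.

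Integrating against $h$ and using that $u\,dv$ converges weakly to the orbifold Dirac measure at $p$ (the factor $|\Gamma_p|$ in the expansion of $u$ being precisely absorbed by the $1/|\Gamma_p|$ in the orbifold volume form in the local chart, consistent with $\int u\,dv=1$ being preserved) gives $\int_X h u\,dv\to h(p,0)$, and hence the conclusion $\lim_{t\to 0}\int_X hv\leq -\log|\Gamma_p|\,h(p,0)$.

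The main obstacle is that the key cancellation occurs between terms which individually diverge as $\tau\to 0$, so interchanging the pointwise asymptotic with the integral requires uniform control of the parametrix remainders on the support of the Gaussian concentration. The cleanest way I see is to pass to the local $\Gamma_p$-cover and perform the rescaling $q=\sqrt\tau\,w$, turning the integral into a standard Gaussian on $\mathbb{R}^n$ whose limit is elementary; the $\Gamma_p$-invariance then contributes the only orbifold-specific effect, namely the factor $|\Gamma_p|$ in the leading term. The inequality (rather than equality) in the conclusion accommodates the one-sided nature of the Laplacian comparison bounds for $f$ away from the Euclidean model and the higher-order remainders in the parametrix.
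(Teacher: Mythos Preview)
Your heuristic is correct---the parametrix asymptotic $u\sim(4\pi\tau)^{-n/2}e^{-d^2/(4\tau)+\log|\Gamma|}$ does explain where the constant $-\log|\Gamma|$ comes from---but the argument as written has a genuine gap at exactly the point you flag. The expression for $v$ involves $|\nabla f|^2$ and $\triangle f$, i.e.\ up to second derivatives of $\log u$. The parametrix expansion controls $u$ itself with a remainder that is small relative to the leading Gaussian, but after two differentiations the remainder competes with the terms you want to cancel; the ``$o(1)$'' you need must be uniform on the whole concentration region and survive the integration against the Gaussian weight. The rescaling $q=\sqrt{\tau}\,w$ is suggestive, but turning it into a proof requires the full heat-kernel parametrix with derivative bounds on the remainder, which you have not established (and which the paper does not supply). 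Your final sentence invokes ``one-sided Laplacian comparison bounds for $f$'' that have not actually been proved anywhere in the argument.

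The paper avoids this difficulty by a different, more indirect route. First, it does \emph{not} attempt pointwise asymptotics for $v$; instead it uses the maximum principle to compare $u$ with the reduced-distance barrier $(4\pi\tau)^{-n/2}e^{-l+\log|\Gamma|}$, which is a supersolution of $\square^*$, to obtain the global inequality $f\leq l-\log|\Gamma|$. This gives an upper bound for $\int fuh$ (hence for $\int vh$, after a decomposition and a gradient estimate of Ni) without any derivative control on the parametrix. Second, having established that the limit exists by monotonicity, the paper exploits the evolution identity $\frac{d}{dt}\int hv=2\tau\int|R_{ij}+f_{,ij}-\frac{g_{ij}}{2\tau}|^2uh$ together with the non-integrability of $1/\tau$ to extract a sequence $\tau_k\to 0$ along which $\tau_k\int(R+\triangle f-\tfrac{n}{2\tau_k})uh\to 0$. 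This kills precisely the second-derivative term that your approach tries to handle by brute-force asymptotics, and reduces the computation to the first-order quantity $\int(f-\tfrac n2)uh$, where the bound $f\leq l-\log|\Gamma|$ and the elementary Gaussian integral $\int\frac{d^2}{4\tau}u\to\frac n2$ finish the job. The reduced-distance comparison and the good-sequence trick are the two ideas missing from your proposal.
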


 \begin{proof}
    Direct calculation shows that
 \begin{align*}
   \D{}{t}\{\int_X hv\}=-\int_X h \square^* v= 2 \tau \int_X \snorm{R_{ij}+f_{,ij}-\frac{g_{ij}}{2\tau}}{}^2
  uh \geq 0.
 \end{align*}
 Therefore, $\displaystyle \lim_{t\to 0^-} \int_X hv$ exists if $\int_X hv$ is
 uniformly bounded as $t \to 0^{-}$. However, we can decompose
 $\int_X hv$ as

 \begin{align*}
   \int_X hv &=\int_X [\tau(2\triangle f -|\nabla f|^2 +R) + f
   -n]uh\\
    &=(4\pi \tau)^{-\frac{n}{2}} \int_X [\tau(2\triangle f -|\nabla f|^2 +R) + f -n]e^{-f}h\\
    &=(4\pi \tau)^{-\frac{n}{2}} \{\int_X [\tau(|\nabla f|^2 +R) + f-n]e^{-f}h -2\tau \int_X h \triangle
    e^{-f}\}\\
    &=\underbrace{\int_X [-2\tau \triangle h + (R\tau -n)h]u}_{I}
       +\underbrace{\int_X \tau |\nabla f|^2 uh}_{II}
       +\underbrace{\int_X fuh}_{III}.
 \end{align*}
 Note that $\int_X u \equiv 1$. Term $I$ is uniformly bounded.
 By the gradient estimate of heat equation, as in~\cite{Ni1}, we have
 \begin{align*}
    \tau \frac{|\nabla u|^2}{u^2} \leq (2+C_1\tau) \{ \log (\frac{B}{u\tau^{\frac{n}{2}}} \int_X u) + C_2 \tau\}
 \end{align*}
 for some constants $C_1, C_2$. Together with $\int_X u \equiv 1$, this implies
 \begin{align*}
  II =\int_X \tau |\nabla f|^2 uh \leq (2+C_1\tau) \{ \int_X ( \log B + f + C_2 \tau)uh\}
     \leq C + 3 \int_X fuh,
 \end{align*}
 where $C$ is a constant depending on $X$ and $h$. It follows that
 \begin{align*}
    \int_X hv \leq C' + 4 \int_X fuh.
 \end{align*}
 In order to show $\int_X hv$ have a uniform upper bound, it suffices to
 show that $III=\int_X fuh$ is uniformly bounded from above.

 Around $(p, 0)$, the reduced distance $l$ on $X$ approximates
 $\frac{d^2}{4\tau}$. (See~\cite{Pe1},~\cite{Ye} for more details.)
 As a consequence,
 we have
 \begin{align*}
   \square^* \{(4\pi \tau)^{-\frac{n}{2}} e^{-l(y, \tau) + \log|\Gamma|}\} \leq 0,
   \quad
   \lim_{\tau \to 0} (4\pi \tau)^{-\frac{n}{2}} e^{-l(y, \tau)+ \log |\Gamma|}=\delta_x(y).
 \end{align*}
 Then maximal principle implies that
 \begin{align}
    f(y, \tau) \leq l(y, \tau) - \log |\Gamma|.
 \label{eqn: fl}
 \end{align}
 for every $y \in X$, $0 <\tau \leq 1$.

%

 Inequality (\ref{eqn: fl}) implies
 \begin{align*}
   \limsup_{\tau \to 0} \int_X fuh &\leq \limsup_{\tau \to 0}\int_X (l-\log|\Gamma|)uh\\
    &=-\log|\Gamma| h(p,0) + \limsup_{\tau \to 0} \int_X
    \frac{d^2}{4\tau}uh\\
    &\leq (\frac{n}{2} - \log|\Gamma|) h(p, 0).
 \end{align*}
 The last step holds since the expansion of $u$ around point
 $(p, 0)$ tells us that
 \begin{align*}
  \limsup_{\tau \to 0}\int_X \frac{d^2}{4\tau} uh
  \leq h(p, 0)
  \{\int_{\R^n / \Gamma} \frac{|z|^2}{4} \cdot (4\pi)^{-\frac{n}{2}}
  \cdot e^{-\frac{|z|^2}{4} + \log|\Gamma|}\}
  =\frac{n}{2}h(p,0).
 \end{align*}

 After the uniform upper bound of $\int_X vh$ is set up, by the
 monotonicity of $\int_X vh$, we see
 $\displaystyle  \lim_{\tau \to 0}  \int_X vh$ exists.  Since $\frac{1}{\tau}$
 is not integrable on $[0,1]$, for every $k$, there are small $\tau$'s such that
 $\displaystyle   \D{}{t}\int_X vh \leq \frac{1}{k \tau}$.
 So we can extract a sequence of $\tau_k \to 0$ such that
 \begin{align*}
  \lim_{k \to \infty} 2\tau_k^2
  \int_X \snorm{R_{ij}+f_{,ij}-\frac{g_{ij}}{2\tau}}{}^2  uh
  = \lim_{k \to \infty}  \tau_k \D{}{t}\int_X vh
  \leq \lim_{k \to \infty} \frac{1}{k}=0.
 \end{align*}
 H\"older inequality and Cauchy-Schwartz inequality implies that
 \begin{align*}
   \lim_{\tau_k \to 0} \tau_k \int_X (R + \triangle f - \frac{n}{2\tau_k})uh
   &\leq
   \lim_{\tau_k \to 0} \tau_k \int_X
   \snorm{R_{ij}+f_{,ij}-\frac{g_{ij}}{2\tau_k}}{}uh\\
   &\leq \lim_{\tau_k \to 0} \{\tau_k^2 \int_X
   \snorm{R_{ij}+f_{,ij}-\frac{g_{ij}}{2\tau_k}}{}^2uh\}^{\frac12}
    \cdot \{\int_X uh\}^{\frac12}\\
   &=0.
 \end{align*}
 Therefore,
 \begin{align*}
   \lim_{\tau \to 0} \int_X vh
 &=\lim_{\tau_k \to 0} \int_X vh\\
 &=\lim_{\tau_k \to 0}  \tau_k \int_X [(R + \triangle f) -
 \frac{n}{2\tau_k}]uh
    -\lim_{\tau_k \to 0} \tau_k \int_X u \triangle h  +  \lim_{\tau_k \to 0}\int_X
    (f-\frac{n}{2})uh\\
 &=\lim_{\tau_k \to 0}\int_X
    (f-\frac{n}{2})uh\\
 &\leq -\log|\Gamma|  h(p, 0).
 \end{align*}
 \end{proof}

 \begin{corollary}
   $v \leq 0$.
 \end{corollary}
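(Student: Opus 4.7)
The plan is to upgrade the integral inequality in Proposition~\ref{proposition: deltalimit} to the pointwise bound $v\leq 0$ by a heat-kernel duality argument. The key ingredient, already computed in the proof of that proposition, is the monotonicity
\[
\frac{d}{dt}\int_X hv\,dv_{g(t)} \;=\; -\int_X h\,\square^* v\,dv_{g(t)} \;\geq\; 0,
\]
valid for any nonnegative solution $h$ of $\square h=0$, since $\square^* v\leq 0$. Hence $\int_X hv$ is nondecreasing in $t$, which is exactly the direction in which $u$ concentrates (as $\tau=T-t\to 0$).

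Fix an arbitrary spacetime point $(y_0,t_0)$ strictly before the concentration time of $u$. For each $\varepsilon>0$ I would choose a smooth nonnegative bump $\phi_\varepsilon$ on $X$ of unit $g(t_0)$-mass supported in $B_{g(t_0)}(y_0,\varepsilon)$, and let $h_\varepsilon(\cdot,t)$ denote its forward heat evolution under $\square h_\varepsilon=0$ on $[t_0,T]$; the parabolic maximum principle ensures $h_\varepsilon\geq 0$. Combining the monotonicity above with Proposition~\ref{proposition: deltalimit} applied to $h_\varepsilon$ at the concentration time of $u$ (written $(p,0)$ in the notation of that proposition) yields
\[
\int_X \phi_\varepsilon\,v(\cdot,t_0)\,dv_{g(t_0)} \;\leq\; \lim_{\tau\to 0}\int_X h_\varepsilon v\,dv_{g(t)} \;\leq\; -\log|\Gamma|\cdot h_\varepsilon(p,0).
\]

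The right-hand side is nonpositive, because every orbifold isotropy satisfies $|\Gamma|\geq 1$, so $-\log|\Gamma|\leq 0$, while $h_\varepsilon\geq 0$ throughout its evolution. Letting $\varepsilon\to 0$ and using spatial continuity of $v(\cdot,t_0)$, the left-hand side converges to $v(y_0,t_0)$, giving $v(y_0,t_0)\leq 0$; since $(y_0,t_0)$ was arbitrary, $v\leq 0$ on all of $X\times[0,T)$. The only point requiring genuine care is the sign bookkeeping that aligns the direction of monotonicity of $\int h_\varepsilon v$ with the time at which Proposition~\ref{proposition: deltalimit} delivers its one-sided bound; once that alignment is fixed, the nonpositivity of $-\log|\Gamma|\cdot h_\varepsilon(p,0)$ and the duality limit do the rest with no further estimates.
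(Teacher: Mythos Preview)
Your argument is correct and is precisely the standard heat-kernel duality argument that underlies this corollary (the paper states the result without proof, as an immediate consequence of Proposition~\ref{proposition: deltalimit} and the monotonicity $\frac{d}{dt}\int_X hv\geq 0$). The only cosmetic point is the time labeling: in the paper's convention the concentration occurs at $t=0$ and the flow lives on $[-T_0,0]$, so your interval ``$[t_0,T]$'' should read $[t_0,0]$; otherwise the sign alignment, the nonnegativity of $h_\varepsilon$, and the $\varepsilon\to 0$ limit are all handled correctly.
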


 \begin{theorem}
    Suppose $h$ is a nonnegative function, there is a large constant $K$ such that
    $\max \{\square h, -\triangle h \}\leq K$
    whenever $t \in [-K^{-1}, 0]$. Then
 \begin{align*}
 \lim_{t \to 0} \int_X hv \leq -\log |\Gamma|h(p, 0).
 \end{align*}
 \label{theorem: deltalimit}
 \end{theorem}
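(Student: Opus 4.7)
My plan is to adapt the proof of Proposition~\ref{proposition: deltalimit}. Two new issues arise: $\int_X hv$ is no longer nondecreasing in $t$, and $h$ is not annihilated by $\square$. I resolve these using the one-sided bounds $\square h\le K$, $-\triangle h\le K$ and a pivot to the case $h\equiv 1$. The main obstacle is the loss of monotonicity, addressed in the second step below.

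The first step is to establish a uniform upper bound $\int_X hv\le C$ on $\tau\in(0,K^{-1}]$. The integration by parts used in Proposition~\ref{proposition: deltalimit} still yields the decomposition
\begin{align*}
\int_X hv = \int_X[-2\tau\triangle h+(R\tau-n)h]u + \int_X\tau|\nabla f|^2uh + \int_X fuh,
\end{align*}
and each term is controlled as before: the first by $-\triangle h\le K$ (the only place this hypothesis enters the decomposition), the second by Ni's gradient estimate for $u$, and the third by $f\le l-\log|\Gamma|$ combined with the asymptotic expansion of $u$ near $(p,0)$.

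The second step is to show $\lim_{t\to 0^-}\int_X hv$ exists and to extract a good subsequence. Computing
\begin{align*}
\frac{d}{dt}\int_X hv = \int_X(\square h)v + \int_X h(-\square^*v),
\end{align*}
the second summand is nonnegative, while $v\le 0$ and $\square h\le K$ give $\int_X(\square h)v\ge K\int_X v$. Since $\square 1=0$, Proposition~\ref{proposition: deltalimit} applied to $h\equiv 1$ shows $\int_X v=W$ is nondecreasing and uniformly bounded (above by $-\log|\Gamma|$, below by its value at $t=-K^{-1}$). Consequently the shifted quantity $\int_X hv - K\int_{-K^{-1}}^t\int_X v\,ds$ is nondecreasing and bounded above by the first step, forcing $\lim_{t\to 0^-}\int_X hv$ to exist. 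Integrating the formula for $\frac{d}{dt}\int hv$ over $[-K^{-1},0]$ and using the boundedness of $\int hv$ together with the lower bound on $\int(\square h)v$ then shows $2\tau\int|R_{ij}+f_{ij}-\frac{g_{ij}}{2\tau}|^2uh$ is integrable in $\tau$ near zero, so a standard argument produces $\tau_k\to 0$ with $\tau_k^2\int|R_{ij}+f_{ij}-\frac{g_{ij}}{2\tau_k}|^2uh\to 0$.

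The third step applies the alternative identity
\begin{align*}
\int_X hv = \tau\int_X\Bigl(R+\triangle f-\tfrac{n}{2\tau}\Bigr)uh - \tau\int_X u\triangle h + \int_X\bigl(f-\tfrac{n}{2}\bigr)uh
\end{align*}
along the subsequence $\tau_k$. The first term tends to $0$ by Cauchy-Schwartz and $\tau_k^2\int|\cdot|^2uh\to 0$; the second is at most $K\tau_k\to 0$ by $-\triangle h\le K$; the third has $\limsup\le -\log|\Gamma|h(p,0)$ by $f\le l-\log|\Gamma|$ and the expansion of $u$ at $(p,0)$. Combined with the existence of the limit from the second step, this gives the claim. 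The decisive new input is that $\square h\le K$ with $v\le 0$ forces the defect in monotonicity to satisfy $\int(\square h)v\ge K\int v$, and the integrand $\int v$ is exactly the Perelman $W$ to which Proposition~\ref{proposition: deltalimit} with $h\equiv 1$ applies.
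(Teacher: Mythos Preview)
Your proof is correct and follows essentially the same route as the paper. The only cosmetic difference is in how the monotone quantity is built: the paper bounds $\int_X v$ from below by the constant $C=K\mu(g(-K^{-1}),K^{-1})$ and works with $C\tau+\int_X hv$, whereas you keep $\int_X v$ time-dependent and subtract its integral, appealing to Proposition~\ref{proposition: deltalimit} with $h\equiv 1$ for its boundedness; both yield the same nondecreasing corrected functional and the same subsequence extraction.
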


 \begin{proof}
  The monotonicity of $\int_X v$ tells us that
  \begin{align*}
    \int_X v \geq \int_X v|_{t=K^{-1}} \geq \mu(g(-K^{-1}), K^{-1}).
  \end{align*}
  whenever $t \in [-K^{-1}, 0)$.
  The conditions $\square h \leq K$, $v \leq 0$ imply
 \begin{align*}
   \D{}{t}\{\int_X hv\}=\int_X (v \square h  - h\square^*v) \geq K \int_X v-\int_X h \square^* v
  \geq C +2 \tau \int_X \snorm{R_{ij}+f_{,ij}-\frac{g_{ij}}{2\tau}}{}^2  uh
 \end{align*}
 where $C=K\mu(g(-K^{-1}), K^{-1})$, $\tau=-t$.
 In other words,
 \begin{align*}
   \D{}{t}\{C\tau +\int_X hv\}  \geq 2 \tau \int_X \snorm{R_{ij}+f_{,ij}-\frac{g_{ij}}{2\tau}}{}^2
   uh \geq 0.
 \end{align*}
 By the same argument as in
 Proposition~\ref{proposition: deltalimit}, $C\tau + \int_X hv$ is
 uniformly bounded from above. So the limit
 $\displaystyle \lim_{\tau \to 0} \int_X hv=\lim_{\tau \to 0} C\tau + \int_X hv$ exists. There
 is a sequence $\tau_k \to 0$ such that
 \begin{align*}
 2 \tau_k^2 \int_X \snorm{R_{ij}+f_{,ij}-\frac{g_{ij}}{2\tau}}{}^2
 \longrightarrow 0.
 \end{align*}
 This yields that $\displaystyle \lim_{\tau_k \to 0} \int_X (R+ \triangle f - \frac{n}{2\tau_k})uh
 =0$.
 Note $-\triangle h \leq K$, as in Proposition~\ref{proposition: deltalimit}, we have
 \begin{align*}
   \lim_{\tau \to 0} \int_X vh
 &=\lim_{\tau_k \to 0} \left\{ \tau_k \int_X [(R + \triangle f) -
 \frac{n}{2\tau_k}]uh - \tau_k \int_X u \triangle h
 +\int_X (f-\frac{n}{2})uh \right\}\\
 &\leq -\log|\Gamma|  h(p, 0).
 \end{align*}
 \end{proof}

 \subsection{Proof of Pseudolocality Theorem}

\textbf{In this section, we fix $\alpha= \frac{1}{10^6m}$.}

 \begin{theorem}[\textbf{Pseudolocality theorem}]
  There exist $\delta>0, \epsilon>0$ with the following property.
  Suppose $\{(X, g(t)), 0 \leq t \leq \epsilon^2\}$ is an orbifold Ricci flow
  solution satisfying
 \begin{itemize}
 \item  Isoperimetric constant close to Euclidean one:  $\mathbf{I}(B(x, 1)) \geq (1-\delta) \mathbf{I}(\R^n)$
 \item  Scalar curvature bounded from below:  $R \geq -1$ in $B(x, 1)$.
 \end{itemize}
 under metric $g(0)$.  Then in the geometric parabolic ball
 $\tilde{P}^+(x, 0, \epsilon, \epsilon^2)$,
 every point is smooth and $|Rm| \leq \frac{\alpha}{t} +
 \epsilon^{-2}$.
 \end{theorem}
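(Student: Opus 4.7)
The plan is to argue by contradiction, following Perelman's original pseudolocality strategy, with modifications for orbifold singularities and the corrected delta-limit of Theorem~\ref{theorem: deltalimit}. Assume the conclusion fails: then for each $k \geq 1$ there exist parameters $\delta_k, \epsilon_k \to 0$ and a compact orbifold Ricci flow $\{(X_k, g_k(t)), 0 \leq t \leq \epsilon_k^2\}$ with a base point $x_k$ at which the two hypotheses hold (at scale $r_0 = 1$), yet with a point $(y_k, t_k) \in \tilde{P}^+(x_k, 0, \epsilon_k, \epsilon_k^2)$ satisfying $|\hat{Rm}|(y_k, t_k) > \alpha/t_k + \epsilon_k^{-2}$; equivalently either $y_k$ is singular, or it is smooth with curvature exceeding the target bound.

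First I would apply Perelman's spacetime point-selection procedure using the quantity $|\hat{Rm}|$. Since orbifold singularities are finite in number and always dominate the selection criterion with $|\hat{Rm}| = +\infty$, one can iterate (replacing any proposed singular point by a nearby smooth point of higher curvature) and extract a refined sequence $(\bar{y}_k, \bar{t}_k)$ with $Q_k := |\hat{Rm}|(\bar{y}_k, \bar{t}_k) \to \infty$, $\bar{t}_k \in (0, \epsilon_k^2]$, and the controlled-neighborhood property
\begin{align*}
 |\hat{Rm}|(z, s) \leq 4 Q_k \qquad \text{on} \qquad \tilde{P}^-\!\bigl(\bar{y}_k, \bar{t}_k, A_k Q_k^{-1/2}, c_k Q_k^{-1}\bigr),
\end{align*}
with $A_k \to \infty$. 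This curvature bound forces the whole backward parabolic region to lie in the smooth locus; in particular $\bar{y}_k$ itself is smooth, so rescaling by $Q_k$ at $(\bar{y}_k, \bar{t}_k)$ yields a smooth Ricci flow on a large piece of spacetime with $|Rm| \leq 4$.

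Second, following Perelman, I would introduce the fundamental solution $u_k$ of $\square^* u = 0$ on $X_k \times [0, \bar{t}_k]$ based at the smooth point $(\bar{y}_k, \bar{t}_k)$, and its companion $v_k = [(\bar{t}_k - t)(2\triangle f_k - |\nabla f_k|^2 + R) + f_k - m]u_k$. Pick a nonnegative forward-heat solution $h_k$ concentrated at $t = 0$ on a small ball around $x_k$, with total mass close to $1$. Since $\bar{y}_k$ is smooth, $|\Gamma_{\bar{y}_k}| = 1$, and Theorem~\ref{theorem: deltalimit} yields $\lim_{t \to \bar{t}_k^-} \int h_k v_k \leq 0$. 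On the other hand, at $t = 0$ the near-Euclidean isoperimetric hypothesis on $B(x_k, 1)$ together with $R \geq -1$ implies, via Perelman's localized logarithmic Sobolev computation, the matching lower bound $\int h_k v_k |_{t=0} \geq -\sigma_k$ with $\sigma_k \to 0$. Combined with the (quasi-)monotonicity of $t \mapsto \int h_k v_k$ established in the proof of Theorem~\ref{theorem: deltalimit}, this pins both endpoints to $0$ in a quantitative sense.

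Finally, rescale the flows by $Q_k$ at $(\bar{y}_k, \bar{t}_k)$. The uniform curvature bound from the point-selection lets the rescaled flows subconverge in Cheeger-Gromov topology to a smooth non-flat ancient Ricci flow with $|Rm|(\bar{y}_\infty, 0) = 1$, and the rescaled $v_k$ converge to a limit $v_\infty \leq 0$. The quantitative squeeze forces $v_\infty \equiv 0$ near the base point, and the equality case in $\square^* v_\infty \leq 0$ then gives $R_{ij} + (f_\infty)_{ij} = g_{ij}/(2\tau)$, i.e., the limit is a gradient shrinking soliton; the Euclidean asymptotics of the backward heat kernel at $\bar{y}_\infty$ pushes this further to flat Euclidean space, contradicting $|Rm|(\bar{y}_\infty, 0) = 1$. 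The hard part will be the initial-time lower bound: one must carefully derive the near-Euclidean log-Sobolev inequality from $\mathbf{I}(B(x_k, 1)) \geq (1-\delta_k)\mathbf{I}(\R^m)$ in the presence of possible singularities elsewhere on $X_k$, and match the constants so the squeeze goes through. The orbifold delta-limit correction $-\log|\Gamma|$ does not enter explicitly, since the selected point is smooth; but had the selection been forced onto a singular point, the extra $-\log|\Gamma| < 0$ would only strengthen the inequality in our favor.
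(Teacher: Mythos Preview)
Your overall strategy is Perelman's, and the endgame (conjugate heat kernel, localized log-Sobolev contradiction) is fine. The gap is in your point-selection step and in how you dispose of singularities.

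You assert that the Perelman iteration on $|\hat{Rm}|$ produces a smooth base point $(\bar{y}_k,\bar{t}_k)$ with $|\hat{Rm}|\leq 4Q_k$ on a large backward parabolic neighborhood, and that ``replacing any proposed singular point by a nearby smooth point of higher curvature'' keeps the iteration going. This is circular: a singular point has $|\hat{Rm}|=+\infty$, so no smooth point nearby has \emph{higher} curvature, and the usual termination argument (curvature at least doubles while staying in a compact set with bounded curvature) fails precisely because curvature is unbounded near singularities. Without knowing in advance that the relevant spacetime region is singular-free, you cannot run the selection, and you cannot rescale by $Q_k$ if the selection is forced onto a singular point.

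The paper inverts your order of operations. Before any point selection, it proves a separate Claim: there are no singular points in the large geometric parabolic ball $\tilde{P}^+(x,0,4A\epsilon,\epsilon^2)$ with $A=\alpha\epsilon^{-1}$. The proof of this Claim is exactly the idea you mention only as an afterthought: assume a singular point $(p,s)$ exists, base the conjugate heat kernel there, and invoke Theorem~\ref{theorem: deltalimit}. Because $|\Gamma_p|\geq 2$, one gets the \emph{definite} bound $\displaystyle -\int_X \eta v\big|_{t=0}\geq \tfrac12\log 2$, and this quantitative negativity (together with standard cutoff estimates showing $\int \eta u$ is close to $1$) already contradicts the near-Euclidean isoperimetric inequality on $B(x,1)$---no blowup limit is needed for this step. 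Once the Claim is established, the region is smooth, Perelman's point selection and the rest of his argument run verbatim, and the orbifold correction $-\log|\Gamma|$ never reappears. In short: the $-\log|\Gamma|$ term is not a harmless strengthening at the end, it is the engine that clears singularities out of the way at the start. Reorganize your proof to make that Claim the first substantive step.
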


 \begin{remark}
   The condition $\mathbf{I}(B(x, 1))> (1-\delta)\mathbf{I}(\R^n)$
 implies that there is no orbifold singularity in $B(x, 1)$.
 \end{remark}

 \begin{proof}

   Define $\displaystyle F(x, r) \triangleq \sup_{(y, t) \in \tilde{P}^+(x, 0, r, r^2)}\{ |\hat{Rm}| - \frac{\alpha}{t} -
   r^{-2}\}$ where $|\hat{Rm}|$ is defined in Definition~\ref{definition: rmhat}.  Then the conclusion of the theorem is equivalent to
   $F(x, \epsilon) \leq 0$.

    Suppose this theorem is wrong.
   For every $(\delta, \eta) \in \R^+ \times \R^+$, there are
   orbifold Ricci flow solutions violating the property.   So we can
   take a sequence of positive numbers $(\delta_i, \eta_i) \to (0, 0)$
   and orbifold Ricci flow solutions $\{(X_i, x_i, g_i(t)), 0 \leq t \leq \eta_i^2 \}$
   satisfying the initial conditions but
   $F(x_i, \eta_i)>0$.

   Define $\epsilon_i$ to be the infimum of $r$ such that $F(x_i, r) \geq  0$.
   Since $x_{i}$ is a smooth point, we have $\eta_i>\epsilon_i >0$.  For every point
   $(z, t) \in \tilde{P}^+(x_i, 0, \epsilon_i, \epsilon_i^2)$, we have
   \begin{align}
     |\hat{Rm}|_{g_i(t)}(z) -\frac{\alpha}{t}- \epsilon_i^{-2}
     \leq |\hat{Rm}|_{g_i(t_i)}(y_i) -\frac{\alpha}{t}- \epsilon_i^{-2}=0
   \label{eqn: bschoice}
   \end{align}
   for some point  $(y_i, t_i) \in \tilde{P}^+(x_i, 0, \epsilon_i, \epsilon_i^2)$.

   Let $A_i =\alpha \epsilon_i^{-1}=\frac{1}{10^6m \epsilon_i}$.

  \begin{clm}
    Every point in the geometric parabolic ball
    $\tilde{P}^+(x_i, 0, 4A_i \epsilon_i, \epsilon_i^2)$
    is a smooth point.
  \end{clm}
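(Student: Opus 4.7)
The plan is to argue by contradiction, using a Perelman-type point-picking scheme combined with the minimality of $\epsilon_i$. Suppose that some singular point $p^*\in X_i$ lies in $\tilde{P}^+(x_i,0,4A_i\epsilon_i,\epsilon_i^2)$, i.e., $d_{g_i(s^*)}(x_i,p^*)\le 4A_i\epsilon_i=4\alpha$ for some $s^*\in(0,\epsilon_i^2]$. Set
\begin{align*}
\rho^* \;:=\; \inf\{\rho>0 \,:\, \tilde{P}^+(x_i,0,\rho,\epsilon_i^2)\ \text{contains a singular point}\}.
\end{align*}
By (\ref{eqn: bschoice}) and $|\hat{Rm}|(\text{singular})=\infty$, we have $\rho^*\ge\epsilon_i$; the contradiction hypothesis gives $\rho^*\le 4A_i\epsilon_i$. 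Since the singular set is finite and $d_{g_i(\cdot)}(x_i,p)$ is continuous in $t$, a singular point $p^*$ touches the boundary at some time $s^*\in[0,\epsilon_i^2]$, i.e.\ $d_{g_i(s^*)}(x_i,p^*)=\rho^*$, while the open parabolic ball $\tilde{P}^+(x_i,0,\rho,\epsilon_i^2)$ with $\rho<\rho^*$ consists entirely of smooth points.

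Next, because $|\hat{Rm}|(p^*)=\infty$ and the smooth part is open, I can extract a sequence $(z_k,s_k)\to(p^*,s^*)$ of smooth points with $|Rm|(z_k,s_k)\to\infty$, lying in $\tilde{P}^+(x_i,0,\rho^*+\tfrac{1}{k},\epsilon_i^2)$. Apply Perelman's point-picking lemma locally to each $(z_k,s_k)$ to produce a ``clean'' bad point $(y_k,\tau_k)$ with $Q_k:=|Rm|(y_k,\tau_k)\ge|Rm|(z_k,s_k)\to\infty$, with $d_{g_i(\tau_k)}(x_i,y_k)\le\rho^*+o(1)$, and with $|\hat{Rm}|\le 4Q_k$ throughout the geometric parabolic ball $\tilde{P}^-(y_k,\tau_k,\eta Q_k^{-1/2},\eta Q_k^{-1})$ for some small $\eta=\eta(m)$. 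Since $\rho^*$ was the infimum radius at which a singularity first appears, this entire clean parabolic ball lies in the smooth region, so the point-picking actually makes sense inside the orbifold.

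The main obstacle, and the heart of the argument, is to convert the existence of such a clean bad point into a contradiction with the choice of $\epsilon_i$ as the infimum in the definition of $F(x_i,\cdot)$. The strategy is: on the clean parabolic ball, bounded curvature gives Hamilton-style distance comparison, so $d_{g_i(t)}(x_i,y_k)$ varies only by $O(\sqrt{Q_k}\cdot \eta Q_k^{-1})=O(\eta Q_k^{-1/2})$ across the time-depth $\eta Q_k^{-1}$. Rescaling by $\sqrt{Q_k}$ and using the no-local-collapsing Proposition established earlier, I would extract a smooth limit Ricci flow with $|Rm|(y_\infty,0)=1$ and $|Rm|\le 4$ on a unit geometric parabolic ball. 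Combining this with the fact that $Q_k^{-1/2}\ll\epsilon_i$ for large $k$ (since $Q_k\to\infty$ while $\epsilon_i$ is fixed in $i$), one may trace a chain of overlapping controlled parabolic balls connecting $(y_k,\tau_k)$ back into the base parabolic ball $\tilde{P}^+(x_i,0,\epsilon_i,\epsilon_i^2)$, carrying the lower bound $Q_k$ along; this produces a point where the effective curvature exceeds $\alpha/t+\epsilon_i^{-2}$ at radius slightly less than $\epsilon_i$, contradicting the definition of $\epsilon_i$. The careful bookkeeping uses crucially the choices $\alpha=1/(10^6 m)$ and $A_i=\alpha/\epsilon_i$, so that the radius $4A_i\epsilon_i$ leaves enough room for the chain of controlled balls to fit while the clean scale $Q_k^{-1/2}$ is negligible compared to $\epsilon_i$.
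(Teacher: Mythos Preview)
Your approach has a genuine gap, and it differs fundamentally from the paper's argument.

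The paper does not use point-picking or blow-up limits here at all. Instead, it exploits the \emph{entropy deficit} carried by an orbifold singularity: if $(p,s)$ is singular, start the conjugate heat kernel $u$ from $(p,s)$, and use Theorem~\ref{theorem: deltalimit} (the orbifold version of Perelman's differential Harnack limit) to get $\lim_{t\to s^-}\int_X(-\eta v)\ge \log|\Gamma_p|\cdot\eta(p,s)\ge \log 2\cdot\eta(p,s)$, since $|\Gamma_p|\ge 2$ at a genuine singularity. A standard cutoff argument with $\eta=\phi((d_{g(t)}(\cdot,x)+200m\sqrt t)/(10A\epsilon))$ then transports this deficit to $t=0$, producing a test function on $B(x,20A\epsilon)\subset B(x,1)$ with $\int(-v\eta)>\tfrac12\log 2$ and $\int u\eta$ close to~$1$. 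After the usual manipulations and rescaling this contradicts the almost-Euclidean isoperimetric constant of $B(x,1)$ at $t=0$. The orbifold structure $|\Gamma_p|\ge 2$ is the entire engine of the argument.

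Your proposal misses this mechanism. You replace the singular point $p^*$ by nearby smooth points $(z_k,s_k)$ with $|Rm|\to\infty$ and then try point-picking, but at that stage you have erased the only feature that distinguishes this situation from the ordinary manifold pseudolocality argument. More concretely, your final paragraph is where the proof breaks: you claim to ``trace a chain of overlapping controlled parabolic balls'' from $(y_k,\tau_k)$ (at distance $\approx\rho^*\ge\epsilon_i$ from $x_i$) back into $\tilde P^+(x_i,0,\epsilon_i,\epsilon_i^2)$ and thereby contradict the minimality of $\epsilon_i$. But minimality of $\epsilon_i$ only says that $|\hat{Rm}|\le\alpha/t+r^{-2}$ inside $\tilde P^+(x_i,0,r,r^2)$ for $r<\epsilon_i$; it says nothing about points at distance $\ge\epsilon_i$. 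There is no mechanism to propagate a curvature lower bound from the outer shell inward, and the local bound $|\hat{Rm}|\le 4Q_k$ on the clean ball gives no global information. Nor do you ever invoke the isoperimetric hypothesis or the scalar curvature lower bound, which are the only tools available to rule out high curvature in the enlarged region.
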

  For convenience, we omit the subindex $i$.
  Suppose that $(p, s)$ is a singular point in
   $\tilde{P}^+(x,0,4A\epsilon, \epsilon^2)$.
  Let
  \begin{align*}
   \eta(y, t) = \phi (\frac{d_{g(t)}(y, x) +200m \sqrt{t}}{10A\epsilon})
  \end{align*}
  where $\phi$ is a cutoff function satisfying the following
  properties. It takes value one on $(-\infty, 1]$ and decreases to
  zero on $[1, 2]$. Moreover, $-\phi'' \leq 10\phi, (\phi')^2 \leq 10\phi$.
   Recall that
  \begin{align*}
     |\hat{Rm}| \leq  \frac{\alpha}{t} + \epsilon^{-2} \leq
     \frac{1+\alpha}{t} < \frac{2}{t}
  \end{align*}
  in the set $\tilde{P}^+(x,0, \epsilon, \epsilon^2)$. In
  particular, every point in $B_{g(t)(x, \sqrt{\frac{t}{2}})}$ is
  smooth and satisfies $|Rm| < \frac{2}{t}$. This
  curvature estimate implies that (c.f. Lemma 8.3 (a) of~\cite{Pe1}, it also holds in orbifold case.)
  \begin{align*}
  \square d \geq -(m-1)(\frac23 \cdot \frac{2}{t} \cdot \sqrt{\frac{t}{2}} + \sqrt{\frac{2}{t}})
  >-4m t^{-\frac12},
  \end{align*}
  where $d(\cdot)=d_{g(t)}(\cdot, x)$. Therefore, as calculated in~\cite{Pe1},
  we have
  \begin{align*}
  \square \eta = \frac{1}{10A\epsilon}(\square d +100m t^{-\frac12})\phi' - \frac{1}{(10A\epsilon)^2} \phi''
      \leq \frac{10\eta}{(10A\epsilon)^2}.
  \end{align*}
  Let $u$ be the fundamental solution of the backward heat equation
  $\square^* u=0$ and $u=\delta_p$ at point $(p, s)$.
  We can calculate
  \begin{align*}
     \D{}{t} \int_X \eta u  &=  \int_X (u \square \eta - \eta   \square^*u)
   =\int_X u \square \eta
   \leq \frac{1}{10(A\epsilon)^2} \int_X u\eta
   \leq \frac{1}{10(A\epsilon)^2} \int_X u
   =\frac{1}{10(A\epsilon)^2}.
  \end{align*}
  It follows that
  \begin{align*}
   \left.\int_X \eta u \right|_{t=0} \geq \left.\int_X \eta u \right|_{t=s} -\frac{s}{10(A\epsilon)^2}
    \geq  1 - \frac{1}{10A^2}.
  \end{align*}
  Similarly, if we let $\bar{\eta}(y, t) = \phi (\frac{d_{g(t)}(y, x) +200m
  \sqrt{t}}{5A\epsilon})$, we can obtain
  \begin{align*}
   \int_{B(x, 10A\epsilon)} u \geq \left. \int_X \bar{\eta} u \right|_{t=0}
   \geq 1-\frac{10}{(5A)^2}.
  \end{align*}
  It forces that
  \begin{align*}
    \left.\int_{B(x, 20A\epsilon) \backslash B(x, 10A\epsilon)} \eta u \right|_{t=0}
   \leq 1-(1-\frac{10}{(5A)^2}) < A^{-2}.
  \end{align*}
  On the other hand, we have
  \begin{align*}
   \D{}{t} \int_X -\eta v &= \int_X (-v \square \eta + \eta \square^*
   v)\\
   &\leq \int_X -v \square \eta\\
   &\leq \frac{1}{10(A\epsilon)^2} \int_X -\eta v,
  \end{align*}
  where we used the fact $-v \geq 0$ and $\square \eta \leq \frac{\eta}{10(A\epsilon)^2}$.
  This inequality together with Theorem~\ref{theorem: deltalimit} implies
  \begin{align*}
        \left.\int_X -\eta v \right|_{t=0}
    \geq e^{-\frac{s}{10(A\epsilon)^2}} \left.\int_X -\eta v\right|_{t=s}
    \geq \log |\Gamma| \eta(x, s) e^{-\frac{s}{10(A\epsilon)^2}} > \frac12 \log |\Gamma|
    \geq \frac12 \log 2.
  \end{align*}
  Let $\tilde{u}=u\eta$ and $\tilde{f}=f -\log \eta$. At $t=0$, as in~\cite{Pe1},
  we can compute
  \begin{align*}
      \frac12 \log 2 &\leq -\int_X  v \eta
  = \int_X \{(-2\triangle f + |\nabla f|^2 -R) s -f +m\} \eta u\\
  &=\int_X \{ -s|\nabla \tilde{f}|^2 -\tilde{f} +m\} \tilde{u}
    +\int_X \{s(\frac{|\nabla \eta|^2}{\eta} -R\eta) -\eta \log
    \eta\} u\\
  &\leq 10A^{-1} + 100\epsilon^2 +\int_X\{-s|\nabla \tilde{f}|^2 -\tilde{f} -m\} \tilde{u}
  \end{align*}
  After rescaling $s$ to be $\frac12$, we obtain
  \begin{align*}
  \left\{
  \begin{array}{ll}
     &\int_{B(x, \frac{20A\epsilon}{\sqrt{2s}})} \{\frac12 (|\nabla \tilde{f}|^2 + \tilde{f} -m)\}
         < -\frac14 \log 2.\\
     &1-A^{-2} <\int_{B(x, \frac{20A\epsilon}{\sqrt{2s}})} \tilde{u} \leq
     1.
  \end{array}
  \right.
  \end{align*}
  This contradicts to the fact that
  $B(x, \frac{20A\epsilon}{\sqrt{2s}}) \subset B(x, \frac{1}{\sqrt{2s}})$
  has almost Euclidean isoperimetric constant
  (c.f. Proposition 3.1 of~\cite{Ni2} for more details).
  So we finish the proof of the claim.\\

  Now we can do as Perelman did in Claim 1 of the proof of Theorem 10.1 of~\cite{Pe1}.
  We can find a point
  $(\bar{x}, \bar{t})$ such that
  \begin{align*}
      |\hat{Rm}|_{g(t)}(z) \leq 4 |\hat{Rm}|_{g(\bar{t})}(\bar{x})
  \end{align*}
  whenever
  \begin{align*}
    (z, t) \in X_{\alpha}, \; 0< t \leq \bar{t},\;
    d_{g(t)}(z, x) \leq d_{g(\bar{t})}(\bar{x}, x) +
    A|\hat{Rm}|_{g(\bar{t})}(\bar{x})^{-\frac12}
  \end{align*}
  where $X_{\alpha}$ is the set of pairs $(z, t)$ satisfying
   $|\hat{Rm}|_{g(t)}(z) \geq \frac{\alpha}{t}$.
  Moreover, we also have $d_{g(\bar{t})}(\bar{x}, x)<(2A+1)
  \epsilon$.  Therefore, the geometric parabolic ball
  $\tilde{P}^+(\bar{x}, 0, A\epsilon, \bar{t})$
  is strictly contained in the geometric parabolic ball
   $\tilde{P}^+(x, 0, 4A\epsilon,\epsilon^2)$.  Therefore, every point around $(\bar{x}, \bar{t})$
  is smooth. We can replace $|\hat{Rm}|$ by $|Rm|$
  and all the arguments of Perelman's proof in~\cite{Pe1}
  apply directly.  For simplicity, we only sketch the basic steps.

  $P^-(\bar{x}, \bar{t}, \frac{1}{10}AQ^{-\frac12}, \frac12 \alpha Q^{-1})$
  is a parabolic ball satisfying $|Rm| \leq
  4Q=4|Rm|_{g(\bar{t})}(\bar{x})$, every point in it is smooth. Then
  by blowup argument, we can show that there is a time
  $\tilde{t} \in [\bar{t}-\frac12\alpha Q^{-1}, \bar{t}]$, such that
  $\int_{B_{g(\tilde{t})}(\bar{x}, \sqrt{\bar{t}-\tilde{t}})} v < -c_0$
  for some positive constant $c_0$, where $v$ is the auxiliary function related to the fundamental solution
  $u=(4\pi(\bar{t}-t))^{-\frac{n}{2}}e^{-f}$ of conjugate heat
  equation, starting from $\delta$-functions at $(\bar{x}, \bar{t})$.
  Under the help of cutoff functions, we can construct a function
  $\tilde{f}$ satisfying
  \begin{align*}
  \left\{
  \begin{array}{ll}
     &\int_{B(x, \frac{20A\epsilon}{\sqrt{2\bar{t}}})} \{\frac12 (|\nabla \tilde{f}|^2 + \tilde{f} -m)\}
         < -\frac12 c_0.\\
     &1-A^{-2} <\int_{B(x, \frac{20A\epsilon}{\sqrt{2\bar{t}}})} \tilde{u} \leq 1.
  \end{array}
  \right.
  \end{align*}
  under the metric $\frac{1}{2\bar{t}}g(0)$. Since
  $B(x, \frac{20A\epsilon}{\sqrt{2\bar{t}}}) \subset B(x, \frac{1}{\sqrt{2\bar{t}}})$
  has almost Euclidean isoperimetric constant as $\epsilon \to 0, A \to
  \infty$,  we know these inequalities cannot hold simultaneously!
 \end{proof}

 \begin{proposition}
   Let $\{(X, g(t)), 0 \leq t \leq 1\}$, $x, \delta, \epsilon$ be the
   same as in the previous theorem.  If in addition,
   $|Rm|<1$ in the ball $B(x, 1)$ at time $t=0$, then
  \begin{align*}
     |Rm|_{g(t)}(y) < (\alpha\epsilon)^{-2}
  \end{align*}
  whenever $0<t< (\alpha \epsilon)^2,  \; dist_{g(t)}(y, x)< \alpha \epsilon$.
 \end{proposition}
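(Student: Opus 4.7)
The plan is to combine the pseudolocality theorem (just proved above) with a classical short-time local curvature estimate that exploits the additional initial bound $|Rm|_{g(0)}<1$.

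First, I would apply the pseudolocality theorem directly to the given solution, which guarantees that every point of $\tilde{P}^+(x,0,\epsilon,\epsilon^2)$ is smooth (so that no orbifold singularity intervenes in the parabolic neighborhood of $(x,0)$ we care about) and provides the baseline bound $|Rm|_{g(t)}(y)\le\frac{\alpha}{t}+\epsilon^{-2}$ there.  This bound is harmless for $t$ bounded away from $0$, but blows up as $t\to 0$, which is precisely the defect that the extra hypothesis $|Rm|_{g(0)}<1$ has to repair.

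Next, with smoothness of the flow on $\tilde{P}^+(x,0,\epsilon,\epsilon^2)$ secured, I would propagate the initial bound forward by a standard cutoff-plus-maximum-principle argument.  Choose a smooth cutoff $\phi$ supported in $B_{g(0)}(x,1)$ with $\phi\equiv 1$ on $B_{g(0)}(x,1/2)$, and apply the parabolic maximum principle to $\phi^2|Rm|^2$ using the Ricci-flow evolution inequality
\begin{align*}
\square|Rm|^2\le C_m|Rm|^3-2|\nabla Rm|^2.
\end{align*}
On a short time interval, the initial bound $|Rm|<1$ in $B_{g(0)}(x,1)$ keeps $|Ric|$ uniformly controlled, so distances do not distort much and the $g(0)$-cutoff is still supported in the region where the flow is smooth and where the initial bound lives.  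A standard doubling-time continuation then yields dimensional constants $c(m),K(m)>0$ with
\begin{align*}
|Rm|_{g(t)}(y)\le K(m)\qquad\text{on }\tilde{P}^+(x,0,\tfrac14,c(m)).
\end{align*}

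Finally, by shrinking the $\epsilon$ from pseudolocality if necessary so that $\alpha\epsilon<\tfrac14$, $(\alpha\epsilon)^2<c(m)$, and $K(m)<(\alpha\epsilon)^{-2}$, the region $\tilde{P}^+(x,0,\alpha\epsilon,(\alpha\epsilon)^2)$ is contained in $\tilde{P}^+(x,0,\tfrac14,c(m))$, so the required bound $|Rm|_{g(t)}(y)<(\alpha\epsilon)^{-2}$ follows.  The main technical obstacle is the maximum-principle step: the pseudolocality estimate $\alpha/t+\epsilon^{-2}$ is not integrable in time down to $t=0$, so one cannot control the distortion of $d_{g(t)}$ from $d_{g(0)}$ using pseudolocality alone near $t=0$.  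The extra hypothesis $|Rm|<1$ in $B(x,1)$ is exactly what is needed to handle that initial interval, after which pseudolocality takes over and closes the argument.
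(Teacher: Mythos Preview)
Your approach differs substantially from the paper's.  You want to invoke pseudolocality only for smoothness of the parabolic region and then appeal to a ``standard'' local doubling-time estimate that propagates the initial bound $|Rm|_{g(0)}<1$ forward via a cutoff maximum principle on $\phi^2|Rm|^2$.  The paper instead argues by contradiction: assuming a bad point $(y_0,t_0)$ exists with $|Rm|\ge(\alpha\epsilon)^{-2}$, it runs a Perelman-type point-selection to upgrade $(y_0,t_0)$ to one satisfying property~$\clubsuit$ (i.e.\ $|Rm|\le 4Q$ on a controlled backward geometric parabolic region $P$), rescales so that $4Q=1$, and then compares the subsolution $|Rm|^2(1-\tfrac{t}{32\alpha})$ with the explicit supersolution $\eta^{-4}$ of $\square F=-6000\,F^{3/2}$, where $\eta$ is a time-dependent cutoff built from $d_{\tilde g(t)}$.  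The pseudolocality bound is used quantitatively, not just for smoothness: it gives $Qt_0<\alpha(1+\alpha)$, which is what keeps $1-\tfrac{t}{32\alpha}>0$ on $P$ and closes the numerical contradiction $\tfrac{1}{32}<\tfrac{1}{625}$.

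The weak point in your sketch is precisely the step you label ``standard''.  Localizing the maximum principle for $|Rm|^2$ with a cutoff $\phi$ requires control of $\Delta_{g(t)}\phi$ (or equivalently $\square d_{g(t)}$) at positive times, which needs curvature bounds on the support of $\phi$ that you do not yet have; the sentence ``the initial bound $|Rm|<1$ keeps $|Ric|$ uniformly controlled'' is circular as written.  A genuine open--closed/continuity argument can rescue this, and it is essentially the content of Lu Peng's written proof of Perelman's Theorem~10.3 that the paper cites, but it is not a one-line standard fact.  The paper's point-selection-plus-barrier argument is longer but self-contained and avoids importing that local estimate as a black box; your route would be shorter if you are willing to cite such a result, but then the proposition becomes a corollary of that citation rather than an independent argument.
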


 \begin{proof}
  Suppose not. There is a point $(y_0, t_0)$ satisfying
 \begin{align*}
  |Rm|_{g(t_0)}(y_0) \geq (\alpha \epsilon)^{-2}, \quad
  0<t< (\alpha \epsilon)^2,
  \quad d_{g(t_0)}(y_0, x)< \alpha \epsilon.
 \end{align*}
 Check if $Q=|Rm|_{g(t_0)}(y_0)$ can control  $|Rm|$ of  ``previous and
 outside" points. In other words, check if the following property $\clubsuit$ is
 satisfied.
 \begin{align*}
 \clubsuit: \qquad  |Rm|_{g(t)}(z) \leq 4Q, \quad
  \forall \; 0 \leq t \leq t_0,  \quad
  d_{g(t)}(z, x) \leq d_{g(t_0)}(y_0, x) + Q^{-\frac12}.
 \end{align*}
 If not, there is a point $(z, s)$ such that
 \begin{align*}
  |Rm|_{g(s)}(z) > 4Q, \quad  0 < s \leq t_0,  \quad
  d_{g(s)}(z, x) \leq d_{g(t_0)}(y_0, x) + Q^{-\frac12}.
 \end{align*}
 Then we denote $(z, s)$ as $(y_1, t_1)$ and check if the property $\clubsuit$
 is satisfied at this new base point.
 Now matter how many steps this process are performed, the base
 point $(y_k, t_k)$ satisfies
 \begin{align*}
  0 &< t_k \leq t_0 < (\alpha \epsilon)^2,  \\
  d_{g(t_k)}(y_k, x)
  &< d_{g(t_0)}(y_0, x) + \sum_{l=0}^{k-1}  2^{-l} Q^{-\frac12}
  < d_{g(t_0)}(y_0, x) + 2Q^{-\frac12}
  < 3\alpha \epsilon < \epsilon.
 \end{align*}
 Namely, $(y_k, t_k)$ will never escape the compact set
 \begin{align*}
 \Omega=\{(z, s)|  0 \leq s \leq (\alpha \epsilon)^2,
  \; d_{g(s)}(z, x) < 3\alpha \epsilon \}
 \end{align*}
 which has bounded $|Rm|$. During each step, $|Rm|$ doubles at least.
 Therefore, this process must terminate in
 finite steps and property $\clubsuit$ will finally hold.
 Without loss of generality, we can assume property $\clubsuit$ holds already at
 the point $(y_0, t_0) \in \Omega$. Define
 \begin{align*}
   P= \{(z,s)| 0 \leq s \leq t_0,  \quad  d_{g(s)}(z, y_0) < Q^{-\frac12}\}.
 \end{align*}
 Triangle inequality and property $\clubsuit$ implies $|Rm| \leq 4Q$
 holds in $P$.
 Let $\tilde{g}(t)=4Qg(\frac{t}{4Q})$, we have
\begin{align*}
   P= \{(z,s)| 0 \leq s \leq 4Qt_0,  \quad  d_{\tilde{g}(s)}(z, y_0) < 2\}.
 \end{align*}

 From now on, we do all the calculation under the metric
 $\tilde{g}(t)$.

 Define $\eta(z, t)= 5\phi(d(z, y_0) + 100mt)$
 where $\phi$ is the same cutoff function as before. It equals $1$ on $(-\infty, 0]$ and decreases to zero on $[1, 2]$. It satisfies $-\phi'' \leq 10 \phi, \quad (\phi')^2 \leq 10\phi$.
 In $P$, we calculate
 \begin{align*}
  \snorm{\nabla \eta}{}^2&= 25(\phi')^2 \snorm{\nabla d}{}^2 = 25 (\phi')^2 \leq 250 \phi=50\eta,\\
  \square \eta
   &=5(\square d + 100m) \phi' -5\phi''
  \leq -5\phi'' \leq 10 \eta,\\
  \square \eta^{-4} &=-4 \eta^{-5} \square
  \eta -20 \eta^{-6}\snorm{\nabla \eta}{}^2 \geq -40 \eta^{-4} -1000 \eta^{-5}
  =(-40\eta^2- 1000\eta) (\eta^{-6})\\
  &\geq -6000(\eta^{-4})^{\frac32}.
 \end{align*}
 On the other hand, in $P$, we have
 \begin{align*}
   \square \{|Rm|^2(1-\frac{t}{32\alpha})\} &\leq 16|Rm|^3(1-\frac{t}{32\alpha})
   - \frac{1}{32\alpha} |Rm|^2\\
   &\leq (16- \frac{1+16t}{32\alpha})|Rm|^2 \\
   &\leq (16- \frac{1+16t}{32\alpha})|Rm|^3 \\
   &\leq (16- \frac{1+16t}{32\alpha})\{|Rm|^2(1-\frac{t}{32\alpha})\}^{\frac32}\\
   &\leq -6000\{|Rm|^2(1-\frac{t}{32\alpha})\}^{\frac32}.
 \end{align*}
 In these inequalities, we used the fact that $|Rm| \leq 1$, $16- \frac{1+16t}{32\alpha}<-6000<0$ and $1-\frac{t}{32\alpha}>0$
 in $P$.
 $|Rm| \leq 1$ is guaranteed by the choice of $P$. Recall $\alpha=\frac{1}{10^6m}$, so $16- \frac{1+16t}{32\alpha}<-6000$ is
 obvious.  To prove $1-\frac{t}{32\alpha}>0$, we note that
  $Q=|Rm|_{g(t_0)}(y_0)< \frac{\alpha}{t_0} + \epsilon^{-2}$, so we have
 \begin{align*}
 Qt_0< \alpha + t_0 \epsilon^{-2} < \alpha(1+\alpha), \quad
 1-\frac{t}{32\alpha} \geq 1- \frac{4Qt_0}{32\alpha}> 1 -\frac{1+\alpha}{8}>0.
 \end{align*}
 It follows that
 \begin{align*}
   \square \{|Rm|^2(1-\frac{t}{32\alpha})\}
  <-6000 \{|Rm|^2(1-\frac{t}{32\alpha})\}^{\frac32}
 \end{align*}
 in $P$.
 Therefore, $\eta^{-4}$ is a super solution of $\square F=-6000F^{\frac32}$,  $|Rm|^2(1-\frac{t}{32\alpha})$ is a sub solution of this equation. Moreover,
 \begin{align*}
   &|Rm|^2 (1-\frac{t}{32\alpha}) < \frac{1}{4Q} \leq \frac14 (\alpha \epsilon)^2 <\frac{1}{625} < \eta^{-4}, \quad \textrm{whenever} \;
   t=0.\\
   &|Rm|^2 (1-\frac{t}{32\alpha})< \infty= \eta^{-4},
   \quad \textrm{whenever} \; d(z, y)=2.
 \end{align*}
 Therefore, for every point in $P$, $|Rm|^2(1-\frac{t}{32\alpha})$ is controlled
 by $\eta^{-4}$. In particular,  under metric $\tilde{g}$, at point $(y_0, 4Qt_0)$,
 we have
 \begin{align*}
 |Rm|^2(y_0)(1-\frac{4Qt_0}{32\alpha}) &\leq \eta(y_0, 4Qt_0)^{-4}\\
  &=\{5\phi(400mQt_0)\}^{-4}\\
  &\leq 5^{-4} \{\phi(400m\alpha(1+\alpha))\}^{-4}\\
  &\leq 5^{-4} \{\phi(1)\}^{-4}=\frac{1}{625}
 \end{align*}
 On the other hand, recall that $\alpha= \frac{1}{10^6m}$, we have
 \begin{align*}
  |Rm|^2(y_0)(1-\frac{4Qt_0}{32\alpha})=\frac{1}{16}
  (1-\frac{Qt_0}{8\alpha})> \frac{1}{16} (1-\frac{1+\alpha}{8})> \frac{1}{32}.
 \end{align*}
 It follows that $\frac{1}{32} < \frac{1}{625}$. Contradiction!

 \end{proof}

  As a corollary of this proposition, we can obtain the improved
  Pseudolocality theroem.

 \begin{theorem}[\textbf{Improved Pseudolocality Theorem}]
  There exists $\eta=\eta(m, \kappa)>0$ with the following property.

  Suppose $\{(X, g(t)), 0 \leq t \leq r_0^2\}$ is a compact orbifold Ricci flow solution.
  Assume that at $t=0$ we have $|\hat{Rm}|(x) \leq r_0^{-2}$
  in $B(x, r_0)$, and $\Vol B(x, r_0) \geq \kappa r_0^m$. Then the estimate
  $|\hat{Rm}|_{g(t)}(x) \leq (\eta r_0)^{-2}$ holds whenever $0 \leq t \leq (\eta r_0)^2$,
  $d_{g(t)}(x, x) < \eta r_0$.
 \label{theorem: improvedbound}
 \end{theorem}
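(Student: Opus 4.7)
The plan is to reduce to the preceding Proposition by converting the non-collapsing hypothesis $\Vol B(x, r_0) \geq \kappa r_0^m$ into an almost-Euclidean isoperimetric bound on a sufficiently small scale. After parabolic rescaling I may assume $r_0 = 1$. The convention that $|\hat{Rm}|$ equals $\infty$ at orbifold singularities, combined with the hypothesis $|\hat{Rm}| \leq 1$ on $B(x, 1)$ at $t=0$, forces $B(x,1)$ to contain no singular points, so it is a genuine Riemannian ball. On this smooth ball, $|Rm| \leq 1$ together with $\Vol B(x, 1) \geq \kappa$ allows me to apply Cheeger's injectivity radius lemma to obtain $\operatorname{inj}_{g(0)}(x) \geq i_0(m,\kappa) > 0$.

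Next I would show that there exists $r_1 = r_1(m, \kappa) > 0$, smaller than $i_0$, such that the geodesic ball $B_{g(0)}(x, r_1)$ has isoperimetric constant at least $(1 - \delta)\mathbf{I}(\R^m)$, where $\delta$ is the threshold appearing in the preceding Pseudolocality Theorem. This follows from a standard Cheeger--Gromov compactness argument by contradiction: if it failed along a sequence $r_{1,i} \to 0$, rescaling the metrics by $r_{1,i}^{-2}$ would produce pointed Riemannian balls of radius tending to infinity on which curvature tends to zero and injectivity radius stays bounded below; taking a $C^{1,\alpha}$ limit in harmonic coordinates yields flat Euclidean space, and upper semicontinuity of the isoperimetric constant under smooth convergence gives a contradiction.

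Once this near-Euclidean isoperimetric bound is established on $B(x, r_1)$, I parabolically rescale by $r_1^{-2}$ so that this ball becomes the unit ball and the flow extends up to time $r_1^{-2} \geq 1$. Under the rescaled metric, $\mathbf{I}(B(x,1)) \geq (1 - \delta)\mathbf{I}(\R^m)$, the scalar curvature bound $R \geq -1$ holds (since $|Rm| \leq r_1^2 \leq 1$), and the curvature hypothesis $|Rm|<1$ in $B(x,1)$ needed by the Proposition is satisfied. Applying the preceding Proposition yields $|Rm|_{g(t)}(y) < (\alpha \epsilon)^{-2}$ whenever $0 < t < (\alpha \epsilon)^2$ and $d_{g(t)}(y, x) < \alpha \epsilon$. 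Scaling back, the estimate holds with $\eta = \alpha \epsilon \, r_1$, which depends only on $m$ and $\kappa$; unscaling the normalization $r_0 = 1$ then gives the full statement.

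The main obstacle is the intermediate step of upgrading volume non-collapsing to a nearly Euclidean isoperimetric constant on a small ball. While each ingredient (Cheeger's lemma, harmonic coordinate compactness, upper semicontinuity of $\mathbf{I}$) is classical in the smooth category, one must verify that nothing in the argument is derailed by the possibility of orbifold singularities. That is ensured by the very first observation: the bound $|\hat{Rm}| \leq r_0^{-2}$ automatically excludes orbifold points from $B(x, r_0)$, so the entire reduction takes place on a smooth Riemannian ball and the standard machinery applies without modification.
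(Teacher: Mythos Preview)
Your proposal is correct and follows the reduction the paper has in mind: the paper itself gives no explicit proof, only the sentence ``As a corollary of this proposition, we can obtain the improved Pseudolocality theorem,'' and your argument supplies precisely the standard details of that corollary---excluding singularities via $|\hat{Rm}|<\infty$, invoking Cheeger's injectivity-radius estimate on the resulting smooth ball, passing to a small-scale almost-Euclidean isoperimetric bound, and then rescaling into the hypotheses of the preceding Proposition.
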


 \begin{remark}
   Suppose $c_0 \geq -c$ is a constant, then the ``normalized flow" $\D{g}{t}=-Ric + c_0g$
   is just a parabolic rescaling of the flow $\D{g}{t}=-2Ric$. So
   Theorem~\ref{theorem: improvedbound} also hold for ``normalized" Ricci flow solutions
   $\D{g}{t}=-Ric + c_0g$. However, the constant $\eta$ will also depend on $c$ then.
 \end{remark}

\section{Weak Compactness Theroem}

 \textbf{ In this section, $\kappa, E$  are fixed constants. $\hslash, \xi$
  are small constants depending only on $\kappa$ and $m$ by Definition~\ref{definition: hslash}
  and Definition~\ref{definition: xi}.}

\subsection{Choice of Constants}
 \begin{proposition}[Bando,~\cite{Ban90}]
    There exists a constant $\hslash_a=\hslash_a(m, \kappa)$ such that
    the following property holds.

   If $X$ is a $\kappa$-noncollapsed, Ricci-flat ALE orbifold,
         it has unique singularity and small energy, i.e.,
         $\int_X |Rm|^{\frac{m}{2}}d\mu<\hslash_a$,
        then $X$ is a flat cone.
  \label{proposition: gap}
  \end{proposition}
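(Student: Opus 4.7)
The plan is to argue by contradiction using a blow-up analysis that exploits the scale-invariance of $\int |Rm|^{m/2} d\mu$ in dimension $m$: under $g \mapsto \lambda^2 g$, $|Rm|$ scales as $\lambda^{-2}$ and $d\mu$ as $\lambda^m$, so $|Rm|^{m/2} d\mu$ is invariant. Suppose no such $\hslash_a$ exists; then there is a sequence of counterexamples $(X_i, g_i)$, each $\kappa$-noncollapsed, Ricci-flat, ALE with a single orbifold point, with $\int_{X_i} |Rm_i|^{m/2} d\mu_i \to 0$, yet none of which is a flat cone. Since $X_i$ is not flat, $\sigma_i \triangleq \sup_{X_i} |Rm_i| > 0$, and $\sigma_i$ is finite because the ALE structure forces $|Rm_i|$ to decay at infinity while the metric is smooth in the local orbifold chart around the one singular point. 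I pick a smooth point $x_i \in X_i$ with $|Rm_i|(x_i) \geq \tfrac{1}{2}\sigma_i$ and rescale $\tilde{g}_i \triangleq \sigma_i \, g_i$.

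Under this rescaling every hypothesis is preserved: $\tilde{g}_i$ remains Ricci-flat, ALE with a single singularity, and $\kappa$-noncollapsed (noncollapsing is scale-invariant, and ALE has infinite diameter, so it holds on all scales); the energy $\int |\tilde{Rm}_i|^{m/2}\, d\tilde{\mu}_i \to 0$ by the scale-invariance above; and now $|\tilde{Rm}_i| \leq 1$ globally with $|\tilde{Rm}_i|(x_i) \geq \tfrac{1}{2}$. The uniform curvature bound together with $\kappa$-noncollapsing yields Cheeger-Gromov compactness, so a subsequence of $(X_i, x_i, \tilde{g}_i)$ converges in the pointed $C^{1,\alpha}$ sense to a limit $(X_\infty, x_\infty, g_\infty)$; the elliptic nature of the Ricci-flat equation in harmonic coordinates then bootstraps this to smooth convergence away from any limiting orbifold singularities. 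Thus $g_\infty$ is Ricci-flat, $\kappa$-noncollapsed, and satisfies $|Rm_\infty|(x_\infty) \geq \tfrac{1}{2}$.

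The next step is to see that the total energy of the limit vanishes. For each fixed $R > 0$, smooth convergence on the smooth part of $B_{g_\infty}(x_\infty, R)$ together with Fatou gives
\begin{equation*}
\int_{B_{g_\infty}(x_\infty, R)} |Rm_\infty|^{m/2}\, d\mu_\infty \;\leq\; \liminf_{i \to \infty} \int_{B_{\tilde{g}_i}(x_i, R)} |\tilde{Rm}_i|^{m/2}\, d\tilde{\mu}_i \;\leq\; \liminf_{i \to \infty} \int_{X_i} |\tilde{Rm}_i|^{m/2}\, d\tilde{\mu}_i \;=\; 0,
\end{equation*}
and the possible limiting orbifold singularities of $X_\infty$ form a set of measure zero. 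Letting $R \to \infty$ forces $|Rm_\infty| \equiv 0$ on the smooth locus, which contradicts $|Rm_\infty|(x_\infty) \geq \tfrac{1}{2}$.

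The main obstacle is the Cheeger-Gromov compactness step in the orbifold setting: one must confirm that uniform sectional curvature bounds plus $\kappa$-noncollapsing on Ricci-flat orbifolds with isolated singularities really produce a pointed limit that is itself an orbifold whose singular set is at most a finite collection of orbifold points, and on whose smooth locus the convergence is smooth. The pieces needed for this are by now standard for isolated singularities — Bando's original paper~\cite{Ban90} develops exactly this machinery, and analogous constructions appear in Section 4 of the present paper. A secondary subtlety is that the basepoints $x_i$ need not remain a bounded distance from the singular locus of $X_i$, so $X_\infty$ could be a smooth Ricci-flat manifold rather than a singular orbifold; however, the contradiction is drawn from the pointwise value of $|Rm_\infty|$ at the smooth point $x_\infty$, making the argument insensitive to whether the singularities escape to infinity in the blow-up limit.
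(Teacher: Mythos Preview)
The paper does not supply its own proof of this proposition; it is quoted as a result of Bando~\cite{Ban90} and used as a black box. Your blow-up/contradiction argument is a correct route to the gap statement, and the scale-invariance, Ricci-flat elliptic bootstrapping, and Fatou steps are all sound. The one point you underplay is the possibility that $d_{\tilde g_i}(x_i,p_i)\to 0$, i.e.\ the singular point collapses onto the basepoint rather than escaping; but this is harmless, since in the local uniformizing cover the lifted metrics still converge smoothly and the lifted curvature at $\tilde x_i$ is $\geq \tfrac12$, forcing the same lower bound in the flat limit and yielding the same contradiction.

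It is worth noting that the more direct argument---and the one implicit in the way the paper pairs this proposition with Proposition~\ref{proposition: econ}---avoids compactness entirely. On a Ricci-flat background the Bochner/Simons identity gives $\Delta |Rm| \geq -c(m)|Rm|^2$, and Moser iteration on $\kappa$-noncollapsed balls yields the $\epsilon$-regularity estimate $\sup_{B(p,\rho/2)}|Rm|\leq C\rho^{-2}\bigl(\int_{B(p,\rho)}|Rm|^{m/2}\bigr)^{2/m}$ once the energy is below $\hslash_b$. Applying this with $\rho\to\infty$ on the complete ALE space (and working in the uniformizing cover near the single orbifold point) gives $|Rm|\equiv 0$ immediately. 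Your approach trades this analytic estimate for a compactness argument; the advantage is that it is conceptually cleaner and does not require tracking Sobolev constants through the orbifold chart, while the Moser-iteration route is more self-contained and gives the explicit dependence of $\hslash_a$ on $m,\kappa$ that the paper later exploits.
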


 \begin{proposition}
 Suppose $B(p, \rho)$ is a smooth, Ricci-flat, $\kappa$-noncollapsed geodesic
  ball and $\partial B(p, \rho) \neq \emptyset$. Then there is a small constant
  $\hslash_b=\hslash_b(m, \kappa)<(\frac{1}{2C_0})^{\frac{m}{2}}$ such that
  \begin{align}
   \sup_{B(p, \frac{\rho}{2})} |\nabla^k Rm| \leq
    \frac{C_k}{\rho^{2+k}}  \{\int_{B(p, \rho)} |Rm|^{\frac{m}{2}} d\mu\}^{\frac{2}{m}}
   \label{eqn: econ}
 \end{align}
 whenever $\int_{B(p, \rho)} |Rm|^{\frac{m}{2}} d\mu < \hslash_b$. Here $C_k$ are constants depending only on the dimension $m$.
 In particular, $B(p, \rho)$ satisfies energy concentration
 property. In other words, if $|Rm|(p) \geq \frac{1}{2 \rho^2}$,  then we have
     \begin{align*}
        \int_{B(p, \frac{\rho}{2})} |Rm|^{\frac{m}{2}} d\mu > \hslash_b.
     \end{align*}
 \label{proposition: econ}
 \end{proposition}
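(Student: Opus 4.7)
The plan is a standard $\varepsilon$-regularity argument: Moser iteration applied to a Kato-type inequality for $|Rm|$, followed by Shi style derivative estimates, with the concluding energy-concentration assertion obtained by contraposition. Since $g$ is Ricci-flat on $B(p,\rho)$, the contracted second Bianchi identity gives $\triangle Rm = Rm * Rm$ schematically, so the Weitzenb\"ock formula for the curvature tensor produces
\begin{align*}
  \triangle |Rm|^2 \geq 2|\nabla Rm|^2 - C(m)|Rm|^3,
\end{align*}
and hence $\triangle |Rm| \geq -C(m)|Rm|^2$ in the weak sense. The $\kappa$-noncollapsing of $B(p,\rho)$ combined with $Ric\equiv 0$ yields a uniform Euclidean-type Sobolev inequality on all sub-balls of $B(p,\rho)$, with constant depending only on $m$ and $\kappa$ (via volume comparison and Croke's isoperimetric inequality).

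Next, fix concentric sub-balls $B(p,r)\subset B(p,r')\subset B(p,\rho)$ with a cutoff $\eta$ equal to $1$ on $B(p,r)$, supported in $B(p,r')$, satisfying $|\nabla\eta|\leq 2/(r'-r)$. Testing the Kato inequality against $\eta^2 |Rm|^{q-1}$ and integrating by parts, H\"older's inequality together with the Sobolev inequality produces a reverse-H\"older chain of the form
\begin{align*}
  \left(\int \eta^{2\chi}|Rm|^{q\chi}\right)^{1/\chi}
  \leq \frac{Cq}{(r'-r)^2}\left(\int_{B(p,\rho)}|Rm|^{m/2}\right)^{2/m}\int |Rm|^q,
\end{align*}
where $\chi = m/(m-2)$. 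The smallness hypothesis $\int_{B(p,\rho)}|Rm|^{m/2}<\hslash_b$ is exactly what allows one to absorb the cubic nonlinearity $\int\eta^2|Rm|^{q+2}$ arising from the Bochner inequality via an $L^{m/2}\cdot L^{q\chi}$ split. Iterating with $q_k=(m/2)\chi^k$ over a telescoping sequence of radii between $\rho/2$ and $\rho$ yields the $L^\infty$ bound
\begin{align*}
  \sup_{B(p,\rho/2)}|Rm| \leq \frac{C_0}{\rho^2}\left(\int_{B(p,\rho)}|Rm|^{m/2}d\mu\right)^{2/m}.
\end{align*}

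Once $|Rm|$ is pointwise bounded on an intermediate ball such as $B(p,3\rho/4)$, the higher-derivative bounds follow from iterated Moser applied to the elliptic equations
\begin{align*}
  \triangle(\nabla^k Rm) = \sum_{i+j=k}\nabla^i Rm * \nabla^j Rm + (\text{commutator terms})
\end{align*}
that the covariant derivatives of $Rm$ satisfy in the Ricci-flat setting; this is the elliptic counterpart of Shi's parabolic derivative estimates. Rescaling $g\mapsto \rho^{-2}g$ to unit scale, applying the unit-scale estimate, and rescaling back supplies the factor $\rho^{-(2+k)}$ and preserves the scale-invariant integral $\int |Rm|^{m/2}$, giving the stated bound on $\sup_{B(p,\rho/2)}|\nabla^k Rm|$.

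Finally, the energy-concentration assertion is the contrapositive of the $k=0$ estimate applied with the base ball shrunk to radius $\rho/2$: this yields $|Rm|(p)\leq (4C_0/\rho^2)(\int_{B(p,\rho/2)}|Rm|^{m/2})^{2/m}$, so if $|Rm|(p)\geq 1/(2\rho^2)$ then $\int_{B(p,\rho/2)}|Rm|^{m/2}\geq (1/(8C_0))^{m/2}$, contradicting $\int<\hslash_b$ once the threshold is chosen below this value (consistent with $\hslash_b<(1/(2C_0))^{m/2}$). The principal obstacle is the Moser absorption step: one must verify that the coefficient arising from the cubic Bochner term can be dominated by a fixed fraction of the Sobolev side whenever $\int|Rm|^{m/2}$ is small, and it is exactly this dominance requirement, together with the Sobolev constant, that fixes the threshold $\hslash_b=\hslash_b(m,\kappa)$.
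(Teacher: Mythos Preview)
The paper does not supply its own proof of this proposition; it is stated as a standing $\varepsilon$-regularity result for Ricci-flat balls (in the spirit of Anderson, Nakajima, and Bando--Kasue--Nakajima), and then used throughout Section~4. Your Moser-iteration argument is exactly the standard proof of such a statement, and the outline you give is correct: the Bochner identity $\triangle Rm = Rm * Rm$ in the Ricci-flat case, Kato's inequality, a Sobolev inequality on sub-balls coming from $\kappa$-noncollapsing together with $Ric=0$ (via Croke), absorption of the cubic term under the smallness hypothesis, and then elliptic bootstrapping for the higher derivatives. The final energy-concentration clause is indeed just the contrapositive of the $k=0$ estimate on the half-radius ball, with $\hslash_b$ shrunk accordingly.

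One small point worth tightening: when you run the contrapositive on $B(p,\rho/2)$, the raw output is $|Rm|(p)\leq 4C_0\rho^{-2}\hslash_b^{2/m}$, so to force a contradiction with $|Rm|(p)\geq \tfrac{1}{2\rho^2}$ you need $\hslash_b\leq (8C_0)^{-m/2}$, which is strictly stronger than the displayed bound $\hslash_b<(2C_0)^{-m/2}$. This is harmless---you are free to choose $\hslash_b$ as small as you like, and the displayed upper bound in the statement is only an upper constraint, not the actual value---but it would be cleaner to say so explicitly rather than leave the reader to reconcile the constants.
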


  \begin{definition}
    Let $\hslash \triangleq \min\{\hslash_a, \hslash_b\}$.
  \label{definition: hslash}
  \end{definition}

\begin{proposition}
   There is a small constant $\xi_a(\kappa, m)$ such that
  the following properties hold.
  Suppose that $\{(X, g(t)), 0 \leq t \leq 1\}$ is a Ricci flow
  solution on a compact orbifold $X$ which
  is $\kappa$-noncollapsed.  $\Omega \subset X$  and
  $|Rm|_{g(0)}(x) \leq \xi_a^{-\frac32}$
  for every point $x \in \Omega$. Then we have
  \begin{align*}
    |Rm|_{g_i(t)}(x) \leq  \frac{1}{10000m^2}\xi_a^{-2}, \quad
    \forall \; x \in  \Omega', \; t \in [0, 9 \xi_a^2].
  \end{align*}
  where $\Omega'=\{ y \in \Omega | d_{g(0)}(y, \partial \Omega) > \xi_a^{\frac34}\}$.
  \label{proposition: xia}
  \end{proposition}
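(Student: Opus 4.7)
The plan is to deduce the proposition directly from the improved pseudolocality theorem (Theorem~\ref{theorem: improvedbound}) by a well-chosen pseudolocality scale. The key observation is that the hypothesized bound $|Rm|_{g(0)} \leq \xi_a^{-3/2}$ is much weaker than the target $\tfrac{1}{10000m^2}\xi_a^{-2}$ for small $\xi_a$, and this gap is exactly what allows the constant $\eta^{-2}$ produced by pseudolocality to be absorbed.

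I would set $r_0 = \xi_a^{3/4}$, so that $\xi_a^{-3/2} = r_0^{-2}$. For any $x \in \Omega'$, the definition of $\Omega'$ forces $d_{g(0)}(x, \partial \Omega) > \xi_a^{3/4} = r_0$, and therefore $B_{g(0)}(x, r_0) \subset \Omega$ (the case $\partial \Omega = \emptyset$ being trivial). The hypothesis then gives $|Rm|_{g(0)} \leq r_0^{-2}$ on $B_{g(0)}(x, r_0)$; combined with the $\kappa$-noncollapsing assumption on scale $r_0$, this yields $\Vol B_{g(0)}(x, r_0) \geq \kappa r_0^m$. Applying Theorem~\ref{theorem: improvedbound} at $x$ with this $r_0$ produces a constant $\eta = \eta(m,\kappa) > 0$ such that
\[ |\hat{Rm}|_{g(t)}(x) \leq (\eta r_0)^{-2} = \eta^{-2}\xi_a^{-3/2} \quad \text{for all } 0 \leq t \leq \eta^2 r_0^2 = \eta^2 \xi_a^{3/2}. \]

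To conclude, I choose $\xi_a = \xi_a(m, \kappa)$ small enough that both $\eta^2 \xi_a^{3/2} \geq 9\xi_a^2$ (i.e.\ $\xi_a^{1/2} \leq \eta^2/9$) and $\eta^{-2}\xi_a^{-3/2} \leq \tfrac{1}{10000 m^2}\xi_a^{-2}$ (i.e.\ $\xi_a^{1/2} \leq \eta^2/(10000 m^2)$). Both are upper bounds on $\xi_a$ depending only on $m$ and $\kappa$, and under them the pseudolocality conclusion applied at each $x \in \Omega'$ gives exactly the required estimate on $[0, 9\xi_a^2]$.

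There is no substantive obstacle beyond this exponent bookkeeping. The choice $r_0 = \xi_a^{3/4}$ is dictated by equating $\xi_a^{-3/2}$ with $r_0^{-2}$, and the slack between the exponents $\tfrac{3}{2}$ and $2$ in the curvature bounds, together with the fact that the target time length $9\xi_a^2$ is much smaller than the pseudolocality time $\eta^2 \xi_a^{3/2}$, is precisely what makes the argument work. In particular, the proposition should really be read as a ``quantitative persistence of smoothness'' statement where the initial curvature bound $\xi_a^{-3/2}$ has been engineered to be strong enough to feed into Theorem~\ref{theorem: improvedbound} at a useful scale.
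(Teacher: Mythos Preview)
Your argument is correct and is essentially the same as the paper's: both apply the improved pseudolocality theorem (Theorem~\ref{theorem: improvedbound}) at scale $r_0=\xi_a^{3/4}$, so that the initial hypothesis $|Rm|\le \xi_a^{-3/2}=r_0^{-2}$ matches the pseudolocality input, and then exploit the gap between the exponents $3/2$ and $2$ to absorb the constant $\eta^{-2}$. The only cosmetic difference is that the paper packages this as a contradiction argument---assuming a sequence $\xi_i\to 0$ violating the conclusion and rescaling by $\xi_i^{-3/2}$ to reach curvature $|Rm|\le 1$ on unit balls---whereas you apply Theorem~\ref{theorem: improvedbound} directly; your version is in fact the cleaner of the two.
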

  \begin{proof}
   Suppose not. There are sequence of $\{(X_i, g_i(t)), 0 \leq t \leq
   1\}$,  $x_i$, $\Omega'$, $\Omega_i$, and $\xi_i \to 0$  violating the statement.

   Blowup them by scale $\xi_i^{-\frac32}$, let $\tilde{g}_i(t)= g_i(\xi_i^{-\frac32}
   t)$.  We can choose a sequence of points
   $y_i \in \Omega_i', t_i \in [0, 9 \xi_i^{\frac12}]$ satisfying
   \begin{align}
   |Rm|_{\tilde{g}_i(t_i)}(y_i)> \frac{1}{10000m^2} \xi_i^{-\frac12} \longrightarrow
   \infty.
   \label{eqn: rmbig}
   \end{align}
   Note that under metric $\tilde{g}_i(0)$,  $|Rm|\leq 1$ in $B(y_i,1)$,
   so inequality (\ref{eqn: rmbig}) contradicts the improved
   pseudolocality theorem!
  \end{proof}

  \begin{proposition}
   Suppose $X$ is an orbifold which is $\kappa$-noncollapsed on
   scale $1$,  $|Rm| \leq 1$ in the smooth geodesic ball $B(x, 1) \subset
   X$.  Then there is a small constant $\xi_b$ such that
   \begin{align*}
     \frac{\Vol(B(y, r))}{r^m} > \frac78 \omega(m)
   \end{align*}
   whenever $y \in B(x, \frac12)$ and $r < \xi_b^{\frac12}$.
   \label{proposition: xib}
  \end{proposition}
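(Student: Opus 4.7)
The plan is a blow-up and contradiction argument, very much in the spirit of Proposition~\ref{proposition: xia}. Assume the conclusion fails. Then there exist a sequence $\xi_i \to 0$, a sequence of orbifolds $(X_i, g_i)$ with base points $x_i$ satisfying all the hypotheses, and points $y_i \in B(x_i, \tfrac12)$ together with radii $r_i < \xi_i^{1/2} \to 0$ such that
\begin{align*}
 \frac{\Vol(B(y_i, r_i))}{r_i^m} \leq \frac{7}{8} \omega(m).
\end{align*}
Rescale by $\tilde{g}_i \triangleq r_i^{-2} g_i$. In the new metric, the curvature satisfies $|\widetilde{Rm}|_{\tilde{g}_i} \leq r_i^2 \to 0$ on the ball $B_{\tilde{g}_i}(y_i, (2r_i)^{-1})$, which exhausts all of $\R$-space as $i \to \infty$. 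Since $\kappa$-noncollapsing on scale $1$ under $g_i$ transfers to $\kappa$-noncollapsing on scale $r_i^{-1} \to \infty$ under $\tilde{g}_i$, every fixed ball $B_{\tilde{g}_i}(y_i, R)$ is eventually $\kappa$-noncollapsed. Moreover, because $d_{g_i}(y_i, x_i) < \tfrac12$ and $Rr_i < \tfrac12$ for large $i$, the ball $B_{\tilde{g}_i}(y_i, R) = B_{g_i}(y_i, Rr_i)$ sits inside the smooth region $B_{g_i}(x_i, 1)$, so no orbifold point ever enters the blow-up picture.

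Next I would invoke Cheeger-Gromov compactness for Riemannian manifolds with bounded curvature and noncollapsed injectivity radius. Uniformly bounded $|\widetilde{Rm}|$ and $\kappa$-noncollapsing produce, via harmonic coordinates on controlled-size balls, a subsequential pointed $C^{1,\alpha}$ limit $(Y, y_\infty, g_\infty)$ which is a complete Riemannian manifold, $\kappa$-noncollapsed on every scale. Since $|\widetilde{Rm}|_{\tilde{g}_i} \to 0$ uniformly on every fixed ball, the limit $g_\infty$ is flat. A complete flat $m$-manifold is a quotient $\R^m/\Gamma$ for some discrete group $\Gamma$ of Euclidean isometries acting freely; but a nontrivial $\Gamma$ would force $\Vol(B(p, s)) = O(s^{m-k})$ with $k \geq 1$ for large $s$, contradicting $\kappa$-noncollapsing at every scale. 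Hence $(Y, g_\infty)$ must be the Euclidean $(\R^m, g_E)$, so $\Vol_{g_\infty}(B(y_\infty, 1)) = \omega(m)$. Continuity of volume under pointed $C^{1,\alpha}$ Cheeger-Gromov convergence then yields $\Vol_{\tilde{g}_i}(B(y_i, 1)) \to \omega(m)$, contradicting the standing inequality $\Vol_{\tilde{g}_i}(B(y_i, 1)) \leq \tfrac{7}{8}\omega(m)$.

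The main technical point I expect is ensuring that the compactness theorem and regularity of the limit apply in this setting. The subtlety that would otherwise require care — the potential presence of orbifold singularities in the blow-up — is bypassed automatically: since $y_i$ sits deep inside the smooth region $B(x_i, 1)$ and the rescaled balls $B_{\tilde{g}_i}(y_i, R)$ shrink in the original metric, the blow-up sequence is eventually smooth on every compact set, and the standard manifold Cheeger-Gromov theory applies without modification.
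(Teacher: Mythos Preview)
The paper states this proposition without proof; it is treated as a standard fact from Riemannian geometry. Your blow-up argument is correct: the rescaled balls stay inside the smooth region $B(x_i,1)$, so ordinary Cheeger--Gromov compactness applies, the limit is complete, flat, and $\kappa$-noncollapsed on every scale, hence isometric to $\R^m$ (a nontrivial quotient $\R^m/\Gamma$ would have asymptotic volume ratio zero, since a finite group cannot act freely on the contractible space $\R^m$ and an infinite $\Gamma$ forces sub-Euclidean growth). Volume continuity then gives the contradiction.

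A more direct route, which is presumably what the paper has in mind, avoids compactness altogether. Since $B(y,\tfrac12)\subset B(x,1)$ is a smooth ball with $|Rm|\le 1$ and $\Vol(B(y,\tfrac12))\ge \kappa(\tfrac12)^m$, Cheeger's injectivity-radius estimate gives $\mathrm{inj}(y)\ge i_0(m,\kappa)>0$. For $r<i_0$ the exponential map is a diffeomorphism onto $B(y,r)$, and Rauch comparison with $|K|\le 1$ bounds its Jacobian between $(\sin r/r)^{m-1}$ and $(\sinh r/r)^{m-1}$; hence $\Vol(B(y,r))/r^m = \omega(m)(1+O(r^2))$, which exceeds $\tfrac78\omega(m)$ once $r$ is small. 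Your contradiction argument is perfectly valid, just heavier machinery than the statement requires.
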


  \begin{definition}
   Define $\xi = \min\{\xi_a, \xi_b, (10000m^2 \frac{E}{\hslash})^{-4}\}$.
  \label{definition: xi}
  \end{definition}

\subsection{Refined Sequences}

  The main theorems of this section are almost the same as that of~\cite{CW3}.

\begin{definition}
   Define
  $\mathscr{O}(m, c, \sigma, \kappa, E)$ as the  the moduli space of
  compact orbifold Ricci flow solutions $\{(X, g(t)), -1 \leq t \leq 1\}$
  satisfying:
  \begin{enumerate}
 \item $\D{}{t}g(t) = -Ric_{g(t)} + c_0 g(t)$ where $c_0 $ is a
 constant satisfying $|c_0| \leq c$.
 \item $\displaystyle \norm{R}{L^{\infty}(X \times [-1, 1])} \leq \sigma$.
 \item  $\displaystyle \frac{\Vol_{g(t)}(B_{g(t)}(x, r))}{r^m} \geq \kappa$
    for all $x \in X, t\in [-1, 1], r \in (0, 1]$.
 \item  $ \{\#Sing(X)\} \cdot \hslash + \int_X |Rm|_{g(t)}^{\frac{m}{2}} d\mu_{g(t)} \leq E$
    for all $t \in [-1, 1]$.
 \end{enumerate}
\label{definition: moduli}
\end{definition}
 Clearly, in order this moduli space be really a generalization of
 the $\mathscr{M}(m, c, \sigma, \kappa, E)$ defined in~\cite{CW3},
 we need $m$ to be an even number.

  We want to show the weak compactness and
  uniform isoperimetric constant bound of
 $\mathscr{O}(m, c, \sigma, \kappa, E)$.
  As in~\cite{CW3}, we use refined sequence as a tool to study
 $\mathscr{O}(m, c, \sigma, \kappa, E)$.     After we obtain the
 weak compactness theorem of refined sequence, the properties of
 $\mathscr{O}(m, c, \sigma, \kappa, E)$ follows from routine blowup
 and bubble tree arguments.   However, we'll give a simpler
 proof of the weak compactness theorem of refined sequences.

 As in~\cite{CW3}, we define Refined sequence.
 \begin{definition}
    Let $\{ (X_i^m, g_i(t)), -1 \leq t \leq 1 \}$ be a sequence of Ricci flows on
    closed orbifolds $X_i^m$.  It is called a
    refined sequence if the following properties  are  satisfied for every $i$.
 \begin{enumerate}
 \item $\displaystyle \D{}{t} g_{i} = -Ric_{g_i} + c_i g_i$ and
      $\displaystyle \lim_{i \to \infty} c_i =0$.
 \item  Scalar curvature norm tends to zero:
   $\displaystyle \lim_{i \to \infty}  \norm{R}{L^{\infty}(X_i \times [-1, 1])}=0.$

 \item  For every radius $r$, there exists $N(r)$ such that
  $(X_i, g_i(t))$ is $\kappa$-noncollapsed on scale $r$ for every $t \in [-1, 1]$
  whenever $i>N(r)$.

 \item Energy uniformly bounded by $E$:
 \begin{align*}
  \{\# (Sing(X_i))\} \cdot \hslash +
  \int_{X_i} |Rm|_{g_i(t)}^{\frac{m}{2}} d\mu_{g_i(t)} \leq E, \qquad \forall \; t
 \in [-1,1].
 \end{align*}

 \end{enumerate}
 \label{definition: refined}
 \end{definition}

 In order to show the weak compactness theorem for every refined
 sequence, we need two auxiliary concepts.

 \begin{definition}
    A refined sequence $\{(X_i, g_i(t)), -1 \leq t \leq 1\}$
  is called an E-refined sequence under constraint $H$ if
  under metric $g_i(t)$, we have
  \begin{align}
 \{\# Sing(B(x_0, Q^{-\frac12}))\} \cdot \hslash + \int_{B(x_0,Q^{-\frac12})} |Rm|^{\frac{m}{2}}
 d\mu \geq \hslash,
 \quad \forall t \in [t_0 , t_0+ \xi^2 Q^{-1}],
 \label{eqn: econ}
 \end{align}
 whenever $(x_0,t_0) \in X_i \times [-\frac12, \frac12]$
 and $Q=|Rm|_{g_i(t_0)}(x_0) \geq H$.
  \end{definition}

  \begin{definition}
  An  E-refined sequence $\{(X_i, g_i(t)), -1 \leq t \leq 1\}$
  under constraint $H$
  is called an EV-refined sequence under constraint $(H, K)$ if under metric $g_i(t)$,
  we have
     \begin{align}
       \frac{\Vol{B(x, r)}}{r^m}  \leq K
    \end{align}
    for every $i$ and $(x, t) \in X_i \times [-\frac14, \frac14]$, $r \in (0, 1]$.
  \end{definition}

 When meaning is clear, we omit the constraint when we mention
 $E$-refined and $EV$-refined sequences.  Clearly, an E-refined sequence is a refined sequence whose
 center-part-solutions  satisfy energy concentration property,
  an EV-refined sequence is an E-refined sequence whose
 center-part-solutions  have bounded volume ratios.
 For convenience, we also call a pointed normalized Ricci flow sequence
    $\{ (X_i, x_i, g_i(t)), -1 \leq t \leq  1  \}$ a (E-, EV-)refined
   sequence if  $\{ (X_i, g_i(t)), -1 \leq t \leq  1 \}$ is a (E-, EV-)refined
   sequence.
   Since volume ratio, energy are scaling invariants,
   blowing up a (E-, EV-)refined sequence at proper points
   generates a new (E-, EV-)refined sequence with smaller constraints.\\

 \begin{remark}
  The definition of refined sequence is the same as in~\cite{CW3}.
  However, the definiton of $E-, EV-$refined sequence here is a slight
  different. This is for the
  convenience of a simplified proof of the weak compactness theorem
  of refined sequences.
 \end{remark}

   We first prove the weak compactness of EV-refined sequence.

   \begin{proposition}[\textbf{$C^{1, \frac12}$-Weak Compactness of EV-refined Sequence}]
   Suppose that $\{(X_i, x_i, g_i(t)), -1 \leq t \leq 1\}$
    is an EV-refined sequence, we have
   \begin{align*}
         (X_i, x_i, g_i(0)) \stackrel{C^{1, \frac12}}{\longrightarrow}
          (X_{\infty}, x_{\infty}, g_{\infty})
   \end{align*}
   where $X_{\infty}$ is a Ricci flat ALE orbifold.
   \label{proposition: EVweakc1h}
   \end{proposition}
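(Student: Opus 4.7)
The plan is to split $X_i$ into a region of bounded curvature, where Hamilton's compactness theorem applies, and a uniformly finite set of high curvature points, which become orbifold singularities of the limit.

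First I would locate and count the concentration points. For each $i$, set $S_i=\{x\in X_i : |Rm|_{g_i(0)}(x)\geq H\}$, where $H$ is the constraint in the $E$-refined definition. The energy concentration inequality forces each $x\in S_i$ to account for at least $\hslash$ of the quantity $\#Sing\cdot\hslash+\int|Rm|^{m/2}\,d\mu$ on a ball of radius $|Rm|(x)^{-1/2}$. A Vitali-type disjointification on any fixed geodesic ball, combined with the uniform energy bound $E$, yields $\#(S_i\cap B_R(x_i))\leq E/\hslash$ uniformly in $i$; after extraction the $S_i$ Hausdorff-converge to a locally finite set $S_\infty$.

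Next I would extract a smooth limit away from the bad points. Outside a fixed neighborhood of $S_i$ one has $|Rm|\leq H$, so Proposition~\ref{proposition: xia} supplies uniform forward curvature control on a definite time interval, and Shi's derivative estimates then give uniform $C^k$ bounds on $Rm$ on compact sets disjoint from $S_i$. Combined with $\kappa$-noncollapsing from the refined hypothesis and the EV upper volume ratio bound, Hamilton's compactness theorem produces a subsequence with $(X_i\setminus N_\varepsilon(S_i), g_i(0))\to (X_\infty\setminus S_\infty, g_\infty)$ in $C^\infty_{\mathrm{loc}}$. Since $\|R\|_{L^\infty}\to 0$ and $c_i\to 0$, passing to the limit in the normalized Ricci flow equation on the smooth part gives $Ric(g_\infty)=0$.

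Then I would extend $g_\infty$ across each $p\in S_\infty$. Near such a $p$ the limit is smooth, Ricci-flat, $\kappa$-noncollapsed, volume-ratio bounded, and inherits finite $\int|Rm|^{m/2}$ from Fatou. Uhlenbeck's removable singularity method in harmonic coordinates (c.f.~\cite{CS}) then furnishes a finite $\Gamma\subset SO(m)$ and a $C^{1,\frac12}$ orbifold chart modeled on $\R^m/\Gamma$; the exponent $\frac12$ is the natural threshold from critical Morrey embedding applied to the quasilinear Ricci-flat system in harmonic coordinates. Together with the EV volume upper bound, Ricci-flatness, and Proposition~\ref{proposition: gap} (Bando's gap theorem) applied at infinity, one concludes that $X_\infty$ has a single end of Euclidean volume growth, which is the ALE structure claimed.

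The main obstacle I expect is the third step: carrying out the orbifold removable singularity argument so that one genuinely obtains a single quotient chart around each $p$ rather than a multifold with further bubbles hidden inside. This is the reason the paper sets $\hslash$ at most the Bando gap constant $\hslash_a$: the smallness of the energy available on small balls around each $p$, combined with the $C^\infty_{\mathrm{loc}}$ smoothness off $S_\infty$, rules out secondary bubbling at the single-time-slice level and allows the gauge-fixing in harmonic coordinates to close off cleanly at the sharp $C^{1,\frac12}$ regularity.
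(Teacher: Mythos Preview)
Your outline through the extraction of a Ricci-flat limit on the smooth part is sound and matches the paper's setup. You also correctly flag the central difficulty: ruling out a multifold limit. But your proposed resolution of that point is where the gap lies.

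Uhlenbeck's removable singularity argument, as you invoke it, acts on a single punctured ball with Ricci-flat metric and finite $\int|Rm|^{m/2}$; it produces an orbifold chart for \emph{that} component. It does not by itself prevent several such components from meeting at $p$, which is exactly the multifold scenario. Your appeal to the Bando gap and to ``smallness of energy ruling out secondary bubbling'' does not address this: Bando's gap says a low-energy Ricci-flat ALE orbifold with one singularity is a flat cone, a statement about a single component, not about how components glue. And since the $g_i(0)$ are not Einstein, the Bishop--Gromov monotonicity that excludes multifold limits in the Anderson--BKN setting is unavailable on the approximating spaces. The paper instead runs two blowup arguments that use the $E$-refined hypothesis in an essential way. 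Step~1 shows no singular $p$ can sit on a smooth component: if some $\partial B(p,\delta)$ had a component of area $>\frac78 m\omega(m)\delta^{m-1}$, one rescales at a threshold radius defined by this area ratio to produce a new EV-refined limit with a Euclidean end attached at a singularity, and iterates until energy is exhausted. Step~2 shows each singular point has a unique end: otherwise a shortest geodesic $\gamma_i\subset X_i$ connecting the two sides avoids all orbifold singularities, and rescaling at the minimal energy-radius $\mathscr{R}$ along $\gamma_i$ yields a limit containing a line, hence Euclidean by the splitting theorem; but the forward-in-time persistence of energy concentration on $[t_0,t_0+\xi^2Q^{-1}]$ built into the $E$-refined definition forces at least $\hslash$ of energy to survive to a strictly positive time slice of this Euclidean flow, a contradiction. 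This time-persistence of concentrated energy is the mechanism your sketch does not supply.
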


   \begin{proof}
      As volume ratio upper bound and energy concentration holds, it
      is not hard to see that
    \begin{align*}
        (X_i, x_i, g_i(0)) \overset{C^{1, \frac12}}{\longrightarrow} (X_{\infty}, x_{\infty}, g_{\infty})
    \end{align*}
    for some limit metric space $X_{\infty}$, which has finite singularity and it's
    regular part is a  $C^{1, \frac12}$ manifold. Moreover, by the
    improved  pseudolocality theorem and the almost scalar flat property of the limit sequence,
    every smooth open set of $X_{\infty}$ is  isometric to an open set of a time slice of a scalar flat, hence
    Ricci flat Ricci flow solution. In other words, every open set of the smooth part of $X_{\infty}$
    is Ricci flat. It's not hard to see that
    \begin{align*}
       \sharp Sing(X_{\infty}) \cdot \hslash + \int_{X_{\infty}}
       |Rm|^{\frac{m}{2}} d\mu \leq E.
    \end{align*}
    This energy bound forces that the tangent space of every singular point
    to be a flat cone, but maybe with more than one ends. Also, the
    tangent cone at infinity is a flat cone(c.f.~\cite{BKN}, ~\cite{An90}, ~\cite{Tian90}).   In other
    words, $X_{\infty}$ is a Ricci flat, smooth, ALE multifold with finite energy.
    We need to show that this limit is an orbifold, i.e., for every $p \in X_{\infty}$,
    the tangent space of $p$ is a flat cone with a unique end. This can be done through
    the following two steps.

   \textit{Step1.  \quad Every singular point of $X_{\infty}$ cannot sit on a
   smooth component.  In other words, suppose $p$ is a singular point of $X_{\infty}$,
   then there exists $\delta_0$ depending on
   $p$ such that every  component of  $\partial B(p, \delta) $ has nontrivial $\pi_1$ whenever
    $\delta < \delta_0$.}

    If this statement is wrong, we can choose $\delta_i \to 0$ such
    that
    \begin{align*}
       |\partial E_{\delta_i}| > \frac{7}{8} m\omega(m) \delta_i^{m-1}
    \end{align*}
    where $E_{\delta_i}$ is a component of $\partial B(p, \delta_i)$.
    By taking subsequence if necessary, we can choose
    $X_i \ni p_i \to  p$ satisfying
    \begin{align*}
       |\partial E_{\delta_i}^i| > \frac78 m\omega(m) \delta_i^{m-1}
    \end{align*}
    where $E_{\delta_i}^i$ is some component of $\partial B(p_i, \delta_i)$.
    Moreover, we can let $p_i$ be the point with
    largest Riemannian curvature in $B(p_i, \rho)$
    for some fixed small number $\rho$.
    Define
    \begin{itemize}
    \item
    $ r_i \triangleq \sup \{r | r< \delta_i, \quad \textrm{the largest component of}\; \partial B(p_i, r) \; \textrm{has area ratio}
     \leq \frac78 m \omega(m)\}$
    \item $r'_i \triangleq \inf \{r |\textrm{the ball} \;
    B(p_i, r) \; \textrm{has volume ratio} \leq \frac34 \omega(m) \}$.
    \end{itemize}
   We claim that $r_i' \leq C Q_i^{-\frac12}$ where $Q_i = |Rm|(p_i)$ and $C$ is
   a uniform constant.  Otherwise, by rescaling $Q_i$ to be $1$ and fixing the central time slice
   to be time $0$,  we can take limit for a new $EV$-refined sequence
   \begin{align*}
     \{(X_i, p_i, \tilde{g}_i(t)), 0 \leq t \leq \xi^2 \} \overset{C^{1, \frac12}}{\to}
     \{(\tilde{X}_{\infty}, p_{\infty}, \tilde{g}_{\infty}(t)), 0 \leq t \leq
     \xi^2\},
   \end{align*}
   where the limit is a stable (Ricci flat ) Ricci flow solution on
   a complete manifold $\tilde{X}_{\infty}$. Moreover, the convergence is
   smooth when $t>0$. Therefore, $(\tilde{X}_{\infty}, p_{\infty},  \tilde{g}_{\infty}(0))$ is
   isometric to $(\tilde{X}_{\infty}, p_{\infty},
   \tilde{g}_{\infty}(\xi^2))$.  This forces that
   $(\tilde{X}_{\infty}, p_{\infty}, \tilde{g}_{\infty}(\xi^2))$
   satisfies
   \begin{align*}
    \hslash \leq  \int_{X_{\infty}} |Rm|^{\frac{m}{2}} d\mu \leq E,
    \quad
    \lim_{r \to \infty}  \frac{\Vol(B(p_{\infty}, r))}{r^m} \geq
    \frac34 \omega(m).
   \end{align*}
   simultaneously.  This is impossible!
   Therefore, $r_i' \leq C Q_i^{-\frac12} \to 0$.
   This estimate of $r_i'$ implies that $r_i$ is
   well defined. Moreover, similar blowup argument shows that
   $\displaystyle \lim_{i \to \infty} \frac{r_i}{r_i'}=\infty$.

     Clearly,  $r_i < \delta_i \to 0$.     Rescale $r_i$ to be $1$ to obtain
   a new EV-refined sequence
    \begin{align*}
    \{ (X_i^{(1)}, x_i^{(1)}, g_i^{(1)}(t)), -1 \leq t \leq 1\}
    \end{align*}
    where $x_i^{(1)}=p_i$.  We have convergence
    \begin{align*}
       (X_i^{(1)}, x_i^{(1)}, g_i^{(1)}(0)) \stackrel{C^{1, \frac12}}{\longrightarrow}
          (X_{\infty}^{(1)}, x_{\infty}^{(1)}, g_{\infty}^{(1)}).
    \end{align*}
    By our choice of $r_i$, for every $r>1$, there is a component of
    $\partial B(x_{\infty}^{(1)}, r)$ whose area is at least $\frac78 m\omega(m)r^{m-1}$.
    Therefore,  the ALE space $X_{\infty}^{(1)}$
    has an end whose volume growth is greater than $\frac78 \omega(m)  r^m > \frac12 \omega(m)
    r^m$.  Detach $X_{\infty}$ as union of orbifolds. One of them
    must be ALE space whose volume growth at infinity is exactly $\omega(m)  r^m$,
    the Ricci flatness forces that this ALE component is isometric to
    Euclidean space.     Since one component of $\partial B(x_{\infty}^{1}, 1)$
    has volume $\frac78 m\omega(m)$, $X_{\infty}^{(1)}$ itself
    cannot be Euclidean. Therefore, $X_{\infty}^{(1)}$ must contain
    a singular point which connects a Euclidean  space.   In other
    words,  $X_{\infty}^{(1)}$ contains a singular point $q$ which sit in a smooth
    component.

      If $X_{\infty}^{(1)}$ has more than one singularity,
    we can blowup at point $q$ as before and obtain a
    new bubble $X_{\infty}^{(2)}$. However, a fixed amount of energy
    (at least $\hslash$) will be lost during this process. Therefore such
    process must stop in finite times. Without loss of generality,
    we can assume that $X_{\infty}^{(1)}$ has a unique singularity $q$.
    By the choice of $p_i$, $\displaystyle x_{\infty}^{(1)} = \lim_{i \to \infty} p_i$
    must be singular if
    $X_{\infty}^{(1)}$ contains a singular point.  It follows that
    $X_{\infty}^{(1)}$ has a unique singularity $x_{\infty}^{(1)}$.
    From the previous argument, we already know that $x_{\infty}^{(1)}=q$
    connects a
    Euclidean space.  Since $x_{\infty}^{(1)}$ is the unique singularity, every
    geodesic $\gamma$ connecting $x_{\infty}^{(1)}$ and some point $x$ in the
    Euclidean space must stay in that Euclidean space. Therefore,
    $\partial B(x_{\infty}^{(1)}, 1)$ has a component which is a standard sphere
    whose area is $m\omega(m)> \frac78 m\omega(m)$. So for large
    $i$,  the largest component of $\partial B(p_i, r_i)$
    has area strictly greater than $\frac78 m\omega(m)r_i^{m-1}$.
    This contradicts to the choice of $r_i$!

    \textit{Step 2. Every singular point of $X_{\infty}$ has only one end. In other words, suppose $p$ is a singular
    point of $X_{\infty}$, then there exists $\delta_0$ depending on $p$ such that $\partial B(p, \delta)$
    is connected whenever $\delta < \delta_0$.}

    Suppose not, there is a small $\delta$ such that
     $\partial B(p, \delta)$ is not connected.  Choose $x, y$
     in two different components of $\partial B(p, \delta)$. Let $\gamma$
     be the shortest geodesic connecting $x$ and $y$.  It must pass
     through  $p$.  Suppose  $x_i, y_i, p_i \in X_i$, $\gamma_i \subset X_i$
     satisfy
     \begin{align*}
      x_i \to x, \quad y_i \to y, \quad p_i \to p, \quad \gamma_i
      \to \gamma
     \end{align*}
     where $\gamma_i$ is the shortest geodesic connecting $x_i, y_i$.

     For every $z \in X_i$, we can define
     \begin{align*}
      \mathscr{R} (z)  \triangleq
      \sup \{r | (\# Sing(B(z, r)))\cdot \hslash + \int_{B(z,r)} |Rm|^{\frac{m}{2}} d\mu=\frac12 \hslash.\}
     \end{align*}
     under the metric $g_i(0)$. Clearly, $\mathscr{R}(z)=0$ iff $z$
     is singular.   On $\gamma_i$, let $q_i$ be the point with the smallest
     $\mathscr{R}$ value and define $r_i=\mathscr{R}(q_i)$.
     Note that on orbifold $X_i$, every shortest geodesic connecting two
     smooth points never pass through orbifold singularity.
     This implies that $r_i= \mathscr{R}(q_i)>0$.
       Clearly, $r_i \to 0$. Now, we
     rescale $r_i$ to be $1$ to obtain new EV-refined sequence
     $\{(X_i, q_i, g_i^{(1)}(t)), -1 \leq t \leq 1\}$
     and take limit
     \begin{align*}
           (X_i, q_i, g_i^{(1)}(0)) \stackrel{C^{1, \frac12}}{\longrightarrow}
         (X_{\infty}^{(1)}, q_{\infty}, g_{\infty}^{(1)}).
     \end{align*}
     Clearly, $X_{\infty}^{(1)}$ contains a straight line passing
     through $q_{\infty}$ which we denote as $\gamma_{\infty}$.
     After rescaling, every unit geodesic ball centered on a point of
     $\gamma_{i}$ contains energy not more than $\frac12 \hslash$.  The
     energy concentration property forces that $|Rm|$ is uniformly
     bounded around $\gamma_i$.  So $\gamma_{\infty}$ is a straigt line
     free of singular point.  Detach $X_{\infty}^{(1)}$ as union of
     orbifolds. Then $\gamma_{\infty}$ must stay in one orbifold
     component. Therefore, there is an orbifold component containing
     a straight line. Then the splitting theorem for Ricci flat
     orbifolds applies and forces that component must be $\R \times
     N^{n-1}$.  The ALE condition forces that component must be
     Euclidean space.   Since $X_{\infty}^{(1)}$ contains a Euclidean
     component. From Step 1, we know every singularity cannot stay
     on smooth component. Therefore, $X_{\infty}^{(1)}$ itself must
     be the Euclidean space. So we actually have convergence
     \begin{align}
           \{(X_i, q_i, g_i^{(1)}(t)), 0 < t \leq \xi^2 \} \stackrel{C^{\infty}}{\longrightarrow}
         \{(X_{\infty}^{(1)}, q_{\infty}, g_{\infty}^{(1)}(t)), 0 < t \leq \xi^2 \}.
     \label{eqn: sconv}
     \end{align}
     This forces that $(X_{\infty}^{(1)}, q_{\infty}, g_{\infty}^{(1)}(t))$ is
     Euclidean space for every $t \in [0, \xi^2]$.

     Now return to the choice of $q_i$
     \begin{align*}
       \{\# Sing(B(q_i, r_i))\} \cdot \hslash
       + \int_{B(q_i, r_i)} |Rm|^{\frac{m}{2}} d\mu  =\frac12 \hslash
     \end{align*}
     actually reads $\int_{B(q_i, r_i)} |Rm|^{\frac{m}{2}} d\mu  = \frac12 \hslash$.
     There is a point $q'_i \in B(q_i, r_i)$ satisfying
     \begin{align*}
      Q_i' \triangleq |Rm|(q'_i) > \left(\frac{\hslash}{2\Vol(B(q_i, 2r_i))} \right)^{\frac{2}{m}}
       > \left(\frac{\hslash}{4} \cdot
       \frac{1}{\omega(m)(2r_i)^m } \right)^{\frac{2}{m}}
       > (\frac{\hslash}{4^m \omega(m)})^{\frac{2}{m}} r_i^{-2}
       \to \infty.
     \end{align*}
     On the other hand, the no-singularity-property of
     $X_{\infty}^{(1)}$ implies $Q_i' < Cr_i^{-2}$ for some
     uniform constant $C$.  Therefore, we have
     \begin{align}
         \delta^2 \triangleq (\frac{\hslash}{4^m \omega(m)})^{\frac{2}{m}} <Q_i'
         r_i^2<C.
     \label{eqn: twosidesq'}
     \end{align}
    In particular, $Q_i' \to \infty$ and therefore the energy concentration property
      applies on $q_i'$:
      \begin{align*}
      \int_{B_{g_i(t)}(q_i', (Q_i')^{-\frac12})} |Rm|^{\frac{m}{2}} d\mu \geq
      \hslash, \quad \; \forall \; 0 \leq t \leq \xi^2 (Q_i')^{-1}.
     \end{align*}
     Combining this with inequality (\ref{eqn: twosidesq'}) implies
    \begin{align*}
      \int_{B_{g_i(t)}(q_i', \delta^{-1}r_i)} |Rm|^{\frac{m}{2}} d\mu \geq
      \hslash, \quad \; \forall \; 0 \leq t \leq \frac{\xi^2}{C}r_i^2.
    \end{align*}
    After rescaling, we have
    \begin{align*}
       \int_{B_{g_i^{(1)}(\bar{t})}(q_i', \delta^{-1})} |Rm|^{\frac{m}{2}}
       d\mu \geq \hslash, \quad
       \bar{t}=\frac{\xi^2}{C}.
    \end{align*}
   The smooth convergence (\ref{eqn: sconv}) implies
   that the energy  of $(X_{\infty}, g_{\infty}^{(1)}(\bar{t}))$ is not less than
    $\hslash$. This
    contradicts to the property that $(X_{\infty}, g_{\infty}^{(1)}(\bar{t}))$
    is a Euclidean space! \\

    Therefore, every singular point of $X_{\infty}$ has a unique
    nontrivial end, i.e., it has tangent space $\R^{n} / \Gamma$
    for some nontrivial $\Gamma$.  So $X_{\infty}$ is an orbifold.
  \end{proof}

  \begin{proposition}
   Every $E$-refined sequence is an $EV$-refined sequence.
  \label{proposition: eev}
  \end{proposition}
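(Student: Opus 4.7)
The plan is to argue by contradiction, combining a rescaling and Perelman-style point-picking argument with the $EV$-weak compactness theorem (Proposition~\ref{proposition: EVweakc1h}). Suppose some $E$-refined sequence $\{(X_i, g_i(t))\}$ under constraint $H$ fails to be $EV$-refined; then there exist points $(x_i, t_i) \in X_i \times [-\tfrac14, \tfrac14]$ and radii $r_i \in (0, 1]$ with $V_i := \Vol_{g_i(t_i)}(B_{g_i(t_i)}(x_i, r_i))/r_i^m \to \infty$. First I would parabolically rescale by $r_i^{-2}$ and shift time so that $(x_i, t_i)$ becomes the center at parameter $0$ and $r_i$ becomes $1$. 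The rescaled sequence $\tilde g_i$ remains $E$-refined under the (smaller or equal) constraint $\tilde H = r_i^2 H$: curvature, radii and parabolic balls scale compatibly, the constants $\tilde c_i = r_i^2 c_i$ and $\|\tilde R\|_\infty \le r_i^2 \sigma$ shrink, $\kappa$-noncollapsing on scales $\le 1$ is preserved, and the energy bound $E$ is scale-invariant.

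Next I would apply an iterative point-picking argument to locate a refined base $(\bar x_i, \bar t_i, \bar r_i)$ with $\bar r_i \le 1$, and rescale further by $\bar r_i^{-2}$, so that the resulting sequence is $EV$-refined with some uniform constant $K = K(m, \kappa, E)$. The iteration replaces the current center whenever a nearby ball at a smaller scale has substantially larger volume ratio; the finiteness of the total energy is what forces termination, since each replacement is triggered by a high-curvature region that, by the energy concentration in Proposition~\ref{proposition: econ}, consumes at least $\hslash$ of the fixed budget $E$, so the iteration halts in at most $\lceil E/\hslash \rceil$ steps.

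Applying Proposition~\ref{proposition: EVweakc1h} to the resulting $EV$-refined sequence produces, along a subsequence, a Cheeger-Gromov $C^{1,1/2}$ limit $(X_\infty, \bar x_\infty, g_\infty)$ that is a Ricci-flat ALE orbifold with energy at most $E$. On such a space, Bishop-Gromov volume comparison applied on the smooth part (where $\mathrm{Ric} = 0$), combined with the ALE structure at infinity, delivers a uniform upper bound on $\Vol(B(y, r))/r^m$ independent of $r$. Consequently the unit ball volume ratio at $\bar x_\infty$ is finite, which contradicts the fact that, by the construction of the rescaling in steps one and two, this limiting ratio equals $\lim_i V_i = \infty$.

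The principal technical obstacle is the iterative point-picking step: since $r \mapsto \Vol(B(x, r))/r^m$ is neither monotone nor continuous, one cannot simply choose the smallest scale where the ratio exceeds a threshold, and one must iterate replacements across both base point and scale. The careful organization of this iteration, together with the energy bookkeeping provided by the energy concentration property, is what guarantees termination and delivers the uniform constant $K$ required by the definition of $EV$-refined.
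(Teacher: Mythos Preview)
Your outline has a genuine circularity that the point--picking step does not resolve. You propose to apply Proposition~\ref{proposition: EVweakc1h} to a rescaled sequence that you claim is $EV$-refined with a \emph{uniform} constant $K$; but the iteration you describe, which replaces the current center whenever a nearby smaller ball has ``substantially larger volume ratio,'' can only increase the ratio at each step. At termination the volume ratio at the final scale is still at least $V_i\to\infty$, and the halting condition merely says nearby smaller balls have ratio at most a fixed multiple of this --- a \emph{relative} bound, not an absolute $K$. Hence the rescaled sequence is \emph{not} $EV$-refined for any fixed $K$, and Proposition~\ref{proposition: EVweakc1h} does not apply. The contradiction you aim for in the last paragraph (``this limiting ratio equals $\lim_i V_i=\infty$'') presupposes exactly the convergence you have not earned.

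The second gap is the asserted link between a replacement step and the consumption of $\hslash$ units of energy. Energy concentration (Proposition~\ref{proposition: econ} and the $E$-refined inequality~\eqref{eqn: econ}) is triggered by \emph{large curvature} $|Rm|\geq H$, not by large volume ratio; you give no mechanism by which a ball with anomalously large volume ratio must contain a point of curvature $\geq H$. Without that link the iteration need not terminate in $\lfloor E/\hslash\rfloor$ steps, and the energy bookkeeping is vacuous.

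The paper does not prove Proposition~\ref{proposition: eev} explicitly here; the argument is taken from~\cite{CW3}. The route there is a direct covering/counting argument using energy concentration rather than a blow--up to an ALE limit: one shows that at each time in $[-\tfrac14,\tfrac14]$ the high--curvature set $\{|Rm|\geq H\}\cup \mathrm{Sing}$ can be covered by at most $\lfloor E/\hslash\rfloor$ balls of a fixed small radius (this is where \eqref{eqn: econ} is used, via a Vitali--type disjointness argument), and on the complement one has $|Rm|<H$, so Bishop--Gromov comparison controls the volume. Combining the two pieces yields a uniform $K$ directly, without invoking Proposition~\ref{proposition: EVweakc1h}. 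If you want to salvage a blow--up strategy, you would first need an independent (scale--invariant) reason why a ball of large volume ratio forces either large curvature or a singularity nearby; absent that, the direct covering argument is the correct tool.
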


The next thing we need to do is to improve the convergence
    topology from $C^{1, \frac12}$ to $C^{\infty}$. In light of Shi's estimate, the following
    backward pseudolocality property assures this improvement.

  \begin{proposition}
      Suppose $\{(X_i, x_i, g_i(t)), -1 \leq t \leq 1\}$ is an E-refined
      sequence satisfying $|Rm|_{g_i(0)} \leq r^{-2}$ in $B_{g_i(0)}(x_i, r)$ for
      some $r \in (0, 1)$.

      Then there is a uniform constant $C$ depending on this sequence such that
   \begin{align*}
     |Rm|_{g(t)}(y) \leq C, \; \textrm{whenever} \;
     (y, t) \in P^-(x_i, 0, \frac12 r, 9\xi^2 r^2).
   \end{align*}
   \end{proposition}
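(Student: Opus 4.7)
The plan is to argue by contradiction through a blow-up analysis that combines the point-selection technique from the pseudolocality proof with the $C^{1,\frac12}$-weak compactness for EV-refined sequences. By rescaling time and space so that $r=1$ (a valid reduction since the refined-sequence conditions are essentially scale-invariant, the normalization constants $c_i$ and scalar bounds simply being rescaled), it suffices to show that no sequence $(y_i, t_i) \in P^-(x_i, 0, \tfrac12, 9\xi^2)$ can have $Q_i := |Rm|_{g_i(t_i)}(y_i) \to \infty$. If such a sequence exists, the initial-time hypothesis forces $t_i<0$ for all large $i$, since $y_i \in B_{g_i(0)}(x_i, \tfrac12)$ implies $|Rm|_{g_i(0)}(y_i) \leq 1 < Q_i$.

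Next I would apply Perelman's point-selection trick (in the same form used in the proof of Theorem~\ref{theorem: improvedbound} above) to replace $(y_i, t_i)$ by a nearby point and assume $|Rm|_{g_i(t)}(z) \leq 4Q_i$ on a forward parabolic neighborhood $P^+(y_i, t_i, A Q_i^{-1/2}, \tfrac12 \alpha Q_i^{-1})$ with $A \to \infty$. Form the rescaled sequence $\tilde g_i(t) := Q_i g_i(t_i + t/Q_i)$. Since the defining conditions of an E-refined sequence are scale-invariant (and the rescaled normalizations $c_i/Q_i$ and scalar bound tend to $0$), the rescaled sequence is still E-refined, and by Proposition~\ref{proposition: eev} it is EV-refined. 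Applying Proposition~\ref{proposition: EVweakc1h} at $\tilde t = 0$, a subsequence satisfies
\begin{align*}
  (X_i, y_i, \tilde g_i(0)) \stackrel{C^{1,\frac12}}{\longrightarrow} (X_\infty, y_\infty, g_\infty),
\end{align*}
with $(X_\infty, g_\infty)$ a Ricci-flat ALE orbifold; the uniform bound $|\widetilde{Rm}| \leq 4$ near $y_i$ forces $y_\infty$ to be a smooth point with $|Rm|(y_\infty) = 1$.

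By Shi's local derivative estimates the convergence upgrades to smooth convergence of Ricci flows on $(0, \alpha/2]$. Because the rescaled scalar curvature and the normalization $c_i/Q_i$ both tend to zero, the limit flow is Ricci-flat at every instant, hence static in time, so $|Rm|(y_\infty, t) \equiv 1$ on the interval of smooth convergence. The contradiction now comes from the initial hypothesis. Setting $T_i := -Q_i t_i \geq 0$, the bound $|Rm|_{g_i(0)}(y_i) \leq 1$ translates to $|\widetilde{Rm}|_{\tilde g_i(T_i)}(y_i) \leq Q_i^{-1} \to 0$. If $T_i$ stays bounded, passing to the limit gives $|Rm|(y_\infty, T_\infty) = 0$, contradicting the static value $1$. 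If $T_i \to \infty$, one propagates smooth convergence forward beyond $\alpha/2$ by iteratively invoking the improved pseudolocality theorem (Theorem~\ref{theorem: improvedbound}) on successive time slices, using the uniform volume non-collapsing provided by the EV-refined structure, and again reaches the same contradiction at any fixed interior time.

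The main obstacle is this final propagation step: extending smooth convergence from the Perelman point-picking window $[0, \alpha/2]$ all the way out to (at least a fixed positive fraction of) $T_i$ when $T_i$ is large. This requires iteratively invoking Theorem~\ref{theorem: improvedbound} on later time slices, and verifying that the non-collapsing and curvature hypotheses of pseudolocality are preserved at each step. The EV-refined structure, in particular the uniform volume lower bound that survives in the limit, is precisely what makes these repeated applications possible, and it is what reduces the backward pseudolocality statement to a clean forward argument.
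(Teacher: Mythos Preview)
Your approach diverges substantially from the paper's, and the step you yourself flag as ``the main obstacle'' is a genuine gap that your sketch does not close.

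The difficulty is the case $T_i = -Q_i t_i \to \infty$. Your plan is to iterate forward pseudolocality (Theorem~\ref{theorem: improvedbound}) to push smooth convergence from the point-selection window $[0,\alpha/2]$ out to $T_i$. But each application of pseudolocality requires, at the new time slice, a curvature bound on a ball of \emph{definite} radius; the output of the previous step only furnishes such a bound on a ball shrunk by a factor $\eta$. You recover the larger ball by appealing to convergence to the static Ricci-flat limit $X_\infty$, but this convergence is itself what you are trying to extend: to propagate to time $T$ you need $i \geq i_0(T)$, and since $T_i\to\infty$ depends on $i$, there is no guarantee that $i \geq i_0(T_i)$. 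This diagonal issue is not addressed. There is also a secondary gap in the point-selection step: Perelman's selection controls the location of $(\bar y_i,\bar t_i)$ in a \emph{geometric} parabolic ball (distances measured at time $\bar t_i$), whereas you need $\bar y_i \in B_{g_i(0)}(x_i,1)$ (a time-$0$ ball) to invoke the hypothesis $|Rm|_{g_i(0)}(\bar y_i)\leq 1$. These are different constraints, and the drift between them is not controlled.

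The paper's argument avoids all of this by \emph{not} blowing up. One takes the $C^{1,\frac12}$ limit of $(X_i,y_i,g_i(t_i))$ at the original scale; then $y_\infty$ is an orbifold \emph{singularity} (since $|Rm|_{g_i(t_i)}(y_i)\to\infty$), so volume comparison gives $\Vol_{g_i(t_i)}\big(B_{g_i(t_i)}(y_i,\lambda_m\xi^{1/2})\big) \leq \tfrac12\omega(m)(\lambda_m\xi^{1/2})^m$ with $\lambda_m=1+\tfrac{1}{100m}$. A short geodesic-length comparison (using the $E$-refined energy concentration and the specific choice of $\xi$ in Proposition~\ref{proposition: xia}) shows $B_{g_i(0)}(y_i,\xi^{1/2})\subset B_{g_i(t_i)}(y_i,\lambda_m\xi^{1/2})$, while the time-$0$ curvature bound and Proposition~\ref{proposition: xib} give $\Vol_{g_i(0)}\big(B_{g_i(0)}(y_i,\xi^{1/2})\big)>\tfrac78\omega(m)\xi^{m/2}$. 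Since scalar curvature tends to zero the two volumes are comparable, yielding $\lambda_m^m>\tfrac76$, which is false. This is a single-step volume argument at scale $\xi^{1/2}$; no propagation over long rescaled time intervals is needed.
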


     \begin{proof}
   Without loss of generality, we can assume $r=1$.

    Suppose this statement is wrong, there are points $(y_i, t_i) \in P^-(x_i, 0, \frac12, 9 \xi^2)$
    satisfying $|Rm|_{g_i(t_i)}(y_i) \to \infty$. According to  Proposition~\ref{proposition: EVweakc1h}
    and Proposition~\ref{proposition: eev}, we can take limit
    \begin{align*}
    (X_i, y_i, g_i(t_i)) \stackrel{C^{1, \frac12}}{\longrightarrow}
      (Y_{\infty}, y_{\infty}, h_{\infty}),
    \end{align*}
    where $Y_{\infty}$ is a Ricci flat ALE orbifold, $y_{\infty}$ is
    a singular point.


   \begin{clm}
    $B_{g_i(0)}(y_i, \xi^{\frac12}) \subset B_{g_i(t_i)}(y_i, \lambda_m \xi^{\frac12})$
      for large $i$, where $\lambda_m = 1+ \frac{1}{100m}$.
   \end{clm}

         Actually, let $\gamma$ be the shortest geodesic connecting
     $y_i$ and $p \in B_{g_i(0)}(y_i, \xi^{\frac12})$ under metric $g_i(0)$.
     By energy concentration property,  under metric $g_i(t_i)$,
     after deleting (at most) $N=\lfloor \frac{E}{\hslash} \rfloor$
     geodesic balls of radius  $\xi^{\frac34}$, the remainder set  which we denote as $\Omega_i$
     has  uniform Riemannian curvature bounded by $\xi^{-\frac32}$.
     Therefore, according to the choice of $\xi$ (c.f. Proposition~\ref{proposition: xia}),
     we know $|Rm|$ is
     uniformly bounded by $\frac{1}{10000m^2} \xi^{-2}$ on $ \Omega_i' \times [t_i, 0]$
     where
     \begin{align*}
     \Omega_i'= \{x \in \Omega_i| d_{g_i(t_i)}(x, \partial \Omega_i) \geq
     \xi^{\frac34}\},
     \quad [t_i, 0] \subset [t_i, t_i + 9\xi^2].
     \end{align*}
      As the change of length is controlled by integration of Ricci curvature over time, we have
      \begin{align*}
         dist_{g_i(t_i)}(p, y_i) &\leq e^{\frac{1}{10^6m}} length_{g_i(0)}
         \gamma + N \cdot 2\xi^{\frac34}
         \leq (e^{\frac{1}{10^6m}} + N \cdot 2\xi^{\frac14})\xi^{\frac12}
         < \lambda_m \xi^{\frac12},
      \end{align*}
      where the last step follows from the choice of $\xi$.
      The Claim is proved.\\

     Since $y_i \in B_{gi(0)}(x_i, \frac12)$, according to the choice of $\xi$,
    we have
      $\Vol_{g_i(0)}(B_{g_i(0)}(y_i, \xi^{\frac12})) > \frac78 \omega(m) \xi^{\frac{m}{2}}$.
    On the other hand,
    $C^{1, \frac12}$-convergence and volume comparison implies
   \begin{align*}
    \frac{\Vol_{g_i(t_i)}(B_{g_i(t_i)}(y_i, \lambda_m \xi^{\frac12}))}{(\lambda_m \xi^{\frac12})^m}
     \to \frac{\Vol(B(y_{\infty}, \lambda_m \xi^{\frac12}))}{(\lambda_m
    \xi^{\frac12})^m}
    \leq \lim_{r \to 0} \frac{\Vol(B(y_{\infty}, r))}{r^m}
    = \frac{\omega(m)}{|\Gamma(y_{\infty})|}.
   \end{align*}
   As volume change is controlled by integration of scalar curvature
   which is tending to zero, we know
   \begin{align*}
     \lim_{i \to \infty}
    \frac{\Vol_{g_i(0)}(B_{g_i(t_i)}(y_i, \lambda_m \xi^{\frac12}))}{(\lambda_m \xi^{\frac12})^m}
    = \lim_{i \to \infty}
    \frac{\Vol_{g_i(t_i)}(B_{g_i(t_i)}(y_i, \lambda_m \xi^{\frac12}))}{(\lambda_m \xi^{\frac12})^m}
    \leq \frac{\omega(m)}{|\Gamma(p_{\infty})|}
    \leq \frac12 \omega(m).
   \end{align*}
   Therefore, for large $i$, we have
   \begin{align*}
     \frac78 \omega(m) \xi^{\frac{m}{2}}
     <\Vol_{g_i(0)}(B_{g_i(0)}(y_i, \xi^{\frac12}))
     \leq \Vol_{g_i(0)}(B_{g_i(t_i)}(y_i, \lambda_m \xi^{\frac12}))
     <\frac34 \omega(m)(\lambda_m)^m \xi^{\frac{m}{2}}.
   \end{align*}
   It implies $e^{\frac{1}{100}} >(1+\frac{1}{100m})^m=\lambda_m^m > \frac76$ which is impossible!
   This contradiction establish the proof of backward pseudolocality.
   \end{proof}

  Using this backward pseudolocality theorem, we can
  improve $C^{1,\frac12}$-convergence to $C^{\infty}$-convergence.

   \begin{proposition}[\textbf{$C^{\infty}$-Weak Compactness of EV-refined Sequence}]
   Suppose that $\{(X_i, x_i, g_i(t)), -1 \leq t \leq 1\}$
    is an EV-refined sequence, we have
   \begin{align*}
         (X_i, x_i, g_i(0)) \stackrel{C^{\infty}}{\longrightarrow}
          (X_{\infty}, x_{\infty}, g_{\infty})
   \end{align*}
   where $X_{\infty}$ is a Ricci flat ALE orbifold.
   \label{proposition: EVweak}
   \end{proposition}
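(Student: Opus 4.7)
The plan is to bootstrap the $C^{1,\frac12}$-convergence already established in Proposition~\ref{proposition: EVweakc1h} (combined with Proposition~\ref{proposition: eev}) to $C^\infty$-convergence on the smooth part of $X_\infty$, by feeding the backward pseudolocality proposition just proved into Shi's local derivative estimates. Since $X_\infty$ has only finitely many singularities, this will complete the claim of $C^\infty$ Cheeger--Gromov convergence of orbifolds.

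First I would fix a point $q$ in the smooth part of $X_\infty$ and a radius $r$ small enough that the ball $B_{g_\infty}(q, 2r)$ contains no orbifold singularity. The $C^{1,\frac12}$-convergence supplies, for all large $i$, diffeomorphisms $\varphi_i$ from a neighborhood of $\overline{B_{g_\infty}(q,2r)}$ into $X_i$ such that $\varphi_i^* g_i(0) \to g_\infty$ in $C^{1,\frac12}$. In particular $|Rm|_{g_i(0)}$ is uniformly bounded on $\varphi_i(B_{g_\infty}(q,\tfrac{3r}{2}))$, so after shrinking $r$ if necessary the hypothesis $|\hat{Rm}|_{g_i(0)} \le r^{-2}$ on $B_{g_i(0)}(\varphi_i(q), r)$ required by the backward pseudolocality proposition holds uniformly in $i$.

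Second, I would apply that backward pseudolocality result at each $\varphi_i(q)$ to obtain a uniform curvature bound $|Rm|_{g_i(t)} \le C$ on the backward parabolic cylinder $P^-(\varphi_i(q), 0, \tfrac{r}{2}, 9\xi^2 r^2)$. With uniform $|Rm|$ bounds on a backward parabolic neighborhood, Shi's local derivative estimates (which are a purely local statement on the smooth part, and therefore apply as on a manifold) yield uniform bounds on $|\nabla^k Rm|_{g_i(0)}$ for every $k \ge 0$ on a slightly smaller ball about $\varphi_i(q)$. Together with the $\kappa$-noncollapsing and $C^0$-convergence of metrics already in hand, the standard Cheeger--Gromov compactness theorem upgrades the convergence of $\varphi_i^*g_i(0)$ from $C^{1,\frac12}$ to $C^\infty$ on a neighborhood of $q$.

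Finally, covering the smooth part of $X_\infty$ by a countable exhaustion of such balls and extracting a diagonal subsequence produces smooth convergence on all of the smooth part of $X_\infty$, which, since the singular set of $X_\infty$ is finite, is precisely $C^\infty$ Cheeger--Gromov convergence of orbifolds. I do not expect a genuine obstacle here: the only point requiring care is verifying the backward pseudolocality hypothesis uniformly in $i$, and this follows immediately from the $C^{1,\frac12}$ bounds away from the finitely many singular points; no ingredient beyond the backward pseudolocality proposition and Shi's estimates is needed.
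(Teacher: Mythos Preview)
Your approach is correct and essentially matches the paper's, which gives no explicit proof but remarks that backward pseudolocality combined with Shi's estimates upgrades the $C^{1,\frac12}$-convergence of Proposition~\ref{proposition: EVweakc1h} to $C^\infty$. One minor point: the uniform bound on $|Rm|_{g_i(0)}$ near a smooth limit point does not follow from $C^{1,\frac12}$-convergence of the metrics alone (curvature is second order in the metric) but rather from the energy-concentration property built into the E-refined sequence, which is what produced the $C^{1,\frac12}$-convergence in the first place and is therefore already available.
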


   \begin{proposition}
   Every refined sequence is an $E$-refined sequence.
   \end{proposition}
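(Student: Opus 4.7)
The plan is to argue by contradiction, combining a blow-up at a high-curvature point with a Perelman-style point-picking and the Ricci-flat energy concentration of Proposition~\ref{proposition: econ}. Suppose the refined sequence is not $E$-refined for any constraint $H$. Then, after extracting a subsequence, one obtains points $(x_i, t_i) \in X_i \times [-\tfrac{1}{2}, \tfrac{1}{2}]$ with $Q_i := |Rm|_{g_i(t_i)}(x_i) \to \infty$ and times $s_i \in [t_i, t_i + \xi^2 Q_i^{-1}]$ at which the energy concentration inequality fails; in particular, the ball $B_{g_i(s_i)}(x_i, Q_i^{-1/2})$ is smooth and carries $\int |Rm|^{m/2}\, d\mu < \hslash$.

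Next I would perform a point-picking analogous to the one used in the proof of Theorem~\ref{theorem: improvedbound}, replacing $(x_i, t_i, s_i)$ by $(\bar x_i, \bar t_i, \bar s_i)$ with curvature $\bar Q_i \geq Q_i$ at $(\bar x_i, \bar t_i)$ such that $|Rm|_{g_i(t)}(y) \leq 4\bar Q_i$ throughout a backward parabolic neighborhood of $(\bar x_i, \bar t_i)$ of parabolic radius $A_i \bar Q_i^{-1/2}$, $A_i \to \infty$, while retaining a violating time $\bar s_i \in [\bar t_i, \bar t_i + \xi^2 \bar Q_i^{-1}]$ for the new base point. Rescaling by $\bar Q_i$, set $\tilde g_i(t) := \bar Q_i \, g_i\!\left(\bar t_i + \bar Q_i^{-1} t\right)$. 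Then under $\tilde g_i$ one has $|Rm|(\bar x_i, 0) = 1$, a uniform backward curvature bound $|Rm| \leq 4$ on a parabolic ball of growing radius, and a retained $\kappa$-noncollapsing; meanwhile the rescaled scalar curvature and normalization constant both tend to $0$.

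By Hamilton's Cheeger--Gromov compactness theorem, after passing to a subsequence one extracts a smooth pointed limit $(\tilde X_i, \bar x_i, \tilde g_i(t)) \to (\tilde X_\infty, \bar x_\infty, \tilde g_\infty(t))$ on some backward time interval. Since scalar curvature tends to zero along the sequence, $R \equiv 0$ on the limit; the evolution equation $\partial_t R = \triangle R + 2|\mathrm{Ric}|^2$ then forces $\mathrm{Ric} \equiv 0$, so $\tilde g_\infty$ is a stationary Ricci-flat metric with $|Rm|(\bar x_\infty) = 1$. The small-energy condition passes to the limit to give $\int_{B(\bar x_\infty, 1)} |Rm|^{m/2}\, d\mu \leq \hslash \leq \hslash_b$. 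However, applying Proposition~\ref{proposition: econ} with $\rho = 1$ at $\bar x_\infty$ (noting $|Rm|(\bar x_\infty) = 1 \geq \tfrac{1}{2}$) requires $\int_{B(\bar x_\infty, 1/2)} |Rm|^{m/2}\, d\mu > \hslash_b$, contradicting the bound just obtained.

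The main obstacle is the point-picking step: one must ensure that, when passing from $(x_i, t_i)$ to $(\bar x_i, \bar t_i)$ in order to gain the backward parabolic curvature bound, a violating time $\bar s_i$ still exists for the new base point. I would handle this exactly as in the proof of the pseudolocality theorem (Theorem~\ref{theorem: improvedbound}): one controls distance distortion over the short time intervals $[\bar t_i, \bar s_i]$ of length at most $\xi^2 \bar Q_i^{-1}$ using the local curvature bound, so that the small-energy ball of rescaled radius $1$ at the new base point stays within a controlled rescaling of the original violating ball, whence the violation is inherited.
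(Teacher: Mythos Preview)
Your approach has a genuine gap in the point-picking step, and the fix you propose does not work. Perelman-style point-picking (as in Theorem~\ref{theorem: improvedbound}) produces a new base point $(\bar x_i,\bar t_i)$ with a backward curvature bound, but it does \emph{not} keep $\bar x_i$ close to the original $x_i$ at the scale $Q_i^{-1/2}$, nor does it keep $\bar t_i$ in the short window $[t_i,\,t_i+\xi^2 Q_i^{-1}]$; the new point can drift by an amount comparable to $Q_i^{-1/2}$ (the geometric series in the point-picking sums to $2Q_i^{-1/2}$) and can move backward in time. Hence there is no reason the small-energy ball $B_{g_i(s_i)}(x_i,Q_i^{-1/2})$ should contain, or even meet, the ball $B_{g_i(\bar s_i)}(\bar x_i,\bar Q_i^{-1/2})$ at any admissible $\bar s_i\in[\bar t_i,\bar t_i+\xi^2\bar Q_i^{-1}]$. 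The ``distance distortion'' control you invoke only compares metrics over a time interval of length $\leq\xi^2\bar Q_i^{-1}$; it says nothing about the spatial displacement of the base point itself. A second, smaller issue: your smooth convergence is only on a \emph{backward} time interval, while the small-energy condition lives at a \emph{forward} rescaled time $\in[0,\xi^2]$; passing it to the limit requires a forward curvature bound (e.g.\ via Proposition~\ref{proposition: xia}), which you do not invoke.

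The paper's proof resolves this by point-picking \emph{among the violating points}: choose $(\bar x_i,\bar t_i)$ so that energy concentration fails there but holds at every point with curvature $\geq 2\bar Q_i$. After rescaling $\bar Q_i$ to $1$ this yields an $E$-refined sequence under constraint $2$, with the base point still violating. Note this does \emph{not} directly give a backward curvature bound; instead the paper feeds this into the machinery already built for $E$-refined sequences: the $E$-refined property under constraint $2$ gives $|Rm|\leq 4$ on $B_{g_i(t_i)}(x_i,\tfrac12)$ at the violating time $t_i\in[0,\xi^2]$, then backward pseudolocality carries this bound back to time $0$, and the $C^\infty$-weak compactness of $EV$-refined sequences (Proposition~\ref{proposition: EVweak}, via Proposition~\ref{proposition: eev}) produces a Ricci-flat orbifold limit at time $t_i$. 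Moser iteration (Proposition~\ref{proposition: econ}) on the smooth limit ball then forces $|Rm|(x_\infty)<\tfrac12$, contradicting $|Rm|(x_\infty)=1$. In short, the correct point-picking is chosen to preserve the violation, and the curvature bound is obtained afterward from the already-proved structure theory of $E$-refined sequences rather than from the point-picking itself.
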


    \begin{proof}
     Suppose not.
      Then by delicate selecting of base points and
     blowup, we can find an $E$-refined sequence
     $\{(X_i, x_i, g_i(t)), -1 \leq t \leq 1\}$ under constraint  $2$ satisfying
     $|Rm|_{g_i(0)}(x_i)=1$ and energy concentration fails at
     $(x_i, 0)$, i.e.,
     \begin{align*}
       \sharp(Sing(B_{g_i(t_i)}(x_i, 1))) + \int_{B_{g_i(t_i)}(x_i, 1)} |Rm|^{\frac{m}{2}} d\mu <
        \hslash
     \end{align*}
     for some $t_i \in [0, \xi^2]$. This means that, under metric
     $g_i(t_i)$,
     $B(x_i, 1)$ is free of singularity and
     $\int_{B(x_i, 1)} |Rm|^{\frac{m}{2}} d\mu <\hslash$.
     The energy concentration
     property implies that $|Rm| \leq 4$ in $B(x_i, \frac12)$ under the
     metric $g_i(t_i)$. Therefore, by the backward pseudolocality,
     we have
     \begin{align*}
      |Rm|_{g_i(t)}(x) \leq 4C, \quad \forall \; (x, t) \in P^-(x_i, t_i, \frac14, \frac94 \xi^2).
     \end{align*}
    In particular, $|Rm|_{g_i(0)}(y) \leq 4C$ for every $y \in B_{g_i(t_i)}(x_i, \frac14)$.
    Based at $(x_i, t_i)$, we can take the smooth limit of
    $P^-(x_i, t_i, \frac18, 2\xi^2) \subset P^-(x_i, t_i, \frac14, \frac94 \xi^2)$,
    which will be a Ricci flat Ricci flow solution. Therefore,
    \begin{align}
       \lim_{i \to \infty} |Rm|_{g_i(t_i)}(x_i) = \lim_{i \to
       \infty} |Rm|_{g_i(0)}(x_i)=1.
    \label{eqn: backextend}
    \end{align}
    On the other hand, the sequence $\{(X_i, x_i, g_i(t)), -1 \leq t \leq 1\}$
    is an $EV$-refined sequence by Proposition~\ref{proposition: eev},
    the $C^{\infty}$-weak compactness theorem for $EV$-refined sequence (under constraint $(2, K_0)$) implies
    \begin{align*}
     (X_i, x_i, g_i(t_i)) \overset{C^{\infty}}{\longrightarrow} (X_{\infty}, x_{\infty}, g_{\infty})
    \end{align*}
    for some Ricci flat ALE orbifold $X_{\infty}$.
    Clearly, $B(x_{\infty}, 1)$ is free of singularity. So Moser
    iteration of $|Rm|$ implies that $|Rm|(x_{\infty})<\frac12$.
    It follows that $\displaystyle \lim_{i \to \infty} |Rm|_{g_i(t_i)}(x_i) \leq
    \frac12$ which contradicts to equation (\ref{eqn: backextend})!
   \end{proof}

   It follows directly the following theorem.

   \begin{theorem}[\textbf{Weak compactness of refined sequence}]
   Suppose $\{(X_i, x_i, g_i(t)), -1 \leq t \leq 1\}$
    is a refined sequence. Then  we have
   \begin{align*}
         (X_i, x_i, g_i(0)) \stackrel{C^{\infty}}{\longrightarrow}
          (X_{\infty}, x_{\infty}, g_{\infty})
   \end{align*}
   for some Ricci flat, ALE orbifold $X_{\infty}$.
   \label{theorem: refinedweak}
   \end{theorem}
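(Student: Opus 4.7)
The plan is to obtain this theorem as an immediate chain of the three preceding implications: refined $\Rightarrow$ $E$-refined $\Rightarrow$ $EV$-refined $\Rightarrow$ $C^{\infty}$-convergent to a Ricci flat ALE orbifold. All three links have just been established above, so the work is essentially to verify that one can apply them in succession to a given refined sequence $\{(X_i, x_i, g_i(t)), -1 \leq t \leq 1\}$ without losing the basepoint structure or the time interval structure.

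First I would invoke the proposition that every refined sequence is $E$-refined. This upgrades the sequence so that the energy concentration inequality (\ref{eqn: econ}) holds at every $(x_0, t_0)$ in the central time interval with $|Rm|_{g_i(t_0)}(x_0)$ large enough. Next I would apply the proposition that every $E$-refined sequence is in fact $EV$-refined, which supplies the uniform volume ratio upper bound on unit geodesic balls centered in the middle time strip. At this point the sequence satisfies all the hypotheses of Proposition~\ref{proposition: EVweak} ($C^{\infty}$-weak compactness of $EV$-refined sequences).

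Finally I would apply Proposition~\ref{proposition: EVweak} at the central time slice $t=0$ to extract a subsequential limit
\begin{align*}
(X_i, x_i, g_i(0)) \stackrel{C^{\infty}}{\longrightarrow} (X_{\infty}, x_{\infty}, g_{\infty})
\end{align*}
where $X_{\infty}$ is a Ricci flat ALE orbifold, which is precisely the conclusion.

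I do not expect a real obstacle: the proof is pure bookkeeping, since the three constraint-promotion propositions each come with their own (possibly enlarged) constants $H$ and $K$, but the final statement quantifies only over ``some'' Ricci flat ALE orbifold $X_{\infty}$ and does not need explicit constants. The only mildly non-trivial point is to note that the compositional scheme respects the pointed structure $(X_i, x_i, \cdot)$ and the center time $t=0$, both of which are preserved unchanged through each implication. Thus the proof reduces to the single sentence ``apply the three preceding propositions in order.''
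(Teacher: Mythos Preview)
Your proposal is correct and matches the paper's approach exactly: the paper simply states that the theorem ``follows directly'' from the chain of preceding propositions (refined $\Rightarrow$ $E$-refined, $E$-refined $\Rightarrow$ $EV$-refined, and the $C^{\infty}$-weak compactness of $EV$-refined sequences), with no additional argument given.
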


\subsection{Applications of Refined Sequences}

  After we obtain this smooth weak convergence, we can use refined
  sequence as a tool to study the moduli space
  $\mathscr{O}(m, c, \sigma, \kappa, E)$ which is defined at the
  begining of this section.   Using the same argument as
  in~\cite{CW3}, we can obtain the following theorems.

  \begin{theorem}[\textbf{No Volume Concentration and Weak Compactness}]
  $\mathscr{O}(m, c, \sigma, \kappa, E)$ satisfies the following two
  properties.
  \begin{itemize}
  \item No volume concentration. There is a constant $K$ such that
        \begin{align*}
          \Vol_{g(0)}(B_{g(0)}(x, r)) \leq Kr^m
        \end{align*}
     whenever $r \in (0, K^{-1}]$, $x \in X$,
     $\{(X, g(t)), -1 \leq t \leq 1\} \in \mathscr{O}(m, c, \sigma, \kappa, E)$.
  \item Weak compactness. If $\{ (X_i, x_i, g_i(t)) , -1 \leq t \leq 1\} \in \mathscr{O}(m, c, \sigma, \kappa, E)$
 for every $i$, by passing to subsequence if necessary, we have
 \begin{align*}
    (X_i, x_i, g_i(0)) \sconv (\hat{X}, \hat{x}, \hat{g})
 \end{align*}
 for some $C^0$-orbifold $\hat{X}$ in Cheeger-Gromov sense.
  \end{itemize}
 \label{theorem: centerwcpt}
 \end{theorem}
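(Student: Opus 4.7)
Both assertions will be proved by contradiction, reducing to the weak compactness of refined sequences (Theorem~\ref{theorem: refinedweak}) through parabolic rescaling. Any flow in $\mathscr{O}(m,c,\sigma,\kappa,E)$ satisfies almost every axiom of Definition~\ref{definition: refined} except scale-sensitive ones: after rescaling by a factor $Q_i \to \infty$, the normalization constant $c_0$ becomes $c_0/Q_i \to 0$, the scalar curvature sup-norm becomes $\sigma/Q_i \to 0$, the $\kappa$-noncollapsing scale improves to $\sqrt{Q_i}\to\infty$, while the energy $\{\#Sing\}\hslash + \int |Rm|^{m/2}$ is scale invariant. Hence any blowup sequence produced from $\mathscr{O}(m,c,\sigma,\kappa,E)$ by parabolic rescaling is automatically a refined sequence and Theorem~\ref{theorem: refinedweak} applies.

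\textbf{Step 1: no volume concentration.} Suppose the first conclusion fails. Then there exist $\{(X_i,g_i(t))\} \in \mathscr{O}(m,c,\sigma,\kappa,E)$, points $x_i \in X_i$, and radii $r_i \in (0, i^{-1}]$ with $\Vol_{g_i(0)}(B_{g_i(0)}(x_i, r_i)) \geq i \, r_i^m$. Form the parabolic rescaling $\tilde{g}_i(t) = r_i^{-2} g_i(r_i^2 t)$ on $[-r_i^{-2}, r_i^{-2}]$. By the observation above, $\{(X_i, x_i, \tilde{g}_i(t)), -1 \leq t \leq 1\}$ is a refined sequence, so Theorem~\ref{theorem: refinedweak} yields $C^\infty$-convergence to a Ricci-flat ALE orbifold $(X_\infty, x_\infty, g_\infty)$. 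But every Ricci-flat ALE orbifold has volume ratio bounded above by $\omega(m)$ at infinity, and in particular $\Vol(B(x_\infty, 1)) \leq C(m,\kappa)$, while the construction forces $\Vol_{\tilde{g}_i(0)}(B(x_i, 1)) \geq i \to \infty$. This contradiction produces the constant $K$.

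\textbf{Step 2: weak compactness.} With Step~1 at hand, every flow in $\mathscr{O}(m,c,\sigma,\kappa,E)$ has a uniform two-sided volume ratio bound on small scales, so Gromov precompactness gives, after passing to a subsequence, a pointed Gromov-Hausdorff limit $(X_i, x_i, g_i(0)) \xrightarrow{GH} (\hat{X}, \hat{x}, \hat{g})$. To upgrade this to Cheeger-Gromov convergence to a $C^0$-orbifold, I locate the bad points: a point $y \in \hat{X}$ will be called a bubble point if there are lifts $y_i \to y$ with $Q_i := |Rm|_{g_i(0)}(y_i) \to \infty$. Rescaling at $(y_i, 0)$ by $Q_i$ produces a refined sequence, whose Ricci-flat ALE orbifold limit is nontrivial and therefore carries at least $\hslash$ units of energy by Proposition~\ref{proposition: gap}. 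Since total energy is bounded by $E$, at most $N \leq E/\hslash$ bubble points $\{y^{(1)}, \ldots, y^{(N)}\} \subset \hat{X}$ exist. Away from this finite set the curvature stays uniformly bounded, and Shi estimates combined with the backward pseudolocality proved in the previous section give smooth convergence of the metrics on compact subsets of $\hat{X} \setminus \{y^{(j)}\}$. At each $y^{(j)}$, the tangent cone is identified (via iterated bubbling and the energy budget) with $\mathbb{R}^m/\Gamma^{(j)}$ for some finite $\Gamma^{(j)} \subset SO(m)$, making $\hat{X}$ a $C^0$-orbifold in the sense of Definition~\ref{definition: orbifold}.

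\textbf{Main obstacle.} The delicate step is showing that each tangent cone at a singular point of $\hat{X}$ is a single cone $\mathbb{R}^m/\Gamma$ rather than a multifold (several cones glued at a point). This is precisely the issue that required the space-time bubble tree in~\cite{CW3}; here it is handled by importing verbatim the Step~1 and Step~2 of Proposition~\ref{proposition: EVweakc1h} applied to the EV-refined blowup sequence at each bubble point. Step~1 rules out a singular point lying on a smooth component by comparing the area of the largest component of the boundary sphere with $\frac{7}{8} m\omega(m) r^{m-1}$; Step~2 rules out multiple ends by extracting a line through the singular point, invoking the Cheeger-Gromoll splitting theorem for Ricci-flat orbifolds to produce a Euclidean factor, and contradicting the presence of energy via backward pseudolocality. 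Both arguments are carried out entirely on the fixed time slice $t=0$ thanks to Proposition~\ref{proposition: eev}, which is the simplification promised in the introduction.
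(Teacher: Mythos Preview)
Your approach is correct and matches the paper's: the paper itself gives no detailed proof here, only the sentence ``Using the same argument as in~\cite{CW3}, we can obtain the following theorems,'' so the intended argument is exactly the blowup-to-refined-sequence reduction you describe, feeding into Theorem~\ref{theorem: refinedweak} and the bubble-tree machinery of~\cite{CW3}.

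Two small comments on presentation. First, in Step~1 you invoke the full $C^\infty$ compactness of Theorem~\ref{theorem: refinedweak} to extract a limit with bounded volume ratio; this is logically fine but overkill. The chain ``refined $\Rightarrow$ $E$-refined $\Rightarrow$ $EV$-refined'' (the last step being Proposition~\ref{proposition: eev}) already gives the uniform volume-ratio bound for any blowup sequence, without passing to a limit space at all. Second, in your ``Main obstacle'' paragraph the phrase ``importing verbatim Step~1 and Step~2 of Proposition~\ref{proposition: EVweakc1h} applied to the EV-refined blowup sequence'' is slightly misleading: what you actually need is not just that the \emph{deepest} bubble is an orbifold, but that $\hat{X}$ itself has connected tangent cones. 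The correct reading is that one reruns those two steps with $\hat{X}$ playing the role of $X_\infty$ --- the key point being that every auxiliary rescaling produced inside those arguments (at the $\mathscr{R}$-minimizing point on the geodesic, or at the scale $r_i$ where the sphere-area ratio drops) has factor tending to infinity and hence yields a genuine refined sequence, so Theorem~\ref{theorem: refinedweak} applies at each stage. With that clarification your sketch is complete. Also note that those steps are not ``carried out entirely on the fixed time slice $t=0$'': the proof of Proposition~\ref{proposition: EVweakc1h} uses the flow on $[0,\xi^2]$ (via forward pseudolocality and the smooth convergence~(\ref{eqn: sconv})), so the simplification over~\cite{CW3} is in the bubble-tree bookkeeping, not in eliminating the time direction.
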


 \begin{theorem}[\textbf{Isoperimetric Constants}]
  There is a constant $\iota=\iota(m, c, \sigma, \kappa, E, D)$ such
  that the following property holds.

   If $\{(X, g(t)), -1 \leq t \leq 1\} \in \mathscr{O}(m, c, \sigma, \kappa, E)$
 and $\diam_{g(0)}(X)<D$, then
 \begin{align*}
   \mathbf{I}(X, g(0)) > \iota.
 \end{align*}
 \label{theorem: centersob}
 \end{theorem}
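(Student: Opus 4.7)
\textbf{Proof plan for Theorem~\ref{theorem: centersob}.}

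I would argue by contradiction using the weak compactness of $\mathscr{O}(m,c,\sigma,\kappa,E)$ provided by Theorem~\ref{theorem: centerwcpt}. Suppose no such $\iota$ exists. Then there is a sequence $\{(X_i,g_i(t)),-1\le t\le 1\}\in\mathscr{O}(m,c,\sigma,\kappa,E)$ with $\diam_{g_i(0)}(X_i)<D$ and $\mathbf{I}(X_i,g_i(0))\to 0$. For each $i$ select a domain $\Omega_i\subset X_i$ with rectifiable boundary whose isoperimetric ratio is within $1/i$ of $\mathbf{I}(X_i,g_i(0))$, and arrange $|\Omega_i|_{g_i(0)}\le|X_i\setminus\Omega_i|_{g_i(0)}$.

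First, choose base points $x_i\in X_i$ arbitrarily. The hypotheses $\diam<D$, $\kappa$-noncollapsing, and the no-volume-concentration part of Theorem~\ref{theorem: centerwcpt} give uniform lower and upper bounds on $\Vol_{g_i(0)}(X_i)$. By the weak compactness part of Theorem~\ref{theorem: centerwcpt} and the diameter bound, we may pass to a subsequence and obtain
\begin{equation*}
 (X_i,x_i,g_i(0))\sconv(\hat{X},\hat{x},\hat{g}),
\end{equation*}
where $\hat{X}$ is a \emph{compact} $C^0$-orbifold with finitely many singular points, and $\Vol_{g_i(0)}(X_i)\to\Vol_{\hat g}(\hat{X})>0$ because the singular set has measure zero and the convergence is smooth (hence volume-preserving) off an arbitrarily small neighborhood of it.

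Second, I would establish that $\hat{X}$ itself has a strictly positive isoperimetric constant $\mathbf{I}(\hat{X},\hat{g})=2\iota_0>0$. Away from the finitely many singular points the metric is smooth with curvature bounds inherited from the limit procedure, so the smooth part can be covered by a fixed number of small geodesic balls with almost-Euclidean isoperimetric constant. Each singular point has a small neighborhood modeled on a quotient of a flat cone $\R^m/\Gamma$, which carries a Euclidean-type isoperimetric inequality with constant depending only on $m$ and $|\Gamma|$. A standard Croke-type patching argument over this finite cover produces a global lower bound depending only on the limit geometry, hence ultimately on $(m,c,\sigma,\kappa,E,D)$.

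Third, I would pass the near-minimizing domains $\Omega_i$ to the limit. Via the Cheeger--Gromov diffeomorphisms, pull $\Omega_i$ back to subsets of an exhausting smooth open set of $\hat{X}$. Since $|\partial\Omega_i|$ is uniformly bounded (the isoperimetric ratio tends to $0$ while $|\Omega_i|$ stays bounded), after a further subsequence the characteristic functions converge in BV to a set of finite perimeter $\Omega_\infty\subset\hat{X}$. The no-volume-concentration estimate of Theorem~\ref{theorem: centerwcpt} excludes accumulation of $\Omega_i$-mass at singular points, so $|\Omega_\infty|_{\hat g}=\lim_i|\Omega_i|_{g_i(0)}$ and the analogous equality holds for the complement; together with the normalization $|\Omega_i|\le|X_i\setminus\Omega_i|$ and the uniform volume bounds, both quantities stay bounded away from $0$ and from $\Vol(\hat{X})$. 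Lower semicontinuity of perimeter gives $|\partial\Omega_\infty|_{\hat g}\le\liminf_i|\partial\Omega_i|_{g_i(0)}=0$, so the isoperimetric ratio of $\Omega_\infty$ in $(\hat{X},\hat{g})$ is $0$, contradicting $\mathbf{I}(\hat{X},\hat{g})\ge 2\iota_0>0$.

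The main obstacle is making this third step rigorous at the finitely many singular points, where the convergence is merely $C^0$: one needs BV-compactness, lower semicontinuity of perimeter, and convergence of volumes across those points. The decisive tool is the no-volume-concentration estimate of Theorem~\ref{theorem: centerwcpt}, which bounds how much volume (and consequently how much perimeter) can hide in small metric neighborhoods of the singular set; with this in hand, the rest is a standard geometric-measure-theoretic comparison on a compact $C^0$-orbifold.
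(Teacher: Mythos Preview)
The paper does not prove this theorem directly; it simply states that the argument is the same as in~\cite{CW3} once Theorem~\ref{theorem: refinedweak} and Theorem~\ref{theorem: centerwcpt} are in hand. So there is no detailed proof here to compare against, and your compactness-and-contradiction strategy is in the intended spirit.

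That said, your argument has a real gap. You assert that ``both quantities stay bounded away from $0$ and from $\Vol(\hat X)$'', but nothing you have written forces $|\Omega_i|_{g_i(0)}$ to stay away from $0$: the normalization $|\Omega_i|\le|X_i\setminus\Omega_i|$ only gives the \emph{upper} bound $|\Omega_i|\le\tfrac12\Vol(X_i)$. If $|\Omega_i|\to 0$ while $|\partial\Omega_i|/|\Omega_i|^{(m-1)/m}\to 0$, your BV limit $\Omega_\infty$ has measure zero and is not an admissible competitor for $\mathbf{I}(\hat X,\hat g)$, so Step~3 yields no contradiction. This case cannot be dismissed: it is precisely the failure of a \emph{local} isoperimetric inequality, and the diameter bound $D$ is of no help at small scales.

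The missing case is exactly where the refined-sequence machinery should be invoked a second time. Rescale the flows by a factor tending to infinity, adapted to the scale of $\Omega_i$; the rescaled solutions form a refined sequence in the sense of Definition~\ref{definition: refined}, since the normalization constants and scalar curvatures are crushed to zero while $\kappa$-noncollapsing and the energy bound are scale invariant, and the available time interval only gets longer. Theorem~\ref{theorem: refinedweak} then produces a Ricci-flat ALE orbifold limit, which has a strictly positive isoperimetric constant, and one derives a contradiction there. Making this precise requires some care in choosing the blowup scale and base point so that $\Omega_i$ neither collapses nor escapes in the limit, but this is the dichotomy your proof needs: either $|\Omega_i|$ stays at macroscopic scale and your Step~3 applies, or it does not and a blowup via Theorem~\ref{theorem: refinedweak} handles it.
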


 Theorem 4.5 of~\cite{CW3} can be improved as
 $\mathcal{OS}(m, \sigma, \kappa, E, V)$---the moduli space of compact
 gradient shrinking Ricci soliton
 orbifolds---is compact.

 \section{\KRF on Fano Orbifolds}

 \subsection{Some Estimates}
 All the estimates developed under the \KRf on Fano manifolds hold
 for Fano orbifolds. We list the important ones and only give sketch
 of proofs if the statement is not obvious.

 \begin{proposition}[Perelman, c.f.~\cite{SeT}]
 Suppose $\{(Y^n, g(t)), 0 \leq t < \infty \}$ is a \KRf solution on
 Fano orbifold $Y^n$.
 There are two positive constants $\mathcal{B}, \kappa$ depending only on this flow such that the
 following two estimates hold.
 \begin{enumerate}
 \item Under metric $g(t)$,  let $R$ be the scalar curvature,
 $-u$ be the normalized Ricci potential, i.e.,
 \begin{align*}
 Ric-\omega_{\varphi(t)}= - \st \ddb{u}, \quad
 \frac{1}{V} \int_Y e^{-u} \omega_{\varphi(t)}^n=1.
 \end{align*}
 Then we have
 \begin{align*}
      \norm{R}{C^0} + \diam Y +
      \norm{u}{C^0} + \norm{\nabla u}{C^0} < \mathcal{B}.
 \end{align*}
 \item  Under metric $g(t)$,  $ \displaystyle
      \frac{\Vol(B(x, r))}{r^{2n}} > \kappa$ for every
       $r \in (0, 1)$, $(x, t) \in Y \times [0, \infty)$.
 \end{enumerate}
 \label{proposition: perelman}
 \end{proposition}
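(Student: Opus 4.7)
The strategy is to transplant the Perelman--Sesum--Tian proof (c.f.~\cite{SeT}) from the manifold to the orbifold setting, relying on the fact that every analytic ingredient it needs --- monotonicity of the $W$-functional, the no-local-collapsing theorem, and the pseudolocality theorem --- has already been established for compact orbifolds with finitely many singularities in Section~3. The plan proceeds in three steps.

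\emph{Step 1 (Uniform bounds on $R$, $u$, $|\nabla u|$).} Write the normalized flow as $\partial_t g_{i\bar j} = -R_{i\bar j} + g_{i\bar j}$, so that the Ricci potential $u$ satisfies $R_{i\bar j} - g_{i\bar j} = -u_{i\bar j}$ together with $\frac{1}{V}\int_Y e^{-u}\omega_{\varphi(t)}^n = 1$. A direct computation gives $\partial_t u = \Delta u + u - a(t)$ with $a(t) = V^{-1}\int_Y u\,e^{-u}\omega^n$, and $R = n + \Delta u$. Applying the parabolic maximum principle to the combinations used by Perelman --- e.g.~$|\nabla u|^2 + u - \Delta u$ and $\Delta u + |\nabla u|^2 - u$ --- whose evolution equations come from local Bochner calculations, one extracts a uniform $C^0$ bound on $R$ and $|\nabla u|$; the bound on $u$ then follows from the normalization, with $a(t)\to 0$ supplied by the monotonicity of $\mu(g(t),\tfrac12)$ and Theorem~\ref{theorem: deltalimit}.

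\emph{Step 2 (Diameter and non-collapsing).} With $R$ in hand, the $\mu$-functional monotonicity yields uniform non-collapsing on scale $\sqrt{t}$ via the standard Perelman heat-kernel argument, and this in turn forces the diameter to be uniformly bounded: otherwise one could pack arbitrarily many disjoint unit balls inside $Y$, each of volume at least $\kappa$, contradicting the fixed total volume $V = \int_Y \omega_{\varphi(t)}^n$. For conclusion~(2) we apply the orbifold no-local-collapsing theorem of Section~3 directly, using the scalar curvature bound from Step~1; the resulting constant $\kappa$ depends only on the initial data and on $\mathcal{B}$.

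\emph{Main obstacle.} The only genuinely new point is to verify that the parabolic maximum principle, integration by parts, and Bochner identities continue to hold at orbifold singular points. Because $Sing(Y)$ is finite, each such point has a local uniformization $B^{2n}/\Gamma$ on which all smooth tensors lift to $\Gamma$-invariant smooth tensors; the classical manifold arguments then apply on the cover and descend, with the only price being factors of $|\Gamma|$ that can be absorbed into the constants $\mathcal{B}$ and $\kappa$. Once this routine transfer is accepted, the rest of the proof is a line-by-line repetition of the manifold version in~\cite{SeT}.
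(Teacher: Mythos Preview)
Your outline has a genuine circular dependency between Steps~1 and~2. The Perelman combinations you invoke in Step~1 do \emph{not} produce absolute $C^0$ bounds on $R$ and $|\nabla u|$; what the maximum principle actually gives (after first securing a lower bound $u\ge -C_1$ via the Green's function and $\mu$-functional argument) is only
\[
\frac{|\nabla u|^2}{u+2B}\le C,\qquad \frac{\Delta u}{u+2B}\le C,
\]
i.e.\ bounds relative to $u$. At this stage $\sqrt{u+2B}$ is Lipschitz, but $u$ itself may still grow quadratically in the distance, and hence $R=n+\Delta u$ is not yet uniformly bounded. Your Step~2 packing argument then needs non-collapsing on unit scale, which in turn needs the scalar curvature bound $|R|\le 1$ on unit balls --- exactly what Step~1 has not yet supplied. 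So the diameter bound cannot be obtained this way.

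The paper breaks the circle by proving the diameter bound \emph{before} the full $R$ bound, using only the relative estimates above and the $\mu$-functional directly (not via non-collapsing). If $\diam_{g(t_i)}Y\to\infty$ while the total volume is fixed, a pigeonhole on dyadic shells produces annuli $A_i=B(x_i,2^{i+2})\setminus B(x_i,2^{i-2})$ with $\Vol(A_i)\to 0$, controlled inner/outer volume ratio, and (since $Sing(Y)$ is finite) containing no singular point. Plugging a cutoff supported in $A_i$ as a test function into $W(g(t_i),\tfrac12,\cdot)$ drives $\mu(g_0,\tfrac12)\to -\infty$, contradicting its finiteness. Only after the diameter is bounded do the relative estimates upgrade to absolute bounds on $u$, $|\nabla u|$, and $R$, and only then does the no-local-collapsing proposition of Section~3 give conclusion~(2). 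Two minor remarks: the reference to Theorem~\ref{theorem: deltalimit} is misplaced (that result feeds the pseudolocality argument, not the $\mu$-monotonicity used here), and ``non-collapsing on scale $\sqrt{t}$'' is not the relevant statement for a flow with infinite lifetime --- one wants scale $1$, uniformly in $t$.
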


 \begin{proof}
   When scalar curvature norm $|R|$ is uniformly bounded, the second
   estimate becomes a direct corollary of the general noncollapsing
   theorem.  So we only need to show the first estimate. The proof
   is almost the same as the manifold case.

   First, note that Green's function exists on every compact orbifold,
   and Perelman's functional behaves the same as in manifold case.
   Same as in~\cite{SeT}, we can apply Green's function and Perelman's functional
   to obtain a uniform lower bound of $u(t)$ where $-u(t)$ is the normalized Ricci potential.

   Second, since $u(t)$ is uniformly bounded from below, we can find
   a big constant $B$ such that $u+2B>B$. Then maximal
   principle tells us that there is a constant $C$ such that
   \begin{align*}
     \frac{\triangle u}{u+2B}<C, \quad
     \frac{|\nabla u|^2}{u+2B}<C.
   \end{align*}
   By the second inequality,  we know $u$ is Lipshitz.  Therefore, $u$ will be bounded whenever diameter is
   bounded.

    Third, diameter is bounded.  Suppose that diameter is unbounded,
   we can find a sequence of annulus $A_i=B_{g(t_i)}(x_i, 2^{i+2}) \backslash B_{g(t_i)}(x_i, 2^{i-2})$
   such that the following properties hold.
   \begin{itemize}
   \item  The closure $\overline{A_i}$ contains no singular point.
   \item  $\Vol_{g(t_i)}(A_i) \to 0$.
   \item  Under metric $g(t_i)$, $\frac{\Vol(B(x_i, 2^{i+2}) \backslash B(x_i, 2^{i-2}))}
         {\Vol(B(x_i, 2^{i+1}) \backslash B(x_i, 2^{i-1}))} < 2^{10n}$.
   \end{itemize}
   The reason we can do this is that $Y$ contains only finite
   singularities.    Then by taking a proper cutoff function whose
   support is in $A_i$, we can deduce that Perelman's functional
   $\mu(g_0, \frac12)$ must tend to $-\infty$. Impossible!
 \end{proof}

 The following estimates on orbifolds are exactly the same as the corresponding estimates on
 manifolds.

 \begin{proposition}[\cite{Zhq}, \cite{Ye}]
 $\{(Y^n, g(t)), 0 \leq t <\infty \}$ is a \KRf on Fano orbifold
 $Y^n$. Then there is a uniform Sobolev constant $C_S$ along this
 flow. In other words, for every
 $f \in C^{\infty}(Y)$, we have
 \begin{align*}
       (\int_Y |f|^{\frac{2n}{n-1}}
       \omega_{\varphi}^n)^{\frac{n-1}{n}}
       \leq
       C_S\{\int_Y |\nabla f|^2 \omega_{\varphi}^n
       + \frac{1}{V^{\frac{1}{n}}} \int_Y |f|^2  \omega_{\varphi}^n\}.
 \end{align*}
 \label{proposition: sobolev}
 \end{proposition}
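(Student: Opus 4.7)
The plan is to follow the Zhang--Ye approach: use the monotonicity of Perelman's $\mu$-functional to obtain a uniform log-Sobolev inequality along the flow, then upgrade log-Sobolev to Sobolev via a semigroup argument. All of the orbifold infrastructure needed, in particular the $W$- and $\mu$-functionals and their monotonicity, is already set up in Section~3.1, and the a priori estimates (scalar curvature bound, diameter bound, $\kappa$-noncollapsing) are supplied by Proposition~\ref{proposition: perelman}.

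First, I would transfer the unnormalized monotonicity derived in Section~3.1 to the normalized \KRf $\D{g}{t}=-Ric+g$ by a standard time rescaling of the form $\tilde g(\tilde t)=(1-\tilde t)\,g(-\log(1-\tilde t))$, which yields a uniform lower bound $\mu(g(t),\tau_0)\ge -C$ for some fixed $\tau_0>0$ and every $t\ge 0$. Testing the variational definition of $\mu$ against the density $u^2=(4\pi\tau_0)^{-n/2}e^{-f}$ subject to $\int_Y u^2\omega_\varphi^n=1$ converts this bound into a log-Sobolev inequality of the shape
\begin{align*}
\int_Y u^2\log u^2\,\omega_\varphi^n\le 4\tau_0\int_Y|\nabla u|^2\,\omega_\varphi^n+\tau_0\int_Y R\,u^2\,\omega_\varphi^n+C_1,
\end{align*}
with $C_1=C_1(n,\tau_0,V)$. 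Absorbing the curvature term via $\|R\|_{C^0}\le\mathcal{B}$ from Proposition~\ref{proposition: perelman} produces a uniform log-Sobolev inequality, and varying $\tau_0$ shows that it holds with an arbitrary free parameter $\tau>0$ and a controlled right-hand side function $\beta(\tau)$.

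For the upgrade to Sobolev, I would invoke Davies' semigroup trick (as exploited in \cite{Zhq}): a log-Sobolev inequality with a free parameter yields an on-diagonal heat kernel bound of the form $p(x,x,t)\le Ct^{-n}$ for short times, from which Nash's method produces the Sobolev embedding $W^{1,2}\hookrightarrow L^{2n/(n-1)}$ with constant depending only on the log-Sobolev constant and the total volume. The volume of $Y$ is controlled using the diameter bound $\diam_{g(t)}Y\le\mathcal{B}$ together with the noncollapsing estimate in Proposition~\ref{proposition: perelman}, so all constants remain uniform in $t\in[0,\infty)$.

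The main subtlety, rather than a serious obstacle, is the presence of finitely many orbifold singularities. Since the singular set has real codimension at least two, it carries no $H^1$-capacity; consequently $C^\infty(Y)$ is dense in the orbifold Sobolev space $W^{1,2}(Y)$, the integration by parts used in the definition and monotonicity of $W$ produces no boundary contributions, and the conjugate heat kernel extends smoothly away from singular points (a fact already exploited in Section~3.1 to derive Theorem~\ref{theorem: improvedbound}). Hence every analytic step in the Zhang--Ye proof transfers verbatim, and the orbifold Sobolev inequality follows.
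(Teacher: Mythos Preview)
Your proposal is correct and matches what the paper does: the paper gives no proof at all, merely citing \cite{Zhq} and \cite{Ye} and asserting that ``the following estimates on orbifolds are exactly the same as the corresponding estimates on manifolds.'' Your sketch of the Zhang--Ye argument (monotonicity of $\mu$ $\Rightarrow$ uniform log-Sobolev $\Rightarrow$ heat kernel bound $\Rightarrow$ Sobolev) is the intended route, and your remark that the finite orbifold singular set has zero $H^1$-capacity is precisely the reason the manifold proof transfers. One small simplification: along the normalized \KRf the total volume $V=\int_Y\omega_\varphi^n$ is constant (the class $[\omega_\varphi]=2\pi c_1(Y)$ is fixed), so you do not need the diameter bound and noncollapsing to control it.
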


  \begin{proposition}[c.f.~\cite{Fu2},~\cite{TZ}]
  $\{(Y^n, g(t)), 0 \leq t <\infty \}$ is a \KRf on Fano orbifold $Y^n$.
 Then there is a uniform weak Poincar\`e constant $C_P$ along this flow.
 Namely, for every nonnegative function $f \in C^{\infty}(Y)$, we have
   \begin{align*}
        \frac{1}{V} \int_Y f^2 \omega_{\varphi}^n   \leq
          C_P\{\frac{1}{V} \int_Y |\nabla f|^2 \omega_{\varphi}^n
           + (\frac{1}{V} \int_Y f  \omega_{\varphi}^n)^2\}.
   \end{align*}
 \label{proposition: poincare}
 \end{proposition}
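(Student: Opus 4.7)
The plan is to derive the weak Poincar\'e inequality from a Bakry--\'Emery type Lichnerowicz estimate applied to the drift Laplacian $\Delta_u := \Delta - \langle \nabla u, \nabla \cdot\rangle$ associated to the normalized Ricci potential $-u$, in the spirit of~\cite{Fu2} and~\cite{TZ}. First I would record the defining identity along the \KRfc $Ric(\of) - \of = -\sqrt{-1}\ddb u$, which at the level of components is $R_{i\bar j} + u_{i\bar j} = g_{i\bar j}$. This is the K\"ahler analogue of a positive Bakry--\'Emery Ricci bound $Ric + \nabla^2 u \geq g$. Combined with a weighted Bochner identity on $L^2(e^{-u}\of^n)$, this gives the Lichnerowicz-type eigenvalue estimate $\lambda_1(\Delta_u) \geq c_0$ for some universal $c_0>0$, and hence a weighted Poincar\'e inequality
\begin{align*}
\int_Y (h - \bar h_u)^2 \, e^{-u}\of^n \; \leq \; C_1 \int_Y |\nabla h|^2 \, e^{-u}\of^n
\end{align*}
for every smooth $h$, where $\bar h_u$ denotes the $e^{-u}\of^n$-weighted average.

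Next, since $\norm{u}{C^0} \leq \mathcal{B}$ uniformly along the flow by Proposition~\ref{proposition: perelman}, the weight $e^{-u}$ is pinched between two positive constants depending only on $\mathcal{B}$. Dropping the weight, and noting that replacing $\bar h_u$ by the unweighted mean $\bar h = \frac{1}{V}\int_Y h\,\of^n$ only changes the left side by a constant factor (since $\bar h$ minimizes $a \mapsto \int (h-a)^2$), we obtain an unweighted Poincar\'e inequality
\begin{align*}
\int_Y (h - \bar h)^2 \of^n \; \leq \; C_2 \int_Y |\nabla h|^2 \of^n
\end{align*}
with $C_2 = C_2(C_1, \mathcal{B})$ uniform in $t$. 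Applying this with $h=f$ and invoking the orthogonal decomposition $\int_Y f^2\,\of^n = \int_Y (f - \bar f)^2\,\of^n + V\bar f^2$ yields
\begin{align*}
\frac{1}{V}\int_Y f^2 \of^n \; \leq \; \frac{C_2}{V}\int_Y |\nabla f|^2 \of^n + \bar f^2,
\end{align*}
which is the stated inequality with $C_P = \max(C_2, 1)$. (The nonnegativity of $f$ is not actually required; it is natural for the applications.)

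The main technical obstacle is justifying the integration-by-parts and the Bochner identity on the orbifold $Y$, so that the Bakry--\'Emery computation transfers. Since $Y$ has only finitely many singular points and the metric lifts to a smooth $\Gamma$-invariant metric on a local uniformizing chart around each, this is handled by cutting off near the singular set with capacity cutoffs $\eta_\epsilon$ whose Dirichlet energy tends to zero as $\epsilon\to 0$, exactly as in the reasoning already used to establish Perelman's bounds in Proposition~\ref{proposition: perelman}. Away from the singular set the argument is identical to the smooth K\"ahler case of~\cite{Fu2} and~\cite{TZ}; once the weighted Poincar\'e inequality is extended across the finitely many singularities, the remaining manipulations are algebraic.
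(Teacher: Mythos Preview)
Your proposal is correct and follows exactly the approach indicated by the paper's citations~\cite{Fu2},~\cite{TZ}; the paper does not give its own proof of this proposition, merely noting that the manifold argument carries over verbatim to orbifolds with isolated singularities. One small caution: the identity $R_{i\bar j}+u_{i\bar j}=g_{i\bar j}$ controls only the $(1,1)$-part of the Bakry--\'Emery tensor, not the full real Hessian, so the Lichnerowicz step should be the K\"ahler version (as in Futaki) using the weighted $\bar\partial$-Bochner formula on $(0,1)$-forms rather than the Riemannian Bochner identity; with that adjustment the argument yields $\lambda_1(\Delta_u)\geq 1$ and the rest proceeds as you wrote.
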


 \begin{proposition}[c.f. \cite{PSS}, \cite{CW2}]
    By properly choosing initial condition, we have
 \begin{align*}
 \norm{\dot{\varphi}}{C^0} + \norm{\nabla \dot{\varphi}}{C^0}<C
 \end{align*}
 for some constant $C$ independent of time $t$.
 \label{proposition: dotphi}
 \end{proposition}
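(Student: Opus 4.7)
The plan is to exploit the identity $\dph = u_t + c(t)$, where $-u_t$ is the normalized Ricci potential of $g(t)$ already controlled by Proposition~\ref{proposition: perelman} and $c(t)$ is a time-dependent spatial constant. This gives the $\nabla \dph$ bound for free, reduces the $C^0$ bound to a bound on the scalar $c(t)$, and the latter bound is then obtained from the monotonicity of Perelman's $\mu$-functional once the initial data are properly normalized.

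First, differentiating the flow equation $\dph = \log(\of^n/\omega_0^n) + \varphi + u$ (where $u$ is the initial Ricci potential) and applying $\st\ddb$ gives
\begin{equation*}
 \st\ddb \dph = -\mathrm{Ric}(\of) + \mathrm{Ric}(\omega_0) + (\of - \omega_0) + \st\ddb u = \of - \mathrm{Ric}(\of) = \st\ddb u_t,
\end{equation*}
where $u_t$ is the normalized Ricci potential of $g(t)$ introduced in Proposition~\ref{proposition: perelman}. Since $\dph - u_t$ is a real pluriharmonic function on the compact orbifold $Y$, it is constant in space at each time, so $\dph(t,x) = u_t(x) + c(t)$. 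Proposition~\ref{proposition: perelman} then yields $\norm{\nabla \dph}{C^0} = \norm{\nabla u_t}{C^0} \leq \mathcal{B}$ directly, and simultaneously gives the oscillation bound $\mathrm{osc}_Y \dph(t) = \mathrm{osc}_Y u_t \leq 2\mathcal{B}$, so the $C^0$ estimate of $\dph$ is now equivalent to a uniform bound on the scalar function $c(t)$.

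Second, to bound $c(t)$ I would choose the initial condition so that $\varphi(0)=0$, which makes $\dph(0)=u_0$ and $c(0)=0$. Testing $\dph = u_t + c(t)$ against the probability measure $V^{-1} e^{-u_t}\of^n$ normalized by $\frac{1}{V}\int_Y e^{-u_t}\of^n = 1$ gives
\begin{equation*}
 c(t) = \frac{1}{V}\int_Y \dph\,e^{-u_t}\of^n - \frac{1}{V}\int_Y u_t\,e^{-u_t}\of^n,
\end{equation*}
whose second term is bounded by $\mathcal{B}e^{\mathcal{B}}$ by Perelman. The first integral is identified, modulo bounded error terms supplied by Proposition~\ref{proposition: perelman}, with the time derivative of Perelman's $\mu(g(t),\tfrac12)$-functional evaluated at its minimizer, and the latter is monotone nondecreasing and uniformly bounded above along the flow by the Fano framework recalled in Section~3.1. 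This forces $|c(t)| \leq C$, and combined with $|u_t|\leq\mathcal{B}$ it yields the required $C^0$ bound of $\dph$.

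The hard part will be precisely the control of $c(t)$: a naive application of the maximum principle to $(\partial_t - \triangle)\dph = \dph$ only produces exponential-in-time growth of $\max_Y \dph$, and the ODE for $\int_Y \dph\,\of^n$ obtained from direct integration by parts suffers from the same defect, since the available $L^2$ gradient bound on $\dph$ is too weak to kill the drift term. The only input I see that closes the estimate is the global monotonicity of Perelman's $\mu$-functional, which contributes the sole nonlocal-in-time piece of information that prevents exponential drift and is special to the Fano regime; the verification that this monotonicity carries over to the orbifold setting is supplied by the discussion preceding Proposition~\ref{proposition: perelman}.
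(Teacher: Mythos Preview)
Your reduction is correct: writing $\dph = u_t + c(t)$ immediately gives the gradient bound and the oscillation bound from Proposition~\ref{proposition: perelman}, and the whole problem is the scalar $c(t)$.  The gap is in your control of $c(t)$.

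First, the choice $\varphi(0)=0$ is \emph{not} the ``proper'' initial condition.  With that choice $c(t)$ will, generically, grow like $e^t$.  Second, the claimed identification of $\frac{1}{V}\int_Y \dph\, e^{-u_t}\of^n$ with $\frac{d}{dt}\mu(g(t),\tfrac12)$ cannot hold modulo bounded errors: the left side equals $c(t)+O(1)$ and therefore depends on the normalisation of $\varphi$ (shifting $\varphi(0)$ by $\delta$ shifts it by $\delta e^t$), whereas $\mu(g(t),\tfrac12)$ and its time derivative are determined by $g(t)$ alone.  So the identity could hold for at most one initial constant, and you have no independent way to single it out.  Even granting the identity, $\mu' \ge 0$ together with $\int_0^\infty \mu'\,dt<\infty$ does not give a pointwise bound on $\mu'$.

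The argument in \cite{PSS} (to which the paper defers) runs differently.  Set $a(t)=\frac{1}{V}\int_Y \dph\,\of^n$; differentiating and using $\ddot\varphi=\triangle_\varphi\dph+\dph$ gives the linear ODE
\[
a'(t)=a(t)-Y(t),\qquad Y(t)=\frac{1}{V}\int_Y |\nabla u_t|^2\,\of^n\in[0,\mathcal{B}^2],
\]
the bound on $Y$ coming from Proposition~\ref{proposition: perelman}.  The general solution is $a(t)=e^t\bigl(a(0)-\int_0^t e^{-s}Y(s)\,ds\bigr)$, and the \emph{unique} bounded solution is obtained by choosing
\[
a(0)=\int_0^\infty e^{-s}Y(s)\,ds,
\]
which is the meaning of ``properly choosing initial condition'' (it fixes the constant $\varphi(0)$).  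With this choice $a(t)=\int_0^\infty e^{-s}Y(t+s)\,ds\in[0,\mathcal{B}^2]$, hence $|c(t)|\le \mathcal{B}^2+\mathcal{B}$ and $\norm{\dph}{C^0}\le \mathcal{B}^2+2\mathcal{B}$.  Your intuition that a piece of information nonlocal in time is needed is exactly right; it enters through this future-weighted integral defining $a(0)$, not through $\mu$-monotonicity.
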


 \begin{proposition}[\cite{CW2}]
   There is a constant $C$ such that
 \begin{align}
   \frac{1}{V} \int_Y (-\varphi) \omega_{\varphi}^n \leq
  n \sup_Y \varphi  - \sum_{i=0}^{n-1}
     \frac{i}{V} \int_Y \st \partial \varphi \wedge
     \bar{\partial} \varphi \wedge \omega^i \wedge
     \omega_{\varphi}^{n-1-i} + C.
  \label{eqn: dsupphi}
 \end{align}
 \end{proposition}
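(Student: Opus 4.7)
The plan is to show the inequality is, after standard algebraic manipulations, equivalent to a uniform upper bound on the Aubin $J$-functional in terms of $\sup_Y\varphi$ and the average of $\varphi$, and then to close that bound using Perelman's estimates and the flow equation.

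First I would reformulate algebraically. The expansion $\omega_\varphi^n - \omega^n = \sqrt{-1}\partial\bar\partial\varphi\wedge\sum_{i=0}^{n-1}\omega^i\wedge\omega_\varphi^{n-1-i}$ together with integration by parts yields $\frac{1}{V}\int_Y(-\varphi)\omega_\varphi^n = \frac{1}{V}\int_Y(-\varphi)\omega^n + \sum_{i=0}^{n-1}I_i$, where $I_i := \frac{1}{V}\int_Y\sqrt{-1}\partial\varphi\wedge\bar\partial\varphi\wedge\omega^i\wedge\omega_\varphi^{n-1-i}$. Setting $a_i := \frac{1}{V}\int_Y\varphi\,\omega^i\wedge\omega_\varphi^{n-i}$, a parallel integration by parts gives $a_{i+1}-a_i = I_i$, and Abel summation produces $\sum_{i=0}^{n-1} iI_i = (n-1)a_n - \sum_{i=1}^{n-1}a_i$. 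Substituting these into the target inequality, one checks it is equivalent to $(n+1)J(\varphi) \leq n\sup_Y\varphi + \frac{1}{V}\int_Y\varphi\,\omega^n + C$, where $J(\varphi)=\sum_{i=0}^{n-1}\frac{i+1}{n+1}I_i$ is the Aubin $J$-functional.

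Second, I would use Perelman's estimates (Proposition~\ref{proposition: perelman}): bounded scalar curvature, bounded $\|u\|_{C^1}$ for the Ricci potential, bounded diameter, and uniform noncollapsing. On the compact orbifold with only finitely many singularities, these ingredients reproduce the usual Green's function lower bound $G_{g(t)}(x,y)\geq -A$ uniformly in $t$, since the orbifold Laplacian and Sobolev inequality (Proposition~\ref{proposition: sobolev}) behave identically to the smooth case away from the finite singular set. Combined with the universal bound $\Delta_\omega\varphi = \operatorname{tr}_\omega\omega_\varphi - n \geq -n$, the Green's function representation yields the oscillation-average bound $\sup_Y\varphi - \frac{1}{V}\int_Y\varphi\,\omega^n \leq nA$. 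This reduces the inequality further to $(n+1)J(\varphi)\leq (n+1)\sup_Y\varphi + C$, equivalently a uniform bound $J(\tilde\varphi)\leq C$ for the normalized potential $\tilde\varphi := \varphi - \sup_Y\varphi \leq 0$.

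Third, I would establish the required bound on $J(\tilde\varphi)$ using the flow equation $\omega_\varphi^n = e^{\dot\varphi - \varphi + u_0}\omega^n$ together with the uniform bounds $\|\dot\varphi\|_{C^0}, \|u_0\|_{C^0} \leq B$ from Proposition~\ref{proposition: dotphi} and Proposition~\ref{proposition: perelman}. Applying Jensen's inequality to the probability measure $\omega_\varphi^n/V$ gives $\frac{1}{V}\int_Y\varphi\,\omega_\varphi^n \leq \frac{1}{V}\int_Y(\dot\varphi+u_0)\,\omega_\varphi^n \leq B$, and the symmetric Jensen with $\omega^n/V$ gives $\frac{1}{V}\int_Y\varphi\,\omega^n \geq -B$. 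Translating to $\tilde\varphi$, both $\sup_Y\varphi-\operatorname{Ave}_\omega\varphi$ and $\sup_Y\varphi-\operatorname{Ave}_{\omega_\varphi}\varphi$ are uniformly controlled, which together with the monotonicity $a_{i+1}\geq a_i$ (from $I_i\geq 0$) sandwiches every intermediate mixed integral $a_i(\tilde\varphi)$, and hence bounds $(n+1)J(\tilde\varphi) = na_n(\tilde\varphi)-\sum_{i=0}^{n-1}a_i(\tilde\varphi)$ uniformly.

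The principal obstacle is the last step: the purely pluripotential-theoretic bounds only give $(n+1)J \leq n\cdot\operatorname{osc}(\varphi)$, and the oscillation of $\varphi$ is not a priori bounded along the flow. The essential input is the coupling of the Green's function estimate (from Perelman's geometric bounds) with the $L^\infty$ control of $\dot\varphi$ and $u_0$ from the flow equation, which is exactly what combines to control both $\operatorname{Ave}_{\omega}\varphi$ and $\operatorname{Ave}_{\omega_\varphi}\varphi$ on the same scale as $\sup_Y\varphi$.
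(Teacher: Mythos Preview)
Your algebraic reduction in Step~1 is correct: the proposition is equivalent to $(n+1)J(\varphi)\le n\sup_Y\varphi+a_n+C$, and together with the Green's function bound $\sup_Y\varphi-a_n\le nA$ for the \emph{fixed} metric $\omega$ (Step~2), this is in turn equivalent to $J(\varphi)\le a_n+C$, i.e.\ a uniform upper bound on $F^0(\varphi)=J(\varphi)-\frac{1}{V}\int_Y\varphi\,\omega^n$.

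The gap is in Step~3. Your Jensen computations only yield $a_0=\frac{1}{V}\int_Y\varphi\,\omega_\varphi^n\le B$ and $a_n=\frac{1}{V}\int_Y\varphi\,\omega^n\ge -B$. Neither of these, nor their combination with the Green bound, gives an upper bound on $\sup_Y\varphi-a_0$; from $a_0\le B$ you get $\sup_Y\varphi-a_0\ge\sup_Y\varphi-B$, which is the wrong direction. Your claimed ``sandwich'' of the $a_i(\tilde\varphi)$ therefore fails: you have $a_i(\tilde\varphi)\le a_n(\tilde\varphi)\le 0$, but no lower bound better than $a_i(\tilde\varphi)\ge a_0(\tilde\varphi)=a_0-\sup_Y\varphi$, which is uncontrolled. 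In fact, if your Step~3 worked it would show $I(\varphi)=a_n-a_0$ is uniformly bounded along the flow; by Proposition~\ref{proposition: conditions} this is equivalent to uniform boundedness of $\varphi$, hence to convergence of the flow, which is false on Fano orbifolds without KE metrics.

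What is missing is a monotonicity input along the flow. Since the paper only cites~\cite{CW2} without proof, here is the argument: the Ding functional $F_\omega(\varphi)=J(\varphi)-\frac{1}{V}\int_Y\varphi\,\omega^n-\log\frac{1}{V}\int_Y e^{u_\omega-\varphi}\omega^n$ is nonincreasing along the \KRf (this follows from the Chebyshev correlation inequality applied to $\dot\varphi$ and $e^{-\dot\varphi}$ against $\omega_\varphi^n$). Hence $F_\omega(\varphi_t)\le F_\omega(0)$. Using the flow equation, $\frac{1}{V}\int_Y e^{u_\omega-\varphi}\omega^n=\frac{1}{V}\int_Y e^{-\dot\varphi}\omega_\varphi^n$ lies in $[e^{-B},e^{B}]$ by Proposition~\ref{proposition: dotphi}, so $J(\varphi)-a_n\le C$ and the proposition follows. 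Equivalently one may use the monotonicity of the Mabuchi energy together with the entropy identity $\frac{1}{V}\int_Y\log\frac{\omega_\varphi^n}{\omega^n}\,\omega_\varphi^n=-a_0+O(1)$ to obtain $-a_0\le (I-J)(\varphi)+C$, which rearranges to the same conclusion.
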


 \begin{proposition}[\cite{Ru}, c.f. \cite{CW2}]
 $\{(Y^n, g(t)), 0 \leq t <\infty \}$ is a \KRf on Fano orbifold $Y^n$.
 Then the following conditions are equivalent.
 \begin{itemize}
 \item $\varphi$ is uniformly bounded.
 \item $\displaystyle \sup_Y \varphi$ is uniformly bounded from above.
 \item $\displaystyle \inf_Y \varphi$  is uniformly bounded from below.
 \item $\int_Y \varphi \omega^n$ is uniformly bounded from above.
 \item $\int_Y (-\varphi) \omega_{\varphi}^n$ is
   uniformly bounded from above.
 \item $I_{\omega}(\varphi)$ is uniformly bounded.
 \item $Osc_{Y} \varphi$ is uniformly bounded.
 \end{itemize}
 \label{proposition: conditions}
 \end{proposition}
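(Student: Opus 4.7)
The seven conditions will be related in a ring, closed by combining unconditional a priori bounds from the flow with Green's function, Moser iteration, inequality~(\ref{eqn: dsupphi}), and, in the final step, Yau's $C^0$-estimate for the complex Monge-Amp\`ere equation. Throughout, write $\overline{f}_\mu := \frac{1}{V}\int_Y f\,\mu$ for the mean of $f$ against a volume form $\mu$.

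First I extract unconditional one-sided bounds. Propositions~\ref{proposition: perelman} and~\ref{proposition: dotphi} give $\|u\|_{C^0}+\|\dot\varphi\|_{C^0}\leq C$, so the flow equation takes the Monge-Amp\`ere form $\omega_\varphi^n=e^{F-\varphi}\omega^n$ with $F:=\dot\varphi-u$ uniformly bounded. Evaluating at extrema of $\varphi$ yields the unconditional bounds $\inf_Y\varphi\leq C$ and $\sup_Y\varphi\geq -C$. Integrating forces $\int_Y e^{-\varphi}\omega^n$ and $\int_Y e^{\varphi}\omega_\varphi^n$ to lie in a fixed compact subset of $(0,\infty)$; Jensen then yields $\overline{\varphi}_\omega\geq -C$ and $\overline{\varphi}_{\omega_\varphi}\leq C$ unconditionally. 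Consequently $(1)\Leftrightarrow(7)$ is immediate, and since $I_\omega(\varphi)=\overline{\varphi}_\omega-\overline{\varphi}_{\omega_\varphi}\geq 0$ always, the equivalences $(4)\Leftrightarrow(5)\Leftrightarrow(6)$ follow at once from these one-sided bounds.

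Next I pair $\sup$-type conditions with averages. Since $\varphi$ is $\omega$-psh ($\Delta_\omega\varphi\geq -n$), the standard Green's function estimate on the fixed orbifold $(Y,\omega)$ yields $\sup_Y\varphi\leq\overline{\varphi}_\omega+C$; the reverse is trivial, so $(2)\Leftrightarrow(4)$. Symmetrically, $-\varphi$ is $\omega_\varphi$-psh, and Moser iteration based on the uniform Sobolev inequality of Proposition~\ref{proposition: sobolev} (together with a Jensen bound $\int_Y\varphi^+\omega_\varphi^n\leq C$ extracted from $\int_Y e^\varphi\omega_\varphi^n\leq C$) gives $\sup_Y(-\varphi)\leq C\|(-\varphi)^+\|_{L^1(\omega_\varphi^n)}+C$, yielding $(3)\Leftrightarrow(5)$. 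Inequality~(\ref{eqn: dsupphi}), after dropping its nonnegative gradient sum, reads
\[
\frac{1}{V}\int_Y(-\varphi)\,\omega_\varphi^n\;\leq\;n\sup_Y\varphi+C,
\]
so $(2)\Rightarrow(5)$, and combining with the above, $(2)\Rightarrow(3)$.

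The remaining implication $(3)\Rightarrow(2)$ is the main obstacle, as it is the only genuinely nonlinear step. Given $\inf_Y\varphi\geq -C_0$, the density $e^{F-\varphi}$ of the Monge-Amp\`ere equation is uniformly bounded in $L^\infty$. Applying Yau's $C^0$-estimate---i.e.\ Moser iteration on $\psi:=\varphi-\inf_Y\varphi\geq 0$ driven by the identity $\omega_\psi^n=e^{F-\varphi}\omega^n$ and the fixed Sobolev inequality on $(Y,\omega)$---produces $Osc_Y\,\varphi\leq C$, so $\sup_Y\varphi\leq\inf_Y\varphi+C\leq C'$, closing the ring. The adaptation of Yau's iteration to a compact orbifold with finitely many isolated singularities is routine, since all the analytic ingredients (integration by parts, Sobolev inequality, maximum principle) extend via orbifold charts without essential modification.
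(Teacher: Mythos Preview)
The paper does not supply its own proof of this proposition; it is stated with citation to \cite{Ru} and \cite{CW2}, under the blanket remark that ``the following estimates on orbifolds are exactly the same as the corresponding estimates on manifolds.'' So there is nothing to compare against except the standard argument in those references, which your outline reproduces faithfully: unconditional one-sided bounds from the Monge--Amp\`ere rewriting of the flow equation, Green's function for $(2)\Leftrightarrow(4)$, Moser iteration against the uniform Sobolev inequality for $(3)\Leftrightarrow(5)$, inequality~(\ref{eqn: dsupphi}) for $(2)\Rightarrow(5)$, and Yau's $C^0$-estimate for the closing step $(3)\Rightarrow(2)$. The orbifold remarks at the end are appropriate and match the spirit of the paper.

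One expository slip: the claim that $(4)\Leftrightarrow(5)\Leftrightarrow(6)$ follows \emph{at once} from the one-sided bounds and $I_\omega(\varphi)=\overline{\varphi}_\omega-\overline{\varphi}_{\omega_\varphi}\geq 0$ is too fast. From those ingredients alone you only get $(6)\Rightarrow(4)$ and $(6)\Rightarrow(5)$; the implication $(4)\Rightarrow(5)$ (or $(5)\Rightarrow(4)$) does not follow without using the later steps of your chain. The logic still closes, since your argument gives $(4)\Leftrightarrow(2)\Rightarrow(5)\Leftrightarrow(3)\Rightarrow(2)$, hence $(4)\Leftrightarrow(5)$, and then $(4)\wedge(5)\Rightarrow(6)$ is immediate. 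Just reorder the presentation so that $(4)\Leftrightarrow(5)$ is established via $(2)$ and $(3)$ before you invoke it to get $(6)$.
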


 \subsection{Tamed Condition by Two Functions: $F$ and $\mathcal{F}$}
  This subsection is similar to the corresponding part
  in~\cite{CW3}. However, we compare different metrics on the line
  bundle to study the tamedness condition.

  Along the \KRF, we have
  \begin{align*}
  \omega_{\varphi(t)}= \omega_0 + \st \ddb \varphi(t), \quad
  \st \ddb \dot{\varphi}(t) = \omega_{\varphi(t)} - Ric_{\omega_{\varphi(t)}}
  \end{align*}
   For simplicity, we omit the subindex $t$. Let $h$ be the metric on $K_Y^{-1}$ induced directly by the
   metric on $Y$, i.e., $h= \det g_{i\bar{j}}$. Let
   $l=e^{-\dot{\varphi}}h$. Clearly,  we have
  \begin{align*}
     -\st \ddb \log \snorm{S}{l}^2+
      \st \ddb \log \snorm{S}{h}^2
    =\st \ddb \dot{\varphi} =\omega_{\varphi} -
    Ric_{\omega_{\varphi}}
  \end{align*}
  It follows that $\st \ddb \log \snorm{S}{l}^2= \omega_{\varphi}$.

  \begin{definition}
   Choose $\{T_{\nu, \beta}^t\}_{\beta=0}^{N_{\nu}}$
   as orthonormal basis of $H^0(K_Y^{-\nu})$ under the metric $h^{\nu}$.
  Then
  \begin{align*}
      &F(\nu, x, t) = \frac{1}{\nu} \log
      \sum_{\beta=0}^{N_{\nu}} \snorm{T_{\nu, \beta}^t}{h^{\nu}}^2(x),\\
      &G(\nu, x, t)=\sum_{\beta=0}^{N_{\nu}} \snorm{\nabla T_{\nu, \beta}^t}{h^{\nu}}^2(x)
  \end{align*}
  are well defined functions on $Y \times [0, \infty)$.

  We call the flow is tamed by $\nu$ if $F(\nu, \cdot, \cdot)$
  is a bounded function on $Y \times [0, \infty)$.
  \end{definition}

  \begin{remark}
   If $Y$ is an orbifold, $K_Y^{-\nu}$ is a line bundle if and only if $\nu$ is an integer multiple of of Gorenstein index
  of $Y$. We call such $\nu$ as appropriate. In this note, we always choose $\nu$ as appropriate ones.
  \end{remark}

  Clearly, $G= \triangle e^{\nu F} - \nu R e^{\nu F}$.
  Fix $(x, t)$, by rotating basis, we can always find a section $T$ such that
  \begin{align*}
     \int_Y \snorm{T}{h^{\nu}(t)}^2 \omega_{\varphi}^n =1, \quad  e^{\nu F(\nu, x, t)} =
     \snorm{T}{h^{\nu}(t)}^2(x).
  \end{align*}
  There also exists a section $T'$ such that
  \begin{align*}
     \int_Y \snorm{T'}{h^{\nu}(t)}^2 \omega_{\varphi}^n =1, \quad  G(\nu, x, t) =
     \snorm{\nabla T'}{h^{\nu}(t)}^2(x).
  \end{align*}

  \begin{definition}
  Choose $\{S_{\nu, \beta}^t\}_{\beta=0}^{N_{\nu}}$
   as orthonormal basis of $H^0(K_Y^{-\nu})$ under the metric $l^{\nu}$.
  Then
  \begin{align*}
      &\mathcal{F}(\nu, x, t) = \frac{1}{\nu} \log
      \sum_{\beta=0}^{N_{\nu}} \snorm{S_{\nu,
      \beta}^t}{l^{\nu}}^2(x),\\
      &\mathcal{G}(\nu, x, t)=\sum_{\beta=0}^{N_{\nu}} \snorm{\nabla S_{\nu, \beta}^t}{l^{\nu}}^2(x).
  \end{align*}
  are well defined functions on $Y \times [0, \infty)$.
  \end{definition}
  Similarly, $\mathcal{G}= \triangle e^{\nu \mathcal{F}} - n\nu e^{\nu \mathcal{F}}$.
  Fix $(x, t)$, by rotating basis, there are unit norm sections $S$
  and $S'$ such that
  \begin{align*}
     &\int_Y \snorm{S}{l^{\nu}(t)}^2 \omega_{\varphi}^n =1, \quad  e^{\nu \mathcal{F}(\nu, x, t)} =
     \snorm{S}{l^{\nu}(t)}^2(x);\\
     &\int_Y \snorm{S'}{l^{\nu}(t)}^2 \omega_{\varphi}^n =1, \quad  \mathcal{G}(\nu, x, t) =
     \snorm{\nabla S'}{l^{\nu}(t)}^2(x).
  \end{align*}

  At point $(x, t)$, we have
  \begin{align*}
   e^{\nu \mathcal{F}}= \snorm{S}{l^{\nu}(t)}^2= e^{-\nu
   \dot{\varphi}}\snorm{S}{h^{\nu}(t)}^2
   = e^{-\nu \dot{\varphi}} \cdot \frac{\snorm{S}{h^{\nu}(t)}^2(x)}{\int_Y \snorm{S}{h^{\nu}(t)}^2 \omega_{\varphi}^n}
    \cdot \int_Y \snorm{S}{h^{\nu}(t)}^2 \omega_{\varphi}^n
   \leq e^{\nu (F-\dot{\varphi} + \snorm{\dot{\varphi}}{C^0})}
   \leq e^{2\nu \mathcal{B}} e^{\nu F}.
  \end{align*}
  Similarly, we can do the other way and it follows that
  \begin{align*}
   F - 2\mathcal{B} \leq \mathcal{F} \leq F + 2\mathcal{B}.
  \end{align*}

  Therefore, a flow is tamed by $\nu$ if and only if
  $\mathcal{F}(\nu, \cdot, \cdot)$ is uniformly bounded on $Y \times [0, \infty)$.
  However, the calculation under the metric $l^{\nu}$ is easier in many cases.
  \footnote{The calculation under the metric $l^{\nu}$ was first suggested to the author by Tian.}
  Some estimates in~\cite{CW4} can be improved.

 \begin{lemma}
   There is a uniform constant $A=A(\mathcal{B}, C_S, n)$ such that
 \begin{align}
   &\snorm{S}{l^{\nu}} < A\nu^{\frac{n}{2}}, \label{eqn: slinf}\\
   &\snorm{\nabla S}{l^{\nu}} < A \nu^{\frac{n+1}{2}}, \label{eqn: naslinf}
 \end{align}
 whenever $S \in H^0(Y, K_Y^{-\nu})$ is a unit norm section  (under the metric $l^{\nu}$).
 \label{lemma: boundl}
 \end{lemma}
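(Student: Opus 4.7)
The plan is to establish both estimates by Moser iteration on $Y$, using the uniform Sobolev inequality of Proposition~\ref{proposition: sobolev} together with Perelman's $C^{0}$-bounds on $\dph$ and $\nabla\dph$ (Proposition~\ref{proposition: perelman} and Proposition~\ref{proposition: dotphi}) and the curvature identity $\st\ddb\log\snorm{S}{l^{\nu}}^{2}=\nu\of$ derived immediately before the definition of $F$.

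For the first inequality, set $u = \snorm{S}{l^{\nu}}^{2}$. Taking the trace of the curvature identity against $\of$ yields $\triangle\log u = c(n)\nu$ on $\{S\neq 0\}$ for a dimensional constant $c(n)$, and a direct computation combined with the Poincar\'e--Lelong contribution along the zero divisor of $S$ gives the distributional inequality $\triangle u + c(n)\nu\,u \geq 0$ on $Y$. Running De~Giorgi--Moser iteration on the powers $u^{p}$, $p \geq 1$, with the uniform Sobolev inequality yields
\begin{align*}
\sup_{Y} u \;\leq\; C(1+\nu)^{n}\, \norm{u}{L^{1}(\of^{n})}
\end{align*}
for a constant $C = C(C_{S},n)$. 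The unit-norm normalization $\norm{u}{L^{1}}=1$ then gives $\sup_{Y} u \leq C\nu^{n}$, and taking square roots produces the first estimate with $A = A(\mathcal{B},C_{S},n)$.

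For the second inequality, I would apply the Bochner--Kodaira formula to the holomorphic section $\nabla S$ of $T^{*}Y\otimes K_{Y}^{-\nu}$ to obtain an inequality of the form
\begin{align*}
\triangle\snorm{\nabla S}{l^{\nu}}^{2} \;\geq\; \snorm{\nabla^{2}S}{l^{\nu}}^{2} \;-\; C\nu\,\snorm{\nabla S}{l^{\nu}}^{2} \;-\; R_{i\bar j}\,(\nabla^{i}S)\overline{\nabla^{j}S},
\end{align*}
where the $C\nu\snorm{\nabla S}{}^{2}$ term is generated by the curvature $\nu\of$ of $l^{\nu}$. Using the \KRf equation $R_{i\bar j}=g_{i\bar j}-\dph_{,i\bar j}$, the Ricci contribution splits into a bounded multiple of $\snorm{\nabla S}{l^{\nu}}^{2}$ plus the troublesome term $\dph_{,i\bar j}(\nabla^{i}S)\overline{\nabla^{j}S}$. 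The Hessian of $\dph$ is not pointwise controlled --- this is the principal technical obstacle --- but inside the Moser iteration the factor $\dph_{,i\bar j}$ can be integrated by parts against test functions of the form $\snorm{\nabla S}{l^{\nu}}^{2p}$, shifting two derivatives off $\dph$ and onto either the test function or onto $\nabla S$ and $S$. The resulting lower-order contributions are then absorbed using the $C^{0}$-bounds on $\dph$ and $\nabla\dph$ from Proposition~\ref{proposition: perelman} and Proposition~\ref{proposition: dotphi} together with the already established bound $\snorm{S}{l^{\nu}}\leq A\nu^{n/2}$. Running the iteration with the same Sobolev constant finally gives $\sup_{Y}\snorm{\nabla S}{l^{\nu}}^{2}\leq C\nu^{n+1}$, establishing the second estimate. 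It is precisely this integration-by-parts device that lets the argument close with only zeroth- and first-order bounds on $\dph$, and it is what produces the sharp growth rate $\nu^{(n+1)/2}$ rather than a worse power.
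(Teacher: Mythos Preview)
Your plan is correct and matches the paper's own proof essentially step for step: Moser iteration on $\snorm{S}{l^{\nu}}^{2}$ using $\triangle\snorm{S}{}^{2}=\snorm{\nabla S}{}^{2}-n\nu\snorm{S}{}^{2}\geq -n\nu\snorm{S}{}^{2}$ for the first bound, and for the second the Bochner identity $\triangle\snorm{\nabla S}{}^{2}=\snorm{\nabla\nabla S}{}^{2}-[(n+2)\nu-1]\snorm{\nabla S}{}^{2}+n\nu^{2}\snorm{S}{}^{2}-\dph_{,i\bar j}\bar S_{,\bar i}S_{,j}$ combined with a single integration by parts moving one derivative off $\dph_{,i\bar j}$ (so only the bound $\snorm{\nabla\dph}{}\leq\mathcal{B}$ is needed, not $C^{0}$ of $\dph$), followed by Moser iteration seeded by the preliminary estimate $\int_{Y}\snorm{\nabla\nabla S}{}^{2}\leq C\nu^{2}$. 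The only inaccuracy is cosmetic: $\nabla S$ is not a holomorphic section of $T^{*}Y\otimes K_{Y}^{-\nu}$, so what you are using is the Weitzenb\"ock formula for $\snorm{\nabla S}{}^{2}$ rather than Bochner--Kodaira applied to a holomorphic object.
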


 \begin{proof}
 For simplicity, we omit subindex $l^{\nu}$ in the proof.
 Note $\triangle_{\omega_{\varphi}} \snorm{S}{}^2= \snorm{\nabla S}{}^2-
 n\nu\snorm{S}{}^2$,
 the proof of inequality (\ref{eqn: slinf}) follows directly the
 proof of Lemma 3.1 in~\cite{CW4}.  So we only
 prove inequality (\ref{eqn: naslinf}).

  Direct calculation shows that
 \begin{align}
   \triangle_{\omega_{\varphi}} |\nabla S|^2 &=
 \snorm{\nabla \nabla S}{}^2 - (n+2)\nu \snorm{\nabla S}{}^2 +
 n\nu^2 |S|^2 + R_{i\bar{j}} \bar{S}_{,\bar{i}}S_{,j} \notag\\
 &=\snorm{\nabla \nabla S}{}^2 - [(n+2)\nu -1]\snorm{\nabla S}{}^2 +
 n\nu^2 |S|^2 -\dot{\varphi}_{, i\bar{j}} \bar{S}_{,\bar{i}}S_{,j}.
 \label{eqn: naseqn}
 \end{align}
 Note that  $S_{,i\bar{j}}= -\nu S g_{i\bar{j}}$,
 integration under measure $\omega_{\varphi}^n$ implies
 \begin{align*}
     \int_Y |\nabla \nabla S|^2  &= -n\nu^2
     +[(n+2)\nu -1]\int_Y \snorm{\nabla S}{}^2+\int_Y \dot{\varphi}_{,i\bar{j}}
     \bar{S}_{,\bar{i}}S_{,j}\\
 &=n\nu[(n+1)\nu-1]  -\int_Y \dot{\varphi}_i \bar{S}_{,\bar{i}\bar{j}}S_{,j}
    + n\nu \int_Y \dot{\varphi}_i\bar{S}_{,\bar{i}} S
 \end{align*}
 In view of $|\dot{\varphi}| \leq \mathcal{B}$, H\"older inequality
 implies
 \begin{align*}
  \int_Y \snorm{\nabla \nabla S}{}^2
 &\leq \mathcal{B} \left(\{\int_Y \snorm{\nabla \nabla S}{}^2\}^{\frac12}
    + n\nu \{\int_Y \snorm{S}{}^2 \}^{\frac12}\right)
   \{\int_Y \snorm{\nabla S}{}^2\}^{\frac12} + n\nu[(n+1)\nu-1]\\
 &=\sqrt{n\nu} \mathcal{B} \left(\{\int_Y \snorm{\nabla \nabla S}{}^2\}^{\frac12}
    + n\nu \right) + n\nu[(n+1)\nu-1]\\
 &\leq \frac12 \int_Y \snorm{\nabla \nabla S}{}^2
  + \frac12 n\nu \mathcal{B}^2 + (n\nu)^{\frac32} \mathcal{B} + n\nu[(n+1)\nu-1].
 \end{align*}
 It follows that
 \begin{align*}
    \int_Y \snorm{\nabla \nabla S}{}^2 \leq C \nu^2.
 \end{align*}
 for some constant $C=C(n,\mathcal{B})$.
 Combinging with the fact $\int_Y |\bar{\nabla}\nabla S|^2= n\nu^2$,
 Sobolev inequality implies
 \begin{align}
   \left(\int_Y \snorm{\nabla
   S}{}^{\frac{2n}{n-1}}\right)^{\frac{n-1}{n}}
   \leq C\nu^2.
 \label{eqn: nasn}
 \end{align}

  Fix $\beta>1$, multiplying $-\snorm{\nabla S}{}^{2(\beta-1)}$ to
  both sides of equation (\ref{eqn: naseqn}), we have
  \begin{align*}
   &\quad  \frac{4(\beta-1)}{\beta^2} \int_Y \left|  \nabla \snorm{\nabla S}{}^{\beta}\right|^2\\
   &=  -\int_Y (n\nu^2\snorm{S}{}^2 +\snorm{\nabla \nabla S}{}^2)\snorm{\nabla
      S}{}^{2(\beta-1)} + [(n+2)\nu-1] \int_Y \snorm{\nabla S}{}^{2\beta}\\
   & \qquad +\int_Y \dot{\varphi}_{,i\bar{j}}
   \bar{S}_{,\bar{i}}S_{,j}\snorm{\nabla S}{}^{2(\beta-1)}
  \end{align*}
  Note that
  \begin{align*}
  &\qquad \int_Y \dot{\varphi}_{,i\bar{j}}
   \bar{S}_{,\bar{i}}S_{,j}\snorm{\nabla S}{}^{2(\beta-1)}\\
  &=-\int_Y \dot{\varphi}_{,i} (\bar{S}_{,\bar{i}\bar{j}}S_{,j}
     + \bar{S}_{,\bar{i}} S_{,j\bar{j}})\snorm{\nabla S}{}^{2(\beta-1)}
  - (\beta-1) \int_Y \dot{\varphi}_{,i} \bar{S}_{,\bar{i}}S_{,j}(S_{k\bar{j}}\bar{S}_{,\bar{k}}
     + S_k\bar{S}_{\bar{k}\bar{j}}) \snorm{\nabla
     S}{}^{2(\beta-2)}\\
  &\leq \nu[\beta-1+n] \int_Y \dot{\varphi}_{,i} S
  \bar{S}_{,\bar{i}} \snorm{\nabla S}{}^{2(\beta-1)}
    + \mathcal{B} \beta \int_Y \snorm{\nabla \nabla S}{} \snorm{\nabla
    S}{}^{2\beta-1}\\
  \end{align*}
  H\"older inequality and Schwartz inequality yield that
  \begin{align*}
  &\mathcal{B}\nu[\beta-1+n] \{\int_Y
   \snorm{S}{}^2 \snorm{\nabla
  S}{}^{2(\beta-1)}\}^{\frac12}\{\int_Y \snorm{\nabla
  S}{}^{2\beta}\}^{\frac12}\\
  &\qquad+ \mathcal{B}\beta \{\int_Y \snorm{\nabla \nabla S}{}^2
   \snorm{\nabla S}{}^{2(\beta-1)}\}^{\frac12} \{\int_Y \snorm{\nabla
   S}{}^{2\beta}\}^{\frac12}\\
  &\leq n\nu^2 \left(\{\int_Y \snorm{S}{}^2 \snorm{\nabla S}{}^{2(\beta-1)}\}
     + \{\int_Y \snorm{\nabla \nabla S}{}^2 \snorm{\nabla S}{}^{2(\beta-1)}\}
   \right)\\
  &\qquad  +\{\frac{\mathcal{B}^2(\beta-1+n)^2}{4n} + \frac{\mathcal{B}^2 \beta^2}{4n\nu^2}\}
  \int_Y \snorm{\nabla  S}{}^{2\beta}.
  \end{align*}
  If $\beta \geq \frac{n}{n-1}$, combining previous three inequalities implies
  \begin{align*}
     \int_Y \left|  \nabla \snorm{\nabla S}{}^{\beta}\right|^2
   \leq C \beta(\beta^2 + \nu) \int_Y \snorm{\nabla S}{}^{2\beta}.
  \end{align*}
  In light of Sobolev inequality, we have
  \begin{align*}
   \left(\int_Y \snorm{\nabla S}{}^{\beta \cdot
   \frac{2n}{n-1}}\right)^{\frac{n-1}{n}}
  \leq C_S\{ \int_Y \left|  \nabla \snorm{\nabla S}{}^{\beta}\right|^2 + \int_Y \snorm{\nabla S}{}^{2\beta}\}
  \leq C \beta (\beta^2 + \nu) \int_Y \snorm{\nabla S}{}^{2\beta}.
  \end{align*}
  Let $k_0$ be the number such that $\lambda^{2k_0} \geq \nu > \lambda^{2(k_0-1)}$ where
 $\lambda=\frac{n}{n-1}$, we have
  \begin{align*}
   \left(\int_Y \snorm{\nabla S}{}^{\beta \cdot
   \frac{2n}{n-1}}\right)^{\frac{n-1}{n}}
   \leq
   \left\{
   \begin{array}{ll}
    C \beta^3 & \textrm{if} \quad \beta >\lambda^{k_0},\\
   (C \nu) \beta  &\textrm{if} \quad \beta \leq \lambda^{k_0}.\\
   \end{array}
   \right.
  \end{align*}
  Iteration implies
  \begin{align*}
  \left\{
  \begin{array}{l}
   \norm{\snorm{\nabla S}{}^2}{L^{\infty}}
    \leq C^{\sum_{k=1}^\infty \lambda^{-k}} \lambda^{\sum_{k=1}^\infty k\lambda^{-k}}
    \norm{\snorm{\nabla S}{}^2}{L^{\lambda^{k_0}}},\\
   \norm{\snorm{\nabla S}{}^2}{L^{\lambda^{k_0}}}
   \leq (C\nu)^{\sum_{k=1}^{k_0} \lambda^{-k}}  \norm{\snorm{\nabla
   S}{}^2}{L^{\lambda}}.
  \end{array}
  \right.
  \end{align*}
 Since $\sum_{k=1}^{k_0} \lambda^{-k} <\sum_{k=1}^{\infty}
 \lambda^{-k}=n-1$, combining these inequalities with inequality (\ref{eqn: nasn})
 gives us
 \begin{align*}
  \norm{\snorm{\nabla S}{}^2}{L^{\infty}} \leq C\nu^{n+1}.
 \end{align*}
 This proves inequality (\ref{eqn: naslinf}).
 \end{proof}

 Similarly, by sharpening the constants in Lemma 3.2 of~\cite{CW4},
 we obtain
 \begin{lemma}
   There is a uniform constant $A=A(\mathcal{B}, C_S, n)$ such that
 \begin{align}
   &\snorm{S}{h^{\nu}} < A\nu^{\frac{n}{2}}, \label{eqn: slinfh}\\
   &\snorm{\nabla S}{h^{\nu}} < A \nu^{\frac{n+1}{2}}, \label{eqn: naslinfh}
 \end{align}
 whenever $S \in H^0(Y, K_Y^{-\nu})$ is a unit norm section  (under the metric $h^{\nu}$).
 \label{lemma: boundh}
 \end{lemma}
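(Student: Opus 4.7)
The proof follows the Moser iteration scheme of Lemma~\ref{lemma: boundl} applied under the metric $h^\nu$ instead of $l^\nu$. The only structural change is that the Chern curvature of $(K_Y^{-\nu}, h^\nu)$ is $\nu\cdot Ric_{\omega_\varphi}$ rather than $\nu\cdot \omega_\varphi$, so for a holomorphic $S$ the basic pointwise identity becomes $S_{,i\bar j}=-\nu R_{i\bar j}S$, and the scalar Bochner formula reads
\[
\triangle |S|_{h^\nu}^2 \;=\; |\nabla S|_{h^\nu}^2 \;-\; \nu R\,|S|_{h^\nu}^2,
\]
where $R$ denotes the scalar curvature. By Proposition~\ref{proposition: perelman} we have $|R|\le \mathcal{B}$, so $-\triangle |S|^2\le \mathcal{B}\nu\,|S|^2$. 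A standard Moser iteration using the uniform Sobolev inequality from Proposition~\ref{proposition: sobolev}, bootstrapped from the normalization $\|S\|_{L^2}^2=1$, yields $\|S\|_{L^\infty}^2\le C\nu^n$, which is inequality~(\ref{eqn: slinfh}).

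For inequality~(\ref{eqn: naslinfh}) one repeats the scheme of Lemma~\ref{lemma: boundl}: integrate a Bochner identity for $|\nabla S|_{h^\nu}^2$ to obtain $\int|\nabla\nabla S|^2\le C\nu^2$, apply the Sobolev inequality to upgrade this to $(\int|\nabla S|^{2n/(n-1)})^{(n-1)/n}\le C\nu^2$, and finally run a Moser iteration on the Bochner inequality by multiplying by $|\nabla S|^{2(\beta-1)}$ and iterating in $\beta$ exactly as in Lemma~\ref{lemma: boundl}. The Bochner formula in this setting takes the form
\[
\triangle|\nabla S|^2 \;=\; |\nabla\nabla S|^2 \;+\; \nu^2 R_{i\bar j}R^{i\bar j}|S|^2 \;+\; R_{i\bar j}\bar S_{,\bar i}S_{,j} \;-\; [\nu R+1]|\nabla S|^2,
\]
the new feature compared to the $l^\nu$ case being the term $\nu^2 R_{i\bar j}R^{i\bar j}|S|^2$ (arising as $|\bar\nabla\nabla S|^2$) in place of the clean term $n\nu^2|S|^2$.

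The main obstacle is that the full Ricci tensor is not pointwise bounded; only $R$ is. To handle this, write $R_{i\bar j}=g_{i\bar j}-\dot\varphi_{,i\bar j}$. Using $\triangle\dot\varphi=n-R$, one computes the expansion
\[
R_{i\bar j}R^{i\bar j} \;=\; -n + 2R + |\partial\bar\partial\dot\varphi|^2,
\]
so it suffices to control the integrals $\int|\partial\bar\partial\dot\varphi|^2|\nabla S|^{2(\beta-1)}|S|^2$ and $\int\dot\varphi_{,i\bar j}\bar S_{,\bar i}S_{,j}|\nabla S|^{2(\beta-1)}$ that appear along the iteration. Both are handled by integrating the $\dot\varphi_{,i\bar j}$ by parts, moving one derivative of $\dot\varphi$ onto the remaining factors and exploiting the pointwise bounds $\|\dot\varphi\|_{C^0}+\|\nabla\dot\varphi\|_{C^0}\le C$ from Proposition~\ref{proposition: dotphi}, together with $|R|\le\mathcal{B}$. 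The resulting error terms are either absorbed into the $|\nabla\nabla S|^2$ term (on the left after rearrangement) by Cauchy--Schwartz, or contribute only $O(\nu^p)$ lower-order powers that are already controlled by the iterate from the previous step. Once these Ricci-involving terms are so handled, the Moser iteration closes identically to that of Lemma~\ref{lemma: boundl} and produces the sharp exponent $\nu^{n+1}$ in inequality~(\ref{eqn: naslinfh}).
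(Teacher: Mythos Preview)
Your overall strategy---adapt the Moser iteration of Lemma~\ref{lemma: boundl} with the line-bundle curvature $\nu R_{i\bar j}$ in place of $\nu g_{i\bar j}$---is exactly what the paper intends (it only says ``similarly, by sharpening the constants in Lemma~3.2 of~\cite{CW4}'' and gives no further details). The argument for inequality~(\ref{eqn: slinfh}) is correct as you wrote it.

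For inequality~(\ref{eqn: naslinfh}) there are two inaccuracies worth flagging. First, you misidentify the obstacle: the term $\nu^2 R_{i\bar j}R^{i\bar j}|S|^2=|\bar\nabla\nabla S|^2$ enters the Bochner identity with the \emph{same} sign as $|\nabla\nabla S|^2$, so after multiplying by $-|\nabla S|^{2(\beta-1)}$ and integrating it appears with a \emph{favorable} (negative) sign on the right-hand side, just as $n\nu^2|S|^2$ did in Lemma~\ref{lemma: boundl}. It can simply be dropped; no integration by parts on $|\partial\bar\partial\dot\varphi|^2$ is needed. Second, your displayed Bochner formula is incomplete: because the curvature $\Theta_{i\bar j}=\nu R_{i\bar j}$ is no longer parallel, an extra term of the form $-2\nu\,\Re\big(\Theta_{i\bar k,k}\,S\,\bar S_{,\bar i}\big)=-2\nu\,\Re\big(R_{,i}\,S\,\bar S_{,\bar i}\big)$ appears (using the K\"ahler Bianchi identity $g^{k\bar l}R_{i\bar l,k}=R_{,i}$). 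This \emph{is} a genuinely new term compared to the $l^\nu$ case, but it is harmless: one integrates the $R_{,i}$ by parts, uses $|R|\le\mathcal{B}$, and absorbs the resulting $|\nabla\nabla S|$ contributions by Cauchy--Schwartz exactly as you describe for the $\dot\varphi_{,i\bar j}$ term. With these two corrections the iteration closes and the proof goes through.
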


 Lemma~\ref{lemma: boundl} and Lemma~\ref{lemma: boundh} clearly
 implies the following estimates.
 \begin{corollary}
 There is a uniform constant $A=A(\mathcal{B}, C_S, n)$ such that
 \begin{align*}
  &\max\{\mathcal{F}, F\} \leq \frac{\log A + n \log \nu}{\nu}, \\
  &\max\{\mathcal{G}, G\} \leq  A\nu^{n+1}.
 \end{align*}
 \end{corollary}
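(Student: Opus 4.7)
The plan is to observe that the corollary is essentially an immediate consequence of Lemma~\ref{lemma: boundl} and Lemma~\ref{lemma: boundh}, once one exploits the basis-rotation reduction that was already noted just after the definitions of $F, G, \mathcal{F}, \mathcal{G}$. Specifically, fix $(x,t)$. Since $\{T_{\nu,\beta}^t\}$ is an $h^{\nu}$-orthonormal basis of $H^0(K_Y^{-\nu})$, any unitary change of basis preserves the sum $\sum_\beta |T_{\nu,\beta}^t|_{h^\nu}^2(x)$, so one can choose the basis so that exactly one member $T$ attains $|T|_{h^{\nu}}^2(x) = e^{\nu F(\nu,x,t)}$ and this $T$ is a unit $h^{\nu}$-norm section. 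Identical reasoning gives a unit $l^{\nu}$-norm section $S$ with $|S|_{l^{\nu}}^2(x) = e^{\nu \mathcal{F}(\nu,x,t)}$, and similarly unit sections $T', S'$ realising $G$ and $\mathcal{G}$ at $(x,t)$.

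Now I would apply the two lemmas directly. Inequality~(\ref{eqn: slinfh}) applied to $T$ yields
\begin{align*}
e^{\nu F(\nu,x,t)} = |T|_{h^{\nu}}^2(x) < A^2 \nu^{n},
\end{align*}
so $F(\nu,x,t) \leq (2\log A + n\log \nu)/\nu$. Inequality~(\ref{eqn: slinf}) applied to $S$ gives the same bound for $\mathcal{F}$. Likewise, inequalities~(\ref{eqn: naslinf}) and~(\ref{eqn: naslinfh}) applied to $S'$ and $T'$ give $\mathcal{G}(\nu,x,t), G(\nu,x,t) \leq A^2 \nu^{n+1}$. Since $(x,t)$ is arbitrary, the four pointwise bounds hold uniformly on $Y \times [0,\infty)$. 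Relabelling $A^2$ as $A$ (which only changes the constant by a factor depending on $\mathcal{B}, C_S, n$) produces the statement of the corollary.

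There is no substantive obstacle here; the proof is bookkeeping. The only point worth being careful about is that the reduction from a sum over an orthonormal basis to the norm of a single unit section is genuinely an equality (not an inequality), which is why the power of $\nu$ on the right-hand side comes out to exactly $n$ for $\nu F$ and $n+1$ for $G$, matching what the lemmas produce. Once this observation is made, the bound on $\max\{F,\mathcal{F}\}$ and $\max\{G,\mathcal{G}\}$ follows without any further estimate.
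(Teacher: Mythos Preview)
Your proposal is correct and matches the paper's approach exactly: the paper states only that the corollary ``clearly'' follows from Lemma~\ref{lemma: boundl} and Lemma~\ref{lemma: boundh}, and you have supplied precisely the intended bookkeeping via the basis-rotation reduction already recorded after the definitions of $F,G,\mathcal{F},\mathcal{G}$.
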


 \begin{proposition}
  Along the flow, $\mathcal{F}$ satisfies
  \begin{align*}
  \left\{
  \begin{array}{ll}
   \D{}{t} \mathcal{F}&=-\dot{\varphi}
     + \int_Y (\dot{\varphi} - \frac{\triangle
     \dot{\varphi}}{\nu})e^{\nu \mathcal{F}} \omega_{\varphi}^n,\\
   \triangle \mathcal{F}&= -n+ (\frac{1}{\nu}e^{-\nu \mathcal{F}} \mathcal{G} -\nu \snorm{\nabla
   \mathcal{F}}{}^2) \geq -n,\\
   \square \mathcal{F}&= n-\dot{\varphi} + \int_Y (\dot{\varphi} - \frac{\triangle
    \dot{\varphi}}{\nu}) e^{\nu \mathcal{F}} \omega_{\varphi}^n +
    (\nu \snorm{\nabla \mathcal{F}}{}^2 - \frac{1}{\nu}e^{-\nu \mathcal{F}} \mathcal{G}).
  \end{array}
  \right.
  \end{align*}
  $F$ satisfies
  \begin{align*}
  \left\{
  \begin{array}{ll}
   \D{}{t} F &= (n-R) - (1+\frac{1}{\nu})\int_Y
   (n-R) e^{\nu F} \omega_{\varphi}^n,\\
   \triangle F &= -R + (\frac{1}{\nu}e^{-\nu F}G - \nu \snorm{\nabla F}{}^2) \geq
   -R,\\
   \square F &= n-(1+\frac{1}{\nu})\int_Y (n-R) e^{\nu F} \omega_{\varphi}^n
     + (\nu \snorm{\nabla F}{}^2 - \frac{1}{\nu}e^{-\nu F}G).
  \end{array}
  \right.
  \end{align*}
 \end{proposition}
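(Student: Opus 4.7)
\emph{Proof proposal.} The six identities split cleanly into two groups: the two Laplacian formulas are pointwise Bochner-type computations on holomorphic sections of $K_Y^{-\nu}$, the two time-derivative formulas follow from the variational (Bergman-kernel) characterization of $e^{\nu\mathcal{F}}$ and $e^{\nu F}$ combined with the flow evolution of the fibre metrics and of the volume form, and the two $\square$ identities are just the corresponding differences.

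For $\triangle\mathcal{F}$ I start from the Bochner formula for a holomorphic section $S$ of $K_Y^{-\nu}$: since the Chern curvature of $(K_Y^{-\nu},l^\nu)$ is $\nu\omega_{\varphi}$, one has $\triangle\snorm{S}{l^\nu}^2=\snorm{\nabla S}{l^\nu}^2-n\nu\snorm{S}{l^\nu}^2$ (this is exactly the identity already invoked inside the proof of Lemma~\ref{lemma: boundl}). Summing over the orthonormal basis $\{S_{\nu,\beta}^t\}$ gives $\triangle e^{\nu\mathcal{F}}=\mathcal{G}-n\nu\,e^{\nu\mathcal{F}}$, and the chain rule $\triangle e^{\nu\mathcal{F}}=\nu e^{\nu\mathcal{F}}\triangle\mathcal{F}+\nu^2 e^{\nu\mathcal{F}}\snorm{\nabla\mathcal{F}}{}^2$ then isolates $\triangle\mathcal{F}$ in the stated form. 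The lower bound $\triangle\mathcal{F}\geq -n$ is pointwise Cauchy--Schwarz, $\snorm{\nabla e^{\nu\mathcal{F}}}{}^2\leq\mathcal{G}\cdot e^{\nu\mathcal{F}}$, which rewrites as $\nu^2 e^{\nu\mathcal{F}}\snorm{\nabla\mathcal{F}}{}^2\leq\mathcal{G}$. The identity for $\triangle F$ is the same argument with curvature $\nu\,Ric$ (trace $\nu R$) in place of $\nu\omega_{\varphi}$.

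For $\D{}{t}\mathcal{F}$ I would exploit the extremal characterization $e^{\nu\mathcal{F}(x,t)}=\sup\bigl\{\snorm{S(x)}{l^\nu(t)}^2:\int_Y\snorm{S}{l^\nu(t)}^2\omega_{\varphi}^n=1\bigr\}$, whose maximizer (unique up to phase) is precisely the unit-norm section $S$ singled out in the paragraph just before the proposition. By the envelope theorem only the direct $t$-dependence of the metric and measure contributes, and the required pieces are (i) at $S$ fixed, $\partial_t\snorm{S(x)}{l^\nu(t)}^2=-\nu\dot{\varphi}(x)\snorm{S(x)}{l^\nu}^2$, because the local potential $\phi$ with $\st\ddb\phi=\omega_{\varphi}$ evolves by $\dot\phi=\dot\varphi$ modulo pluriharmonics (since $\partial_t\omega_{\varphi}=\st\ddb\dot\varphi$), and (ii) $\partial_t\omega_{\varphi}^n=\triangle\dot\varphi\cdot\omega_{\varphi}^n$, so that $\partial_t\!\int_Y\snorm{S}{l^\nu(t)}^2\omega_{\varphi}^n=-\nu\!\int_Y\snorm{S(y)}{l^\nu}^2(\dot\varphi-\triangle\dot\varphi/\nu)\omega_{\varphi}^n$. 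Dividing the resulting expression for $\partial_t e^{\nu\mathcal{F}}$ by $\nu e^{\nu\mathcal{F}}$ gives the advertised formula, with the $e^{\nu\mathcal{F}}$ inside the integrand understood as the function $y\mapsto\snorm{S(y)}{l^\nu}^2$ via the rotated-basis convention. The derivation for $\D{}{t}F$ runs in parallel; the new ingredient is that the fibre metric $h(t)=\det g_{i\bar{j}}(t)$ itself evolves by $\partial_t\log h=g^{i\bar{j}}\partial_t g_{i\bar{j}}=n-R$, so at fixed $T$ one has $\partial_t\snorm{T}{h^\nu(t)}^2=\nu(n-R)\snorm{T}{h^\nu}^2$, and both this term and the volume-form term contribute an $(n-R)$ factor, generating the prefactor $1+1/\nu$. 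The two $\square$ identities are then just $\partial_t-\triangle$ of the corresponding pieces.

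The only step that is not purely calculational is the appeal to the envelope theorem, which requires the maximizer to be unique up to phase and to depend smoothly on $t$; this is the standard Bergman-kernel statement on the finite-dimensional space $H^0(K_Y^{-\nu})$ and breaks only at possible base points of the pluri-anticanonical system, where $e^{\nu\mathcal{F}}$ vanishes and the formulas are vacuous. One can also sidestep this entirely by fixing a time-independent basis $\{T_\alpha\}$, writing $e^{\nu\mathcal{F}(x,t)}=e^{-\nu\phi(x,t)}\mathbf{t}(x)^{*}H(t)^{-1}\mathbf{t}(x)$ with Gram matrix $H_{\alpha\beta}(t)=\int_Y\langle T_\alpha,T_\beta\rangle_{l^\nu}\omega_{\varphi}^n$, differentiating via $\partial_t H^{-1}=-H^{-1}\dot{H}H^{-1}$, and observing that the off-diagonal reproducing kernel produced by this matrix calculation agrees on the diagonal $y=x$ with $e^{\nu\mathcal{F}(x)}$, which is exactly the abbreviation employed in the proposition's integrand.
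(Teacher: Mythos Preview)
Your proposal is correct and lands on the same formulas; the Laplacian identities are handled exactly as in the paper (Bochner plus Cauchy--Schwarz), and for the time derivatives you and the paper are computing the same thing in two different presentations.

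The paper does not invoke the envelope theorem. Instead it fixes $t_0$, writes the time-$t$ orthonormal basis as $a_{\alpha\beta}(t)S_\beta$ with $a_{\alpha\beta}(t_0)=\delta_{\alpha\beta}$, differentiates the orthonormality relation to obtain
\[
\dot a_{\alpha\gamma}+\dot{\bar a}_{\gamma\alpha}
=-\int_Y(-\nu\dot\varphi+\triangle\dot\varphi)\langle S_\alpha,S_\gamma\rangle\,\omega_\varphi^n,
\]
and then differentiates $e^{\nu\mathcal F}=\sum_\alpha\langle a_{\alpha\beta}S_\beta,a_{\alpha\gamma}S_\gamma\rangle$ directly. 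Choosing $S_0$ to be the peak section at $x$ makes every cross term $\langle S_\beta,S_\alpha\rangle(x)$ with $(\alpha,\beta)\neq(0,0)$ vanish, so only $\dot a_{00}+\dot{\bar a}_{00}$ survives. This is precisely the mechanism behind your envelope-theorem shortcut: the vanishing of $S_\alpha(x)$ for $\alpha\neq 0$ is what kills the implicit dependence of the maximizer on $t$. The paper's route has the mild advantage that it never needs smooth $t$-dependence of the peak section (it differentiates the whole sum, not the supremum), so the caveat you raise about base points and regularity of the maximizer simply does not arise. Your alternative Gram-matrix computation with $\partial_t H^{-1}=-H^{-1}\dot H H^{-1}$ is exactly the paper's moving-frame argument in matrix notation.

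You are also right to flag that the ``$e^{\nu\mathcal F}$'' inside the integral is, in both the paper's derivation and yours, shorthand for $\snorm{S_0(y)}{l^\nu}^2$ with $S_0$ the peak section at the evaluation point $x$ (equivalently $|K(x,y)|^2/K(x,x)$), not the diagonal Bergman function $e^{\nu\mathcal F(y)}$; the paper uses this abbreviation without comment.
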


 \begin{proof}
 At $t= t_0$, suppose $\{S_{\beta}\}_{\beta=0}^{N_{\nu}}$ are
 orthonormal holomorphic sections of $H^0(Y, K_Y^{-1})$ under the
 metric $l^{\nu}(t_0)$. Assume $\{a_{\alpha \beta}(t)S_{\beta}\}_{\alpha=0}^{N_{\nu}}$
 are orthonormal holomorphic sections at time $t$ under the metric
 $l^{\nu}(t)$.  By these uniformization condition,  we have
 \begin{align*}
  &a_{\alpha \beta}(t_0) = \delta_{\alpha \beta},\\
  &\delta_{\alpha \gamma}=a_{\alpha \beta}\bar{a}_{\gamma \xi}
   \int_Y \langle  S_{\beta}, S_{\xi}\rangle  \omega_{\varphi}^n,\\
  &0=\dot{a}_{\alpha \beta}a_{\gamma \xi}
   \int_Y \langle  S_{\beta}, S_{\xi}\rangle  \omega_{\varphi}^n
  +a_{\alpha \beta} \dot{\bar{a}}_{\gamma \xi}
   \int_Y \langle  S_{\beta}, S_{\xi}\rangle  \omega_{\varphi}^n\\
  &\qquad +a_{\alpha \beta} \bar{a}_{\gamma \xi}
   \int_Y (-\nu \dot{\varphi}
  + \triangle \dot{\varphi}) \langle S_{\beta}, S_{\xi}\rangle
  \omega_{\varphi}^n.
 \end{align*}
 In particular, at $t=t_0$, using sum convention we have
 \begin{align*}
  &0=\dot{a}_{\alpha \gamma} + \dot{\bar{a}}_{\gamma \alpha} +
    \int_Y (-\nu \dot{\varphi} + \triangle \dot{\varphi})
    \langle S_{\alpha}, S_{\gamma} \rangle \omega_{\varphi}^n, \\
  &\left. \D{}{t} e^{\nu\mathcal{F}} \right|_{t=t_0}
    = \left.\D{}{t}
    \left(a_{\alpha \beta} \bar{a}_{\alpha \gamma}  \langle S_{\beta},
    S_{\gamma}\rangle \right) \right|_{t=t_0}\\
   &\qquad =\dot{a}_{\alpha \beta} \langle S_{\beta}, S_{\alpha}\rangle
   + \dot{\bar{a}}_{\alpha \gamma} \langle S_{\alpha}, S_{\gamma}\rangle
   +(-\nu \dot{\varphi}) \langle S_{\alpha}, S_{\alpha} \rangle.
 \end{align*}
 Fix $x \in Y$, at $t=t_0$, there is a unit norm section
 $S$ such that  $\snorm{S}{l^{\nu}}^2(x)= e^{\nu\mathcal{F}}$. Let
 $S_{0}=S$, then we have
 \begin{align*}
  \left. \D{}{t} e^{\nu\mathcal{F}} \right|_{t=t_0}
 = e^{\nu \mathcal{F}} (\dot{a}_{00} + \dot{\bar{a}}_{00} - \nu \dot{\varphi})
 =e^{\nu \mathcal{F}}
 (\int_Y (\nu \dot{\varphi} - \triangle \dot{\varphi}) e^{\nu \mathcal{F}} \omega_{\varphi}^n - \nu\dot{\varphi})
 \end{align*}
 On the other hand,
 \begin{align*}
    \triangle e^{\nu \mathcal{F}}= \langle \nabla S_{\alpha}, \nabla
    S_{\alpha}\rangle - n\nu \langle S_{\alpha}, S_{\alpha} \rangle
    = \mathcal{G} -n\nu e^{\nu \mathcal{F}}.
 \end{align*}
 It follows that
 \begin{align*}
  (\D{}{t} - \triangle) e^{\nu \mathcal{F}}=
   e^{\nu \mathcal{F}}
 \{\int_Y (\nu \dot{\varphi} - \triangle \dot{\varphi}) e^{\nu \mathcal{F}} \omega_{\varphi}^n + \nu
 (n-\dot{\varphi})\} - \mathcal{G}.
 \end{align*}

 Similarly, we can have
 \begin{align*}
  \D{}{t}e^{\nu F}&=e^{\nu F} \{\nu \triangle \dot{\varphi}
   - (\nu+1) \int_Y \triangle \dot{\varphi}e^{\nu F}
   \omega_{\varphi}^n\}\\
  &=e^{\nu F} \{\nu (n-R)
   - (\nu+1) \int_Y (n-R) e^{\nu F} \omega_{\varphi}^n\},\\
  \triangle e^{\nu F}&= G - \nu R e^{\nu F},\\
  \square e^{\nu F} &=e^{\nu F} \{ n\nu
   -(\nu+1)\int_Y (n-R)e^{\nu F}
   \omega_{\varphi}^n \}-G.
 \end{align*}

 From the evolution equation of $e^{\nu \mathcal{F}}$
 and $e^{\nu F}$, we can easily obtain the evolution equation of $\mathcal{F}$
 and $F$.
 \end{proof}

 \begin{remark}
  The advantage of $F$ appears when the evolution equation is
  calculated.  Every term in $\D{F}{t}$ is a geometric quantity.
  Suppose that $\int_0^{\infty} \int_Y (R-n)_{-} \omega_{\varphi}^n dt<
   \infty$ and $\int_0^{\infty} (R_{\max}(t)-n) dt< \infty$, then $F$ must be
   bounded from below and the flow is tamed.

   When we consider the convergence of metric space, the smooth
   convergence of $g_{i\bar{j}}$ will automatically induce the smooth
   convergence of $h^{\nu}= \det(g_{i\bar{j}})^{\nu}$.   Therefore,
   we prefer to use $h^{\nu}$ as the more natural metric of $K_Y^{-\nu}$
   under the \KRf.
 \end{remark}

   Since H\"ormarnder's estimate hold in the orbifold case. The
   bound in Lemma~\ref{lemma: boundh} implies the convergence of
   plurianticanonical sections when the underlying orbifolds
   converge.

 \begin{proposition}
  Suppose $Y$ is a Fano orbifold,  $\{(Y, g(t)), 0 \leq t < \infty\}$ is a \KRf
  without volume concentration.  Let $t_i$ be a sequence of time such that
   $\displaystyle  (Y, g(t_i)) \sconv (\hat{Y}, \hat{g})$
   for some Q-Fano normal variety $(\hat{Y}, \hat{g})$.
  Then for any fixed positive integer $\nu$ (appropriate for both $Y$ and $\hat{Y}$),
  the following properties hold.
  \begin{enumerate}
   \item  If $S_i \in H^0(Y, K_{Y}^{-\nu})$ and $\int_{Y} \snorm{S_i}{h^{\nu}(t_i)}^2
   \omega_{\varphi(t_i)}^n=1$, then by taking subsequence if necessary, we have
   $\hat{S} \in H^0(\hat{Y}, K_{\hat{Y}}^{-\nu})$ such that
   \begin{align*}
      S_i \sconv \hat{S},
      \quad \int_{\hat{Y}} \snorm{\hat{S}}{\hat{h}^{\nu}}^2 \hat{\omega}^n=1.
   \end{align*}

   \item  If $\hat{S} \in H^0(\hat{Y}, K_{\hat{Y}}^{-\nu})$ and
    $\int_{\hat{Y}} \snorm{\hat{S}}{\hat{h}^{\nu}}^2  \hat{\omega}^n =1$, then there is a subsequence  of
   sections $S_i \in H^0(Y_i, K_{Y_i}^{-\nu})$ satisfying
   \begin{align*}
   \int_{Y_i} \snorm{S_i}{h^{\nu}(t_i)}^2 \omega_{\varphi(t_i)}^n=1,
   \quad  S_i \sconv \hat{S}.
   \end{align*}

  \end{enumerate}
  \label{proposition: bundleconv}
  \end{proposition}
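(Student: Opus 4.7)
The backbone of the proof is the uniform $C^0$ and $C^1$ bounds from Lemma~\ref{lemma: boundh} combined with elliptic regularity for holomorphic sections, the Cheeger--Gromov convergence on the smooth part of $\hat{Y}$, and H\"ormander's $L^2$-estimate for $\db$ on the positive line bundle $K_Y^{-\nu}$. The argument is modeled on Theorem~3.1 of~\cite{CW4}, with extra care required where the orbifold singularities of $Y$ accumulate to form the normal singularities of $\hat{Y}$.

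For part (1), let $S_i \in H^0(Y, K_Y^{-\nu})$ be of unit $L^2$ norm. By Lemma~\ref{lemma: boundh}, $\snorm{S_i}{h^\nu}$ and $\snorm{\nabla S_i}{h^\nu}$ are uniformly bounded by constants depending only on $\nu$, $\mathcal{B}$, $C_S$. Since each $S_i$ solves $\db S_i = 0$, standard elliptic bootstrapping on the $\db$-operator, applied together with the $C^\infty$-boundedness of $g(t_i)$ on compact subsets of $\hat{Y}_{\mathrm{reg}}$ pulled back through the diffeomorphisms $\varphi_i$ coming from \sconv, yields uniform $C^k$ estimates on every compact $K \Subset \hat{Y}_{\mathrm{reg}}$. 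A diagonal Arzela--Ascoli argument on an exhaustion of $\hat{Y}_{\mathrm{reg}}$ produces a subsequence and a holomorphic section $\hat{S}$ of $K_{\hat{Y}_{\mathrm{reg}}}^{-\nu}$ with $\varphi_i^* S_i \sconv \hat{S}$ on compact subsets. Since $\hat{Y}$ is a $Q$-Fano normal variety with $\nu$ appropriate, $K_{\hat{Y}}^{-\nu}$ extends as a holomorphic line bundle across $\hat{Y}_{\mathrm{sing}}$, and the uniform bound $\snorm{\hat{S}}{\hat{h}^\nu} \le A\nu^{n/2}$ combined with normality of $\hat{Y}$ allows Riemann's removable singularity theorem to extend $\hat{S}$ to a global section in $H^0(\hat{Y}, K_{\hat{Y}}^{-\nu})$. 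Preservation of the $L^2$ norm follows from dominated convergence, using that the singular set has measure zero and that $\snorm{S_i}{h^\nu}^2 \omega_{\varphi(t_i)}^n$ admits a uniform integrable majorant after pullback.

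For part (2), given unit $\hat{S} \in H^0(\hat{Y}, K_{\hat{Y}}^{-\nu})$, pick cutoffs $\chi_k$ equal to one on a compact exhaustion $K_k \Subset \hat{Y}_{\mathrm{reg}}$ and supported in a slightly larger set avoiding $\hat{Y}_{\mathrm{sing}}$, with $\int_{\hat{Y}} \snorm{\db \chi_k}{}^2 \hat{\omega}^n \to 0$ as $k \to \infty$ (achievable since $\hat{Y}_{\mathrm{sing}}$ has codimension at least two, using standard logarithmic cutoffs). For each $k$, the smooth section $\chi_k \hat{S}$ lives on $\hat{Y}_{\mathrm{reg}}$ and can be transferred to $Y_i$ by $\varphi_i$ once $i$ is large enough that $\mathrm{supp}(\chi_k) \subset \mathrm{Im}(\varphi_i)$. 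Call the pullback $\tilde{S}_{i,k}$; then $\db \tilde{S}_{i,k}$ is $L^2$-small as $i \to \infty$ followed by $k \to \infty$, because $g_i$ converges smoothly to $\hat{g}$ on $\mathrm{supp}(\db \chi_k)$. Since $K_Y^{-\nu}$ is positive along the \KRf (Ricci positive on the regular part, with the Bakry--Emery weight $l^\nu$), H\"ormander's $L^2$-estimate produces $u_{i,k}$ with $\db u_{i,k} = \db \tilde{S}_{i,k}$ and $\norm{u_{i,k}}{L^2} \to 0$. Setting $S_i = \tilde{S}_{i, k(i)} - u_{i, k(i)}$ for a suitably slow $k(i) \to \infty$ and normalizing, we obtain unit holomorphic sections on $Y_i$ that converge smoothly to $\hat{S}$ by the same elliptic bootstrap as in part~(1).

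The main obstacle is the behavior at the singular locus. In part~(1) the Cheeger--Gromov convergence is only $C^0$ near singularities, so smooth convergence of sections cannot hold there; one must rely on the a priori $C^0$-bound together with normality of $\hat{Y}$ to extend holomorphically. In part~(2) one must verify that the cutoff error $\db\chi_k \cdot \hat{S}$ really has vanishing $L^2$ norm, which depends on $\hat{Y}_{\mathrm{sing}}$ having complex codimension $\ge 1$ (true for $Q$-Fano normal varieties) together with the uniform bound on $\snorm{\hat{S}}{\hat{h}^\nu}$. The technical continuity from $h^\nu_i$ to $\hat{h}^\nu$ under \sconv\ is the remaining ingredient and follows because $h = \det g_{i\bar{j}}$ depends smoothly on $g$ on the regular part.
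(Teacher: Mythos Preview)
Your proposal is correct and follows essentially the same approach as the paper. The paper does not give a detailed proof of this proposition; it simply remarks that H\"ormander's $L^2$-estimate holds in the orbifold setting and that the a priori bounds of Lemma~\ref{lemma: boundh} imply the convergence of plurianticanonical sections, referring the reader to Theorem~3.1 of~\cite{CW4} for the continuity argument --- exactly the ingredients you assemble. One small wording issue: the positivity needed for H\"ormander's estimate comes not from ``Ricci positive'' but from using the metric $l^{\nu}=e^{-\nu\dot{\varphi}}h^{\nu}$, whose curvature is $\nu\,\omega_{\varphi}>0$ (this is precisely what the paper sets up before Lemma~\ref{lemma: boundl}); otherwise your outline is on target.
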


  Using this property, we can justify the tamedness condition by
  weak compactness exactly as Theorem 3.2 of~\cite{CW4}.

  \begin{theorem}
   Suppose $Y$ is a Fano orbifold,  $\{(Y, g(t)), 0 \leq t < \infty\}$ is a \KRf
   without volume concentration. Suppose this flow satisfies weak compactness, i.e., for every sequence $t_i \to \infty$, by
   passing to subsequence, we have
   \begin{align*}
      (Y, g(t_i)) \sconv (\hat{Y}, \hat{g}),
   \end{align*}
   where $(\hat{Y}, \hat{g})$ is a Q-Fano normal variety.

   Then this flow is tamed by a big constant $\nu$.
 \label{theorem: justtamed}
 \end{theorem}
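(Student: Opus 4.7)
The plan is to argue by contradiction, combining the weak compactness hypothesis with the bundle convergence of Proposition~\ref{proposition: bundleconv}. The upper bound $F(\nu,\cdot,\cdot)\le \nu^{-1}\log(A\nu^n)$ is immediate from Lemma~\ref{lemma: boundh} and the corollary that follows it, so it suffices to produce a uniform lower bound of $F(\nu,\cdot,\cdot)$ for some fixed large $\nu$.

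First I would fix an appropriate $\nu$ (a suitable integer multiple of the Gorenstein index of $Y$), chosen large enough that for every $\hat{Y}$ which arises as a smooth Cheeger-Gromov limit of time slices $(Y,g(t_i))$, the line bundle $K_{\hat{Y}}^{-\nu}$ is (the restriction on the smooth part of) a very ample line bundle on $\hat{Y}$; in particular, the global sections of $K_{\hat{Y}}^{-\nu}$ have no common zero on $\hat{Y}$. Suppose for contradiction that $F(\nu,\cdot,\cdot)$ is not bounded below, and select $(x_i,t_i)$ with $t_i\to\infty$ and $F(\nu,x_i,t_i)\to-\infty$. By the weak compactness assumption, after passing to a subsequence $(Y,g(t_i))\sconv(\hat{Y},\hat{g})$ for some Q-Fano normal variety $\hat{Y}$, and by compactness of $\hat{Y}$ I may also assume $x_i\to\hat{x}\in\hat{Y}$.

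Next, fix any orthonormal basis $\{\hat{T}_\beta\}_{\beta=0}^{N_\nu}$ of $H^0(K_{\hat{Y}}^{-\nu})$ with respect to $\hat{h}^\nu$ and $\hat\omega^n$. By Proposition~\ref{proposition: bundleconv}(2), there exist orthonormal bases $\{T_{\nu,\beta}^{t_i}\}$ of $H^0(K_Y^{-\nu})$ under $h^\nu(t_i)$ such that $T_{\nu,\beta}^{t_i}\sconv\hat{T}_\beta$. Very ampleness of $K_{\hat{Y}}^{-\nu}$ forces $\sum_\beta|\hat{T}_\beta|^2_{\hat{h}^\nu}(\hat{x})\ge c>0$, hence $\hat{F}(\nu,\hat{x})>-\infty$. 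The smooth convergence of the ambient metric together with the smooth convergence of the sections then yields $F(\nu,x_i,t_i)\to\hat{F}(\nu,\hat{x})$, contradicting $F(\nu,x_i,t_i)\to-\infty$. This reproduces the strategy of Theorem~3.2 of~\cite{CW4}.

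The main obstacle is the first step: producing a single $\nu$ that works uniformly across the entire family of possible limits $\hat{Y}$. Since all such limits arise from the same orbifold flow and share uniform bounds on diameter, volume, noncollapsing constant, and scalar curvature, together with a common Gorenstein-type bound, one expects such a uniform $\nu$ to come from a Matsusaka- or Koll\'ar-type effective very-ampleness theorem for $Q$-Fano normal varieties. A secondary technical point is the case when $\hat{x}$ is a singular point of $\hat{Y}$: here $|\hat{T}_\beta|^2_{\hat{h}^\nu}(\hat{x})$ must be interpreted via the extension of $K_{\hat{Y}}^{-\nu}$ provided by the $Q$-Fano structure, and the pointwise convergence $T_{\nu,\beta}^{t_i}(x_i)\to\hat{T}_\beta(\hat{x})$ must be tracked carefully through the orbifold charts around smooth approximations of $\hat{x}$.
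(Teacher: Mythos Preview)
Your proposal is correct and follows exactly the route the paper takes: the paper does not give an independent argument but states that the result follows ``exactly as Theorem~3.2 of~\cite{CW4}'' once Proposition~\ref{proposition: bundleconv} (continuity of orthonormal bases under Cheeger--Gromov limits) and the a~priori section bounds of Lemma~\ref{lemma: boundh} are in hand, which is precisely the mechanism you describe. Your identification of the two residual technical points --- uniformity of $\nu$ across all possible limits, and the interpretation of $\sum_\beta|\hat T_\beta|^2_{\hat h^\nu}$ at a singular $\hat x$ --- is accurate; in the paper's framework the second point is handled by the uniform gradient bound $|\nabla S|_{h^\nu}\le A\nu^{(n+1)/2}$ (so $e^{\nu F}$ is uniformly Lipschitz and one may compare $x_i$ with nearby smooth points), while the first is deferred to~\cite{CW4} rather than to an effective Matsusaka-type statement.
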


 As mentioned in the introduction. Suppose $Y$ is
 an orbifold Fano surface, $\{(Y, g(t)), 0 \leq t <\infty\}$ is a \KRf solution. Then
 this flow has no volume concentration and satisfies weak
 compactness theorem.  Under the help of Perelman's functional,
 every weak limit $(\hat{Y}, \hat{g})$ must satisfy K\"ahler Ricci
 soliton equation on its smooth part.  On the other hand, the
 soliton potential function has uniform $C^1$-norm bound since it is
 the smooth limit of $-\dot{\varphi}(t_i)$. Therefore Uhlenbeck's
 removing singularity method applies and we obtain $(\hat{Y}, \hat{g})$
 is a smooth orbifold which can be embedded into $\CP^{N_{\nu}}$ by
 line bundle $K_{\hat{Y}}^{-\nu}$ for some big $\nu$(c.f.~\cite{Baily}).
 Then the following Theorem from~\ref{theorem: justtamed} directly.

 \begin{theorem}
  Suppose $Y$ is an orbifold Fano surface, $\{(Y, g(t)), 0 \leq t <\infty\}$
 is a \KRf solution. Then there is a big constant $\nu$ such that
 this flow is tamed by $\nu$.
 \label{theorem: surfacetamed}
 \end{theorem}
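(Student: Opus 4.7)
The strategy is to reduce the theorem to Theorem~\ref{theorem: justtamed} by verifying its two hypotheses (no volume concentration and weak compactness to a $Q$-Fano normal variety) for every \KRf on an orbifold Fano surface. The plan is to first set up the sequence so that Theorem~\ref{theorem: centerwcpt} applies, then upgrade the regularity and algebro-geometric structure of the limit using Perelman's functional, Uhlenbeck's removable singularity theorem, and Baily's embedding theorem.

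First I would verify membership in the moduli space $\mathscr{O}(4, 1, \sigma, \kappa, E)$. For any $T \geq 1$, consider the shifted flow $\{(Y, g(t+T)), -1 \leq t \leq 1\}$. Proposition~\ref{proposition: perelman} gives a uniform bound on scalar curvature and $\kappa$-noncollapsing on scale $1$. Since $Y$ is a real $4$-dimensional orbifold and $c_1(Y)^2$ is a fixed topological invariant, the Chern-Gauss-Bonnet formula for orbifolds controls $\int_Y |Rm|^2 \omega_t^2$ uniformly in $t$ (modulo a bounded Ricci/scalar correction), giving the required energy bound with $m/2 = 2$. The normalization constant $c_0 = 1$ is obviously bounded, so the shifted flow lies in $\mathscr{O}(4, 1, \sigma, \kappa, E)$ uniformly in $T$. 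Theorem~\ref{theorem: centerwcpt} then delivers no volume concentration directly, and, for any sequence $t_i \to \infty$, a Cheeger-Gromov subsequential limit $(Y, g(t_i)) \sconv (\hat{Y}, \hat{g})$ to a $C^0$-orbifold.

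Next I would upgrade $\hat{Y}$ to a smooth Kähler-Ricci soliton orbifold. On the smooth part of $\hat{Y}$, the Perelman $\mu$-functional is monotone along the flow and uniformly bounded (by Perelman's estimates), hence its time-derivative tends to zero. Passing this to the limit on compact subsets of the regular part forces the optimal $f$ to satisfy $R_{i\bar j} + f_{i\bar j} - g_{i\bar j} = 0$, i.e.\ a shrinking Kähler-Ricci soliton equation on $\hat{Y}_{\mathrm{reg}}$; this is the standard Sesum-Tian argument (cf.~\cite{Se1}) adapted to the orbifold setting, and works because Proposition~\ref{proposition: dotphi} gives a uniform $C^1$ bound on $\dot\varphi(t_i)$, so the soliton potential on $\hat{Y}_{\mathrm{reg}}$ is Lipschitz with a priori bounds. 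Around each singular point $p \in \hat{Y}$, the tangent cone is $\R^4/\Gamma$ and the soliton equation combined with these uniform bounds on the potential put the metric in a gauge where Uhlenbeck's removing singularity method (cf.~\cite{CS}) applies. This promotes $\hat{Y}$ from a $C^0$-orbifold to a smooth ($C^\infty$) Kähler orbifold satisfying the shrinking soliton equation globally.

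Finally I would invoke Baily's theorem~\cite{Baily} to embed $\hat{Y}$ into $\CP^{N_\nu}$ via the line bundle $K_{\hat{Y}}^{-\nu}$ for some sufficiently divisible $\nu$; this realizes $\hat{Y}$ as a $Q$-Fano normal variety whose $\nu$-th plurianticanonical bundle restricts to $K_{\hat{Y}_{\mathrm{reg}}}^{-\nu}$. With both hypotheses of Theorem~\ref{theorem: justtamed} now verified (no volume concentration from Theorem~\ref{theorem: centerwcpt}, weak compactness to a $Q$-Fano normal variety from the steps above), we conclude the flow is tamed by some big $\nu$. The main obstacle I anticipate is the soliton/Uhlenbeck step: one has to carefully combine the $C^{1,\frac12}$ regularity already guaranteed by the weak compactness machinery with the elliptic nature of the soliton equation in a gauge compatible with the orbifold singularity in order to legitimately apply the removable singularity theorem; keeping track of the uniform constants (especially the $C^1$ bound on the potential through the limit) is where care is needed.
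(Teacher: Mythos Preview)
Your proposal is correct and follows essentially the same route as the paper: verify membership in $\mathscr{O}(4,1,\sigma,\kappa,E)$ via Perelman's estimates and the Chern--Gauss--Bonnet energy bound, apply Theorem~\ref{theorem: centerwcpt} for no volume concentration and $C^0$-orbifold weak compactness, upgrade the limit to a smooth K\"ahler--Ricci soliton orbifold using Perelman's $\mu$-functional monotonicity together with the $C^1$ bound on $\dot\varphi$ and Uhlenbeck's removable singularity argument, then invoke Baily's embedding to realize $\hat{Y}$ as a $Q$-Fano normal variety and conclude by Theorem~\ref{theorem: justtamed}. Your identification of the delicate point (carrying the uniform $C^1$ bound on the potential through the limit so that the removable singularity theorem applies) is exactly the spot the paper handles by citing~\cite{CS}.
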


 \subsection{Properties of Tamed Flow}
  Follow~\cite{Tian91}, we define
  \begin{definition}
  Let $\mathscr{P}_{G, \nu, k}(Y, \omega)$ be the collection of all
  $G$-invariant functions of form
  $\displaystyle \frac{1}{2\nu}\log (\sum_{\beta=0}^{k-1} \norm{\tilde{S}_{\nu,
  \beta}}{h^{\nu}}^2)$, where $\tilde{S}_{\nu, \beta} \in H^0(K_Y^{-\nu})$ satisfies
  \begin{align*}
     \int_Y \langle \tilde{S}_{\alpha}, \tilde{S}_{\beta} \rangle_{h^{\nu}}
     \omega^n=\delta_{\alpha \beta},   \quad 0 \leq \alpha, \beta
     \leq k-1 \leq \dim(K_Y^{-\nu}) -1;
     \qquad h= \det g_{\omega}.
  \end{align*}
  Define
  \begin{align*}
      \alpha_{G, \nu, k} \triangleq
      \sup\{ \alpha |  \sup_{\varphi \in \mathscr{P}_{G,\mu, k}} \int_Y e^{-2\alpha \varphi} \omega^n <
      \infty\}.
  \end{align*}
  If $G$ is trivial, we denote $\alpha_{\nu, k}$
  as $\alpha_{G, \nu, k}$, denote $\mathscr{P}(\nu, k)$ as $\mathscr{P}(G, \nu, k)$.
  \label{definition: nualpha}
 \end{definition}

 The next definition follows~\cite{DK}.
 \begin{definition}
  Let $Y$ be a complex orbifold and $f$ is a plurisubharmonic function and
  $f \in L^1(Y)$.
  For any compact set $K \subset Y$, define
  \begin{align*}
    \alpha_K(f)= \sup \{ c \geq 0: \; e^{-2cf} \textrm{is $L^1$ on a neighborhood
    of}\; K\},
  \end{align*}
 This $\alpha_K(f)$ is called the complex singularity exponent of $f$ on $K$.
 \label{definition: singexp}
 \end{definition}

 If $f \in \mathscr{P}(\nu, k)$ and $\alpha < \alpha_{\nu, k}$,
 we have $\int_Y e^{-2\alpha f}< \infty$ by definition. Since the
 set $\mathscr{P}(\nu, k)$ is compact in $L^1(Y)$-topology (actually in $C^{\infty}$ topology).
 By the semicontinuity property proved in~\cite{DK}, we see there is
 a uniform constant $C_{\alpha, \nu, k}$ such that
 \begin{align*}
   \int_Y e^{-2\alpha f}< C_{\alpha, \nu, k},
   \quad
   \forall \; f \in \mathscr{P}(\nu, k).
 \end{align*}

 Suppose the flow is tamed by $\nu$. By rotating basis, we can
 choose $\{S_{\nu, \beta}^t\}_{\beta=0}^{N_{\nu}}$ and
 $\{\tilde{S}_{\nu, \beta}^t\}_{\beta=0}^{N_{\nu}}$ as orthonormal
 basis of $H^0(K_Y^{-\nu})$ under the metric $h^{\nu}(t)$ and $h^{\nu}(0)$
 respectively, and they satisfies
 \begin{align*}
     S_{\nu, \beta}^t= a(t)\lambda_{\beta}(t)\tilde{S}_{\nu,
     \beta}^t, \quad
     0< \lambda_0(t) \leq \lambda_1(t) \leq \cdots \leq
     \lambda_{N_{\nu}}(t)=1.
 \end{align*}
 As in~\cite{CW4}, we have the partial $C^0$-estimate
 \begin{align*}
   \snorm{\varphi  - \sup_Y \varphi -\frac{1}{\nu} \log
   \sum_{\beta=0}^{N_{\nu}} \snorm{\lambda_{\beta}(t)\tilde{S}_{\nu, \beta}^t}{h_0^{\nu}}^2}{} <
   C.
 \end{align*}
 This yields
 \begin{align*}
   \int_Y e^{-\alpha(\varphi -\sup_Y \varphi)} \omega^n
    &< e^C \int_Y \left(\sum_{\beta=0}^{N_{\nu}}
     \snorm{\lambda_{\beta}(t)\tilde{S}_{\nu, \beta}^t}{h_0^{\nu}}^2
     \right)^{-\frac{\alpha}{\nu}} \omega^n\\
   &< e^C  \int_Y \left(\sum_{\beta=N_{\nu}-k+1}^{N_{\nu}}
     \snorm{\lambda_{\beta}(t)\tilde{S}_{\nu, \beta}^t}{h_0^{\nu}}^2
     \right)^{-\frac{\alpha}{\nu}} \omega^n\\
   &\leq e^C \lambda_{N_{\nu}-k+1}^{-\frac{2 \alpha}{\nu}}
    \int_Y \left(\sum_{\beta=N_{\nu}-k+1}^{N_{\nu}}
     \snorm{\tilde{S}_{\nu, \beta}^t}{h_0^{\nu}}^2
     \right)^{-\frac{\alpha}{\nu}} \omega^n\\
   &<e^C C_{\alpha, \nu, k} \lambda_{N_{\nu}-k+1}^{-\frac{2
   \alpha}{\nu}}.
 \end{align*}
 Plug in the equation $\dot{\varphi}= \log \frac{\omega_{\varphi}^n}{\omega^n} +
 \varphi+u_{\omega}$ and note that $\dot{\varphi}, u_{\omega}$ are bounded, we have
 \begin{align*}
   \int_Y e^{(1-\alpha)\varphi + \alpha \sup_Y \varphi} \omega_{\varphi}^n <
   C'(\alpha, \nu, k) \lambda_{N_{\nu}-k+1}^{-\frac{2\alpha}{\nu}}
 \end{align*}
 The convexity of exponential function implies
 \begin{align}
  (1-\alpha) \frac{1}{V} \int_Y \varphi \omega_{\varphi}^n + \alpha \sup_Y \varphi < C''(\alpha, \nu, k) -
  \frac{2\alpha}{\nu} \log \lambda_{N_{\nu}-k+1}.
 \label{eqn: intsupsum}
 \end{align}
 whenever $\alpha< \alpha_{\nu, k}$. Using this estimate, we can
 obtain the following two convergence theorems as in~\cite{CW4}.

 \begin{theorem}
  Suppose $\{(Y^n, g(t)), 0 \leq t < \infty \}$ is a \KRf tamed by $\nu$.
  If $\alpha_{\nu, 1}> \frac{n}{(n+1)}$,
  then $\varphi$ is uniformly bounded along this flow. In
  particular, this flow converges to a KE metric exponentially fast.
 \label{theorem: nuconv}
 \end{theorem}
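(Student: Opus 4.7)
The plan is to extract a uniform $C^0$-bound for $\varphi(t)$ from the tamedness condition combined with the hypothesis $\alpha_{\nu,1} > \frac{n}{n+1}$, and then upgrade this to exponential convergence to a KE metric by the standard parabolic machinery already set up in the paper.

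The first step is to specialize the key estimate $(\ref{eqn: intsupsum})$ to $k=1$. Since $N_{\nu}-k+1 = N_{\nu}$ and $\lambda_{N_{\nu}}(t) \equiv 1$, the logarithmic error term drops out, so for any $\alpha$ with $\frac{n}{n+1} < \alpha < \alpha_{\nu,1}$ we obtain the time-uniform bound
$$\alpha \sup_Y \varphi(t) + (1-\alpha)\,\frac{1}{V}\int_Y \varphi(t)\,\omega_{\varphi(t)}^n \;\leq\; C_1.$$
I then combine this with Proposition $5.6$ (inequality $(\ref{eqn: dsupphi})$): dropping the non-positive cross terms $-\sum_{i} \frac{i}{V}\int_Y \st \partial\varphi\wedge\bar\partial\varphi\wedge\omega^i\wedge\omega_\varphi^{n-1-i}$, it yields
$$\frac{1}{V}\int_Y (-\varphi)\,\omega_\varphi^n \;\leq\; n\sup_Y \varphi + C_2.$$
Substituting this into the previous line and rearranging gives
$$\bigl[(n+1)\alpha - n\bigr]\,\sup_Y \varphi(t) \;\leq\; C_3,$$
and because $\alpha(n+1) - n > 0$ by the assumption on $\alpha_{\nu,1}$, this delivers a uniform upper bound on $\sup_Y \varphi(t)$.

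Next I invoke Proposition $5.7$ (Proposition \ref{proposition: conditions}), which makes $\sup_Y \varphi$ being uniformly bounded from above equivalent to the uniform $C^0$-boundedness of $\varphi$ itself. Once the oscillation of $\varphi$ is uniformly controlled, Perelman's estimates (Proposition \ref{proposition: perelman}), the uniform Sobolev inequality (Proposition \ref{proposition: sobolev}), and the uniform bound on $\dot{\varphi}$ and $\nabla\dot{\varphi}$ (Proposition \ref{proposition: dotphi}) provide all the ingredients of the standard Chen--Tian style smoothing argument, giving uniform $C^k$-estimates for the evolving metric. Together with Perelman's $\mu$-functional monotonicity, subsequential limits of time slices are K\"ahler Ricci solitons with uniformly bounded potentials; since $\varphi$ itself is bounded and the KE condition can be read off from $\dot{\varphi} \to 0$, any such limit is in fact KE, and a Lojasiewicz--Simon type inequality at the KE limit promotes subsequential convergence to exponential convergence along the whole flow.

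The $C^0$-bound argument is essentially Tian's original calculation transplanted into the flow setting via $(\ref{eqn: intsupsum})$, so I expect no serious obstacle there. The real work lies in the last step: justifying that the uniform $C^0$-bound, combined with the tamedness condition and Perelman's estimates, really gives genuine exponential convergence of the whole flow (rather than merely along a subsequence) on an orbifold. This requires one to check that the higher-order Chen--Tian and Phong--Sesum--Sturm--Weinkove estimates, and the Lojasiewicz-type inequality near the KE limit, all survive the presence of finitely many orbifold singularities. Since every analytic tool used in those arguments (heat kernel asymptotics, Moser iteration, Sobolev embedding) has been verified on orbifolds in Sections $3$ and $5$, this extension is routine but is the step where the most care is needed.
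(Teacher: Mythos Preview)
Your proof is correct and follows essentially the same approach as the paper: specialize inequality (\ref{eqn: intsupsum}) to $k=1$ so that $\lambda_{N_\nu}=1$ kills the log term, combine with the bound $\frac{1}{V}\int_Y(-\varphi)\,\omega_\varphi^n \leq n\sup_Y\varphi + C$ from (\ref{eqn: dsupphi}), and use $(n+1)\alpha - n > 0$ to bound $\sup_Y\varphi$ from above, then invoke Proposition~\ref{proposition: conditions}. The paper's own proof stops at the $C^0$-bound and treats the exponential convergence as a known consequence, whereas you spell out the route through Chen--Tian/Perelman machinery; this extra discussion is reasonable but not required.
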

 \begin{proof}
 Choose $\alpha \in (\frac{n}{n+1}, \alpha_{\nu, 1})$. Put $k=1$ into inequality (\ref{eqn: intsupsum}),
 we have
 \begin{align*}
  (1-\alpha) \frac{1}{V} \int_Y \varphi \omega_{\varphi}^n + \alpha \sup_Y \varphi
  < C(\alpha, \nu).
 \end{align*}
 Together with
  $\frac{1}{V} \int_Y (-\varphi) \omega_{\varphi}^n \leq n \sup_Y
  \varphi+C$, it implies
 \begin{align*}
  \{\alpha - n(1-\alpha)\}\sup_{Y} \varphi < C.
 \end{align*}
 As $\alpha>\alpha_{\nu, 1}>\frac{n}{n+1}$, we have $\alpha - n(1-\alpha)>0$,
 this yields that $\displaystyle \sup_Y \varphi$ is
 uniformly bounded from above. Therefore, $\varphi$ is uniformly
 bounded.
 \end{proof}

 \begin{theorem}
 Suppose $\{(Y^n, g(t)), 0 \leq t < \infty \}$ is a \KRf tamed by $\nu$.
 If $\alpha_{\nu, 2}>\frac{n}{n+1}$ and
 $\alpha_{\nu, 1} > \frac{1}{2- \frac{n-1}{(n+1) \alpha_{\nu,2}}}$, then $\varphi$ is
 uniformly bounded along this flow.  In particular, this flow converges to a KE
 metric exponentially fast.
 \label{theorem: nuconvr}
 \end{theorem}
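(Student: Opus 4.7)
The strategy is to adapt the proof of Theorem~\ref{theorem: nuconv} by invoking the inequality (\ref{eqn: intsupsum}) with $k=2$, which becomes usable thanks to the assumption $\alpha_{\nu,2} > \frac{n}{n+1}$. By Proposition~\ref{proposition: conditions} it suffices to produce a uniform upper bound for $\sup_Y\varphi(t)$; the exponential convergence to a KE metric then follows from the standard tamed-flow argument used at the end of Theorem~\ref{theorem: nuconv}.

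Write $S = \sup_Y\varphi$, $M = \frac{1}{V}\int_Y\varphi\,\omega_{\varphi}^n$ and $\Lambda(t) = -\log\lambda_{N_{\nu}-1}(t)\ge 0$. Applying (\ref{eqn: intsupsum}) with $k=1,\ \alpha_1 < \alpha_{\nu,1}$ (and $\lambda_{N_{\nu}} \equiv 1$) gives $(1-\alpha_1)M + \alpha_1 S \leq C$; applying it with $k=2,\ \alpha_2 < \alpha_{\nu,2}$ gives $(1-\alpha_2)M + \alpha_2 S \leq C + \frac{2\alpha_2}{\nu}\Lambda$. Combined with the basic inequality
\begin{align*}
-M \leq nS - J(\varphi) + C,\qquad J(\varphi):=\sum_{i=0}^{n-1}\frac{i}{V}\int_Y \st\,\partial\varphi\wedge\db\varphi\wedge\omega^i\wedge\omega_{\varphi}^{n-1-i}\ge 0
\end{align*}
from (\ref{eqn: dsupphi}), one obtains a linear system in the three quantities $S,M,\Lambda$. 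One further ingredient is needed to close it: a lower bound for $J(\varphi)$ that compensates the $+\frac{2\alpha_2}{\nu}\Lambda$ term on the right of the $k=2$ inequality.

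The plan for this missing bound is to substitute the partial $C^0$ approximation $\varphi \approx S + \psi$, with $\psi = \frac{1}{\nu}\log\sum_\beta \snorm{\lambda_\beta\tilde S_{\nu,\beta}^t}{h_0^{\nu}}^2$, into $J(\varphi)$; the mixed $\partial\psi\wedge\db\psi$ terms can be explicitly controlled by the Fubini--Study-type potentials built from the two top sections $\tilde S_{\nu,N_{\nu}-1}^t,\tilde S_{\nu,N_{\nu}}^t$. Using the $\alpha_{\nu,2}$-integrability of $(\snorm{\tilde S_{\nu,N_{\nu}-1}^t}{h_0^{\nu}}^2+\snorm{\tilde S_{\nu,N_{\nu}}^t}{h_0^{\nu}}^2)^{-\alpha_2/\nu}$, together with the a priori bounds of Lemma~\ref{lemma: boundh}, one derives an inequality of the form
\begin{align*}
J(\varphi) \geq (n-1)(S-M) - \frac{C(n-1)}{(n+1)\alpha_2}\cdot\frac{2}{\nu}\Lambda - C.
\end{align*}
Feeding this into $-M\le nS-J+C$ yields $(n-1)M+S\le C+\frac{C(n-1)}{(n+1)\alpha_2}\cdot\frac{2}{\nu}\Lambda$, after which eliminating $M$ and $\Lambda$ against the two instances of (\ref{eqn: intsupsum}) produces a single inequality whose coefficient on $S$ is $\alpha_1\bigl(2-\frac{n-1}{(n+1)\alpha_2}\bigr)-1$. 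This coefficient is positive, for a suitable choice of $\alpha_1<\alpha_{\nu,1},\ \alpha_2<\alpha_{\nu,2}$, exactly under the hypothesis $\alpha_{\nu,1}>\frac{1}{2-\frac{n-1}{(n+1)\alpha_{\nu,2}}}$, giving the sought-after bound on $S$.

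The main obstacle is the lower bound for $J(\varphi)$ with the sharp weight $(n-1)$: the weights $i$ in the definition of $J(\varphi)$ must be matched precisely against the $(n+1)\alpha_2$ denominator arising from the $\alpha_{\nu,2}$-integrability of a sum of two sections. In practice one likely splits into two regimes; when $\lambda_{N_{\nu}-1}$ is bounded below, the $k=2$ inequality is essentially as strong as a $k=1$ inequality with $\alpha_2$ in place of $\alpha_1$ and the conclusion is immediate; when $\lambda_{N_{\nu}-1}$ is very small, $\psi$ is dominated by $\frac{1}{\nu}\log\snorm{\tilde S_{\nu,N_{\nu}}^t}{h_0^{\nu}}^2$ and one must show that the $(n-1)$-improvement coming from the $J(\varphi)$ term is not overwhelmed by the error $\frac{2}{\nu}\Lambda$. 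Everything else is essentially the algebraic manipulation already carried out in the manifold setting of \cite{CW4}.
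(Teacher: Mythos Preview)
Your overall structure is right---the proof combines (\ref{eqn: intsupsum}) for $k=1$ and $k=2$ with (\ref{eqn: dsupphi}) and a lower bound on $J(\varphi)$---but the key lower bound you state for $J(\varphi)$ is incorrect in both form and derivation, and this is the genuine gap. You claim
\[
J(\varphi)\;\geq\;(n-1)(S-M)\;-\;\tfrac{C(n-1)}{(n+1)\alpha_2}\cdot\tfrac{2}{\nu}\Lambda\;-\;C,
\]
but the sign on $\Lambda$ here is wrong (your bound becomes vacuous as $\Lambda\to\infty$, while what is needed is precisely a bound that is \emph{useful} in that regime), the $(n-1)(S-M)$ term has no clear origin, and the ``feeding into $-M\le nS-J+C$'' step does not produce the inequality $(n-1)M+S\le C+D\Lambda$ you assert (try it: you get a lower bound on $nM+S$, not an upper bound). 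Moreover, the $\alpha_{\nu,2}$-integrability of a sum of two sections does not yield a lower bound on $J(\varphi)$; integrability gives upper bounds on $\int e^{-\alpha\psi}$, which is how $\alpha_{\nu,2}$ actually enters---namely, only through (\ref{eqn: intsupsum}) with $k=2$.

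The correct lower bound, following Tian~\cite{Tian91}, is
\[
J(\varphi)\;\geq\;(n-1)\cdot\frac{1}{V}\int_Y \st\,\partial\varphi\wedge\bar\partial\varphi\wedge\omega^{n-1}
\;\geq\;(1-\delta)\,(n-1)\,\frac{\Lambda}{\nu}\;-\;C,
\]
valid once one has shown $\lambda_{N_\nu-1}\to 0$ along a bad subsequence. The first inequality just keeps the top term of the sum; the second replaces $\varphi$ by $X=\tfrac{1}{\nu}\log\sum_\beta\snorm{\lambda_\beta\tilde S_{\nu,\beta}}{h_0^{\nu}}^2$ via the partial $C^0$-estimate and then applies Tian's Poincar\'e-duality argument on curves (this is where the orbifold remark ``$\R$-coefficient Poincar\'e duality holds'' is used). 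With this bound in hand, (\ref{eqn: intsupsum}) for $k=2$ with $\alpha<\alpha_{\nu,2}$ gives $(1-\alpha)M+\alpha S<C+2A\alpha\,J(\varphi)$ where $A=\frac{1}{(n-1)(1-\delta)}$; combining with (\ref{eqn: dsupphi}) (i.e.\ $J\le nS+M+C$) and then with (\ref{eqn: intsupsum}) for $k=1$ with $\beta<\alpha_{\nu,1}$ yields exactly the coefficient $\bigl(2A+1-\tfrac{1-\beta}{\beta}(2nA-1)\bigr)\alpha-1$ in front of $-M$, which is positive under the stated hypothesis as $\delta\to 0$. Your final coefficient is correct, but the route to it goes through Tian's geometric bound on $\int\st\,\partial X\wedge\bar\partial X\wedge\omega^{n-1}$, not through $\alpha_{\nu,2}$-integrability.
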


 \begin{proof}
   We argue by contradiction. Suppose that $\varphi$ is not
   uniformly bounded.

    Then there must be a sequence of $t_i$ such that
    $\displaystyle \sup_{Y} \varphi(t_i) \to  \infty$.
    We claim that $\lambda_{N_{\nu}-1}(t_i) \to 0$. Otherwise, $\log \lambda_{N_{\nu}-1}(t_i)$ is uniformly
   bounded. Choose $\alpha \in (\frac{n}{n+1}, \alpha_{\nu, 2})$.
   Combining  $\frac{1}{V} \int_Y (-\varphi) \omega_{\varphi}^n \leq n \sup_Y
  \varphi+C$ and the inequality (\ref{eqn: intsupsum})
  in the case $k=1$,  the same argument as in the proof of Theorem~\ref{theorem: nuconv}
  implies that $\displaystyle \sup_Y \varphi(t_i)$ is uniformly
  bounded. This contradicts to our assumption!

  Note that $\R$-coefficient \Poincare duality holds on orbifold, singularities on $Y$
  are isolated which can be included in small geodesic balls with few contribution to integration.
  Since $\lambda_{N_{\nu}-1}(t_i) \to 0$, as in~\cite{Tian91}, for every small $\delta>0$,
  we have
  \begin{align*}
 \frac{1}{V} \int_Y \st \partial X_{t_i} \wedge
     \bar{\partial} X_{t_i} \wedge \omega^{n-1}
 \geq -\frac{(1-\delta)}{\nu} \log \lambda_{N_{\nu}-1}(t_i) -C
 \end{align*}
 for large $i$.
 Here
  $\displaystyle X_{t_i}=\frac{1}{\nu} \log \sum_{\beta=0}^{N_{\nu}} \snorm{\lambda_{\beta}(t_i)\tilde{S}_{\nu,
   \beta}^{t_i}}{h_0^{\nu}}^2$.
  For notation simplicity, we omit the subindex $t_i$ from now
  on. It follows that
  \begin{align*}
   &\quad \sum_{j=0}^{n-1}
     \frac{j}{V} \int_Y \st \partial \varphi \wedge
     \bar{\partial} \varphi \wedge \omega^j \wedge
     \omega_{\varphi}^{n-1-j}\\
 &\geq \frac{n-1}{V} \int_Y \st \partial \varphi \wedge
     \bar{\partial} \varphi \wedge \omega^{n-1}\\
 &\geq \frac{n-1}{V} \int_Y \st \partial X \wedge
     \bar{\partial} X \wedge \omega^{n-1} -C\\
 &\geq -(1-\delta) \cdot \frac{(n-1)}{\nu}
     \cdot \log \lambda_{N_{\nu}-1} -C.
 \end{align*}
 Plug this into inequality (\ref{eqn: intsupsum}) in the case $k=1$,
 we arrive

 \begin{align*}
   (1-\alpha) \frac{1}{V} \int_Y \varphi \omega_{\varphi}^n + \alpha \sup_Y \varphi < C(\alpha, \nu)
   +\frac{1}{1-\delta} \cdot \frac{2\alpha}{(n-1)}\sum_{i=0}^{n-1}
     \frac{i}{V} \int_Y \st \partial \varphi \wedge
     \bar{\partial} \varphi \wedge \omega^i \wedge
     \omega_{\varphi}^{n-1-i}.
 \end{align*}
 Combining it with
 \begin{align}
   \frac{1}{V} \int_Y (-\varphi) \omega_{\varphi}^n \leq
  n \sup_Y \varphi  - \sum_{i=0}^{n-1}
     \frac{i}{V} \int_Y \st \partial \varphi \wedge
     \bar{\partial} \varphi \wedge \omega^i \wedge
     \omega_{\varphi}^{n-1-i} + C
 \end{align}
 we have
 \begin{align*}
     \left(2A\alpha -(1-\alpha)\right) \frac{1}{V} \int_Y
     (-\varphi) \omega_{\varphi}^n < \alpha (2An -1)  \sup_Y \varphi +C.
 \end{align*}
 where $A=\frac{1}{(n-1)(1-\delta)}$.
 Combining this with the estimate (\ref{eqn: intsupsum}) for $k=1$
 implies
  \begin{align*}
     \left(2A\alpha -(1-\alpha)\right) \frac{1}{V} \int_Y
     (-\varphi) \omega_{\varphi}^n < (2A n-1) \alpha  \sup_Y \varphi +C
  < (2An -1) \frac{\alpha}{\beta}(1-\beta) \frac{1}{V} \int_Y(-\varphi) \omega_{\varphi}^n
    +C.
 \end{align*}
 where $\beta$ is any number less that $\alpha_{\nu, 1}$, $A=\frac{1}{(n-1)(1-\delta)}$. Therefore,
 we have
 \begin{align}
   \{ (2A +1 - \frac{1-\beta}{\beta} (2nA -1))\alpha -1\} \frac{1}{V} \int_Y(-\varphi)
   \omega_{\varphi}^n<C.
 \label{eqn: intphi}
 \end{align}

 If  $\alpha_{\nu, 1} > \frac{1}{2-\frac{(n+1)}{(n-1)\alpha_{\nu, 2}}}$, then
 we can find $\beta$  a little bit less than $\alpha_{\nu, 1}$,
 $\alpha$ a little bit less than $\alpha_{\nu, 2}$, $A$ a little bit
 greater than $\frac{1}{n-1}$ such that
 \begin{align*}
  (2A +1 - \frac{1-\beta}{\beta} (2nA -1))\alpha -1>0.
 \end{align*}
 Recall our subindex $t_i$ in inequality (\ref{eqn: intphi}),
 we have $\frac{1}{V} \int_Y(-\varphi_{t_i}) \omega_{\varphi_{t_i}}^n$
 is  uniformly bounded from above. This implies that $\displaystyle \sup_Y \varphi_{t_i}$
 is uniformly bounded. Contradiction!
 \end{proof}

 \section{Some Applications and Examples}
 The following theorem is a direct corollary of
 Theorem~\ref{theorem: surfacetamed},
 Theorem~\ref{theorem: nuconv} and Theorem~\ref{theorem: nuconvr}.

 \begin{theorem}
  Suppose that $Y$ is an orbifold Fano surface such that one of the
  following two conditions holds for every large integer $\nu$,
  \begin{itemize}
  \item  $\alpha_{\nu, 1}> \frac23$.
  \item  $\alpha_{\nu, 2} > \frac23, \quad \alpha_{\nu, 1} > \frac{1}{2- \frac{1}{3\alpha_{\nu, 2}}}$.
  \end{itemize}
  Then $Y$ admits a KE metric.
  \label{theorem: KEexist}
 \end{theorem}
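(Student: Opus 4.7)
The plan is very short because the statement is explicitly advertised as a direct corollary of the three main theorems assembled earlier in this section. The strategy is to start the \KRf on $Y$ from any initial K\"ahler metric with $[\omega_0]=2\pi c_1(Y)$, promote the flow to the tamed regime via the surface-specific result, and then feed this tamedness into whichever convergence criterion matches the hypothesized $\alpha_{\nu,k}$ bound. Since $n=2$, the quantities $\frac{n}{n+1}$ and $\frac{1}{2-(n-1)/((n+1)\alpha_{\nu,2})}$ reduce exactly to $\frac23$ and $\frac{1}{2-1/(3\alpha_{\nu,2})}$, so the hypotheses of this theorem match the $n=2$ specialization of Theorems~\ref{theorem: nuconv} and~\ref{theorem: nuconvr} verbatim.

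First, fix any smooth orbifold K\"ahler metric $\omega_0$ with $[\omega_0]=2\pi c_1(Y)$ and let $\{(Y,g(t)),\,0\le t<\infty\}$ be the unique solution of the normalized \KRf starting from $\omega_0$; long-time existence on Fano orbifolds follows from the orbifold version of Cao's argument that is in force throughout Section 5. Next, apply Theorem~\ref{theorem: surfacetamed}: because $Y$ is an orbifold Fano surface, there exists a (large) integer $\nu$ for which the flow is tamed by $\nu$, i.e.\ $F(\nu,\cdot,\cdot)$ is uniformly bounded on $Y\times[0,\infty)$. Up to enlarging $\nu$, we may further assume that $\nu$ is appropriate (an integer multiple of the Gorenstein index of $Y$) and large enough that the hypothesized $\alpha_{\nu,k}$ inequalities of the present theorem apply at this $\nu$.

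With tamedness in hand, we distinguish the two cases. If $\alpha_{\nu,1}>\frac23$, Theorem~\ref{theorem: nuconv} applied with $n=2$ yields a uniform bound on the K\"ahler potential $\varphi(t)$ and exponential convergence of $g(t)$ to a KE metric on $Y$. If instead $\alpha_{\nu,2}>\frac23$ and $\alpha_{\nu,1}>\frac{1}{2-1/(3\alpha_{\nu,2})}$, we invoke Theorem~\ref{theorem: nuconvr} with $n=2$ to obtain the same conclusion. In either situation, $Y$ admits a KE metric, which is exactly what was to be shown.

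There is essentially no main obstacle left at this stage: all the analytic work (the pseudolocality theorem, the weak compactness of $\mathscr{O}(m,c,\sigma,\kappa,E)$, the K\"ahler–Ricci soliton structure of every limit, the partial $C^0$-estimate implied by tamedness, and the two convergence criteria driven by $\alpha_{\nu,k}$) has already been done earlier in the paper. The only point worth double-checking in the writeup is that the single $\nu$ produced by Theorem~\ref{theorem: surfacetamed} can be chosen simultaneously appropriate for $Y$ and large enough that the assumed asymptotic $\alpha_{\nu,k}$ inequalities hold; this is why the statement quantifies over \emph{every large integer $\nu$} rather than a single one.
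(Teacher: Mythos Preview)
Your proposal is correct and follows exactly the route the paper intends: the theorem is stated there as a direct corollary of Theorem~\ref{theorem: surfacetamed}, Theorem~\ref{theorem: nuconv}, and Theorem~\ref{theorem: nuconvr}, and you have spelled out precisely that deduction, including the $n=2$ specialization of the thresholds. Your remark about choosing $\nu$ simultaneously large, appropriate, and witnessing tamedness is the only bookkeeping point, and you handle it correctly.
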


 There are a lot of orbifold Fano surfaces where Theorem~\ref{theorem: KEexist}
 can be applied.   For simplicity, we only consider the good case:
 every singularity is a rational double point. This kind of orbifolds are called
 Gorenstein log del Pezzo surfaces.

 Let's first recall some definitions.

 \begin{definition}
   Suppose that $X$ is a normal variety and $D=\sum d_i D_i$ is a
   $\Q$-cartier divisor on $X$ such that $K_X +D$ is $\Q$-cartier
   and let $f: Y \to X$ be a birational morphism, where $Y$ is
   normal. We can write
   \begin{align*}
    K_{Y} \sim_{\Q} f^* (K_X +D) + \sum a(X, D, E)E.
   \end{align*}
  The discrepancy of the log pair $(X, D)$ is the number
  \begin{align*}
     discrep(X, D)= \inf_E \{ a(X, D, E) | E \; \textrm{is exceptional divisor over} \;
     X\}.
  \end{align*}
 The total discrepancy of the log pair $(X, D)$ is the number
 \begin{align*}
   totaldiscrep(X, D)= \inf_E \{a(X,D,E) | E \; \textrm{is divisor over} \;
   X\}.
 \end{align*}
 We say that the log pair $K_X +D$ is
 \begin{itemize}
 \item  Kawamata log terminal (or log terminal) if and only if
   $totaldiscrep(X, D)>-1$.
 \item log canonical iff $discrep(X, D)\geq -1$.
 \end{itemize}
 \end{definition}

  Assume now that $X$ is a variety with log terminal singularities,
  let $Z \subset X$ be a closed subvariety and let $D$ be an
  effective $\Q$-Cartier divisor on $X$. Then the number
  \begin{align*}
    lct_Z(X, D) = \sup\{ \lambda \in \Q | \textrm{the log pair} \; (X, \lambda D)\; \textrm{is log canonical along}
     \; Z\}.
  \end{align*}
  Let $x$ be a point in $X$, $f$ be a local defining holomorphic
  function of divisor $D$ around $x$, then we have
  \begin{align*}
    lct_x(X, D)= \alpha_x(\log f)
  \end{align*}
  where $\alpha_x(\log f)$ is the singularity exponent of plurisubharmonic function $\log f$ around point
  $x$. (c.f. definition~\ref{definition: singexp}).

 \begin{definition}
 \begin{align*}
  lct_{\nu}(X)= \inf\{ lct(X, \frac{1}{\nu}D) | D \; \textrm{effective} \; \textrm{$\Q$-divisor on}
  \; X \;  \textrm{such that} \; D \in |-\nu K_X|\}.
 \end{align*}
 The global log canonical threshold of $X$ is the number
 \begin{align*}
    lct(X)= \inf\{lct(X, D)| D \; \textrm{effective divisor of} \; X \;\textrm{such that}
        \; D \sim_{\Q} -K_X\}.
 \end{align*}
 \end{definition}

 It's not hard to see $lct_{\nu}(X)= \alpha_{\nu, 1}$.  According to the proof
 of Demailly (c.f.~\cite{ChS},~\cite{SYl}), we know
 \begin{align*}
  \alpha(X) = lct(X) = \lim_{\nu \to \infty} lct_{\nu}(X) = \lim_{\nu \to
  \infty} \alpha_{\nu, 1}.
 \end{align*}
 Therefore, we have
 \begin{align*}
  \infty=\alpha_{\nu, N_{\nu}+1} \geq \cdots \geq \alpha_{\nu, 3} \geq
  \alpha_{\nu, 2} \geq \alpha_{\nu, 1}(X)=lct_{\nu}(X)  \geq lct(X) = \alpha(X).
 \end{align*}

 The calculation of $\alpha_{\nu, k}$ is itself a very interesting
 problem (c.f.~\cite{SYl},~\cite{ChS}).  Here we will
 use some results calculated in~\cite{Kosta}.

 \begin{lemma}[\cite{Kosta}]
    Let $Y$ be a Gorenstein log del Pezzo surface, every singularity of $Y$ is of type
 $\A_k$.  Suppose $Y$ satisfies one of the following conditions.
 \begin{itemize}
 \item $Y$ has only singularities of type $\A_1$ or $\A_2$
   and $K_Y^2=1$. $|-K_Y|$ has a cuspidal curve $C$ such that
   $Sing(C)$ contains an $\A_2$ singularity.
 \item $Y$ has one singularity of type $\A_5$ and $K_Y^2=1$.
 \item $Y$ has one singularity of type $\A_6$ and $K_Y^2=1$.
 \end{itemize}
 Then $\alpha_{\nu, 1} \geq \frac23$
    and $\alpha_{\nu, 2}> \frac23$.
 \label{lemma: cacalpha}
 \end{lemma}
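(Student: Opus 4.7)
The plan is to translate both inequalities into statements about log canonical thresholds, since by the discussion preceding the lemma we have $\alpha_{\nu,1}(Y)=lct_\nu(Y)\geq lct(Y)=\alpha(Y)$. Thus the first inequality $\alpha_{\nu,1}\geq \frac{2}{3}$ follows once we show $lct(Y)\geq \frac{2}{3}$, i.e., that $(Y,\frac{2}{3}D)$ is log canonical for every effective $\Q$-divisor $D\sim_{\Q} -K_Y$. For the strict inequality $\alpha_{\nu,2}>\frac{2}{3}$ we cannot work with a single divisor: we must control the Tian-type functions built from a 2-dimensional subspace $V\subset H^0(K_Y^{-\nu})$ and exhibit, via Demailly's semicontinuity, a uniform gain $\alpha_{\nu,2}\geq \frac{2}{3}+\varepsilon$.

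For the bound $lct(Y)\geq \frac{2}{3}$, the approach is by contradiction and local analysis at each point $p\in Y$. If some effective $D\sim_{\Q}-K_Y$ violated $lct_p(Y,\frac{2}{3}D)\geq 1$, one passes to the minimal resolution $\pi\colon\tilde Y\to Y$, where each $\A_k$ point contributes an exceptional chain $E_1+\cdots+E_k$ of $(-2)$-curves. One writes $\pi^{*}D=\tilde D+\sum a_i E_i$ and $K_{\tilde Y}\sim_{\Q}\pi^{*}K_Y$, then controls the $a_i$ by combining $K_Y^2=1$ with the intersection numbers $\tilde D\cdot E_i$, to verify that $(\tilde Y,\frac{2}{3}\pi^{*}D)$ is log canonical along the exceptional fibre. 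At a smooth point $p$, the bound $\operatorname{mult}_p D\leq \frac{3}{2}$ (needed for log canonicity of the pair with coefficient $\frac{2}{3}$) is forced by the existence of the cuspidal member $C\in|-K_Y|$ with $\Sing(C)$ of type $\A_2$ in the first case, and by the restriction of the anticanonical linear system through the $\A_5$ or $\A_6$ point in the remaining two cases: in each situation the intersection $D\cdot C=K_Y^{2}=1$ leaves no room for $D$ to acquire a worse singularity than what is permitted.

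For $\alpha_{\nu,2}>\frac{2}{3}$ the idea is to consider $f=\frac{1}{2\nu}\log(\|S_0\|^2+\|S_1\|^2)$ with $\{S_0,S_1\}$ an orthonormal basis of $V\subset H^0(K_Y^{-\nu})$, and to split the base locus: write $(S_0)+(S_1)=2F+M$ where $F$ is the fixed part of $|V|$ and $M$ moves. The singularity exponent of $f$ at a point $p$ is governed by $F$ plus a generic member of the mobile pencil $M$. The strict gain $>\frac{2}{3}$ over $\alpha_{\nu,1}$ is obtained because two distinct members of a pencil cannot both be tangent to the worst anticanonical configuration at the extremal point (the cusp in case 1, and the distinguished singularity in cases 2--3); together with the semicontinuity theorem of Demailly--Koll\'ar applied to the $L^{1}$-compact family $\mathscr{P}(\nu,2)$, this upgrades the pointwise strict inequality to the uniform bound $\alpha_{\nu,2}\geq \frac{2}{3}+\varepsilon$.

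The main obstacle will be the third configuration, a degree $1$ del Pezzo with a single $\A_6$ singularity, which is extremal: the exceptional chain has maximal length and the anticanonical pencil interacts with it most intricately. Verifying log canonicity at the $\A_6$ point reduces to a finite combinatorial check of the discrepancies $a(E_i;Y,\frac{2}{3}D)\geq -1$ along the chain; the checks must be organized so that the condition $K_Y^2=1$ and the precise list of $(-1)$-curves on $\tilde Y$ meeting the chain are used sharply. This is exactly where Kosta's explicit classification of anticanonical divisors on Gorenstein log del Pezzo surfaces enters, and we will quote it rather than reprove it.
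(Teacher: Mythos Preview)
Your approach to $\alpha_{\nu,1}\geq\frac{2}{3}$ matches the paper's: both reduce to Kosta's computation of $lct(Y)$ via the minimal resolution and the identification of the extremal anticanonical divisors, and the paper likewise treats the cases separately (sketching only the $\A_5$ case).

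For $\alpha_{\nu,2}>\frac{2}{3}$ the paper takes a shorter route than your pencil decomposition. Given $\varphi=\frac{1}{2\nu}\log(\|S_1\|^2+\|S_2\|^2)\in\mathscr{P}(\nu,2)$ with $S_1\perp S_2$, the paper simply uses the trivial pointwise inequality
\[
\alpha_y(\varphi)\;\geq\;\max\bigl\{\alpha_y(\varphi_1),\,\alpha_y(\varphi_2)\bigr\},\qquad \varphi_i=\tfrac{1}{2\nu}\log\|S_i\|^2,
\]
which follows from $\|S_1\|^2+\|S_2\|^2\geq\|S_i\|^2$. The $\alpha_{\nu,1}$ analysis (from Kosta) shows that $\alpha_y(\varphi_i)\leq\frac{2}{3}$ forces $y$ to be the distinguished singular point and $S_i$ to be a scalar multiple of $(S')^\nu$ for a \emph{unique} $S'\in H^0(K_Y^{-1})$. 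Orthogonality prevents $S_1$ and $S_2$ from both being that section, so $\max\{\alpha_y(\varphi_1),\alpha_y(\varphi_2)\}>\frac{2}{3}$ at every point; the paper then invokes finiteness of the achievable lct values together with compactness of $\mathscr{P}(\nu,2)$ and Demailly--Koll\'ar semicontinuity (the last step agreeing with yours) to make the bound uniform. Your fixed/mobile base-locus decomposition is therefore unnecessary, and your geometric claim that ``two distinct members of a pencil cannot both be tangent to the worst configuration'' is really just the uniqueness of the extremal section in disguise. Both arguments are correct; the paper's buys simplicity by reducing the two-section problem straight back to the one-section classification, while yours would generalize more readily to situations where the extremal divisor is not unique.
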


 \begin{proof}
   The proof argues case by case and the main ingredients are
   contained in~\cite{Kosta} already.  For simplicity, we only give a
   sketch proof of the second case.

   If $f \in \mathscr{P}(\nu, 1)$ and $\alpha_x(f) \leq \frac23$,
   one can show that $f = \frac{1}{2\nu} \log \snorm{S}{h_0^{\nu}}^2$
   for some $S \in H^0(K_Y^{-\nu})$. Moreover, $x$ is the unique singularity
   of type $\A_5$ and $S= (S')^{\nu}$
   for some $S' \in H^0(K_Y^{-1})$.   $Z(S')$ is the unique
   divisor passing through $x$ such that
    $lct_x(Y, Z(S'))= \frac23$.

   For every $\varphi \in \mathscr{P}(\nu, 2)$,
   we have $e^{2\nu \varphi} = e^{2\nu \varphi_1} + e^{2\nu \varphi_2}$
   where
   \begin{align*}
    \varphi_1= \frac{1}{\nu} \log \snorm{S_1}{h_0^{\nu}}^2, \quad
    \varphi_2= \frac{1}{\nu} \log \snorm{S_2}{h_0^{\nu}}^2; \quad
    \int_Y \langle S_1, S_2\rangle_{h_0^{\nu}} \omega_0^n =0.
   \end{align*}
   Clearly,  for every point $y \in Y$, we have
   \begin{align*}
     \alpha_y(\varphi) \geq \max \{ \alpha_y(\varphi_1),
     \alpha_y(\varphi_2)\}> \frac23.
   \end{align*}
   Since $\alpha_y(\varphi_1), \alpha_y(\varphi_2)$ can only achieve finite possible
   values, we have
   \begin{align*}
      \inf_{y \in Y, \varphi \in \mathscr{P}(\nu, 2)}
      \alpha_y(\varphi) > \frac23.
   \end{align*}
   By the compactness of $Y$ and the semicontinuity property proved
   in~\cite{DK}, we have the inequality $\alpha_{\nu, 2} > \frac23$.
 \end{proof}

  Therefore, Theorem~\ref{theorem: KEexist} applies and we know KE metrics
  exist on such orbifolds $Y$ in Lemma~\ref{lemma: cacalpha}.
  Together with Theorem 1.6 of~\cite{Kosta} and Theorem 5.1 of~\cite{SYl}, we have proved the following theorem.

   \begin{theorem}
  Suppose $Y$ is a cubic surface with only one ordinary double point, or $Y$ is
  a degree $1$ del Pezzo surface having only Du Val singularities of type $\A_k$ for $k \leq 6$. Starting from any
   metric $\omega$ satisfying $[\omega]=2\pi c_1(Y)$,
   the \KRf will converge to a KE metric on $Y$.  In particular, $Y$ admits a KE metric.
  \end{theorem}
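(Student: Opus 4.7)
The plan is to assemble the final theorem from three ingredients already established: the tamedness of any orbifold \KRf on a Fano surface (Theorem~\ref{theorem: surfacetamed}), the convergence criterion for tamed flows with good $\alpha_{\nu,k}$ (Theorem~\ref{theorem: KEexist}), and the explicit calculations of $\alpha_{\nu,1}$ and $\alpha_{\nu,2}$ on these specific Gorenstein log del Pezzo surfaces.

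First I would note that in both cases ($Y$ a cubic surface with a single ordinary double point, or $Y$ a degree $1$ del Pezzo with Du Val $\A_k$ singularities, $k\le 6$) the variety $Y$ is a Fano orbifold surface in the sense used throughout the paper, since rational double points are quotient singularities. Hence for any initial K\"ahler metric $\omega$ with $[\omega]=2\pi c_1(Y)$, the \KRf starting from $\omega$ exists for all time, and by Theorem~\ref{theorem: surfacetamed} there is some large appropriate $\nu$ such that the flow is tamed by $\nu$. So the convergence problem is reduced, via Theorem~\ref{theorem: KEexist}, to verifying that either $\alpha_{\nu,1}(Y)>\tfrac{2}{3}$ or the pair $\alpha_{\nu,2}(Y)>\tfrac{2}{3}$ and $\alpha_{\nu,1}(Y)>\frac{1}{2-\tfrac{1}{3\alpha_{\nu,2}}}$ holds, for every sufficiently large $\nu$.

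Next I would split into cases according to the geometry. For the cubic surface with one ordinary double point, the calculation of the global log canonical threshold in Theorem~1.6 of~\cite{Kosta} together with the semicontinuity argument of Demailly reproduced in~\cite{SYl},\cite{ChS} gives $\alpha(Y)>\tfrac{2}{3}$, hence $\alpha_{\nu,1}(Y)\ge \mathrm{lct}(Y)>\tfrac{2}{3}$ for all large $\nu$, and the first bullet of Theorem~\ref{theorem: KEexist} applies directly. For a degree $1$ del Pezzo with Du Val singularities of type $\A_k$, $k\le 6$, the results in~\cite{Kosta} give $\alpha(Y)>\tfrac{2}{3}$ for $k\le 4$, so again the first bullet handles these cases. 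The remaining borderline cases $k=5,6$, and the mixed $\A_1/\A_2$ case with a cuspidal member of $|-K_Y|$ whose singular locus contains an $\A_2$, are exactly the situations covered by Lemma~\ref{lemma: cacalpha}, which yields $\alpha_{\nu,1}\ge \tfrac{2}{3}$ together with the strict inequality $\alpha_{\nu,2}>\tfrac{2}{3}$. Plugging into the second bullet of Theorem~\ref{theorem: KEexist} then gives convergence. (In each subcase one must check that the orbifold singularities on the surface are all among the Du Val types allowed by Lemma~\ref{lemma: cacalpha}; this is a direct consequence of the classification hypothesis.)

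The main obstacle is genuinely Lemma~\ref{lemma: cacalpha}, i.e.\ pushing the bound $\alpha_{\nu,2}>\tfrac{2}{3}$ on the worst degree-$1$ cases where $\alpha_{\nu,1}$ sits exactly at $\tfrac{2}{3}$; this is where Kosta's detailed case analysis of pluri-anticanonical divisors and Demailly--Koll\'ar semicontinuity are essential. Once those $\alpha$-invariant bounds are in hand, the orbifold Ricci flow machinery developed in Sections~3--5 --- pseudolocality, weak compactness of time slices to a $Q$-Fano soliton variety, continuity of plurianticanonical sections (Proposition~\ref{proposition: bundleconv}) and the resulting tamedness (Theorem~\ref{theorem: surfacetamed}) --- does the rest, producing the exponentially fast convergence of $\{g(t)\}$ to a K\"ahler--Einstein metric on $Y$, and in particular the existence of such a metric.
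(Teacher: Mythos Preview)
Your overall strategy is exactly the paper's: invoke Theorem~\ref{theorem: surfacetamed} to get tamedness, then feed the $\alpha_{\nu,k}$ bounds into Theorem~\ref{theorem: KEexist}, splitting into the cases where $\alpha_{\nu,1}>\tfrac23$ outright and the borderline cases where Lemma~\ref{lemma: cacalpha} supplies $\alpha_{\nu,2}>\tfrac23$.

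There is, however, a concrete misstep in your case analysis for the nodal cubic. Kosta's paper~\cite{Kosta} treats degree~$1$ del Pezzo surfaces, not degree~$3$; its Theorem~1.6 does not compute $\mathrm{lct}$ for a cubic with an $\A_1$ point, and in fact for such a cubic one has $\mathrm{lct}(Y)=\tfrac23$, not $>\tfrac23$. So the first bullet of Theorem~\ref{theorem: KEexist} does \emph{not} apply here. The paper instead invokes Theorem~5.1 of~\cite{SYl}, which provides the relevant $\alpha_{\nu,2}>\tfrac23$ estimate (the same mechanism as for smooth cubics with an Eckardt point), and then the second bullet of Theorem~\ref{theorem: KEexist} finishes the job. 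You also mischaracterize~\cite{SYl} as merely ``the semicontinuity argument of Demailly''; it is the source of the $\alpha_{\nu,2}$ computation for the cubic case. Your treatment of the degree~$1$ surfaces (Kosta for the generic cases with $\mathrm{lct}>\tfrac23$, Lemma~\ref{lemma: cacalpha} for the $\A_5$, $\A_6$, and the special $\A_1/\A_2$ configurations) is correct and matches the paper.
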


 \begin{remark}
  If we consider $\alpha_{G, \nu, k}$ instead of $\alpha_{\nu, k}$
  for some finite group $G \subset Aut(Y)$,  it's still possible to study
  the existence of KE metrics on degree $1$  Gorenstein log Del Pezzo surfaces
  with $\A_7$ or $\A_8$ singularities.
 \end{remark}


%
%
%

  \vspace{1in}

 Bing  Wang,  Department of Mathematics, Princeton University,
  Princeton, NJ 08544, USA; bingw@math.princeton.edu

  \end{document}